\newcommand{\ssq}{\subseteq} 
\newcommand{\subgp}[1]{\langle{{\hash}1}\rangle}
\def\F{\mathcal{F}}
\def\X{\mathbb{X}}
\def\ind{\textrm{ind}}
\def\ourmin{\mathrm{min}}
\def\ourmax{\mathrm{max}}
\def\vect{\rightharpoonup}
\newcommand{\Ex}[1]{\mathbb{E}\left[#1\right]}
\newcommand{\pr}[1]{\mathbb{P}\left(#1\right)}
\def\exff{\mathbb{E}_{F,F'}(G_{i-1})}
\newcommand{\fl}[1]{\ensuremath{\lfloor #1 \rfloor}}
\newcommand{\eq}[1]{\begin{equation}\label{eq:#1}}
\newcommand{\eqe}{\end{equation}}
\newcommand{\eqr}[1]{\eqref{eq:#1}}
\numberwithin{equation}{section}
\def\Var{\textup{Var}}
\def\owedge{\mathsmaller{\bigwedge}}
\newcommand{\obinom}[2]{\scalebox{1.2}{$\binom{#1}{#2}$}}
\newtheorem{theorem}{Theorem}[section]
\newtheorem{lem}[theorem]{Lemma}
\newtheorem{cor}[theorem]{Corollary}
\newtheorem{prop}[theorem]{Proposition}
\theoremstyle{definition}\newtheorem{definition}[theorem]{Definition}
\theoremstyle{definition}\newtheorem{remark}[theorem]{Remark}
\def\le{\leqslant}
\def\ge{\geqslant}
\def\eps{\varepsilon}
\definecolor{sgreen}{rgb}{0.3, 0.9, 0.3} 
\begin{document}

\title[Moderate deviations of subgraph counts]{Moderate deviations of subgraph counts in the Erd\H os-R\' enyi random graphs $G(n,m)$ and $G(n,p)$}

\author{Christina Goldschmidt} \vspace{0.8cm}

\address{Department of Statistics and Lady Margaret Hall, University of Oxford, 24-29 St Giles', Oxford OX1 3LB, UK}
\email{goldschm@stats.ox.ac.uk}

\author{Simon Griffiths} 
\address{Departamento de Matemática, PUC-Rio, Rua Marqu\^es de S\~ao Vicente 225, G\'avea, Rio de Janeiro 22451-900, Brazil}		
\email{simon@mat.puc-rio.br}

\author{Alex Scott}
\address{Mathematical Institute, University of Oxford, Oxford OX2 6GG, UK}
\email{scott@maths.ox.ac.uk}

\maketitle

\begin{abstract} The main contribution of this article is an asymptotic expression for the rate associated with moderate deviations of subgraph counts in the Erd\H os-R\'enyi random graph $G(n,m)$.  Our approach is based on applying Freedman's inequalities for the probability of deviations of martingales to a martingale representation of subgraph count deviations.  In addition, we prove that subgraph count deviations of different subgraphs are all linked, via the deviations of two specific graphs, the path of length two and the triangle.  We also deduce new bounds for the related $G(n,p)$ model.
\end{abstract}

\section{Introduction}

Deviations of subgraph counts in random graphs, and in particular in the Erd\H os-R\'enyi random graph $G(n,p)$, have been the focus of intense study in recent years.  Almost all of the results have concerned either small deviations (of the order of the standard deviation) or large deviations (of the order of the mean).  Less is known about the intermediate range of moderately large deviations.

Corresponding to the first category, deviations of the order of the standard deviation, Ruci\'nski established~\cite{Ruc} that for the entire range of densities $p$ such that $np^{e(H)},(1-p)n^2\to \infty$ the number of copies of a fixed graph $H$ in $G(n,p)$ is asymptotically normally distributed.   Articles with results that are quantitively stronger have followed~\cite{BKR,KRT,RR,Rol}.  On the other hand Janson~\cite{Jan} (building on the earlier articles, Janson~\cite{JanRSA} and Janson and Nowicki~\cite{JN}) gives a general framework in which to think about random graph statistics.   Among other results, he proves a functional central limit theorem for the evolution of subgraph count deviations, and that subgraph counts in $G(n,m)$ are also asymptotically normally distributed.

In the second category, deviations of the order of the mean, usually referred to as large deviations, have become a major focus in recent years.  Interest in these problems grew after the seminal articles of Vu~\cite{Vu} and Janson and Ruci\'nski~\cite{JR} in the early 2000s provided many results, using a large range of techniques, which were still far from best possible.  Important subsequent advances include the translation of such deviation problems into variational problems for graphons (Chatterjee and Varadhan~\cite{CV}) and solutions to these variational problems for certain values of the parameters (Lubetzky and Zhao~\cite{LZ} and Zhao~\cite{Z}).   We recommend the survey of Chatterjee~\cite{Chat} and the references therein for a more detailed overview.  Note that the approach of Chatterjee and Varadhan~\cite{CV}, which is applied in the context of the model $G(n,p)$, has been generalised to apply in $G(n,m)$ by Dembo and Lubetzky~\cite{DL}.  Very recently, a major breakthrough by Harel, Mousset and Samotij~\cite{HMS2019} has greatly extended the range of such large deviation results.

In this article, we focus on deviation events of some intermediate size, usually called moderate deviations.  We shall focus on the random graph model $G(n,m)$, with a fixed number of edges, which we believe to be the more natural context in which to study moderate deviations of subgraph counts.  For example, in the dense case, the standard deviation of the number of triangles in $G(n,m)$ is of order $n^{3/2}$, while it is of order $n^2$ in $G(n,p)$.  This expresses the fact that by far the easiest way for $G(n,p)$ to have extra triangles is simply to have extra edges.  By fixing the number of edges and working in $G(n,m)$ one studies the finer problem of other possible causes of triangle count deviations.

Our main contributions are as follows:
\begin{enumerate}
\item[(i)] We give a general martingale-type expression for subgraph count deviations in $G(n,m)$ (see Theorem~\ref{thm:Mart}).
\item[(ii)] We prove that subgraph count deviations are generally well predicted by the deviations of the counts of two specific graphs, $P_2$ and $K_3$ (see Theorem~\ref{thm:relate}).
\item[(iii)] Using the above results, we determine the asymptotic rate associated with moderately large subgraph count deviations.  That is we determine the function $r=r(n)$ such that a deviation of this type has probability $\exp\big(-r(1+o(1))\big)$ (see Theorem~\ref{thm:main}).
\item[(iv)] We deduce results concerning moderately large subgraph count deviations in $G(n,p)$ which are significantly stronger than previously known bounds (see Theorems~\ref{thm:smalldelta} and~\ref{thm:largerdelta}).
\end{enumerate} 

We state other auxiliary results along the way, such as an approximate bound on deviation probabilities across the whole range of deviations, Theorem~\ref{thm:upto} and an estimate for the tail of the binomial distribution, Theorem~\ref{thm:bah}.

We require the following notation.  We write $N_H(G)$ for the number of embeddings of a graph $H$ in a graph $G$.  That is, the number of injective functions $\phi:V(H)\to V(G)$ such that
\[
\phi(u)\phi(v)\in E(G) \qquad \text{for all} \qquad uv\in E(H)\, .
\]
This is also referred to in the literature as the number of \emph{isomorphic copies} of $H$ in $G$.
When we count without multiplicity we write $\binom{G}{H}$, so that, for example
\[
N_{K_3}(K_4)\, =\, 24\qquad \text{and}\qquad \binom{K_4}{K_3}\, =\, 4\, .
\]

We shall be interested interested in $N_H(G)$, where $H$ is a fixed graph and $G$ is a large random graph.  For example, we think of a fixed graph $H$ with $v=v(H)$ vertices, and $e=e(H)$ edges, and a large graph $G$ with $n$ vertices and $m$ edges, where $n$ is taken very large, and $m$ behaves as a function of $n$.  (In view of this choice of notation, we will never use $e$ to denote the base of the natural logarithm, but will rather write $\exp(1)$.)

Let $N:=\binom{n}{2}$.  For a graph $H$ with $v$ vertices, and $e$ edges, the expected number of embeddings (isomorphic copies) of $H$ in $G(n,m)$ is
\eq{Ldef}
L_{H}(m)\, :=\, \frac{(n)_v(m)_e}{(N)_e}\, ,
\eqe
where $(n)_k:=n(n-1)\dots (n-k+1)$ denotes the falling factorial.  It will be useful at times to note that
\eq{Lident}
L_{H}(m)\, -\, L_{H}(m-1)\,  =\, \frac{1}{N-m+1}\sum_{f\in E(H)}\big(L_{H\setminus f}(m-1)\, -\, L_{H}(m-1)\big)\, .
\eqe
The intuition behind the identity is that both sides represent the increase in the expected number of embeddings of $H$ caused by the addition of an edge: the sum on the right hand side corresponds to the expected number of almost complete embeddings, in the sense that a single edge is not present.  Alternatively, direct calculation shows that both sides have value $e(n)_v(m-1)_{e-1}/(N)_e$.

A natural way to generate $G(n,m)$ is to add the edges one at a time.  The Erd\H os-R\'enyi random graph process $(G_i:i=0,\dots ,N)$ is defined as follows.  Let $G_0$ be the empty graph, and for each $i\ge 0$ let $G_{i+1}$ be obtained by adding a uniformly chosen edge to $G_i$.  Clearly $G_m$ is distributed as $G(n,m)$.  The process ends with $G_N$ being the complete graph $K_n$.  We observe that the process is Markovian.  We refer the reader to the books \cite{Bela, JLR} for further background on random graphs.

Our focus will be on subgraph count deviations in $G_m$.  We write $D_H(G_m)$ for the deviation of the $H$-count in $G_m$.  That is,
\eq{Ddef}
D_H(G_m)\, :=\, N_H(G_m)\, -\, L_H(m)\, .
\eqe
We shall see that paths of length two, which we denote $\owedge$, and triangles, which we denote $\triangle$, play a particularly important role.  We write $\binom{H}{\owedge}$ for the number of paths of length two in a graph $H$ and $\binom{H}{\triangle}$ for the number of triangles in $H$.

Let us define the function $\gamma_H(t)$ for $t\in (0,1)$ by
\eq{gdef}
\gamma_H(t)\, :=\, \left(4\obinom{H}{\owedge}^2 t^{2e-2}(1-t)^2\, +\, 12\obinom{H}{\triangle}^2 t^{2e-3}(1-t)^3\right)^{-1}\, .
\eqe

We now state our main result concerning the asymptotic rate of moderate deviations of subgraph counts.  We use the notation $f\ll g$ for $f=o(g)$.  We express the deviation as a multiple of $n^{v-3/2}$ as this is the order of the standard deviation (in the dense case).

The model we consider is defined as follows.  Let $(G_{n,m}:m=0,\dots ,N)$, $n\ge 1$ be independent copies of the Erd\H os-R\'enyi random graph process, and let $(G_{n,t})_{n\ge 1}$ denote the sequence of random graphs $(G_{n,m_{n}})_{n\ge 1}$, where $m_n=\lfloor tN\rfloor$.  We will be interested in $G_{n,t}$ both in the case that $t\in (0,1)$ is a constant, and the case that $t=t(n)$ is a function of $n$.  We exclude the case that $t(n)$ converges to $1$ (see Remark~\ref{rem:to1}).

\begin{samepage}
\begin{theorem}\label{thm:main} Let $t=t(n)\in (0,1)$ be a sequence bounded away from $1$, and let $H$ be graph with $v$ vertices, $e$ edges, and $\binom{H}{\owedge}\ge 1$.    Then
\[
\pr{D_H(G_{n,t})\phantom{\Big|} >\, \alpha_n n^{v-3/2}}\, =\, \exp\big(-\gamma_H(t) \alpha_n^2 (1+o(1))\big)\, ,
\]
for every sequence $(\alpha_n:n\ge 1)$ which satisfies either
\begin{enumerate}
\item[(i)] $1\ll \alpha_n \ll n^{1/2}$ and $t(n)=t\in (0,1)$ is constant, or
\item[(ii)]   $\max\{t^{1/2}n^{-1/2}\log{n}, t^{e-3/2}\} \, \ll\,  \alpha_n\, \ll\, \min\{t^{2e-5/2}n^{1/2}, t^{e+2}n^{1/2}\}$.
\end{enumerate}

Furthermore the same holds for $\pr{D_H(G_{n,t})\, <\, -\alpha_n n^{v-3/2}}$.
\end{theorem}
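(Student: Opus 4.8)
**The plan is to derive Theorem~\ref{thm:main} by combining the martingale representation (Theorem~\ref{thm:Mart}), the reduction to $\owedge$- and $\triangle$-counts (Theorem~\ref{thm:relate}), and Freedman's martingale concentration inequalities, obtaining matching upper and lower bounds on the deviation probability.**

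First I would set up the martingale. Writing $X_i := \Ex{N_H(G_{n,N}) \mid G_{n,i}} = N_H(G_{n,i})$-type conditional expectations along the edge-exposure filtration, the Doob martingale $(M_i)_{i=0}^{m_n}$ with $M_i = \Ex{D_H(G_{n,m_n}) \mid G_{n,i}}$ has increments $\Delta_i = M_i - M_{i-1}$ that are controlled by Theorem~\ref{thm:Mart}. The key point established there (and in the identity~\eqr{Lident}) is that each increment is, to leading order, a linear combination of the current deviations $D_{\owedge}$ and $D_{\triangle}$ of the path and triangle counts in $G_{n,i}$, with coefficients of order $t^{e-2}(1-t)$ and $t^{e-3/2}(1-t)^{3/2}$ respectively — precisely the quantities appearing inside $\gamma_H(t)^{-1}$. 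So the first step is to quote Theorem~\ref{thm:Mart} to write $D_H(G_{n,t}) = \sum_{i \le m_n} \Delta_i + (\text{negligible error})$, and to record the conditional variance $\sum_i \Ex{\Delta_i^2 \mid G_{n,i-1}}$ and the almost-sure increment bound $|\Delta_i| \le b_i$.

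Next I would compute the predictable quadratic variation. Summing $\Ex{\Delta_i^2\mid\mathcal F_{i-1}}$ over $i \le m_n$ and using that the ``typical'' values of $D_{\owedge}, D_{\triangle}$ at stage $i \approx sN$ are of order $n^{v(\owedge)-3/2}$ and $n^{v(\triangle)-3/2}$ — so that their contribution is lower order — the conditional variance concentrates around a deterministic quantity $\sigma_n^2$. A computation (integrating $t^{2e-2}(1-t)^2$ and $t^{2e-3}(1-t)^3$ against the edge-addition rate $1/(N-i)$, or rather the appropriate normalization giving $n^{2v-3}$) should yield $\sigma_n^2 = \tfrac{1}{2}\gamma_H(t)^{-1} n^{2v-3}(1+o(1))$, matching the normalization $\alpha_n n^{v-3/2}$ with rate $\gamma_H(t)\alpha_n^2$. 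The role of the constraints on $\alpha_n$ in parts (i) and (ii) is exactly to guarantee that (a) the deviation is large enough that the fluctuations of the conditional variance, and the error term from Theorem~\ref{thm:Mart}, are negligible relative to $\sigma_n$ (this is where $\alpha_n \gg 1$, resp.\ the lower bounds in (ii), come from), and (b) the deviation is small enough — $\alpha_n \ll n^{1/2}$, resp.\ the upper bounds in (ii) — that the Gaussian (quadratic-rate) regime of Freedman's inequality applies rather than the Poissonian tail, i.e.\ the increment bound $b_i$ times the target deviation is dominated by $\sigma_n^2$.

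For the \emph{upper bound} on the probability I would apply Freedman's inequality directly: on the event that the conditional variance is at most $(1+o(1))\sigma_n^2$ (which holds with probability $1 - o(1)$, or with an error absorbable into the rate, by a second-moment or union-bound argument over dyadic scales) and the increments are bounded by $b_i$, Freedman gives
\[
\pr{\sum_i \Delta_i > \alpha_n n^{v-3/2}} \le \exp\!\left(-\frac{(\alpha_n n^{v-3/2})^2}{2\sigma_n^2 + \tfrac{2}{3} b\,\alpha_n n^{v-3/2}}\right) = \exp\big(-\gamma_H(t)\alpha_n^2(1+o(1))\big),
\]
the $b$-term being negligible by the upper constraints on $\alpha_n$. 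For the \emph{lower bound} — the genuinely harder half — I would use a change-of-measure / tilting argument, or a direct construction: force the early edges to create a slight excess of paths of length two (the cheapest mechanism, by $\gamma_H$), show that this excess propagates through the martingale to produce the required excess of $H$-copies, and estimate the probability of this ``strategy'' by the corresponding entropy cost, which matches $\exp(-\gamma_H(t)\alpha_n^2(1+o(1)))$; alternatively one invokes a lower-bound companion to Freedman (a martingale CLT / Cramér-type lower estimate valid in the moderate regime, which the constraints on $\alpha_n$ are designed to permit). I expect \textbf{the lower bound to be the main obstacle}: Freedman's inequality is one-sided, so matching it from below requires either a delicate second-moment argument showing the conditional variance does not concentrate \emph{too much below} $\sigma_n^2$ together with a moderate-deviation CLT for martingales, or an explicit near-optimal deviation strategy whose probability must be estimated sharply — and both routes require carefully tracking that the $D_{\owedge}$ and $D_{\triangle}$ contributions (via Theorem~\ref{thm:relate}) are the only ones that matter to leading order, with everything else provably $o(\alpha_n^2)$ in the exponent. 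The two-sidedness in the ``Furthermore'' clause is then immediate by symmetry of the argument (replacing $\Delta_i$ by $-\Delta_i$).
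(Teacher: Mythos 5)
Your skeleton — martingale representation plus Freedman's inequality, with the rate determined by the predictable quadratic variation — is indeed the paper's strategy, and you correctly anticipate that the lower bound is the crux. But the proposal as written has two genuine gaps.

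First, you hedge on how the lower bound is actually achieved. You list a tilting/change-of-measure argument, an explicit deviation strategy, and ``a lower-bound companion to Freedman'' as three possible routes without committing. Only the last of these is what the argument requires: Freedman's converse inequality (stated in the paper as Lemma~\ref{lem:CF}), which provides a matching lower bound of the form $\exp(-\alpha^2(1+o(1))/2\beta)$ under the same variance concentration. Moreover, even once you pick this route, there is a subtlety you haven't anticipated: the converse inequality lower-bounds the probability that the martingale crosses the level $\alpha$ \emph{at some time before} $\lfloor tN\rfloor$, not that the final value exceeds $\alpha_n n^{v-3/2}$ at time $\lfloor tN\rfloor$ precisely. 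One then needs a short forward application of Freedman's inequality to the post-crossing tail of the martingale to show that, having crossed a slightly inflated level, the process stays above the target with probability at least $1/2$. Without this bridge the converse inequality does not directly yield the stated conclusion.

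Second, you treat the concentration of the conditional quadratic variation as a computation, when it is a substantial result in its own right (Propositions~\ref{prop:var} and~\ref{prop:varbetter}). The covariances $\Ex{X_F(G_i)X_{F'}(G_i)\mid G_{i-1}}$ are random, and showing they cluster tightly around the deterministic $V_{F,F'}(i,n)$ requires already having the coarser deviation bound of Theorem~\ref{thm:upto} and the relation Theorem~\ref{thm:relate} in hand; the argument is bootstrapped, not self-contained. Related to this, the paper does not run Freedman directly on the exact martingale of Theorem~\ref{thm:Mart} (whose increments involve $X_F$ for \emph{all} $F\subseteq E(H)$), but first replaces $D_H(G_{n,t})$ by the explicit proxy $\Lambda^*_H(G_{n,t})$, a linear combination of $X_{\owedge}(G_i)$ and $X_{\triangle}(G_i)-3sX_{\owedge}(G_i)$ (Theorems~\ref{thm:approx} and~\ref{thm:approxbetter}), and only then computes the variance. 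Your invocation of Theorem~\ref{thm:relate} gestures at this reduction but conflates $\Lambda_H$ (deviations of $P_2$- and triangle-counts) with $\Lambda^*_H$ (the martingale increment version); the latter is what lets Freedman's lemmas apply. These are not cosmetic omissions: without the $\Lambda^*_H$ reduction and the variance-concentration lemma the Freedman step has nothing to bite on.
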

\end{samepage}

\begin{remark} We initially proved the results of this article in the dense case (i.e., with $t\in (0,1)$ a constant), and have now partially extended them to sparser regimes.  The problem of finding the asymptotic rate across the whole range of sparse densities remains open. \end{remark}

\begin{remark} In the sparse case, $t=o(1)$, we may simplify $\gamma_H(t)$ to
\begin{enumerate}
\item[(i)] $\big(4\obinom{H}{\owedge}^2 t^{2e-2}(1-t)^2\big)^{-1}$ in the case $\obinom{H}{\triangle}=0$, or
\item[(ii)] $\big(12\obinom{H}{\triangle}^2 t^{2e-3}(1-t)^3\big)^{-1}$ in the case $\obinom{H}{\triangle}\ge 1$.
\end{enumerate}
We may also note that the same dichotomy applies to $\Lambda_H(G_{n,t})$, see~\eqr{Ladef}, in the sense that the term involving $D_{\triangle}(G_{n,t})$ dominates, in the sparse case, if $\obinom{H}{\triangle}\ge 1$.
\end{remark}

\begin{remark}\label{rem:to1} Our proof of Theorem~\ref{thm:main} breaks down as $t$ approaches $1$.  However, with an alternative approach one may obtain the same bound provided:
\[
(1-t)^{e-3/2}\, \ll\, \alpha_n\, \ll\, (1-t)^{e+2}n^{1/2}\, .
\]
The alternative approach is to approximate $D_H(G_{n,t})$ by $\Lambda_{H}(G_{n,t})$ (using Theorem~\ref{thm:relate}), and apply Corollary~\ref{cor:comp} to each of $D_{\owedge}(G_{n,t})$ and $D_{\triangle}(G_{n,t})$ to express these deviations in terms of deviations in the complement, and then apply Theorem~\ref{thm:main} to the complement.  (Since the complement is sparse the deviation event is more easily achieved by $D_{\owedge}$ and the contribution of $D_{\triangle}$ is essentially trivial.) 
\end{remark}

\begin{remark} In the dense case, $t\in (0,1)$ constant, the range of deviations considered $(\omega(n^{v-3/2}),o(n^{v-1}))$, corresponds to the range strictly between the orders of magnitude of the standard deviation of $D_{H}(G)$ for $G\sim G(n,m)$ and $G\sim G(n,p)$ respectively.  This range is best possible, in the sense that the asymptotics of $\log(\pr{D_H(G_{n,t})>\, \alpha_n n^{v-3/2}})$ are different if $\alpha_n=O(1)$ or $\alpha_n=\Omega(n^{1/2})$.  For $\alpha_n=O(1)$ this follows from the central limit theorem of Janson~\cite{Jan}.  On the other hand, if $\alpha_n=\Omega(n^{1/2})$ then the asymptotic log probability is larger\footnote{As a particular example, if any vertex has degree $n-1$ then $D_{\owedge}(G_{n,t})\, \ge\, (1-t)^2n^2$, and this has probability at least $\Omega(t^n)$ which is larger than $\exp(-\gamma_{\owedge}(t) (1-t)^4 n (1+o(1)))$ for certain values of $t\in (0,1)$.}.  Theorem~\ref{thm:upto} below gives an exponent which is best possible up to multiplication by constant (in the dense case) across the whole range of deviations $(\omega(n^{v-3/2}),\Theta(n^v))$.
\end{remark}

A key step in proving Theorem~\ref{thm:main} is to establish a relation between the subgraph count deviations $D_H(G_{n,t})$ of different graphs $H$.  Specifically, we prove that $D_H(G_{n,t})$, the deviation of the $H$-count in $G_{n,t}$ is generally well predicted by a certain linear combination of $D_{\owedge}(G_{n,t})$, the deviation of the $P_2$ count, and $D_{\triangle}(G_{n,t})$, the deviation of the triangle count.  Let us define
\eq{Ladef}
\Lambda_{H}(G_{n,t})\, :=\, n^{v-3}t^{e-2}\left(\obinom{H}{\owedge}-3\obinom{H}{\triangle}\right)  D_{\owedge}(G_{n,t})\, +\, n^{v-3}t^{e-3}\obinom{H}{\triangle} D_{\triangle}(G_{n,t})
\eqe
to be this linear combination, where $v=v(H)$ and $e=e(H)$.  Note that $\Lambda_{H}(G_{n,t})$ is $n^{v-3}$ times a linear combination $\kappa D_{\owedge}(G_{n,t})+\rho D_{\triangle}(G_{n,t})$, in which the coefficients depend only on $H$ and $t$.

\begin{theorem}\label{thm:relate} 
Let $H$ be a graph with $v$ vertices and $e$ edges.  There exists a constant $C=C(H)$ such that for all $n$, and all $t=t(n)\in (0,1)$, we have
\eq{of two}
\pr{\big| D_H(G_{n,t})-\Lambda_H(G_{n,t})\big |\phantom{\Big|}>\, Cbt^{1/2}n^{v-2}}\, \le\, \exp(-b)
\eqe
for all $3\log{n}\le b\le t^{1/2} n$.  Furthermore
\eq{oftwoer}
\pr{\big| D_H(G_{n,t})-\Lambda_H(G_{n,t})\big |\phantom{\Big|}>\, Cbn^{v-2}}\, \le\, \exp(-b)
\eqe
for all $b\ge 3\log{n}$. 
\end{theorem}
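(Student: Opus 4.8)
Here is a plan of proof for Theorem~\ref{thm:relate}.

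The plan is to combine the martingale representation of $D_H$ furnished by Theorem~\ref{thm:Mart} with Freedman's inequality. Write $(G_i)_{i=0}^{m_n}$ for the Erd\H os-R\'enyi process ending at $G_{m_n}=G_{n,t}$, and let $M_i:=\mathbb{E}[N_H(G_{n,t})\mid G_i]$ be the associated Doob martingale. The representation of Theorem~\ref{thm:Mart} writes $M_i$ as a linear combination, with explicit coefficients $c_{i,F}$ depending only on $i$, of the counts $N_{H_F}(G_i)$ over spanning subgraphs $F$ of $H$, where $H_F$ denotes $H$ with edge set $F$; to leading order $c_{i,F}\approx s_i^{\,e-|F|}(1-s_i)^{|F|}$, with $s_i:=(m_n-i)/(N-i)$ the residual density. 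Consequently
\[
D_H(G_{n,t})\;=\;\sum_{i=1}^{m_n}\big(M_i-M_{i-1}\big)\;=\;\sum_{i=1}^{m_n}\sum_{F\subseteq E(H)}c_{i,F}\,\Theta_i^F,
\]
where $\Theta_i^F$ is the mean-zero ``surprise'' in the one-step increment $N_{H_F}(G_i)-N_{H_F}(G_{i-1})$, namely the number of copies of $H_F$ completed by the $i$-th edge minus its conditional mean given $G_{i-1}$. Applying this also to $D_\owedge$ and $D_\triangle$, and using that $\Theta_i^F=0$ whenever $|F|\le 1$ (the number of edges of $G_i$ is deterministic), one gets $D_\owedge(G_{n,t})\approx\sum_i(1-s_i)^2\Theta_i^\owedge$, while the analogous expansion of $D_\triangle$, after isolating the part $\widetilde\Theta_i^\triangle$ of the $i$-th triangle-surprise not explained by the degrees at the two endpoints of the $i$-th edge, takes the form $D_\triangle(G_{n,t})\approx 3t\,D_\owedge(G_{n,t})+\widetilde D_\triangle(G_{n,t})$ with $\widetilde D_\triangle:=\sum_i(1-s_i)^3\widetilde\Theta_i^\triangle$.

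The second step reduces every $\Theta_i^F$ to $\Theta_i^\owedge$ and $\widetilde\Theta_i^\triangle$. Peeling off an edge $g$ of $F$, a copy of $H_F$ completed by the $i$-th edge $\{x,y\}$ is a copy of $H_F\setminus g$ with the endpoints of $g$ pinned at $x$ and $y$; the surprise in the number of such copies is, to leading order, governed by the degrees and codegree at $x,y$, which yields
\[
\Theta_i^F\;=\;\alpha_i^F\,\Theta_i^\owedge\;+\;\beta^F\,\widetilde\Theta_i^\triangle\;+\;(\text{surprises of strictly smaller order}),
\]
with deterministic coefficients $\alpha_i^F$ (which may depend on $i$ through $\tau_i\approx i/N$) and $\beta^F$. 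Substituting this into the expansion of $D_H$ gives $D_H(G_{n,t})=\sum_i A_i\Theta_i^\owedge+\sum_i B_i\widetilde\Theta_i^\triangle+(\text{a small remainder martingale})$ with deterministic $A_i,B_i$. The core of the argument is the evaluation of $A_i$ and $B_i$: combining the explicit $c_{i,F}$ with the coefficients $\alpha_i^F,\beta^F$, the combinatorial identities $\sum_{g\in E(H)}\binom{H\setminus g}{\owedge}=(e-2)\binom{H}{\owedge}$ and $\sum_{g\in E(H)}\binom{H\setminus g}{\triangle}=(e-3)\binom{H}{\triangle}$, and the algebraic identity $s_i+(1-s_i)\tau_i=t$, all of the $i$-dependence collapses and one finds $A_i\approx n^{v-3}t^{e-2}\binom{H}{\owedge}(1-s_i)^2$ and $B_i\approx n^{v-3}t^{e-3}\binom{H}{\triangle}(1-s_i)^3$. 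Comparing with the expansions of $D_\owedge$ and $\widetilde D_\triangle$ then gives
\[
D_H(G_{n,t})\;\approx\;n^{v-3}t^{e-2}\binom{H}{\owedge}\,D_\owedge(G_{n,t})\;+\;n^{v-3}t^{e-3}\binom{H}{\triangle}\,\widetilde D_\triangle(G_{n,t}),
\]
and substituting $\widetilde D_\triangle=D_\triangle-3tD_\owedge$ turns the right-hand side into precisely $\Lambda_H(G_{n,t})$ of \eqr{Ladef} --- the term $-3\binom{H}{\triangle}$ there being merely the cost of expressing the answer through $D_\triangle$ instead of $\widetilde D_\triangle$.

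By construction the remainder $D_H(G_{n,t})-\Lambda_H(G_{n,t})$ is a martingale: a sum over $i$ of mean-zero terms built from the strictly-smaller-order surprises and the errors in the reductions. We control it with Freedman's inequality. Unconditionally, each of its increments is $O(n^{v-2})$ and its predictable quadratic variation is $O(n^{2v-4}\log n)$, which gives \eqr{oftwoer} for all $b\ge 3\log n$ provided $C$ is taken large enough. For the sharper bound \eqr{of two} we restrict to a good event on which all vertex degrees and pair codegrees in every $G_i$, $i\le m_n$, remain within a constant factor of their typical values, of orders $tn$ and $t^2n$ respectively; on this event the increments improve to $O(t^{1/2}n^{v-2})$ and the quadratic variation to $O(t\,n^{2v-4}\log n)$, the good event fails with probability at most $e^{-b}$ throughout $3\log n\le b\le t^{1/2}n$, and Freedman's inequality applied to the martingale stopped at the first violation of the good event yields \eqr{of two}. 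The step I expect to be the main obstacle is earlier, in the evaluation of $A_i$ and $B_i$: one must follow every contribution through the reductions and exhibit the cancellations that make $A_i\propto(1-s_i)^2$ and $B_i\propto(1-s_i)^3$, while simultaneously verifying that the strictly-smaller-order surprises and the reduction errors are small enough, both in typical size and in the worst case. It is precisely the lack of a satisfactory unconditional worst-case bound that necessitates the passage to the good event and hence the weaker estimate \eqr{oftwoer}.
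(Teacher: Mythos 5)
Your plan is essentially the paper's own proof. Both start from the exact martingale representation of Theorem~\ref{thm:Mart}, approximate each increment $X_F(G_i)$ by a linear combination of $X_\owedge(G_i)$ and the orthogonalized triangle increment $X_\triangle(G_i)-3sX_\owedge(G_i)$ (your $\widetilde\Theta_i^\triangle$) with an error increment (the paper's $Y_F(G_i)$), observe that the deterministic coefficients collapse --- via exactly the combinatorial identities and the relation $s_i+(1-s_i)\tau_i=t$ you cite --- to recover $\Lambda_H$, and then control the resulting remainder martingale by Freedman's inequality on a high-probability degree/codegree event, which is what produces the $t^{1/2}$ improvement in \eqr{of two} relative to \eqr{oftwoer}. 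The only organizational point you omit: the paper proves the approximation lemmas (including the coefficient estimate, whose error carries a factor $1/(1-t)$) only for $t\le 1/2$, and then deduces the range $t\in(1/2,1)$ by passing to the complement graph via Corollary~\ref{cor:comp}, since the good-event argument as stated degrades as $t\to 1$; worth keeping in mind if you flesh this out.
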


We also state a weaker version of Theorem~\ref{thm:main} which applies across the entire range of possible deviations.

\begin{theorem}\label{thm:upto} 
Let $H$ be graph with $v$ vertices and $e$ edges.    Then there is a constant $c=c(H)$ such that for all $t=t(n)\in (0,1)$, and for all $\alpha, n \ge c^{-1}$, we have 
\[
\pr{|D_H(G_{n,t})|\phantom{\Big|} >\, \alpha n^{v-3/2}}\, \le \, \exp\big(-c \alpha  \min\{\alpha, n^{1/2}\}\big)\, .
\]
\end{theorem}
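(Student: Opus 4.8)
The plan is to apply Freedman's martingale inequality to the martingale representation of $D_H$ furnished by Theorem~\ref{thm:Mart}, after first using Theorem~\ref{thm:relate} to reduce to the two base graphs $\owedge$ and $\triangle$. The mechanism producing the stated exponent is that the relevant martingale has increments of size $O(n^{v-2})$ and predictable quadratic variation of size $O(n^{2v-3})$, so that Freedman's inequality for a deviation of $\alpha n^{v-3/2}$ has a Gaussian regime for $\alpha\le n^{1/2}$ and an exponential regime for $\alpha\ge n^{1/2}$, which together give $\exp(-c\,\alpha\min\{\alpha,n^{1/2}\})$.

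For the reduction, apply \eqr{oftwoer} with $b$ a suitable constant times $\alpha n^{1/2}$; since $\log n=o(n^{1/2})$ and $\alpha n^{1/2}\ge c^{-1}n^{1/2}$, one has $b\ge 3\log n$ throughout the range $\alpha,n\ge c^{-1}$ provided $c$ is small enough, whence $\pr{|D_H(G_{n,t})-\Lambda_H(G_{n,t})|>\tfrac12\alpha n^{v-3/2}}\le\exp(-b)$. As $\binom{H}{\triangle}\ge 1$ forces $e\ge 3$, $\binom{H}{\owedge}\ge 1$ forces $e\ge 2$, and $\binom{H}{\owedge}=0$ forces $\binom{H}{\triangle}=0$, each power of $t\le 1$ in the definition \eqr{Ladef} of $\Lambda_H$ has a nonnegative exponent unless its prefactor vanishes, so the coefficients of $n^{v-3}D_\owedge(G_{n,t})$ and $n^{v-3}D_\triangle(G_{n,t})$ are bounded by a constant $C(H)$. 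Thus $\{|\Lambda_H(G_{n,t})|>\tfrac12\alpha n^{v-3/2}\}$ lies inside $\{|D_\owedge(G_{n,t})|>c'\alpha n^{3/2}\}\cup\{|D_\triangle(G_{n,t})|>c'\alpha n^{3/2}\}$ for some $c'=c'(H)>0$, and since $\alpha n^{1/2}\ge\alpha\min\{\alpha,n^{1/2}\}$ it suffices to prove the bound for $H\in\{\owedge,\triangle\}$ (both with $v=3$).

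For such $H$ fix $m$ and set $M_i=\Ex{N_H(G_m)\mid\mathcal F_i}$ with $\mathcal F_i=\sigma(G_0,\dots,G_i)$, so $D_H(G_m)=\sum_{i=1}^m(M_i-M_{i-1})$ (Theorem~\ref{thm:Mart}). Adding the $i$-th uniform edge $u_iv_i$ changes $N_\owedge$ by $2\big(\deg_{G_{i-1}}(u_i)+\deg_{G_{i-1}}(v_i)\big)$ and $N_\triangle$ by $6\,|N_{G_{i-1}}(u_i)\cap N_{G_{i-1}}(v_i)|$, so $|M_i-M_{i-1}|\le c_1 n$. Writing $e_w=\deg_{G_{i-1}}(w)-2(i-1)/n$, the increment is, up to an $\mathcal F_{i-1}$-measurable shift, a bounded multiple of $e_{u_i}+e_{v_i}$ (resp.\ of the centred common-neighbour count), so $\Var(M_i-M_{i-1}\mid\mathcal F_{i-1})\le c_2\,n(N-i+1)^{-1}\sum_w e_w^2(G_{i-1})$ (resp.\ a similar bound involving the $C_4$-count). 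On the event $\mathcal E$ that $\sum_w e_w^2(G_i)$ stays within twice its mean $\Theta\big(\tfrac iN(1-\tfrac iN)n^2\big)$ — and, for $\triangle$, that $D_\owedge(G_i)$ and $D_{C_4}(G_i)$ stay within their natural scales — for all $i\le m$, the factors $N-i+1$ cancel and $\sum_{i=1}^m\Var(M_i-M_{i-1}\mid\mathcal F_{i-1})\le c_3 n^3$. Feeding $M=c_1 n$, $V=c_3 n^3$, $\lambda=\alpha n^{3/2}$ into Freedman's inequality,
\[
\pr{\{|D_H(G_m)|>\alpha n^{3/2}\}\cap\mathcal E}\,\le\,2\exp\!\Big(-\tfrac{\alpha^2 n^3}{c_3 n^3+c_1\alpha n^{5/2}}\Big)\,\le\,2\exp\!\big(-c''\alpha\min\{\alpha,n^{1/2}\}\big)\,,
\]
while $\pr{\mathcal E^c}$ is controlled by the same Freedman estimate applied to $D_\owedge(G_i)$ and $D_{C_4}(G_i)$ as processes in $i$ (each satisfies such a bound, the associated drift being mean-reverting), with $D_{C_4}$ reduced to $D_\owedge$ through Theorem~\ref{thm:relate} since $C_4$ is triangle-free; here the exit threshold defining $\mathcal E$ should be the larger of the natural scale and $\tfrac12\alpha n^{3/2}$, so that exiting $\mathcal E$ is no more likely than the target deviation. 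This settles $t$ bounded away from $1$; for $t\to 1$ the quadratic variation acquires a factor $\log\tfrac1{1-t}$, and one instead rewrites $D_\owedge,D_\triangle$ in $G_{n,t}$ via the complementary sparse graph (Corollary~\ref{cor:comp}, cf.\ Remark~\ref{rem:to1}) and invokes the case already proved.

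The step I expect to be the main obstacle is the quadratic variation bound on $\mathcal E$. It is self-referential — for $\owedge$, $\sum_w e_w^2(G_{i-1})$ equals $D_\owedge(G_{i-1})$ plus its deterministic mean, so the estimate is valid only while $D_\owedge$ has not overshot its natural scale — and breaking this circularity cleanly via stopping times, verifying by Freedman that the exit probability is of the right order at every relevant scale of $\alpha$ (including the largest deviations, $\alpha$ up to order $n^{3/2}$), and carrying it out uniformly over $t\in(0,1)$, especially as $t\to 1$, is where the argument needs the most care.
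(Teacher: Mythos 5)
Your architecture --- reduce to $D_\owedge$ and $D_\triangle$ via Theorem~\ref{thm:relate} with $b\asymp\alpha n^{1/2}$, then apply Freedman's inequality to the resulting martingale --- is sound and close in spirit to the paper's proof (which works with $\Lambda^*_H$ directly rather than splitting into $D_\owedge$ and $D_\triangle$, but this is cosmetic). The coefficient bookkeeping for $\Lambda_H$ and the verification that $b\ge 3\log n$ in the relevant range of $\alpha$ are both correct.

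The quadratic-variation step, which you rightly flag as the main obstacle, is a genuine gap that your sketch does not close. You observe that $\sum_u D_u(G_{i-1})^2 = D_\owedge(G_{i-1}) + O(n^2)$, so control of the predictable variation runs through the very quantity you are estimating, and your proposed fix --- stop at threshold $\max\{n^2,\tfrac12\alpha n^{3/2}\}$ and bound the exit probability ``by the same Freedman estimate, the drift being mean-reverting'' --- is the circular step: the exit event is itself a deviation of $D_\owedge$ at scale $\Theta(\alpha n^{3/2})$, whose probability you are proposing to control by the very argument you are in the middle of constructing. (The drift of $D_\owedge$ is genuinely mean-reverting: from Lemma~\ref{lem:expW} one finds $\Ex{D_\owedge(G_i)\mid G_{i-1}} = \big(1-\tfrac{2}{N-i+1}\big)D_\owedge(G_{i-1})$, since $D_{\owedge\setminus f}\equiv 0$; but you never use this quantitatively, and a proof exploiting it would look quite different from the one you outline.) The route for $\triangle$ through $D_{C_4}$ and Theorem~\ref{thm:relate} loops back to $D_\owedge$ and inherits the same difficulty. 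The paper avoids all of this by proving a \emph{direct} large-deviation bound for the sum of squared degree deviations: Lemma~\ref{lem:deg2} gives $\pr{\sum_u D_u(G_m)^2 > Cbn^2}\le\exp(-bn)$, and Lemma~\ref{lem:codeg2} gives the codegree analogue, both established by Azuma--Hoeffding (Corollary~\ref{cor:HA}) applied to dyadically-weighted linear forms $\sum_u\sigma_u D_u(G_m)$ together with a careful union bound over sign patterns $\sigma$ --- nowhere invoking any control of $D_\owedge$ itself. Because the failure probability $\exp(-bn)$ is uniformly far below the target exponent, it feeds into Freedman (via Lemma~\ref{lem:basicvar} and then Proposition~\ref{prop:forH}) without any bootstrap. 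This direct concentration estimate is the ingredient your proof is missing.
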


\subsection*{A discussion of our approach}

Our main results, and Theorem~\ref{thm:main} in particular, are proved using a pair of lemmas of Freedman~\cite{F}, stated in Section~\ref{sec:ineqs}, which provided an upper and a lower bound on deviation probabilities of martingales.  In particular, in certain circumstances, they imply that the probability that a martingale $(S_i)_{i=0}^{m}$ has a certain deviation $\alpha$ from its mean, is given by
\[
\exp\left(\frac{-\alpha^2 \, (1+o(1))}{2\beta}\right)\, ,
\]
where $\beta$ is asymptotic to the discrete quadratic variation
\[
\sum_{i=1}^{m}\Ex{(S_{i}-S_{i-1})^2\, \big|\,\F_{i-1}}\, 
\]
of the process.

In order to apply these results in our setting we are presented with two main challenges.  The first is to give a martingale expression for subgraph count deviations $D_H(G_m)$.  We state both a precise martingale expression for $D_H(G_m)$, see Theorem~\ref{thm:Mart}, and an approximate (but simpler) martingale expression for $D_H(G_m)$, see Theorem~\ref{thm:approx}.  The precise martingale expression, Theorem~\ref{thm:Mart}, is relatively easy to prove.  To verify the accuracy of the approximate martingale expression, Theorem~\ref{thm:approx} is substantially more difficult and this is done in Section~\ref{sec:relate}, as part of the proof of Theorem~\ref{thm:relate}.

The second challenge is to understand the behaviour of the discrete quadratic variation of these martingale expressions.  The relevant result, Proposition~\ref{prop:var}, which follows from the more precise Proposition~\ref{prop:varbetter}, allows us to deduce that this discrete quadratic variation is very predictable -- it is very likely to be close to a particular deterministic function.

Our proof of Proposition~\ref{prop:varbetter} makes use of Theorem~\ref{thm:relate}, which concerns the relationship between subgraph count deviations, and Theorem~\ref{thm:upto}.

We remark that the Hoeffding-Azuma inequality, Lemma~\ref{lem:HA}, is simpler to use than Freedman's inequality and for this reason we use it to prove various auxiliary results.  However, we stress that the main theorem itself, Theorem~\ref{thm:main}, could not be proved using the Hoeffding-Azuma inequality.  In essence, the Hoeffding-Azuma inequality gives substantially weaker bounds than Freedman's inequality when the martingale increments are typically much smaller than their maximum possible value; more precisely, when the conditional second moment of the increments, $\Ex{X_i^2|\F_{i-1}}$, are typically much smaller than their essential supremum, $\|X_i\|_{\infty}$.

\begin{remark} We developed this discrete martingale approach to understanding subgraph count deviations precisely because this approach combines well with results, such as those of Freedman, about discrete martingales.  We would like to acknowledge that a continuous time martingale framework for subgraph counts, and random graph statistics in general, was developed by Janson~\cite{Jan} in the 1990s.  There are number of connections between our results and those of Janson.  In particular, the significance of $P_2$ and triangle counts is also evident from Janson's results.  We encourage the interested reader to read~\cite{Jan} for results on the central limit theorem in $G(n,m)$, results on functional limits of random graph statistics, and much more.

\end{remark}

\subsection{Moderate deviations of subgraph counts in $G(n,p)$}

Until this point we have focussed exclusively on deviation events in the model $G(n,m)$.  We now deduce results concerning the probabilities of moderate deviations of subgraph counts in the Erd\H os-R\'enyi random graph $G(n,p)$.  We write $q$ for $1-p$ here and throughout.

We shall suppress $n$ from the notation and write $G_p$ for a graph chosen according to the distribution $G(n,p)$, i.e., with each edge included in $G_p$ independently with probability $p$.  For a graph $H$ with $v$ vertices and $e$ edges, we write
% \eq{Lpdef}
\[
L_{H}(p)\, :=\, (n)_v p^e 
\]
for the expected number of isomorphic copies of $H$ in $G_p$, and
\[
D_H(G_p)\, :=\, N_H(G_p)\, -\, L_{H}(p)
\]
for the deviation of the $H$-count $N_H(G_p)$ from its mean.  We consider deviations of size $\delta_n L_H(p)$ (that is, $\delta_n$ times the mean) where $n^{-1}\ll \delta_n\ll 1$.  This corresponds to the range strictly between the standard deviation and the regime of large deviations (i.e., the order of the mean).

Our first result corresponds to the range $\delta_n\ll n^{-1/2}$.  In this range we obtain a precise asymptotic expression for the deviation probability.  We remark that this result has already been obtained using a completely different approach by F\'eray, M\'eliot and Nikeghbali, see Theorem 10.1 in~\cite{FGN}.  Their result, which is proved in the framework of mod-$\phi$ convergence, also gives an asymptotically tight expression for deviation probabilities in this range.

\begin{theorem}\label{thm:smalldelta}
Let $p \in (0,1)$, and let $H$ be a graph with $v$ vertices and $e$ edges.  Let $(\delta_n : n \ge 1)$ be a sequence such that $n^{-1} \ll \delta_n \ll n^{-1/2}$. 
 Then
\begin{align*}
& \pr{D_H(G_{n,p}) >  \delta_n  p^e(n)_v} \phantom{\Big|}\\
&  =\, (1+o(1)) \sqrt{\frac{e^2q}{\pi p}} \exp\left(- \frac{\delta_n^2 pn^2}{4 e^2q}\, +\, \frac{\big((3e-2)-(3e-1)p\big) \delta_n^3 pn^2}{12e^3q^2}\, -\, \log (n\delta_n)  \right)\, .
\end{align*}
\end{theorem}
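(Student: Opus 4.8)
The plan is to reduce the $G(n,p)$ probability to a sharp estimate for the upper tail of a binomial, via the classical observation that $G_{n,p}$, conditioned on having $M:=|E(G_{n,p})|$ edges, is distributed as $G_{n,M}$, where $M\sim\mathrm{Bin}(N,p)$. Define $m^*=m^*(n)$ by $L_H(m^*)=(1+\delta_n)L_H(p)$ --- this determines $m^*$ uniquely once $L_H$ is extended to real arguments through the polynomial in~\eqref{eq:Ldef}, since $L_H$ is increasing --- equivalently $(m^*)_e=(1+\delta_n)p^e(N)_e$ (rounding $m^*$ to an integer wherever $M=m^*$ or $M\ge m^*$ occurs below changes nothing to the accuracy we need). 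Conditioning on $\{M=m\}$ turns $\{D_H(G_{n,p})>\delta_n L_H(p)\}$ into $\{D_H(G_{n,m})>L_H(m^*)-L_H(m)\}$, so
\[
\pr{D_H(G_{n,p})>\delta_n L_H(p)}\ =\ \sum_{m=0}^{N}\pr{M=m}\,g(m),\qquad g(m):=\pr{D_H(G_{n,m})>L_H(m^*)-L_H(m)}.
\]
I record at the outset that $L_H(m)-L_H(m')\asymp n^{v-2}(m-m')$ whenever $m'<m$ are both of order $pN$ (and $L_H(m^*)-L_H(m)=\Theta(n^v)$ once $m\le pN/2$, say), so a change of $\omega(\sqrt n)$ in $m$ shifts the threshold in $g$ by $\omega(n^{v-3/2})$, i.e.\ by more than the typical fluctuation of $D_H(G_{n,m})$.

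The first main step is to show that $g(m)$ agrees with $\mathbbm{1}[m>m^*]$ up to an error that is $o(\pr{M>m^*})$ after integrating against the law of $M$. I split $\sum_m\pr{M=m}\big(g(m)-\mathbbm{1}[m>m^*]\big)$ into three ranges. On the window $|m-m^*|\le A\sqrt n$ ($A$ a large constant) I bound $|g-\mathbbm{1}|\le1$: there are $O(\sqrt n)$ terms, and $\pr{M=m}=(1+o(1))\pr{M=m^*}$ there because $\delta_n\sqrt n\to0$ keeps the binomial mass flat across a window of width $O(\sqrt n)$. For $m>m^*+A\sqrt n$ I use $1-g(m)=\pr{D_H(G_{n,m})\le-(L_H(m)-L_H(m^*))}\le\exp\!\big(-c\,\tfrac{m-m^*}{\sqrt n}\min\{\tfrac{m-m^*}{\sqrt n},\sqrt n\}\big)$ from Theorem~\ref{thm:upto}, which summed against $\pr{M=m}\le\pr{M=m^*}$ (as $m^*$ lies past the mode) contributes $O(\sqrt n\,\pr{M=m^*})$. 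For $m<m^*-A\sqrt n$ I again bound $g(m)$ via Theorem~\ref{thm:upto}: the delicate sub-case is the bulk $m\asymp pN$, where $g(m)\le\exp(-\Omega(\delta_n n^2))$, and in fact $g(m)\le\exp(-\Omega(n))$ once $m^*-m$ exceeds $n$ --- here $\delta_n\ll n^{-1/2}$ is used to guarantee $\exp(-\Omega(n))=o(\pr{M>m^*})=o(\exp(-\Theta(\delta_n^2n^2)))$ --- while for intermediate $m$ a monotonicity argument on the exponent $-(m-pN)^2/(2Npq)-c(m^*-m)$ shows the summand is maximised at the window edge and the whole range contributes $o(\pr{M>m^*})$. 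Since $\pr{M=m^*}/\pr{M>m^*}=\Theta(\delta_n)$ (by the estimate in the next step), every error term is $O(\sqrt n\,\delta_n)\pr{M>m^*}=o(\pr{M>m^*})$, whence $\pr{D_H(G_{n,p})>\delta_n L_H(p)}=(1+o(1))\pr{M>m^*}$. Note this step uses only Theorem~\ref{thm:upto}, no central limit theorem.

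The second main step evaluates $\pr{M>m^*}$ by the Bahadur--Rao type estimate for the binomial distribution (Theorem~\ref{thm:bah}). Let $I(x)=x\log\tfrac xp+(1-x)\log\tfrac{1-x}q$ be the Cram\'er rate function and $\theta:=I'(m^*/N)$; in our range $\theta\to0$, so $\pr{M>m^*}=(1+o(1))\big(\theta\sqrt{2\pi Npq}\big)^{-1}\exp\!\big(-NI(m^*/N)\big)$, and passing from $m^*$ to $\lceil m^*\rceil$ perturbs the exponent by only $O(\theta)=O(\delta_n)=o(1)$. Two expansions then finish the computation. First, solving $(m^*)_e=(1+\delta_n)p^e(N)_e$ gives $1+\tfrac{m^*-pN}{pN}=(1+\delta_n)^{1/e}\big(1+O(1/N)\big)$, hence $m^*-pN=\tfrac{pN}{e}\delta_n-\tfrac{(e-1)pN}{2e^2}\delta_n^2+O(N\delta_n^3)+O(1)$; crucially the $\delta_n^2$ term is not negligible. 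Second, since $\theta=(m^*-pN)/(Npq)\,(1+o(1))=\delta_n/(eq)\,(1+o(1))$ and $N=\binom n2$, the prefactor equals $(1+o(1))(n\delta_n)^{-1}\sqrt{e^2q/(\pi p)}$, producing the stated constant and the $-\log(n\delta_n)$; and the Taylor expansion $NI(p+u)=\tfrac{u^2}{2pq}N+\tfrac{(p-q)u^3}{6p^2q^2}N+O(Nu^4)$ at $u=(m^*-pN)/N$ --- with $Nu^4=O(\delta_n^4n^2)=o(1)$ precisely because $\delta_n\ll n^{-1/2}$ --- combined with the expansion of $m^*-pN$ yields $NI(m^*/N)=\tfrac{\delta_n^2pn^2}{4e^2q}-\tfrac{\big((3e-2)-(3e-1)p\big)\delta_n^3pn^2}{12e^3q^2}+o(1)$. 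The coefficient of $\delta_n^3$ collects one contribution from the cubic term of $I$ and one from the $\delta_n^2$ correction to $m^*-pN$, combining through the identity $3(e-1)q-(p-q)=(3e-1)q-1=(3e-2)-(3e-1)p$.

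I expect the main obstacle to be this last step: the statement demands the exponent to additive accuracy $o(1)$, which forces one to expand $m^*$ to second order in $\delta_n$, expand $I$ to third order, control the integer rounding of $m^*$, and pin down the Bahadur--Rao prefactor with its exact constant, then verify these assemble into exactly the claimed formula. The first main step is conceptually routine but still needs care, since the ``wrong side'' contributions must be shown $o(\pr{M>m^*})$ rather than merely small; this is where the two-sided hypothesis $n^{-1}\ll\delta_n\ll n^{-1/2}$ enters --- the lower bound places $m^*$ well into the tail of $M$ (so that $\pr{M>m^*}\ll1$ and the $\exp(-NI)$ form of Bahadur--Rao is the correct one), while the upper bound both keeps the binomial mass flat across the $\Theta(\sqrt n)$ transition window and makes the crude bulk bound $\exp(-\Omega(n))$ beat $\exp(-\Theta(\delta_n^2n^2))$.
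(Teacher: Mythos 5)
Your proposal is correct and follows the same basic strategy as the paper: condition on the number of edges $M$ of $G_{n,p}$, replace the subgraph-count factor by an indicator, and then read off the result from a sharp Bahadur--Rao estimate for $\pr{M > m^*}$. The implementation differs in two mild but worth-noting ways. First, instead of introducing the shifted thresholds $m_{\pm}=m_*\pm\delta_n^{-1/2}n^{1/4}$ (chosen so that $L_H$ changes by $\omega(n^{v-3/2})$ but $B_N$ changes by only a $1+o(1)$ factor) and sandwiching $B_N(m_+)(1-o(1))\le\pr{D_H(G_p)>\cdot}\le B_N(m_-)+\text{tail}$, you compare $g(m)$ directly to $\mathbbm{1}[m>m^*]$ and split at scale $\Theta(\sqrt n)$ around $m^*$, absorbing the middle window by the single estimate $\sqrt n\,b_N(m^*)=\Theta(\sqrt n\,\delta_n)\,B_N(m^*)=o(B_N(m^*))$; both exploit the same two numerical facts ($b_N(m^*)/B_N(m^*)=\Theta(\delta_n)$ and $\delta_n\sqrt n\to0$), so this is a matter of packaging rather than a genuinely different idea. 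Second, you state Bahadur--Rao via the Cram\'er rate function $I$ and its derivative $\theta=I'(m^*/N)$ rather than via the series $E(x,N)$ of Theorem~\ref{thm:bah}; these are of course the same object, and since $\theta\sqrt{Npq}=x_*(1+o(1))$ the two prefactors agree. I checked your third-order Taylor expansion of $NI$ against the claimed $\delta_n^3$ coefficient, including the sign-management through $3(e-1)q-(p-q)=(3e-2)-(3e-1)p$, and it is correct; the integer-rounding of $m^*$ costs only $O(\theta)=O(\delta_n)=o(1)$ in the exponent as you say; and the use of Theorem~\ref{thm:upto} on both sides of $m^*$ is exactly how the paper controls the wrong-side mass. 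So this is a complete and correct argument, essentially equivalent to the paper's.
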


%(1+ o(1))\, \sqrt{\frac{e^2(1-p)}{\pi p}} \exp \left(-\frac{pn^2\delta_n^2}{4e^2(1-p)} - \frac{p[1 -3(1-p)(e-1)] n^2 \delta_n^3}{12 e^3 (1-p)^2 } - \log (n\delta_n)  \right).

\begin{remark}
Observe that the only dependence on the graph $H$ in this range is via the numbers of edges $e$.  This is related to the fact that it is vastly easier to achieve this deviation by having extra edges in $G_p$ than achieving the deviation in $G_m$ for $m\approx pN$.  In other words, the above expression corresponds to the probability of the appropriate deviation of the binomial distribution.
\end{remark}

The range of larger deviations, $n^{1/2}\ll \delta_n \ll 1$, is more difficult to study in that there is a non-trivial interplay between the deviation probabilities of the binomial distribution and subgraph count deviations in $G_m$.  In particular, we require the full strength of Theorem~\ref{thm:main} to obtain the following result.  It is for this reason that $\gamma_H(p)$ appears in the rate.

We will also require the following notation.  Recall that $N:=\binom{n}{2}$. Set
\[
x_*\, :=\, \left[(1+\delta_n)^{1/e}\, -\, 1\right]\, \sqrt{\frac{pN}{q}}\, ,
\]
and for $0<x<\sqrt{N}/2$ define
\[
E(x,N)\, =\, \sum_{i=1}^{\infty} \frac{(p^{i+1} + (-1)^i q^{i+1})x^{i+2}}{(i+1)(i+2) p^{i/2} q^{i/2} N^{i/2}}\, .
\]
We can now state our result for larger deviations.  In fact the result may be stated across the whole range $n^{-1}\ll \delta_n\ll 1$.

\begin{theorem}\label{thm:largerdelta}
Let $p \in (0,1)$, and let $H$ be a graph with $v$ vertices and $e$ edges.  Let $(\delta_n : n \ge 1)$ be a sequence such that 
$n^{-1} \ll \delta_n \ll 1$.  Then
\begin{align*}
& \pr{D_H(G_{n,p}) > \delta_n p^e(n)_v}\phantom{\Big|}\\
&  \qquad =\, \exp \left( -\frac{x_*^2}{2}\, +\, E(x_*,N)\, +\, (1+o(1))\frac{\delta_n^2 n}{16\gamma_H(p) e^4p^{2e-2}q^2}\, +\, O(\log{n})\right) \, .
\end{align*}
\end{theorem}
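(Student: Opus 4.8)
The plan is to condition on the number of edges of $G_{n,p}$, which reduces the statement to a one-parameter optimization: the deviation $D_H(G_{n,p})>\delta_n p^e(n)_v$ is produced most cheaply by combining a large deviation of the number of edges (a binomial deviation, governed by Theorem~\ref{thm:bah}) with a within-$G(n,m)$ deviation of the $H$-count (governed by Theorem~\ref{thm:main}). Since $|E(G_{n,p})|\sim\mathrm{Bin}(N,p)$, and conditionally on $\{|E(G_{n,p})|=m\}$ the graph $G_{n,p}$ has the law of $G(n,m)$, writing $N_H(G)=D_H(G)+L_H(m)$ with $L_H(m)=(n)_v(m)_e/(N)_e$ as in \eqr{Ldef} gives
\[
\pr{D_H(G_{n,p})>\delta_n p^e(n)_v}\;=\;\sum_{m=0}^{N}\pr{\mathrm{Bin}(N,p)=m}\;\pr{D_H(G_{n,m})>(1+\delta_n)L_H(p)-L_H(m)}.
\]
Let $m_*$ be the integer for which $L_H(m_*)$ is closest to $(1+\delta_n)L_H(p)$; a short computation gives $m_*=pN(1+\delta_n)^{1/e}(1+O(N^{-1}))$, so that the standardised value $(m_*-pN)/\sqrt{pqN}$ is $x_*+O(n^{-1})$. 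For $m<m_*$ write the required within-$G(n,m)$ deviation as $\alpha(m)\,n^{v-3/2}$, where $\alpha(m):=\big((1+\delta_n)L_H(p)-L_H(m)\big)\,n^{3/2-v}>0$.

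For the upper bound I would bound the inner probability by $1$ for $m\ge m_*$; by $\expb{-\gamma_H(m/N)\alpha(m)^2(1-o(1))}$ via Theorem~\ref{thm:main}(i) (here $t=p$ is a constant, in the uniform-in-$m$ form its proof provides) when $\alpha(m)$ lies in the band $1\ll\alpha(m)\ll n^{1/2}$; and by $\expb{-c\,\alpha(m)\min\{\alpha(m),n^{1/2}\}}$ via Theorem~\ref{thm:upto} otherwise. For the binomial weight I would use Theorem~\ref{thm:bah}, writing $-\log\pr{\mathrm{Bin}(N,p)\ge m}=r(m)+O(\log n)$ with $r(m_*)=\tfrac{x_*^2}{2}-E(x_*,N)+O(\log n)$. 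Since there are only $O(n^2)$ summands, this reduces the upper bound to estimating $\min_{m\le m_*}\big[\,r(m)+\gamma_H(m/N)\alpha(m)^2(1-o(1))\,\big]$, up to an additive $O(\log n)$.

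The minimum is found by Taylor expansion about $m_*$. Writing $m=m_*-\Delta$ one has $\gamma_H(m/N)\to\gamma_H(p)$, $\alpha(m)=ep^{e-1}N^{-1}n^{3/2}\,\Delta\,(1+o(1))=\Theta(\Delta\,n^{-1/2})$, and $r(m)=r(m_*)-\tfrac{\delta_n}{eq}\,\Delta\,(1+o(1))+O(\Delta^2/N)$, the curvature term $O(\Delta^2/N)$ being negligible beside the $\Theta(\Delta^2/n)$ contribution of $\gamma_H(p)\alpha(m)^2$. Minimising the resulting quadratic in $\Delta$ yields $\Delta_*=\Theta(\delta_n n)$ and optimal value $r(m_*)-\tfrac{\delta_n^2 n}{16\,\gamma_H(p)\,e^4 p^{2e-2}q^2}(1+o(1))$, which is exactly the claimed rate. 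For the matching lower bound I would keep only a short window of $m$ around $m_*-\Delta_*$: there Theorem~\ref{thm:bah} gives $\pr{\mathrm{Bin}(N,p)=m}\ge\expb{-r(m)-O(\log n)}$, and conditionally on $\{|E(G_{n,p})|=m\}$ the lower bound in Theorem~\ref{thm:main} gives $\pr{D_H(G_{n,m})>\alpha(m)n^{v-3/2}}\ge\expb{-\gamma_H(m/N)\alpha(m)^2(1+o(1))}$; conditioning on the edge count makes the two estimates multiply, and optimising over the window recovers the same exponent.

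The step I expect to be the main obstacle is matching the optimal split against the range of validity of Theorem~\ref{thm:main}, together with the attendant error bookkeeping. One has $\alpha(m_*-\Delta_*)=\Theta(\delta_n n^{1/2})$, which lies in the admissible band $1\ll\alpha\ll n^{1/2}$ precisely when $n^{-1/2}\ll\delta_n\ll 1$; in the complementary sub-range $n^{-1}\ll\delta_n\ll n^{-1/2}$ the optimal $\Delta_*$ degenerates, the term $\delta_n^2 n/(16\gamma_H(p)e^4p^{2e-2}q^2)$ is $o(1)$ and is absorbed into $O(\log n)$, the rate is governed purely by the binomial tail, and one should check that this is consistent with Theorem~\ref{thm:smalldelta}. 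Separately, one must verify that the summands with $m$ far from $m_*-\Delta_*$, where only the cruder Theorem~\ref{thm:upto} is available, are genuinely negligible, and that the cumulative error terms (from $L_H(m)\neq(n)_v(m/N)^e$, from $\gamma_H(m/N)\neq\gamma_H(p)$, from the curvature of the binomial rate function, and from the $(1+o(1))$ factors in Theorems~\ref{thm:main} and~\ref{thm:bah}) are small enough, uniformly over the window, to pin down both the coefficient of the $\delta_n^2 n$ term and the sub-leading term $E(x_*,N)$; the latter is inherited directly from Theorem~\ref{thm:bah}, and hence comes essentially for free.
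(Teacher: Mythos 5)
Your proposal is correct and follows essentially the same route as the paper: condition on the edge count, write the probability as $\sum_m b_N(m)\,\pr{N_H(G_{n,m})>(1+\delta_n)p^e(n)_v}$, observe that the $O(\log n)$ error allows replacing the sum by its largest term, estimate the binomial weight via Theorem~\ref{thm:bah}, estimate the conditional $H$-count deviation via Theorem~\ref{thm:main} (with Theorem~\ref{thm:upto} for terms far from the optimum), Taylor-expand $L_H$ and the binomial rate about $m_*$, and minimise the resulting quadratic in the shift. You also correctly flag the two bookkeeping points the paper handles: that the optimiser $\Delta_*=\Theta(\delta_n n)$ has $\alpha(m_*-\Delta_*)=\Theta(\delta_n n^{1/2})$ so Theorem~\ref{thm:main}(i) applies only when $\delta_n\gg n^{-1/2}$ (and when $\delta_n\lesssim n^{-1/2}\sqrt{\log n}$ the entire $f$-optimisation is $o(\log n)$ and is absorbed into the error term), and that the far-from-optimal terms are controlled by Theorem~\ref{thm:upto}.
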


We remark that the asymptotic rate, which gives the bound
\begin{equation}\label{eq:asy}
\exp \left( -\frac{x_*^2}{2}(1+o(1))\right)\, =\, \exp\left(\frac{-\delta_n^2 p n^2}{4e^2 q}\, +\, o(\delta_n^2 n^2)\right)\, ,
\end{equation}
already appears in the articles of D\"oring and Eichelsbacher~\cite{DE} and~\cite{DE2}.  The difference between the results is the order of magnitude of the error term.  In the range $\delta_n\gg n^{-1/2}\sqrt{\log{n}}$ we have an error term of the form $o(\delta_n^2 n)$ in the exponent\footnote{We believe that it ought to be possible to reduce the error term in the missing range $\Omega(n^{-1/2})\le \delta_n\le O(n^{-1/2}\sqrt{\log{n}})$ to be of the form $o(\delta_n^2 n)$, rather than $O(\log{n})$.  For example, one might prove this by combining our approach with the central limit theorem for subgraph count deviations.}

On the other hand, D\"oring and Eichelsbacher in~\cite{DE} obtained the asymptotic rate for the range of parameters
\[
\sqrt{\frac{q}{p}} \, n^{-1}\, \ll\, \delta_n\, \ll\, p^{3e-2}q^2\, 
\]
by an estimation of the log-Laplace transform and the Gartner-Ellis theorem.
In~\cite{DE2} D\"oring and Eichelsbacher show that results may also be obtained through a moderate deviation principles via cumulants, in an approach based on a celebrated lemma of large deviations theory due to Rudzkis, Saulis and Statulevicius.  The results of ~\cite{DE2} include the asymptotic rate for the range of parameters
\[
\sqrt{\frac{q}{p}} \, n^{-1}\, \ll\, \delta_n\, \ll\, p^{(3e-4)/5}q^{4/5} \, n^{-4/5}\, .
\]

It may be of interest to investigate for which ranges of $\delta_n$ and $p$ our more precise expansion remains valid.  Janson and Warnke~\cite{JW} focussed on the lower tail and found the same asymptotic expression,~\eqref{eq:asy}, for the logarithm of $\pr{D_H(G_{n,p})< -\delta_n p^e(n)_v}$ across the whole range of moderate deviations and densities $p\ll n^{-1/m_2(H)}$ where $m_{2}(H)=\max_{J\subseteq H}(e(J)-1)/(v(J)-2)$.  Furthermore, their result also applies in the setting of $k$-uniform hypergraphs.

We also remark that a weaker result with the $1+o(1)$ replaced by $O(1)$ may be proved using only Theorem~\ref{thm:upto} to bound deviation probabilities for $D_H(G_m)$.  In this sense Theorem~\ref{thm:main} has a relatively minor impact on the strength of the bound obtained for deviations $D_H(G_p)$ in $G(n,p)$.  On the other hand, we believe that this reinforces our argument that $G(n,m)$ is the more natural setting in which to study these subgraph count deviations in the first place.

Finally, the reader may wonder why we gave the implicit definition $-x_*^2/2\, +\, E(x_*,N)$ rather than just writing out the expansion.  The problem is that the number (and complexity) of the terms in the expansion grows as $\delta_n$ increases.  We illustrate this by giving the expansion in the range $n^{-1/2}\log{n}\ll \delta_n\ll n^{-2/5}$. 

\begin{cor}\label{cor:largerdelta}
Let $p \in (0,1)$, and let $H$ be a graph with $v$ vertices and $e$ edges.  Let $(\delta_n : n \ge 1)$ be a sequence such that 
$n^{-1/2}\log{n} \ll \delta_n \ll n^{-2/5}$.  Then
\begin{align*}
& \pr{D_H(G_{n,p}) > \delta_n p^e(n)_v}\phantom{\Big|}\\
&=\,  \exp\Bigg( \!\! -\frac{\delta_n^2 p n^2}{4qe^2} -\frac{p[(3e-1)q-1]\delta_n^3 n^2}{12q^2e^3} +\frac{p\big[(e-1)q[(8e+11)q-6]+1-3pq\big]\delta_n^4n^2}{48q^3e^4} \\
&\hspace{8cm} \qquad+ (1+o(1))\frac{\delta_n^2 n}{16\gamma_H(p) e^4p^{2e-2}q^2}\Bigg)\, .
\end{align*}
\end{cor}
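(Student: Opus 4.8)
The plan is to read the corollary off from Theorem~\ref{thm:largerdelta} by Taylor-expanding the bound there in powers of $\delta_n$ and discarding everything negligible in the exponent. First I would put $y:=(1+\delta_n)^{1/e}-1$, so that $x_*=y\sqrt{pN/q}$, and record the identity
\[
\frac{x_*^{i+2}}{p^{i/2}q^{i/2}N^{i/2}}=\frac{pN\,y^{i+2}}{q^{i+1}}\qquad(i\ge 1),
\]
which collapses the first part of the bound to $-\tfrac{x_*^2}{2}+E(x_*,N)=pN\,\Phi(y)$, where, using $p+q=1$,
\[
\Phi(y):=-\frac{y^2}{2q}+\sum_{i\ge 1}\frac{\big(p^{i+1}+(-1)^iq^{i+1}\big)y^{i+2}}{(i+1)(i+2)\,q^{i+1}}=-\frac{y^2}{2q}+\frac{(p-q)y^3}{6q^2}+\frac{(1-3pq)y^4}{12q^3}+O(y^5)
\]
is an explicit power series with coefficients rational in $p$ and $q$.

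Next I would substitute the binomial series $y=\sum_{k\ge 1}\binom{1/e}{k}\delta_n^k$ into $\Phi$ and collect powers of $\delta_n$, obtaining $\Phi(y)=c_2\delta_n^2+c_3\delta_n^3+c_4\delta_n^4+O(\delta_n^5)$ with $c_2,c_3,c_4$ explicit rational functions of $p,q,e$, and then multiply through by $pN=\tfrac12 p\,n(n-1)$. This presents $-\tfrac{x_*^2}{2}+E(x_*,N)$ as a finite sum of terms of the shapes $n^2\delta_n^k$ and $n\delta_n^k$, $k\ge 2$, plus a tail, and the range hypotheses are exactly what is needed to prune it. Since $\delta_n\ll n^{-2/5}$ one has $n^2\delta_n^5=o(1)$, so every $n^2\delta_n^k$ with $k\ge 5$, together with the entire tail of $\Phi$ beyond the quartic term, vanishes in the limit (this is what the upper endpoint $n^{-2/5}$ is for); and $n\delta_n^k=o(1)$ for $k\ge 3$. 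Reading the displayed $n^2$ as $n(n-1)=2\binom n2$ — so that $pN\,c_k\delta_n^k$ is precisely a coefficient times $n^2\delta_n^k$, with no $n$-scale remainder splitting off — what survives are exactly the three displayed terms of orders $n^2\delta_n^2$, $n^2\delta_n^3$, $n^2\delta_n^4$, once one carries out the (routine) evaluation of $c_2,c_3,c_4$. Finally, $\delta_n\gg n^{-1/2}\log n$ forces $\delta_n^2 n\gg\log^2 n$, so the $O(\log n)$ error term in Theorem~\ref{thm:largerdelta} is $o(\delta_n^2 n)$ and is absorbed into the $(1+o(1))$ in front of $\dfrac{\delta_n^2 n}{16\gamma_H(p)e^4p^{2e-2}q^2}$ (this is what the lower endpoint $n^{-1/2}\log n$ is for). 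Assembling the pieces gives the stated identity.

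I expect no conceptual obstacle beyond Theorem~\ref{thm:largerdelta} itself: the whole argument is bookkeeping, and the step needing the most care is the composition $\Phi(y(\delta_n))$ together with the size accounting of the discarded terms. It is the cross terms — the cubic and quartic parts of $\Phi$ paired with the quadratic and cubic parts of $y(\delta_n)$ — that produce the awkward coefficients $(3e-1)q-1$ and $(e-1)q[(8e+11)q-6]+1-3pq$ in the statement, so these must be expanded out carefully; and one must also justify rigorously the estimates on the pruned pieces (the $n^2\delta_n^{\ge 5}$ tail, the $n\delta_n^{\ge 3}$ corrections, the $O(\log n)$ term) to be sure that nothing of order $n^2\delta_n^2$, $n^2\delta_n^3$, $n^2\delta_n^4$, or — up to the $(1+o(1))$ — $\delta_n^2 n$ has been lost along the way.
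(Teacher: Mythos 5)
Your proposal is correct and is precisely the computation the paper leaves implicit when it says the corollary ``illustrates'' Theorem~\ref{thm:largerdelta}: substitute $y=(1+\delta_n)^{1/e}-1$, use your scaling identity to collapse $-x_*^2/2$ and $E(x_*,N)$ into $pN$ times an explicit power series in $y$, compose with the binomial series for $y(\delta_n)$, and discard what the range hypotheses kill (the upper endpoint $n^{-2/5}$ controls the $n^2\delta_n^k$ terms for $k\ge 5$, the lower endpoint $n^{-1/2}\log n$ absorbs the $O(\log n)$ into the $(1+o(1))$). Your observation that the displayed $n^2$ should be read as $n(n-1)=2N$ is a necessary and easily missed point: replacing $n(n-1)$ by a literal $n^2$ introduces an $n\delta_n^2$-scale discrepancy, which is of the same order as --- and hence cannot be hidden inside --- the $(1+o(1))$ factor on the final term. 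The one open piece is the ``routine'' evaluation of $c_2,c_3,c_4$ and the matching with the displayed coefficients, and that is where the actual content lies. When you carry it out, be careful about the sign of $E$: Theorem~\ref{thm:largerdelta} as stated has $+E(x_*,N)$, but the quantity $s(p,n,\delta_n)$ defined in its proof is $x_*^2/2+E(x_*,N)$ and the exponent there emerges as $-s+\cdots$, i.e.\ with $-E(x_*,N)$, consistent with Theorem~\ref{thm:bah}; the sign feeds directly into the $\delta_n^3$ and $\delta_n^4$ coefficients, and the $e=1$ case (a pure binomial tail, which you can expand independently from the Cram\'er rate function $I(p(1+\delta))$) is a useful sanity check against which to settle all signs and constants.
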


Naturally, both results will rely on an estimate for tail probabilities of the binomial distribution.  While estimates are available (Littlewood~\cite{Little} for example, see also McKay~\cite{McKay}), we shall give a proof of the following estimate for completeness.  This result is essentially due to Bahadur~\cite{Bah}.  In addition to $E(x,N)$ defined above, let us also define a truncated version of the sum:
\[
E(x,N,J)\, =\, \sum_{i=1}^{J} \frac{(p^{i+1} + (-1)^i q^{i+1})x^{i+2}}{(i+1)(i+2) p^{i/2} q^{i/2} N^{i/2}}\, .
\]

Adapting the argument from Theorem 2 of Bahadur~\cite{Bah}, we obtain the following asymptotics for
\[
b_N(k) = \pr{\mathrm{Bin}(N,p) = k}
\]
and
\[
B_N(k) = \pr{\mathrm{Bin}(N,p) \ge k}.
\]
in terms of $x_N = \frac{k-pN}{\sqrt{Npq}}$.  The theorem is valid for $p\in (0,1)$ a constant or $p=p_N$ a function.

\begin{samepage}
\begin{theorem} \label{thm:bah}
Suppose that $(x_N)$ is a sequence such that $1\ll x_N\ll \sqrt{Npq}$. Then
\[
b_N(\fl{pN+x_N\sqrt{Npq}})\,  =\, (1 + o(1))\frac{1}{\sqrt{2 \pi Npq}} \exp \left(-\frac{x_N^2}{2} - E(x_N,N) \right)
\]
and
\[
B_N(pN + x_N \sqrt{Npq})\, =\, (1 + o(1)) \frac{1}{x_N \sqrt{2\pi}} \exp \left(- \frac{x_N^2}{2} - E(x_N,N) \right)\, .
\]
Furthermore, if $1\ll x_N\ll (pqN)^{1/2} (pqN)^{-1/(J+3)}$ then the infinite sum $E(x_N,N)$ may be replaced by the finite sum $E(x_N,N,J)$ in both expressions. 
\end{theorem}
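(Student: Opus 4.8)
The plan is to follow Bahadur~\cite{Bah}: first pin down the local probability $b_N(k)$ via Stirling's formula with its explicit error term, and then bootstrap from the local estimate to the tail estimate for $B_N(k)$ by comparing the sequence $(b_N(\ell))_{\ell\ge k}$ with a geometric series. For the local estimate, write $k=\fl{pN+x_N\sqrt{Npq}}$ and $x:=(k-pN)/\sqrt{Npq}$, so that $|x-x_N|<1/\sqrt{Npq}$; since $1\ll x_N\ll\sqrt{Npq}$ this gives $x^{2}/2=x_N^{2}/2+o(1)$ and $E(x,N)=E(x_N,N)+o(1)$, so it is enough to prove both displays with $x_N$ replaced by $x$. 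Put $t:=x\sqrt{pq/N}=(k-pN)/N$, so $k=N(p+t)$ and $N-k=N(q-t)$. The hypothesis forces $Npq\to\infty$, hence (as $p,q\le 1$) $Np,Nq\ge Npq\to\infty$, so $k,N-k\to\infty$, and $t/p=(x/\sqrt{Npq})q=o(1)$, $t/q=(x/\sqrt{Npq})p=o(1)$. Applying Stirling's formula with error to $\binom{N}{k}$ and using $(p+t)(q-t)=pq(1+o(1))$ gives $b_N(k)=(1+o(1))(2\pi Npq)^{-1/2}\big(\tfrac{Np}{k}\big)^{k}\big(\tfrac{Nq}{N-k}\big)^{N-k}$, the $(1+o(1))$ absorbing both the Stirling error (negligible because $k,N-k\to\infty$) and the prefactor approximation.

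Next I would compute the exponent. With $a:=t/p$, $b:=t/q$ (so $pa=qb=t$) and $\varphi(z):=(1+z)\log(1+z)-z=\sum_{j\ge 2}(-1)^{j}z^{j}/(j(j-1))$, one has
\[
\log\!\Big[\big(\tfrac{Np}{k}\big)^{k}\big(\tfrac{Nq}{N-k}\big)^{N-k}\Big]
=-N\big[p(1+a)\log(1+a)+q(1-b)\log(1-b)\big]
=-N\big[p\,\varphi(a)+q\,\varphi(-b)\big],
\]
where the last equality uses the cancellation $pa-qb=0$ of the linear parts. Since $|a|,|b|<1$ eventually, the two series converge absolutely; substituting $a=t/p$, $b=t/q$, $t=x\sqrt{pq/N}$ and re-indexing by $i=j-2$, the $j=2$ term contributes $\tfrac12 x^{2}$ and the remaining terms sum \emph{exactly} to $E(x,N)$ — one checks that $N\,p\,a^{j}=x^{j}p^{1-j/2}q^{j/2}N^{1-j/2}$ and its $b$-counterpart reproduce the $(i{=}j{-}2)$-th summand of $E$, the sign $(-1)^{i}$ arising from the $p$-branch. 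Hence $b_N(k)=(1+o(1))(2\pi Npq)^{-1/2}\exp(-x^{2}/2-E(x,N))$, which by the first paragraph is the claimed local formula.

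For the tail, use that $\rho_\ell:=b_N(\ell)/b_N(\ell-1)=(N-\ell+1)p/(\ell q)$ is decreasing in $\ell$, with $\rho_k=1-(1+o(1))x/\sqrt{Npq}<1$ (an elementary computation; here $x\gg 1$ is used so that $x\sqrt{pq/N}$ dominates the $\Theta(1/N)$ correction). For the upper bound, $b_N(k+j)\le b_N(k)\rho_{k+1}^{\,j}$ gives $B_N(k)\le b_N(k)/(1-\rho_{k+1})=(1+o(1))b_N(k)\sqrt{Npq}/x$. For the matching lower bound, choose $J'$ with $\sqrt{Npq}/x\ll J'\ll x\sqrt{Npq}$ — possible exactly because $x\to\infty$ — e.g.\ $J'=\fl{\sqrt{Npq}}$; then $\rho_{k+J'}=1-(1+o(1))x/\sqrt{Npq}$ and $\rho_{k+J'}^{\,J'}\le\exp(-(1+o(1))J'x/\sqrt{Npq})\to 0$, so summing the first $J'$ terms gives $B_N(k)\ge b_N(k)\big(1-\rho_{k+J'}^{\,J'}\big)/\big(1-\rho_{k+J'}\big)=(1+o(1))b_N(k)\sqrt{Npq}/x$. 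Combining with the local estimate (and absorbing the rounding of $pN+x_N\sqrt{Npq}$ into a further $1+o(1)$) yields $B_N(pN+x_N\sqrt{Npq})=(1+o(1))(x_N\sqrt{2\pi})^{-1}\exp(-x_N^{2}/2-E(x_N,N))$.

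Finally, for the truncation, assume without loss of generality $p\le q$. Then the $i$-th term of $E(x_N,N)$ is at most $2qx_N^{2}\big(x_N\sqrt{q/(pN)}\big)^{i}$ in absolute value, and $x_N\sqrt{q/(pN)}=(x_N/\sqrt{Npq})q\to 0$, so $E(x_N,N)-E(x_N,N,J)$ is bounded by a geometric series whose first term is $O\big(x_N^{\,J+3}(pqN)^{-(J+1)/2}\big)$; this is $o(1)$ exactly when $x_N\ll(pqN)^{(J+1)/(2(J+3))}=(pqN)^{1/2}(pqN)^{-1/(J+3)}$, the stated hypothesis, so then $E(x_N,N)$ may be replaced by $E(x_N,N,J)$ in both formulas. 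I expect the main obstacle to be keeping all the $o(1)$'s uniform when $p=p_N$ varies with $N$: wringing from the single hypothesis $1\ll x_N\ll\sqrt{Npq}$ that $k,N-k\to\infty$, that $t/p,t/q\to 0$, and that there is room for the intermediate scale $J'$, and verifying that the Stirling error, the prefactor approximation and the rounding each contribute only a factor $1+o(1)$ rather than $1+O(1)$. Once the series convergence is secured, matching the surviving terms with $E(x_N,N)$ is a routine, if slightly tedious, bookkeeping exercise.
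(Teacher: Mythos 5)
Your proposal is correct.  For the local probability $b_N(k)$, your argument is essentially the paper's: apply Stirling to the binomial coefficient (with $k,N-k\to\infty$ extracted from $Npq\to\infty$), write the exponent as $-N[p\,\varphi(t/p)+q\,\varphi(-t/q)]$ after the linear terms cancel, and re-index the series to recover exactly $x^2/2 + E(x,N)$.  The paper packages the Stirling step slightly differently — it works with Bahadur's auxiliary quantity $A_n(k):=\frac{(k+1)q}{k+1-(n+1)p}\binom{n}{k}p^kq^{n-k}$ and shows $A_n(k_n)C_n^{+}(x_n)\to 1$, then reads off $b_n$ from $A_n$ — but the core Taylor-expansion computation is the same.

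Where you genuinely diverge is the passage from the local estimate to the tail.  The paper simply invokes Theorem~1 of Bahadur, which asserts the two-sided bound $1\le A_n(k_n)/B_n(k_n)<1+x_n^{-2}$ and thus converts the estimate for $A_n$ (equivalently $b_n$) into the estimate for $B_n$ in one line.  You instead give a self-contained sandwich: the ratios $\rho_\ell = b_N(\ell)/b_N(\ell-1)$ are decreasing, $\rho_k = 1-(1+o(1))x/\sqrt{Npq}<1$, so $B_N(k)\le b_N(k)/(1-\rho_{k+1})$ gives the upper bound, and truncating the sum at a well-chosen intermediate scale $J'\asymp\sqrt{Npq}$ (sandwiched between $\sqrt{Npq}/x$ and $x\sqrt{Npq}$, possible exactly because $x\to\infty$) gives the matching lower bound.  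Your route is more elementary and self-contained — it avoids quoting Bahadur's Theorem~1 — at the cost of the extra bookkeeping with the intermediate scale $J'$; the paper's route is shorter but outsources the key comparison.  Your treatment of the truncation error and of the uniformity issues when $p=p_N$ varies (deriving $k,N-k\to\infty$, $t/p,t/q\to 0$, $x^2/N\to 0$ from the single hypothesis $1\ll x_N\ll\sqrt{Npq}$) is sound.
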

\end{samepage}

The proof of Theorem~\ref{thm:bah} is given in the appendix.

Let us now return to Theorems~\ref{thm:smalldelta} and~\ref{thm:largerdelta} and give an overview of their proofs.  We immediately observe, by conditioning on the number of edges of $G_p$, that we may express $\pr{D_H(G_p)\, >\, \delta_n p^e(n)_v}$ as a sum:
\[
\pr{D_H(G_p)\, >\, \delta_n p^e(n)_v}\, =\, \sum_{m=0}^{N}b_N(m)\, \pr{N_H(G_m)\, >\, (1+\delta_n)p^e (n)_v}\, .
\]
For $m>pN$ we have that the first term ($b_N(m)$) is decreasing while the second is increasing.  The proofs are therefore concerned with identifying which terms make the largest contribution.  

In the case of Theorem~\ref{thm:largerdelta} the problem reduces exactly to a calculation of the maximum, as all other effects are swallowed up in the $O(\log{n})$ error term in the exponent.

In the case of Theorem~\ref{thm:smalldelta} we exploit the fact that there is an interval $[m_-,m_+]$ over which the first term ($b_N(m)$) decreases very little and the second term grows from $o(1)$ to $1-o(1)$.

\subsection*{Layout of the article}

In Section~\ref{sec:Mart} we present Theorem~\ref{thm:Mart}, the general martingale expression for the subgraph count deviation $D_H(G_m)$.  We also present an important approximate representation, Theorem~\ref{thm:approx}, and a lemma relating subgraph counts to subgraph counts in the complementary graph.  In Section~\ref{sec:ineqs} we state the martingale inequalities that we shall use throughout the article.  These include the Hoeffding-Azuma inequality, a related inequality adapted to $G(n,m)$ and, crucially, Freedman's inequalities for the probability of deviations of martingales.  In Section~\ref{sec:degrees} we prove bounds concerning the behaviour of degrees and codegrees in $G(n,m)$.  In Section~\ref{sec:relate} we prove the approximate representation result, Theorem~\ref{thm:approx}, and deduce Theorem~\ref{thm:relate}.  

We then turn our focus towards deviation probabilities themselves.  In Section~\ref{sec:upto}, we prove Theorem~\ref{thm:upto}, which gives a general though not especially precise bound on subgraph count probabilities. In order to prove the tighter result Theorem~\ref{thm:main}, we must first understand better the variances and covariances of the increments of the martingale representation.  In Section~\ref{sec:var} we prove bounds on general covariances of increments in the martingale representation of $D_H(G_m)$, and in Section~\ref{sec:main} we prove Theorem~\ref{thm:main}.

Finally, in Section~\ref{sec:pproofs} we deduce our results for subgraph count deviations in $G(n,p)$.

\subsection*{Notation}

Throughout $N$ denotes $\binom{n}{2}$.  Let us also recall that $N_H(G)$ denotes the number of embeddings of a graph $H$ in a graph $G$, and that $\binom{G}{H}$ denotes the number of copies of $H$ in $G$ counted without multiplicity.

Use of $m$ and $t$:  In Sections~\ref{sec:Mart},~\ref{sec:ineqs} and ~\ref{sec:degrees}, we work with the Erd\H os-R\' enyi random graph process and use $m$ simply to denote the number of edges of $G_m$.  In later sections $m$ is used specifically to refer to $\lfloor tN\rfloor$.   The latter use corresponds to the use in the definition of $G_{n,t}$ as $G_{n,m}$ with $m=\lfloor tN\rfloor$.

Use of $i$ and $s$:  We think of $G_m$ as the result of a realisation of the random graph process $(G_i:i=0,\dots ,m)$.  In this context we use $s$ throughout to refer to $i/N$, the proportion of pairs that occur as edges of $G_i$.  This usage occurs below in the definitions of $\X_H(G_m), \Lambda^{**}_{H}(G_{n,t}), V_{F,F'}(i,n)$ and $W_{F,F'}(G_{i-1})$, for example.

Use of $v$ and $e$: We use $v$ and $e$ to denote the number of vertices and edges of the small graph we are currently working with.  The majority of the time this is the graph $H$.  However, in Section~\ref{sec:Ysmall} and Section~\ref{sec:var} it is the graph $F$.  When necessary we write $v(F)$ and $e(F)$, for example, to avoid ambiguity.

\noindent\begin{tabular}{lr}
Introductory notation:\phantom{\bigg)}\hspace{7cm} & First introduced:\\
\(\displaystyle L_{H}(m)\, :=\, \frac{(n)_v(m)_e}{(N)_e} \) &
\eqr{Ldef}\\
\(\displaystyle D_{H}(G_m)\, :=\, N_{H}(G_m)\, -\, L_{H}(m) \phantom{\Bigg)}\) & \eqr{Ddef}\\
\(\displaystyle A_{H}(G_m)\, :=\, N_{H}(G_m)\, -\, N_{H}(G_{m-1}) \phantom{\Bigg)}\)& \eqr{Adef}\\
\(\displaystyle \gamma_H(t)\, :=\, \left(4\obinom{H}{\owedge}^2 t^{2e-2}(1-t)^2\, +\, 12\obinom{H}{\triangle}^2 t^{2e-3}(1-t)^3\right)^{-1}\phantom{\Bigg)}\) & \eqr{gdef}
\end{tabular}

\noindent\begin{tabular}{lr}
Martingale increments related to subgraph count deviations: \qquad& \hspace{-2cm}First introduced:\\
\(\displaystyle X_{H}(G_m)\, :=\, A_{H}(G_m)\, -\, \Ex{A_{H}(G_m)\, \big|\,G_{m-1}}\phantom{\Bigg)} \)& \eqr{Xdef}\\
\( \displaystyle X^*_F(G_i)\, := \, n^{v-3}s^{e(F)-2}\left(\obinom{F}{\owedge}-3\obinom{F}{\triangle}\right)  X_{\owedge}(G_i)\, +\, n^{v-3}s^{e(F)-3}\obinom{F}{\triangle}X_{\triangle}(G_i)\phantom{\Bigg)}\) & \eqr{Xsdef}\\
\( \displaystyle Y_{F}(G_i)\, :=\, X_{F}(G_i)\, -\, X^{*}_F(G_i)\phantom{\Bigg)}\) &\eqr{Ydef}
\end{tabular}

Additionally, $\X_H(G_i;t)$, see~\eqr{XXdef}, is defined by
\begin{align*}
& \X_{H}(G_i;t)\, \\
& \quad :=\, n^{v-3}t^{e-3}\left(t\obinom{H}{\owedge}\frac{(1-t)^2}{(1-s)^2}X_{\owedge}(G_i)\, +\, \obinom{H}{\triangle} \frac{(1-t)^3}{(1-s)^{3}}\, \big(X_{\triangle}(G_i)-3sX_{\owedge}(G_i)\big) \right)\, .
\end{align*}

There are three random variables $\Lambda_{H}(G_{n,t}), \Lambda^*_{H}(G_{n,t})$ and $\Lambda^{**}_{H}(G_{n,t})$ that all approximate $D_H(G_{n,t})$ in some sense.  They are first defined respectively as equations~\eqr{Ladef},~\eqr{Lasdef} and~\eqr{Lassdef}.  In the following definition $m$ denotes $\lfloor tN\rfloor$ and $\X_H(G_i,t)$ and $X_F(G_i)$ are as defined above.
\begin{align*}
\Lambda_{H}(G_{n,t})\, & :=\,  n^{v-3}t^{e-2}\left(\obinom{H}{\owedge}-3\obinom{H}{\triangle}\right)  D_{\owedge}(G_{n,t})\, +\, n^{v-3}t^{e-3}\obinom{H}{\triangle} D_{\triangle}(G_{n,t})  \\
\Lambda^{*}_H(G_{n,t})\, & := \, \sum_{i=1}^{m}\X_{H}(G_i;t) \\
\Lambda^{**}_H(G_{n,t})\, & := \, \sum_{i=1}^{m}\, \sum_{F\ssq E(H)}\frac{(1-t)^{e(F)}(t-s)^{e-e(F)}}{(1-s)^e}\, X_F(G_i)
\end{align*}

\noindent\begin{tabular}{lr}
Degrees, codegrees and their deviations:& \hspace{2.3cm} First introduced:\\
\(\displaystyle d_u(G_m)\, :=\, \text{degree of $u$ in $G_m$}\phantom{\Bigg)}\) & \\
\(\displaystyle D_{u}(G_m)\, :=\, d_u(G_m)\, -\, \frac{2m}{n}\phantom{\Bigg)}   \)&\eqr{Dudef} \\
\(\displaystyle d_{u,w}(G_m)\, :=\, \text{codegree of $u$ in $G_m$}\phantom{\Bigg)} \)& \\
\(\displaystyle D_{u,w}(G_m)\, :=\, d_{u,w}(G_m)\, -\, \frac{(n-2)(m)_2}{(N)_2}\phantom{\Bigg)}\)& \eqr{Duwdef}\\
\(\displaystyle \Delta(e_i)\, :=\, D_{u}(G_{i-1})^2+D_{w}(G_{i-1})^2+D_{uw}(G_{i-1})^2\phantom{\Bigg)}\)& \eqr{Deldef}
\end{tabular}

\noindent\begin{tabular}{lr}
Functions related to covariance: & \hspace{-0.8cm} First introduced:\\
\(\displaystyle V_{F,F'}(i,n)\, :=\, n^{v+v'-5}s^{e+e'-4}(1-s)\big(s\theta_1(F,F')+(1-s)\theta_2(F,F')\big)\phantom{\Bigg)} \) & \eqr{Vdef}\\
\(\displaystyle \theta_1(F,F')\, :=\, 8\obinom{F}{\owedge}\obinom{F'}{\owedge} \phantom{\Bigg)} \) & \eqr{Tdef} \\
\(\displaystyle \theta_2(F,F')\, :=\, 36\obinom{F}{\triangle}\obinom{F'}{\triangle} \phantom{\Bigg)} \) & \eqr{Tdef} \\
\(\displaystyle W_{F,F'}(G_{i-1})\, :=\, 8n^{v+v'-7} s^{e+e'-4}\obinom{F}{\owedge}\obinom{F'}{\owedge} D_{\owedge}(G_{i-1})\phantom{\Bigg)} \) & \eqr{Wdef}
\end{tabular}

Finally, we write $\log$ for the natural logarithm.

%  L_H, D_h, A_H, \Lambda_H, \gamma_H, X_H, d_u, D_{u,w}, D_u, D_{u,w}, X_F^*, \Lambda_H^*, Y_F, \Lambda_H^*, A_F^*, D(e), V(F,F'), W(F,F').  \Theta

%Throughout the article $s$ always denotes $i/N$.
\newpage

\section{Martingale expression for $D_{H}(G_m)$}\label{sec:Mart}

In this section we state and prove Theorem~\ref{thm:Mart}, our martingale expression for $D_H(G_m)$.  In doing so, we also prove Lemma~\ref{lem:expW}, which concerns the expected number of copies of $H$ created with the addition of the $m$th edge.  We also state an approximate expression for $D_H(G_m)$, see Theorem~\ref{thm:approx}, which will be proved in Section~\ref{sec:relate} as part of the proof of Theorem~\ref{thm:relate}.

For the duration of the section, let us fix $n$ and let $(G_m:m=0,\dots ,N)$ be a realisation of the Erd\H os-R\'enyi random graph process on $n$ vertices.  It is helpful to think of $G_m$ as also including the information of the order in which its edges were added.  Let us define 
\eq{Adef}
A_{H}(G_m)\, :=\, N_{H}(G_m)\, -\, N_{H}(G_{m-1})\, ,
\eqe
the number of embeddings (isomorphic copies) of $H$ created with the addition of the $m$th edge.  Our martingale expression for $D_H(G_m)$ will be based on centered versions of these random variables.  Let
\eq{Xdef}
X_{H}(G_m)\, :=\, A_{H}(G_m)\, -\, \Ex{A_{H}(G_m)\, \big|\, G_{m-1}}\, .
\eqe
Note that $X_{H}(G_m)$ is obtained from $A_{H}(G_m)$ by shifting it so that $\Ex{X_{H}(G_m)|G_{m-1}} = 0$.

We may now state the martingale expression for $D_{H}(G_m)$.

\begin{theorem}\label{thm:Mart} Let $H$ be a graph with $v$ vertices and $e$ edges.  Then
\eq{inMart}
D_{H}(G_m) \, =\, \sum_{i=1}^{m}\, \sum_{F\ssq E(H)}\frac{(N-m)_{e(F)}(m-i)_{e-e(F)}}{(N-i)_e}\, X_F(G_i)\, ,
\eqe
where the inner sum is taken over all $2^e$ graphs $F$ with $V(F)=V(H)$ and $E(F)\ssq E(H)$.
\end{theorem}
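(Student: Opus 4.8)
The plan is to prove \eqref{eq:inMart} by induction on $m$, exploiting the Markov structure of the random graph process together with the linearity of the claimed expression in the increments $X_F(G_i)$. Since $D_H(G_0) = N_H(G_0) - L_H(0) = 0$ and the right-hand side is an empty sum when $m=0$, the base case is trivial. For the inductive step, I would write $D_H(G_m) - D_H(G_{m-1})$ on the left and match it against the difference of the two sums on the right. Concretely, the left-hand difference is
\[
D_H(G_m) - D_H(G_{m-1}) \;=\; A_H(G_m) \;-\; \big(L_H(m) - L_H(m-1)\big) \;=\; X_H(G_m) \;+\; \Ex{A_H(G_m)\,\big|\,G_{m-1}} \;-\; \big(L_H(m) - L_H(m-1)\big),
\]
using the definition \eqref{eq:Xdef}. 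So the task reduces to two things: (a) showing that the ``drift'' term $\Ex{A_H(G_m)\,|\,G_{m-1}} - (L_H(m) - L_H(m-1))$ can itself be written as a linear combination of the earlier increments $X_F(G_i)$, $i \le m-1$, $F \subsetneq E(H)$ (a proper subgraph appears because a newly created copy of $H$ needs its remaining $e-1$ edges already present, so conditioning on $G_{m-1}$ involves counts of proper subgraphs), and (b) checking that the coefficients in the telescoped sum match $\frac{(N-m)_{e(F)}(m-i)_{e-e(F)}}{(N-i)_e}$.

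For step (a), the key identity is that $\Ex{A_H(G_m)\,|\,G_{m-1}}$ equals $\frac{1}{N-m+1}$ times the number of ``almost-embeddings'' of $H$ in $G_{m-1}$ — injective maps $\phi : V(H) \to V(G_{m-1})$ realizing all but exactly one prescribed edge of $H$ — summed over the choice of missing edge; this is exactly the combinatorial content behind \eqref{eq:Lident}, and presumably is (part of) Lemma~\ref{lem:expW}. Writing each such count as $N_{H\setminus f}(G_{m-1})$ summed over $f \in E(H)$, and then re-expressing $N_{H\setminus f}(G_{m-1}) = D_{H\setminus f}(G_{m-1}) + L_{H\setminus f}(m-1)$, the deterministic parts cancel against $L_H(m) - L_H(m-1)$ by \eqref{eq:Lident}, leaving a linear combination of $D_{H\setminus f}(G_{m-1})$ with $f$ ranging over $E(H)$. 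Now I would apply the inductive hypothesis to each $D_{H\setminus f}(G_{m-1})$ (these are graphs with the same vertex set $V(H)$ but fewer edges, so one either strengthens the induction to be simultaneously over all $F \subseteq E(H)$, or more cleanly sets up the induction on $m$ with the statement quantified over all $H$ at once, treating $F \subseteq E(H)$ uniformly). Expanding, every $X_F(G_i)$ with $E(F) \subseteq E(H\setminus f) \subsetneq E(H)$, $i \le m-1$, collects a coefficient, and one reorganizes the sum.

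The main obstacle I anticipate is the bookkeeping in step (b): verifying that after telescoping, the coefficient of a given $X_F(G_i)$ built up from the recursion equals $\frac{(N-m)_{e(F)}(m-i)_{e-e(F)}}{(N-i)_e}$. This amounts to a falling-factorial / hypergeometric identity: the coefficient of $X_F(G_m)$ should be $1$ when $F = H$ and $\frac{(N-m)_{e(F)}}{(N-m+1)_{e(F)+1}}\cdot(\text{something})$ otherwise, while for $i<m$ one needs the recurrence $\frac{(N-m)_{e(F)}(m-i)_{e-e(F)}}{(N-i)_e} = \frac{(N-m+1)_{e(F)}(m-1-i)_{e-e(F)}}{(N-i)_e} + \frac{1}{N-m+1}\sum_{f : F \subseteq H\setminus f}\frac{(N-m+1)_{e(F)}(m-1-i)_{e-e(F)-1}}{(N-i)_e}\cdot(\dots)$ to close. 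Rather than grinding through this directly, a cleaner route is to \emph{guess} the coefficients (as given in the statement) and verify by a direct one-line check that they satisfy the recursion inherited from (a); alternatively, one can give a bijective/probabilistic interpretation of $\frac{(N-m)_{e(F)}(m-i)_{e-e(F)}}{(N-i)_e}$ as the probability, conditionally on the process up to time $i$, that in $G_m$ the edges of $H$ outside $F$ were added during steps $i+1, \dots, m$ while the edges of $F$ were added after step $m$ — this makes the formula essentially self-proving and reduces the induction to a short consistency check. Either way, no single step is deep; the difficulty is purely in organizing the double sum and confirming the falling-factorial identity, so I would budget most of the write-up for a careful, explicit treatment of that reindexing.
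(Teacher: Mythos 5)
Your proposal follows essentially the same route as the paper: establish Lemma~\ref{lem:expW}, use it to write
\[
D_H(G_m) \,=\, D_H(G_{m-1}) \,+\, X_H(G_m) \,+\, \frac{1}{N-m+1}\sum_{f\in E(H)}\big(D_{H\setminus f}(G_{m-1}) - D_H(G_{m-1})\big)\,,
\]
and close by induction (the paper inducts jointly on $e(H)$ and $m$; you induct on $m$ with the statement quantified over all $H$ simultaneously — these are interchangeable, since in either case the recursion only calls on graphs with edge set contained in $E(H)$ at time $m-1$). Both you and the paper leave the final coefficient verification compressed (``one may check'' in the paper, ``grind through the falling-factorial identity'' in yours), so there is no gap relative to the published argument. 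The one genuine value-add in your sketch is the probabilistic reading of the coefficient $(N-m)_{e(F)}(m-i)_{e-e(F)}/(N-i)_e$ as the conditional probability, given $G_i$ and $e$ prescribed vertex-pairs none of which are edges at time $i$, that the $e(F)$ pairs in $E(F)$ are still absent at time $m$ while the $e-e(F)$ pairs in $E(H)\setminus E(F)$ are present by time $m$; this is correct — after time $i$ the remaining $N-i$ pairs are added in a uniformly random order, and the favourable orderings number $(N-m)_{e(F)}(m-i)_{e-e(F)}(N-i-e)!$ out of $(N-i)!$ — and spelling it out would make the bookkeeping the paper glosses over transparent.
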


\begin{remark}
Equation~\eqr{Xdef} shows that $X_{H}(G_m)$ is a martingale increment with respect to the natural filtration of $G_0,\dots ,G_N$.  Since
\[
\sum_{F\ssq E(H)}\frac{(N-m)_{e(F)}(m-i)_{e-e(F)}}{(N-i)_e}\, X_F(G_i)
\]
is a linear combination of the random variables $X_F(G_i)$, it too is a martingale increment, and so~\eqr{inMart} is indeed a martingale.
\end{remark}

We begin with a lemma about $\Ex{A_{H}(G_m)|G_{m-1}}$, the expected number of embeddings (isomorphic copies) of $H$ created with the $m$th edge, given the graph $G_{m-1}$.

\begin{lem}\label{lem:expW}
In the Erd\H os-R\'enyi random graph process $(G_m:m=0,\dots ,N)$,
\begin{align*}
& \Ex{A_{H}(G_m)\, \big|\, G_{m-1}}\, =\, \frac{1}{N-m+1}\sum_{f\in E(H)}\big(N_{H\setminus f}(G_{m-1})-N_{H}(G_{m-1})\big)\\
& \quad =\, \big(L_{H}(m)-L_H(m-1)\big)\,  +\, \frac{1}{N-m+1}\sum_{f\in E(H)} \big(D_{H\setminus f}(G_{m-1})-D_{H}(G_{m-1})\big)\, ,
\end{align*}
where $H\setminus f$ denotes the graph obtained from $H$ by removing the edge $f$.
\end{lem}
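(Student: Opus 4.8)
The plan is to establish the first equality by a direct counting argument on the random graph process, and then to obtain the second equality from the first using only the definition~\eqr{Ddef} of $D_H$ together with the identity~\eqr{Lident}.

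For the first equality, I would condition on $G_{m-1}$ and use that $G_m$ is obtained from $G_{m-1}$ by adding one of its $N-m+1$ non-edges, chosen uniformly at random, so that
\[
\Ex{A_{H}(G_m)\, \big|\, G_{m-1}}\, =\, \frac{1}{N-m+1}\sum_{g}\big(N_{H}(G_{m-1}+g)-N_{H}(G_{m-1})\big)\, ,
\]
where $g$ ranges over the non-edges of $G_{m-1}$. The key combinatorial observation is that a \emph{new} embedding $\phi$ of $H$ into $G_{m-1}+g$ — that is, an injective $\phi\colon V(H)\to V(G_m)$ realising every edge of $H$ but not realising them all already in $G_{m-1}$ — uses the edge $g$ for \emph{exactly one} edge $f=uv\in E(H)$: at least one such $f$ exists because $\phi$ is not an embedding in $G_{m-1}$, and at most one exists because $\phi$ is injective. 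Moreover, once $\phi(f)=g$, the fact that $g\notin E(G_{m-1})$ forces every remaining edge of $H$ to be mapped into $E(G_{m-1})$. Hence, for each fixed non-edge $g$, the number of new embeddings equals $\sum_{f\in E(H)}$ of the number of injective $\phi$ with $\phi(f)=g$ that map $H\setminus f$ into $G_{m-1}$.

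Summing over all non-edges $g$ and exchanging the order of summation, the $f$-term becomes the number of embeddings $\psi$ of $H\setminus f$ into $G_{m-1}$ with $\psi(f)\notin E(G_{m-1})$; since those $\psi$ with $\psi(f)\in E(G_{m-1})$ are precisely the embeddings of $H$ into $G_{m-1}$, this count is $N_{H\setminus f}(G_{m-1})-N_{H}(G_{m-1})$, which yields the first claimed identity. For the second identity, I would substitute $N_{H\setminus f}(G_{m-1})=L_{H\setminus f}(m-1)+D_{H\setminus f}(G_{m-1})$ and $N_{H}(G_{m-1})=L_{H}(m-1)+D_{H}(G_{m-1})$ into the first, split the sum into its ``$L$-part'' and ``$D$-part'', and note that the $L$-part is exactly the right-hand side of~\eqr{Lident}, hence equals $L_H(m)-L_H(m-1)$.

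The only delicate point — and the step I expect to require the most care — is the bijective claim in the counting argument: that new copies of $H$ correspond, with no over- or under-counting, to ``almost-copies'' missing one prescribed edge $f$. Injectivity of embeddings is exactly what makes this correspondence clean, and I would state explicitly both that no two edges of $H$ can map onto $g$ and that $g\notin E(G_{m-1})$ forces the remaining edges into $G_{m-1}$; everything else is routine substitution and an appeal to~\eqr{Lident}.
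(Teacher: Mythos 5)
Your proof is correct and follows essentially the same double-counting argument as the paper: the paper decomposes $A_H(G_m) = \sum_{f\in E(H)} A_{H,f}(G_m)$ by which edge of $H$ maps to $e_m$ and then averages over the choice of $e_m$, whereas you average over the non-edge $g$ first and then decompose by $f$; swapping these two sums makes the arguments identical. The treatment of the second equality (expand $N=L+D$ and invoke~\eqr{Lident}) is the same.
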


\begin{proof}
Let us first observe that the second equality follows directly from the definitions.  Indeed, one may simply expand $N_{H}(G_m)$ as $L_{H}(m)+D_{H}(G_m)$, and use~\eqr{Lident}.

We now prove the first equality.  We may view $A_H(G_m)$, the number of embeddings (isomorphic copies) of $H$ created with the addition of the $m$th edge, $e_m$, as a sum
\[
A_H(G_m)\, =\, \sum_{f\in E(H)} A_{H,f}(G_m)\, ,
\]
where $A_{H,f}(G_m)$ denotes the number of embeddings of $H$ created with the addition of $e_m$, in which $e_m$ is the image of the edge $f$ of $H$.  It therefore suffices to prove that
\eq{WHf}
\Ex{A_{H,f}(G_m)\, \big|\, G_{m-1}}\, =\, \frac{1}{N-m+1}\big(N_{H\setminus f}(G_{m-1})-N_{H}(G_{m-1})\big)
\eqe
for each $f\in E(H)$.

Fix $f\in E(H)$.  In order for an injective function $\phi:V(H)\to V(G_m)$ to represent an embedding of $H$ in $G_m$, but not in $G_{m-1}$, and have $e_m=\phi(f)$, it is necessary and sufficient that $\phi$ embeds $H\setminus \{f\}$ into $G_{m-1}$, that $\phi(f)$ is not an edge of $G_{m-1}$, and, finally, that $e_m$ is chosen to be $\phi(f)$.

The number of injective functions obeying the first two conditions is precisely $N_{H\setminus f}(G_{m-1})-N_{H}(G_{m-1})$, and, for any such $\phi$, the probability that $e_m$ is chosen to be $\phi(f)$ is $1/(N-m+1)$.  The required equation, \eqr{WHf}, follows immediately.
\end{proof}

We now prove Theorem~\ref{thm:Mart}.

\begin{proof} The proof is by induction on $e=e(H)$, and on $m\in \{0,\dots ,N\}$.  If $e=1$ or $m=0$ the result holds trivially.  Now consider a graph $H$ with $v$ vertices and $e\ge 2$ edges, and $m\in \{1,\dots ,N\}$.  We may expand $D_H(G_m)$ as follows
\begin{align*}
D_H&(G_m)\,  =\, N_{H}(G_m)\, -\, L_H(m)\phantom{\Big)} \\
& =\, N_{H}(G_{m-1})\, +\, A_H(G_m) \, - \, L_H(m-1)\, -\, \big(L_{H}(m)-L_H(m-1)\big)\phantom{\Big)} \\
& =\, D_H(G_{m-1})\, +\, A_H(G_m) \, -\, \big(L_{H}(m)-L_H(m-1)\big)\phantom{\Big)}\\
& =\,  D_H(G_{m-1})\, +\, X_H(G_m)\, +\, \Ex{A_{H}(G_m)\, \big|\, G_{m-1}} -\, \big(L_{H}(m)-L_H(m-1)\big)\phantom{\Big)}\\
& =\, D_H(G_{m-1})\, +\, X_H(G_m)\, + \,  \frac{1}{N-m+1}\sum_{f\in E(H)} \big(D_{H\setminus f}(G_{m-1})-D_{H}(G_{m-1})\big)\, , \phantom{\Big)}
\end{align*}
where we have used the definition of $X_{H}(G_m)$ in the fourth line and Lemma~\ref{lem:expW} in the last line.

We have an expression for $D_H(G_m)$ in terms of $X_H(G_m)$ and a linear combination of deviations $D_{F}(G_{m-1})$ with $F\subseteq H$.  By the induction hypothesis each of these may be expressed as a linear combination of the $X_{F}(G_i)$ with $F\subseteq E(H)$ and $1\le i\le m$.  One may check that the resulting expression for $D_H(G_m)$ is that claimed.
\end{proof}

We now give a simpler expression which approximates $D_H(G_m)$ very well.  Since this expression is itself closely related to the quantity $\Lambda_H(G_{n,t})$ which appears in Theorem~\ref{thm:relate}, we use the notation $\Lambda^*_H(G_{n,t})$.  For a graph $H$ with $v$ vertices and $e$ edges, let us first define
\eq{XXdef}
\X_{H}(G_i;t)\, :=\, n^{v-3}\left(t^{e-2}\obinom{H}{\owedge}\frac{(1-t)^2}{(1-s)^2}X_{\owedge}(G_i)\, +\, t^{e-3}\obinom{H}{\triangle} \frac{(1-t)^3}{(1-s)^{3}}\, \big(X_{\triangle}(G_i)-3sX_{\owedge}(G_i)\big) \right)
\eqe
where $s=i/N$, as it is throughout the article, and define
\eq{Lasdef}
\Lambda^{*}_H(G_{n,t})\, := \, \sum_{i=1}^{m}\X_H(G_i;t) .
\eqe

We are now ready to state Theorem~\ref{thm:approx}.  The statement will be given for $t\in (0,1/2]$.  This form is sufficient for the proof of Theorem~\ref{thm:relate}.

\begin{theorem}\label{thm:approx}  Let $H$ be a graph with $v$ vertices and $e$ edges.  There exists a constant $C=C(H)$ such that for all $t=t(n)\in (0,1/2]$ we have
\eq{approx}
\pr{\big| D_H(G_{n,t})-\Lambda^{*}_H(G_{n,t})\big |\phantom{\Big|}>\, Cbt^{1/2}n^{v-2}}\, \le\, \exp(-b)
\eqe
for all $3\log{n}\le b\le t^{1/2} n$.  Furthermore,
\eq{approxer}
\pr{\big| D_H(G_{n,t})-\Lambda^{*}_H(G_{n,t})\big |\phantom{\Big|}>\, Cbn^{v-2}}\, \le\, \exp(-b)
\eqe
for all $b\ge 3\log{n}$. 
\end{theorem}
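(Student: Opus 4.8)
The starting point is the exact martingale representation of Theorem~\ref{thm:Mart}. Write $m=\fl{tN}$ and, for $F\ssq E(H)$ and $1\le i\le m$, set $c_{F,i}:=(N-m)_{e(F)}(m-i)_{e-e(F)}/(N-i)_e$, so that $D_H(G_m)=\sum_{i=1}^m\sum_{F\ssq E(H)}c_{F,i}X_F(G_i)$; since adding one edge changes the number of embeddings of any $F$ with $e(F)\le1$ by a deterministic amount, $X_F\equiv0$ for such $F$ and only $F$ with $e(F)\ge2$ contribute. Put $s=i/N$, introduce the smoothed coefficients $\bar c_{F,i}:=(1-t)^{e(F)}(t-s)^{e-e(F)}/(1-s)^e$, and use the splitting $X_F=X^*_F+Y_F$ of~\eqr{Xsdef}--\eqr{Ydef}. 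Then $D_H(G_m)-\Lambda^*_H(G_{n,t})=(\mathrm{A})+(\mathrm{B})+(\mathrm{C})$ with
\[
(\mathrm{A}):=\sum_{i=1}^m\sum_F\bar c_{F,i}X^*_F(G_i)-\Lambda^*_H(G_{n,t}),\quad (\mathrm{B}):=\sum_{i=1}^m\sum_F(c_{F,i}-\bar c_{F,i})X^*_F(G_i),\quad (\mathrm{C}):=\sum_{i=1}^m\sum_F c_{F,i}Y_F(G_i).
\]
I would show $(\mathrm{A})=0$ identically, bound $(\mathrm{B})$ crudely, and control the main term $(\mathrm{C})$ via Freedman's inequality.

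For $(\mathrm{A})$, expand $\obinom{F}{\owedge}=\sum_P\mathbbm{1}[P\ssq F]$ and $\obinom{F}{\triangle}=\sum_T\mathbbm{1}[T\ssq F]$, the sums over the $P_2$'s $P$ and triangles $T$ of $H$; interchanging the order of summation and, for fixed $P$ (resp.\ $T$), evaluating the inner sum over those $F$ with $F\supseteq P$ (resp.\ $F\supseteq T$) by the binomial theorem gives $\sum_{j}\binom{e-k}{j}(1-t)^{k+j}s^{j}(t-s)^{e-k-j}=(1-t)^k[(1-t)s+(t-s)]^{e-k}=(1-t)^kt^{e-k}(1-s)^{e-k}$, with $k=2$ for a $P_2$ and $k=3$ for a triangle (the powers $s^{e(F)-2}$, $s^{e(F)-3}$ in $X^*_F$ supply the $s^j$, and the factor $s^{e(F)-2}=s\cdot s^{e(F)-3}$ attached to $-3\obinom{F}{\triangle}$ yields the $-3s\,X_\owedge$ cross-term). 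Collecting the terms recovers $\X_H(G_i;t)$ of~\eqr{XXdef} exactly, so $(\mathrm{A})=0$.

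For $(\mathrm{B})$, note that $m=tN+O(1)$ and $i\le m\le N/2$ (using $t\le1/2$, so $N-i,N-m\ge N/2$), whence each falling factorial differs from the corresponding power by a factor $1+O(1/N)$ and $|c_{F,i}-\bar c_{F,i}|=O(1/N)$ uniformly. Since adding an edge creates at most $O(n)$ new $P_2$'s and triangles, $|X_\owedge(G_i)|,|X_\triangle(G_i)|\le Cn$ and hence $|X^*_F(G_i)|\le Cn^{v-2}$ deterministically; so $(\mathrm{B})$ is a martingale with increments $O(n^{v-4})$, and Hoeffding--Azuma (Lemma~\ref{lem:HA}) over $m\le N$ steps makes $(\mathrm{B})$ exceed the thresholds of~\eqr{approx} or~\eqr{approxer} only with probability $\exp(-\Omega(n^2))\le\exp(-b)$.

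Term $(\mathrm{C})$ is the crux, and it is precisely the estimate carried out in Section~\ref{sec:relate} inside the proof of Theorem~\ref{thm:relate}. The input needed is a high-probability bound, uniform in $i\le m$, on $Y_F(G_i)=X_F(G_i)-X^*_F(G_i)$, together with control of the quadratic variation $\sum_i\big(\sum_F c_{F,i}Y_F(G_i)\big)^2$. Expanding $A_F(G_i)$ (the number of embeddings of $F$ created when $e_i=uw$ is added) over the edge of $F$ mapped to $e_i$ and over placements of the other $v-2$ vertices, one sees that its leading behaviour is controlled by the degrees $d_u(G_{i-1})$, $d_w(G_{i-1})$ and the codegree $d_{u,w}(G_{i-1})$, and that $X^*_F$ is exactly the centred leading term --- the coefficient $\obinom{F}{\owedge}-3\obinom{F}{\triangle}$ coming from separating the ``pure'' $P_2$'s of $F$ from the $P_2$'s that sit inside triangles of $F$, and the powers $s^{e(F)-2}$, $s^{e(F)-3}$ from approximating by $s$ the chance that each remaining edge of $F$ is already present in $G_{i-1}$. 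Bounding $Y_F$ then rests on (i) the concentration of $d_u(G_{i-1})$ and $d_{u,w}(G_{i-1})$ about their means, from Section~\ref{sec:degrees}, and (ii) for $e(F)\ge3$, the concentration of the relevant auxiliary small-subgraph counts, which follows from Theorem~\ref{thm:upto}. On the event where all these hold ``at level $b$'' --- a union bound over the $O(n^2)$ vertices, pairs and edges involved, absorbed by $b\ge3\log n$ --- one obtains $|Y_F(G_i)|\le a_b$ for all $i\le m$ and $\sum_i Y_F(G_i)^2\le q_b$, with $a_b\ll t^{1/2}n^{v-2}$ and $q_b\ll tn^{2v-4}$ (both with room to spare, since $Y_F$ is second order: essentially $n^{v-4}$ times a degree/codegree deviation, so of order $n^{v-3}$ up to $t$- and logarithmic factors). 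Feeding $a_b$ and $q_b$ into Freedman's upper inequality, applied on the good event via a stopping time, gives $\pr{|(\mathrm{C})|>Cbt^{1/2}n^{v-2}}\le\exp(-b)$, which with $(\mathrm{A})$ and $(\mathrm{B})$ yields~\eqr{approx}; for~\eqr{approxer} one reruns the argument with only the deterministic bound $|Y_F(G_i)|\le Cn^{v-2}$ (and the observation that $|D_H(G_m)-\Lambda^*_H(G_{n,t})|=O(n^v)$ always, so the event is empty once $b$ is a sufficiently large multiple of $n^2$), which already suffices at the coarser scale $Cbn^{v-2}$ for all $b\ge3\log n$. I expect the main obstacle to be exactly this last part: the combinatorial expansion showing $X^*_F$ is the true leading term of $X_F$, and then extracting from the degree, codegree and Theorem~\ref{thm:upto} estimates a quadratic-variation bound $q_b$ sharp enough for Freedman to close the whole range $3\log n\le b\le t^{1/2}n$.
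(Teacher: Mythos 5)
Your overall strategy matches the paper's: start from the exact martingale expression of Theorem~\ref{thm:Mart}, replace the discrete coefficients $(N-m)_{e(F)}(m-i)_{e-e(F)}/(N-i)_e$ by their smooth approximants $(1-t)^{e(F)}(t-s)^{e-e(F)}/(1-s)^e$ with error $O(n^{-2})$ per term (the paper's Lemma~\ref{lem:coeffs}), split $X_F=X^*_F+Y_F$, check by a binomial identity that the $X^*_F$-part reproduces $\Lambda^*_H$ exactly (your $(\mathrm{A})=0$ is literally the Claim inside the paper's proof), and control the $Y_F$-part by Freedman together with a stopping/truncation device. The only organizational difference is cosmetic: the paper introduces $\Lambda^{**}_H$, bounds $D_H(G_{n,t})-\Lambda^{**}_H(G_{n,t})$ deterministically by $O(tn^{v-2})$, and then works on $\Lambda^{**}_H-\Lambda^*_H=\sum_{i,F}\bar c_{F,i}Y_F(G_i)$; your $(\mathrm{B})$ is bounded by Hoeffding--Azuma instead, but a deterministic bound already suffices.

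There is, however, a genuine gap in your item (ii) for $Y_F$ when $e(F)\ge3$: you propose to get ``concentration of the relevant auxiliary small-subgraph counts'' from Theorem~\ref{thm:upto}. In the paper's logical structure this is circular. Theorem~\ref{thm:upto} is proved via Proposition~\ref{prop:forH} and Theorem~\ref{thm:approxbetter}, and the latter is a corollary of Theorem~\ref{thm:relate}, which is deduced from the very statement you are proving. Nor does a naive substitute save you: the edit-Lipschitz constant of $N_F$ on $\mathcal{G}_{n,m}$ is $\Theta(n^{v-2})$, so Corollary~\ref{cor:HA} applied directly to $D_F(G_m)$ gives an exponent roughly $\alpha^2/n$ rather than the $\alpha\min\{\alpha,n^{1/2}\}$ of Theorem~\ref{thm:upto} --- a loss of a factor $n$ that would ruin the Freedman budget across the full range $3\log n\le b\le t^{1/2}n$. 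Moreover, even granted Theorem~\ref{thm:upto}, it is not directly applicable here: what you actually need is control of a rooted count (the number of placements of $F\setminus f$ given that $\vec{f}$ maps onto the fixed incoming edge $e_i=12$), which is not an $N_{F'}(G_{i-1})$ for any unrooted graph $F'$.

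The paper resolves this without any subgraph-count concentration. Lemma~\ref{lem:Clem} fixes $e_i=12$, conditions on the two neighbourhoods $N_1(G_{i-1}),N_2(G_{i-1})$, and observes that, conditionally, the rooted count $A_{F,\vec{f}}(G_i)$ is a function of $G_{i-1}[V\setminus\{1,2\}]$ with edit-Lipschitz constant only $n^{v-4}$ (two vertices are pinned). Corollary~\ref{cor:HA} then gives concentration of $A_{F,\vec{f}}(G_i)$ around its conditional expectation $A^{**}_{F,\vec{f}}(G_i)$ at scale $b^{1/2}n^{v-3}$, and an explicit expansion of $A^{**}_{F,\vec{f}}$ in terms of $|N_1|,|N_2|,|N_1\cap N_2|$ matches it to the degree/codegree proxy $A^*_{F,\vec{f}}$ up to error $O\big(n^{v-3}+n^{v-4}\Delta(e_i)\big)$. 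Combined with the degree/codegree moment bounds of Section~\ref{sec:degrees} (Lemmas~\ref{lem:deg4},~\ref{lem:codeg4}), this yields Proposition~\ref{prop:Ysmall} and Lemma~\ref{lem:maxY} with no appeal to Theorem~\ref{thm:upto}, closing the argument non-circularly. You should replace your step (ii) by this conditional-Lipschitz idea.
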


\begin{remark}
The curious reader may wonder why we express the terms $\X_H(G_{i};t)$ of $\Lambda^{*}_H(G_{n,t})$ as a linear combination of $X_{\owedge}$ and $X_{\triangle}-3sX_{\owedge}$ rather than directly as a linear combination of $X_{\owedge}$ and $X_{\triangle}$.  We consider this choice natural because $X_{\owedge}$ and $X_{\triangle}-3sX_{\owedge}$ are asymptotically orthogonal (i.e., uncorrelated) in the sense that
\[
\Ex{X_{\owedge}(G_i)
\big(X_{\triangle}(G_i)-3sX_{\owedge}(G_i)\big)\,\Big|\, G_{i-1}}
\]
is typically $o(n)$, while their individual variances are $\Theta(n)$.  See Section~\ref{sec:var} for more details.
\end{remark}

In fact, the result holds for all $t\in (0,1)$.  This follows directly from Theorem~\ref{thm:relate} and~\eqr{lambneed}.

\begin{theorem}\label{thm:approxbetter}
Theorem~\ref{thm:approx} holds for $t=t(n)\in (0,1)$.
\end{theorem}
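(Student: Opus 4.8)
The plan is to deduce the full-range statement from Theorem~\ref{thm:relate}, which (unlike Theorem~\ref{thm:approx}) is already available for every $t\in(0,1)$, together with~\eqr{lambneed}, which compares the two approximating random variables $\Lambda_H(G_{n,t})$ and $\Lambda^{*}_H(G_{n,t})$. Concretely, I would start from the decomposition
\[
D_H(G_{n,t})-\Lambda^{*}_H(G_{n,t})\, =\, \big(D_H(G_{n,t})-\Lambda_H(G_{n,t})\big)\, +\, \big(\Lambda_H(G_{n,t})-\Lambda^{*}_H(G_{n,t})\big)\, ,
\]
and bound the two brackets separately. The first bracket is controlled, for all $t\in(0,1)$ and all $b$ in the stated ranges, directly by Theorem~\ref{thm:relate}: with probability at least $1-\exp(-b)$ it is at most $Cbt^{1/2}n^{v-2}$ when $3\log n\le b\le t^{1/2}n$, and at most $Cbn^{v-2}$ when $b\ge 3\log n$.

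For the second bracket I would invoke~\eqr{lambneed}. After expanding $D_{\owedge}(G_{n,t})$ and $D_{\triangle}(G_{n,t})$ via Theorem~\ref{thm:Mart}, the quantity $\Lambda_H(G_{n,t})-\Lambda^{*}_H(G_{n,t})$ is a fully explicit linear combination, with deterministic coefficients, of the increments $X_{\owedge}(G_i)$ and $X_{\triangle}(G_i)$; those coefficients are differences between falling-factorial ratios $(N-m)_k/(N-i)_k$ and their power surrogates $(1-t)^k/(1-s)^k$, each such replacement costing a relative error of order $1/(N-m)$. Using that $|X_{\owedge}(G_i)|,|X_{\triangle}(G_i)|=O(n)$ deterministically, one sees after summing over $i\le m$ that this difference is dominated by the bounds in the previous paragraph. (If~\eqr{lambneed} instead records the discrepancy through $D_{\owedge}(G_{n,t})$ and $D_{\triangle}(G_{n,t})$ themselves, I would first bound those deviations on a high-probability event using Theorem~\ref{thm:upto}.) A union bound over the two events then yields~\eqr{approx} and~\eqr{approxer} with a harmless extra factor, absorbed into $C$ after replacing $b$ by, say, $b+\log 2$; since $b\ge 3\log n$ this changes nothing, and we obtain the conclusion for all $t=t(n)\in(0,1)$, with no separate treatment of $t\le1/2$ and $t>1/2$.

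The one point that needs genuine care, and where I expect the work of~\eqr{lambneed} to lie, is uniformity in $t$ as $t\to1$: both the coefficients $(1-s)^{-k}$ defining $\Lambda^{*}_H$ and the factors $(N-i)_e^{-1}$ produced by Theorem~\ref{thm:Mart} blow up near the top of the summation range $i=m$, so the naive term-by-term error estimate is too weak there. One must check that these singularities cancel in the difference $\Lambda_H-\Lambda^{*}_H$, so that~\eqr{lambneed} holds with error of smaller order than $bt^{1/2}n^{v-2}$ (in the range $3\log n\le b\le t^{1/2}n$) and of smaller order than $bn^{v-2}$ (for $b\ge3\log n$) uniformly over $t\in(0,1)$. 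Granting~\eqr{lambneed} in that form, the deduction of Theorem~\ref{thm:approxbetter} is just the triangle inequality and the union bound above.
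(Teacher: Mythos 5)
Your proposal is exactly the paper's argument: the paper deduces Theorem~\ref{thm:approxbetter} from Theorem~\ref{thm:relate} (already proved for all $t\in(0,1)$) together with the deterministic bound $\big|\Lambda^{*}_H(G_{n,t})-\Lambda_H(G_{n,t})\big|\le C'tn^{v-2}$ of~\eqr{lambneed}, via the triangle inequality, just as you describe. Your closing caveat about uniformity as $t\to 1$ is a fair observation, but the paper does not revisit it either --- it simply invokes~\eqr{lambneed} as established in the proof of Theorem~\ref{thm:relate} --- and the downstream applications only use $t$ bounded away from $1$.
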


Theorem~\ref{thm:approx} is proved in Section~\ref{sec:relate} as part of the proof of Theorem~\ref{thm:relate}.  After proving Theorem~\ref{thm:relate} we easily deduce Theorem~\ref{thm:approxbetter}.

\subsection{An aside: subgraph counts from subgraph counts in the complement}

We record a simple lemma that allows one to relate subgraph counts in $G$ to subgraph counts in the complement $G^c$.

\begin{lem}\label{lem:comp} Let $H$ and $G$ be graphs, and let $G^c$ be the complement of $G$, then
\eq{incomp}
N_{H}(G)\, =\, \sum_{H'\subseteq E(H)}(-1)^{e(H')}N_{H'}(G^c)\, ,
\eqe
where the sum is over all $2^{e(H)}$ subgraphs of $H$.
\end{lem}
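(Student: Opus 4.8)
The identity in \eqref{eq:incomp} is essentially an inclusion–exclusion statement, so the plan is to count embeddings of $H$ directly. Fix a graph $G$ on vertex set $[n]$, and let $H$ have vertex set $V(H)$. For an injective map $\phi : V(H) \to [n]$, say $\phi$ is an \emph{embedding of $H$ into $G$} if $\phi(u)\phi(v) \in E(G)$ for every $uv \in E(H)$. The quantity $N_H(G)$ counts exactly these maps. The idea is to express the indicator that a fixed injection $\phi$ is an embedding of $H$ into $G$ in terms of the complement $G^c$, and then sum over $\phi$.

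\medskip
First I would fix an arbitrary injection $\phi : V(H) \to [n]$ and, for each edge $f = uv \in E(H)$, write $\mathbbm{1}[\phi(u)\phi(v) \in E(G)] = 1 - \mathbbm{1}[\phi(u)\phi(v) \in E(G^c)]$. Multiplying over all $f \in E(H)$ gives
\[
\mathbbm{1}[\phi \text{ embeds } H \text{ into } G] \;=\; \prod_{f \in E(H)} \big(1 - \mathbbm{1}[\phi(f) \in E(G^c)]\big) \;=\; \sum_{E(H') \subseteq E(H)} (-1)^{e(H')} \prod_{f \in E(H')} \mathbbm{1}[\phi(f) \in E(G^c)],
\]
where $H'$ ranges over the $2^{e(H)}$ subgraphs of $H$ with $V(H') = V(H)$. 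Here the product over an empty edge set is $1$. Now $\prod_{f \in E(H')} \mathbbm{1}[\phi(f) \in E(G^c)]$ is precisely the indicator that $\phi$ is an embedding of $H'$ into $G^c$ (note that $\phi$ is already injective and the vertex sets agree, so no further condition is needed). Summing the displayed identity over all injections $\phi : V(H) \to [n]$ and interchanging the two finite sums yields
\[
N_H(G) \;=\; \sum_{E(H') \subseteq E(H)} (-1)^{e(H')} \sum_{\phi} \mathbbm{1}[\phi \text{ embeds } H' \text{ into } G^c] \;=\; \sum_{H' \subseteq E(H)} (-1)^{e(H')} N_{H'}(G^c),
\]
which is \eqref{eq:incomp}.

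\medskip
There is essentially no obstacle here: the only point requiring a little care is the bookkeeping of the sum indexing — making sure that ``$H' \subseteq E(H)$'' is read as ranging over all spanning subgraphs of $H$ (same vertex set, a subset of the edges), so that $N_{H'}(G^c)$ counts injections $\phi : V(H) \to V(G^c)$ respecting the edges of $H'$, and that the empty-product and empty-edge-set conventions are handled consistently. Once the expansion of the product of $(1 - x_f)$ is written out, the rest is a routine exchange of finite summations, so this can be presented in a few lines.
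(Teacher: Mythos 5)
Your proof is correct. It does, however, take a genuinely different route from the paper's. The paper introduces the induced embedding counts $N^{\mathrm{ind}}_F(G)$ and proceeds in two stages: first it writes $N^{\mathrm{ind}}_F$ as an inclusion--exclusion over supergraphs of $F$ in $K_v$, and $N_H$ as a sum of induced counts over supergraphs; it then uses the bijection $N^{\mathrm{ind}}_F(G) = N^{\mathrm{ind}}_{F^c}(G^c)$ (an induced copy of $F$ in $G$ is exactly an induced copy of $F^c$ in $G^c$) and collapses the double sum by a binomial-type cancellation. Your argument bypasses induced counts entirely: for a fixed injection $\phi$ you write the embedding indicator as $\prod_{f\in E(H)}\bigl(1 - \mathbbm{1}[\phi(f)\in E(G^c)]\bigr)$, expand the product, and swap the two finite sums. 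This is shorter, more self-contained, and makes the inclusion--exclusion mechanism more transparent; the paper's route is a bit longer but has the incidental benefit of recording the useful complementary duality for induced subgraph counts, which is a standard tool in its own right. The one point you correctly flag --- that the $H'$ in the sum is a \emph{spanning} subgraph of $H$ with $V(H')=V(H)$ so that $N_{H'}(G^c)$ counts injections from the full vertex set --- is essential for the identity, and you have handled it properly.
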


\begin{proof} Writing $N^{\ind}_H(G)$ for the number of induced embeddings (isomorphic copies) of $H$ in $G$ we have, by inclusion-exclusion, that
\[
N_{F}^{\ind}(G)\, =\,\sum_{F\subseteq H\subseteq K_v}(-1)^{e(H)-e(F)}N_{H}(G)\, ,
\]
and, in the other direction,
\[
N_{H}(G)\, =\, \sum _{H\subseteq F\subseteq K_v} N^{\ind}_F(G)\, =\,  \sum _{F^{c}\subseteq H^{c}} N^{\ind}_{F^c}(G^c)
\]
where $K_v$ is the complete graph on the vertex set of $H$.

We now have
\begin{align*} 
N_{H}(G)\,&  =\,  \sum _{F^{c}\subseteq H^{c}} N^{\ind}_{F^c}(G^c)\phantom{\Bigg)} \\
& =\, \sum_{F^{c}\subseteq H^{c}}\quad \sum_{F^c\subseteq H'\subseteq K_v}(-1)^{e(H')-e(F^c)}N_{H'}(G^c)\phantom{\Bigg)}\\
& =\, \sum_{H'\subseteq K_v}(-1)^{e(H')}N_{H'}(G^c) \,\sum_{H\cup (H')^c\subseteq F\subseteq K_v}(-1)^{e(F^c)}\phantom{\Bigg)}\\
& =\, \sum_{H'\subseteq H}(-1)^{e(H')}N_{H'}(G^c)\, ,\phantom{\Bigg)}
\end{align*}
where the last line follows since the sum over $F$ in the line above gives $1$ if $H\cup (H')^c=K_v$ and $0$ otherwise.
\end{proof}

By linearity, the same identity holds for deviations.

\begin{cor}\label{cor:comp} Let $H$ and $G$ be graphs, and let $G^c$ be the complement of $G$, then
\[
D_{H}(G)\, =\, \sum_{H'\subseteq E(H)}(-1)^{e(H')}D_{H'}(G^c)\, ,
\]
where the sum is over all $2^{e(H)}$ subgraphs of $H$.
\end{cor}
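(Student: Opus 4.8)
The plan is to deduce Corollary~\ref{cor:comp} from Lemma~\ref{lem:comp} in essentially one line, once one checks that the deterministic correction terms $L_H$ obey the same inclusion--exclusion identity. Recall that $D_H(G) = N_H(G) - L_H(\cdot)$, where in the $G(n,m)$ setting $L_H(m) = (n)_v(m)_e/(N)_e$, and that if $G$ has $m$ edges then $G^c$ has $N-m$ edges, so $D_{H'}(G^c) = N_{H'}(G^c) - L_{H'}(N-m)$. Thus the corollary is equivalent to the combination of Lemma~\ref{lem:comp} with the purely arithmetic identity
\[
L_H(m)\, =\, \sum_{H'\subseteq E(H)}(-1)^{e(H')}L_{H'}(N-m)\, .
\]

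First I would establish this arithmetic identity. The cleanest route is probabilistic: apply Lemma~\ref{lem:comp} with $G=G(n,m)$ and take expectations of both sides. Since expectation is linear, the right-hand side becomes $\sum_{H'}(-1)^{e(H')}\Ex{N_{H'}(G(n,m)^c)}$, and because $G(n,m)^c$ is distributed as $G(n,N-m)$, each term equals $L_{H'}(N-m)$; the left-hand side is $L_H(m)$ by definition of $L_H$. (Alternatively one can verify the identity directly by a falling-factorial computation, but the probabilistic argument is shorter and conceptually transparent.)

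With the identity in hand, the proof concludes by subtraction: applying Lemma~\ref{lem:comp} to the \emph{fixed} graph $G$ gives $N_H(G) = \sum_{H'}(-1)^{e(H')}N_{H'}(G^c)$, and subtracting the identity for the $L$-terms yields
\[
D_H(G)\, =\, N_H(G) - L_H(m)\, =\, \sum_{H'\subseteq E(H)}(-1)^{e(H')}\big(N_{H'}(G^c) - L_{H'}(N-m)\big)\, =\, \sum_{H'\subseteq E(H)}(-1)^{e(H')}D_{H'}(G^c)\, ,
\]
as required.

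There is no real obstacle here; the only point requiring any care is that the ``deviation'' notation $D_H$ implicitly carries the edge count of the ambient graph, so one must be consistent about using $L_{H'}(N-m)$ (not $L_{H'}(m)$) for counts in $G^c$. Once that bookkeeping is fixed, the statement is immediate from Lemma~\ref{lem:comp} and linearity, which is why it is phrased as a corollary.
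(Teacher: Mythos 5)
Your proof is correct and takes essentially the same route as the paper: both establish the arithmetic identity $L_H(m)=\sum_{H'\subseteq E(H)}(-1)^{e(H')}L_{H'}(N-m)$ by taking expectations of Lemma~\ref{lem:comp} over $G\sim G(n,m)$ (using that $G(n,m)^c\sim G(n,N-m)$), and then subtract this from the deterministic identity of Lemma~\ref{lem:comp}. The only difference is that you spell out the bookkeeping around the edge count $N-m$ more explicitly, which the paper leaves implicit.
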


\begin{proof}
This follows easy from Lemma~\ref{lem:comp} by linearity.  Indeed, by taking expectation (with $G\sim G(n,e(G))$) on both sides of~\eqr{incomp} we obtain that
\[
L_{H}(e(G))\, =\, \sum_{H'\subseteq E(H)}(-1)^{e(H')}L_{H'}(e(G^c))\, .
\]
Subtracting this from~\eqr{incomp} gives the required identity.
\end{proof}

\section{Martingale deviation inequalities}\label{sec:ineqs}

In this section we state the Hoeffding-Azuma inequality~\cite{Azuma,Hoeff} which bounds the probability of martingale deviations.  The particular form of the Hoeffding-Azuma inequality we shall use is stated as Corollary~\ref{cor:HA}.

For certain key results, including our main theorem, we need to use an inequality of Freedman~\cite{F} instead. Freedman's inequality gives significantly stronger bounds in certain contexts; in particular when the martingale increments, $X_i$, have conditional second moments, $\Ex{X_i^2|\F_{i-1}}$, much smaller than $\|X_i\|_{\infty}^2$.

We begin with the Hoeffding-Azuma inequality and its corollary.  The corollary is an application of the inequality to functions $f(G_m)$, where $G_m\sim G(n,m)$.

Let $(S_n)_{n \ge 0}$ be a martingale with respect to a filtration $(\F_n)_{n \ge 0}$.  Write $X_i = S_i - S_{i-1}$, $i \ge 1$ for its \emph{increments} and note that $\Ex{X_i | \F_{i-1}} = 0$ for all $i \ge 1$.

\begin{lem}[Hoeffding-Azuma inequality]\label{lem:HA}
Let $(S_m)_{m=0}^{M}$ be a martingale with increments $(X_i)_{i=1}^{M}$, and let $c_i=\|X_i\|_{\infty}$ for each $1\le i\le M$.
Then, for each $a>0$, 
\[
\pr{S_M-S_0\, >\, a}\, \le \, \exp\left(\frac{-a^2}{2\sum_{i=1}^{M}c_i^2}\right)\, .
\]
Furthermore, the same bound holds for $\pr{S_M-S_0\, <\, -a}$.
\end{lem}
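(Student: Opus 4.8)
The plan is to use the standard exponential-moment (Chernoff) method, reducing the two tail bounds to a single estimate on the conditional moment generating function of each martingale increment. We may assume each $c_i < \infty$, since otherwise $\sum_{i=1}^{M} c_i^2 = \infty$ and the claimed bound is the trivial bound $1$. First I would fix $\lambda > 0$ and apply Markov's inequality to the nonnegative random variable $e^{\lambda(S_M - S_0)}$, obtaining $\pr{S_M - S_0 > a} \le e^{-\lambda a}\,\Ex{e^{\lambda(S_M - S_0)}}$.

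Next I would control $\Ex{e^{\lambda(S_M - S_0)}}$ by peeling off increments one at a time. Writing $S_M - S_0 = \sum_{i=1}^{M} X_i$ and conditioning on $\F_{M-1}$, and using that $S_{M-1} - S_0$ is $\F_{M-1}$-measurable, we get $\Ex{e^{\lambda(S_M - S_0)}} = \Ex{e^{\lambda(S_{M-1} - S_0)}\,\Ex{e^{\lambda X_M}\mid \F_{M-1}}}$. The key step is Hoeffding's lemma: if $X$ satisfies $\Ex{X} = 0$ and $|X| \le c$ almost surely, then $\Ex{e^{\lambda X}} \le e^{\lambda^2 c^2/2}$; this is the one place with genuine content, and it follows from bounding $e^{\lambda x}$ by its chord on $[-c,c]$ (by convexity) and then a second-order Taylor estimate in $\lambda$ for the resulting function. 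Applied conditionally — using $\Ex{X_i \mid \F_{i-1}} = 0$ and $\|X_i\|_\infty \le c_i$ — it yields $\Ex{e^{\lambda X_i}\mid \F_{i-1}} \le e^{\lambda^2 c_i^2/2}$. Substituting and iterating down to $i = 1$ gives $\Ex{e^{\lambda(S_M - S_0)}} \le \exp\big(\tfrac{\lambda^2}{2}\sum_{i=1}^{M} c_i^2\big)$, hence $\pr{S_M - S_0 > a} \le \exp\big(-\lambda a + \tfrac{\lambda^2}{2}\sum_{i=1}^{M} c_i^2\big)$.

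Finally I would optimise over $\lambda > 0$: the exponent is minimised at $\lambda = a / \sum_{i=1}^{M} c_i^2$, which produces the stated bound $\pr{S_M - S_0 > a} \le \exp\big(-a^2/(2\sum_{i=1}^{M} c_i^2)\big)$. The lower-tail statement follows immediately by applying the same argument to the martingale $(-S_m)_{m=0}^{M}$, whose increments $-X_i$ still satisfy $\|-X_i\|_\infty = c_i$. Since this is a classical inequality, there is no real obstacle; the only step requiring care is Hoeffding's lemma, and even that is routine.
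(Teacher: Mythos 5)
Your proof is correct and is the standard Chernoff/Hoeffding's-lemma argument for the Azuma--Hoeffding inequality. The paper does not prove this lemma itself (it simply cites Azuma and Hoeffding), so there is no in-paper proof to compare against; your argument is the classical one and would serve as a faithful self-contained proof.
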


Let us write $\mathcal{G}_{n,m}$ for the family of graphs with $n$ vertices and $m$ edges.  One may think of $\mathcal{G}_{n,m}$ as endowed with an edit distance, in which graphs which differ in two edges, $G$ and $G\setminus \{e_i\}\cup \{e_j\}$ for example, have distance $1$.  It is then natural to say a function $f:\mathcal{G}_{n,m}\to \mathbb{R}$ is \emph{$C$-Lipschitz}, if $|f(G)-f(G')|\le C$ for all pairs of adjacent graphs $G,G'$.  

Given a function $\psi:E(K_n)\to \mathbb{R}^{+}$, let us say that a function $f:\mathcal{G}_{n,m}\to \mathbb{R}$ is \emph{$\psi$-Lipschitz} if for every adjacent pair of graphs $G,G'\in \mathcal{G}_{n,m}$ we have
\[
\big|\, f(G)\, -\, f(G')\, \big|\, \le\, \psi(e_i)\, +\, \psi(e_j)\, ,
\]
where $G\bigtriangleup G'=\{e_i,e_j\}$.

\begin{cor}\label{cor:HA} Given $\psi:E(K_n)\to \mathbb{R}^+$ and a $\psi$-Lipschitz function $f:\mathcal{G}_{n,m}\to \mathbb{R}$, we have
\[
\pr{f(G_m)\, -\,\Ex{f(G_m)}\, \ge \, a}\, \le\, \exp\left(\frac{-a^2}{8\|\psi\|_2^2}\right)
\]
for all $a\ge 0$, where $\|\psi\|_2^2:=\sum_{e\in E(K_n)}\psi(e)^2$.

Furthermore, the same bound holds for $\pr{f(G_m)\, -\,\Ex{f(G_m)}\, \le \, -a}$.
\end{cor}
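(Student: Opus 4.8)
The plan is to reduce to the Hoeffding--Azuma inequality, Lemma~\ref{lem:HA}, applied to a Doob martingale obtained by exposing the edges of $K_n$ one at a time, but in a \emph{carefully chosen order}. Enumerate $E(K_n)$ as $f_1,\dots,f_N$ so that $\psi$ is non-increasing along this list, i.e.\ $\psi(f_1)\ge \psi(f_2)\ge\dots\ge\psi(f_N)$. Draw $G_m\sim G(n,m)$, let $\mathcal F_k$ be the $\sigma$-algebra generated by the indicators $\mathbf 1[f_1\in G_m],\dots,\mathbf 1[f_k\in G_m]$, and set $S_k:=\Ex{f(G_m)\mid \mathcal F_k}$. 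Then $(S_k)_{k=0}^N$ is a martingale with $S_0=\Ex{f(G_m)}$ and $S_N=f(G_m)$, since the status of all $N$ edges determines $G_m$.

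The crux is to bound the increments $X_k=S_k-S_{k-1}$ in $\|\cdot\|_\infty$. Conditionally on $\mathcal F_{k-1}$, the new information in $\mathcal F_k$ is a single bit, so $S_k$ takes (at most) two values, say $S_k^+$ when $f_k\in G_m$ and $S_k^-$ when $f_k\notin G_m$, and $S_{k-1}$ is a convex combination of these; hence $|X_k|\le |S_k^+-S_k^-|$. Now, conditionally on $\mathcal F_{k-1}$ and on the status of $f_k$, the remaining edges of $G_m$ form a uniformly random subset of $\{f_{k+1},\dots,f_N\}$, of size one larger in the ``$f_k\notin G_m$'' case than in the ``$f_k\in G_m$'' case. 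I will couple the two conditional laws of $G_m$ by realising the larger subset as the smaller one together with one further edge $g$ chosen uniformly from the not-yet-selected elements of $\{f_{k+1},\dots,f_N\}$; a short counting check shows this has the correct marginals. Under this coupling the two copies of $G_m$, call them $G^+$ and $G^-$, have the same $m$ edges except that $G^+$ contains $f_k$ where $G^-$ contains $g$, so they are adjacent in $\mathcal G_{n,m}$; the $\psi$-Lipschitz hypothesis then gives $|f(G^+)-f(G^-)|\le \psi(f_k)+\psi(g)$. Taking expectations over the coupling,
\[
|S_k^+-S_k^-|\ \le\ \psi(f_k)\,+\,\max_{j>k}\psi(f_j)\ =\ \psi(f_k)+\psi(f_{k+1}),
\]
where the last equality uses the non-increasing ordering (with the convention $\psi(f_{N+1})=0$). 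For an arbitrary ordering one would be stuck with $\psi(f_k)+\max_j\psi(f_j)$, which is far too weak — the ordering is the whole point.

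Setting $c_k:=\psi(f_k)+\psi(f_{k+1})$, which bounds $\|X_k\|_\infty$, we have $c_k^2\le 2\psi(f_k)^2+2\psi(f_{k+1})^2$, hence $\sum_{k=1}^N c_k^2\le 4\sum_{e\in E(K_n)}\psi(e)^2=4\|\psi\|_2^2$. Lemma~\ref{lem:HA} applied to $(S_k)_{k=0}^N$ (which remains valid when the $c_i$ are merely upper bounds for $\|X_i\|_\infty$, since the bound is monotone in them) then gives
\[
\pr{f(G_m)-\Ex{f(G_m)}\ge a}\ \le\ \exp\!\left(\frac{-a^2}{2\sum_k c_k^2}\right)\ \le\ \exp\!\left(\frac{-a^2}{8\|\psi\|_2^2}\right),
\]
which is the claimed bound (trivially true when $a=0$). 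The lower-tail statement follows by applying the same argument to $-f$, which is again $\psi$-Lipschitz.

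The main obstacle is the increment estimate: one must (i) correctly identify the conditional law of the unrevealed part of $G_m$ given $\mathcal F_{k-1}$ and the status of $f_k$, (ii) build the present/absent coupling so that the two graphs differ in precisely one pair of edges, and (iii) notice that processing the edges in order of decreasing $\psi$ is exactly what converts the crude $\max_j\psi(f_j)$ into $\psi(f_{k+1})$ and hence produces the clean $\|\psi\|_2^2$ in the denominator. Everything else is routine bookkeeping.
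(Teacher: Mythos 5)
Your proof is correct and follows essentially the same route as the paper: order the edges so that $\psi$ is non-increasing, take the edge-exposure Doob martingale, bound each increment by a one-edge-swap coupling of the two conditional laws of $G_m$, and use the ordering to dominate $\psi$ of the swapped-in edge by $\psi$ of the just-revealed one before applying Hoeffding--Azuma. The only cosmetic difference is that you record the increment bound as $\psi(f_k)+\psi(f_{k+1})$ and then expand $c_k^2\le 2\psi(f_k)^2+2\psi(f_{k+1})^2$, whereas the paper uses $2\psi(e_i)$ directly; both give $\sum c_k^2\le 4\|\psi\|_2^2$.
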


\begin{remark} While we include a proof of this corollary for completeness, we do not claim that it is an original result.  The statement is very close in spirit to that of McDiarmid's concentration inequality~\cite{Colin}, although in a slightly different setting, as we do not have independence.  See also Warnke~\cite{Lutz}, where generalisations of McDiarmid's inequality are proved, including one where the independence condition may be weakened.\end{remark}

\begin{proof} Let $e_1,\dots ,e_N$ be an ordering of the edges of $K_n$ in which $\psi$ is decreasing.  Consider the martingale
\[
Z_i\, =\, \Ex{f(G_m)\, \big|\, G_m\cap\{e_1,\dots ,e_i\}}\, ,\vspace{0.1cm}
\]
where the conditioning indicates that we reveal the first $i$ edges in the ordering. Observe that $Z_0=\Ex{f(G_m)}$ and $Z_N=f(G_m)$.  The result will follow immediately from the Hoeffding-Azuma inequality provided we prove that
\eq{incbound}
\big|Z_{i}\, -\, Z_{i-1}\big|\, \le\, 2\psi(e_i)\qquad \text{almost surely}.
\eqe
Let $G_{-}:=G_m\cap \{e_1,\dots ,e_{i-1}\}$ and let us set $m_{-}\, =\, |G_{-}|$ and $m'=m-m_{-}-1$.  We may generate $G_m$ as follows.  Let $J$ be a uniformly random subset of $\{i+1,\dots ,N\}$ of cardinality $m'$, and let $k$ be chosen uniformly in $\{i+1,\dots ,N\}\setminus J$.  We promise that $G_m$ will be given by either
\eq{case1}
G_{-}\, \cup \, \{e_i\}\, \cup \{e_j:j\in J\}\, =: \, G_{-,i,J}
\eqe
if $e_i \in G_m$ or by
\eq{case2}
G_{-}\, \cup \, \{e_k\}\, \cup \{e_j:j\in J\}\, =:\, G_{-,k,J}\,
\eqe
if $e_i \notin G_m$.  With this in mind, we have
\begin{align*}
& \Ex{f(G_m) \, \big|\, G_m\cap\{e_1,\dots ,e_i\}}\\ 
& = \, 1_{e_{i}\in G_m}\mathbb{E}_J[f(G_{-,i,J})|G_{-}]\, +\, 1_{e_{i}\not\in G_m}\mathbb{E}_{J,k}[f(G_{-,k,J})|G_{-}]\\
& =\, \mathbb{E}_{J,k}[f(G_{-,k,J})|G_{-}]\, +\, 1_{e_{i}\in G_m} \mathbb{E}_{J,k}[f(G_{-,i,J})-f(G_{-,k,J})|G_{-}]
\end{align*}
and 
\begin{align*}
&  \Ex{f(G_m) \, \big|\, G_m\cap\{e_1,\dots ,e_{i-1}\}} \\
 & =\, \pr{e_{i}\in G_m|G_{-}}\mathbb{E}_J[f(G_{-,i,J})|G_{-}]  +\, \pr{e_{i}\not\in G_m|G_{-}}\mathbb{E}_{J,k}[f(G_{-,k,J})|G_{-}]\\
& =\,  \mathbb{E}_{J,k}[f(G_{-,k,J})|G_{-}]\, +\, \pr{e_{i}\in G_m|G_{-}} \mathbb{E}_{J,k}[f(G_{-,i,J})-f(G_{-,k,J})|G_{-}]\, .
\end{align*}
It follows that
\[
Z_{i}\, -\, Z_{i-1}\, =\, \mathbb{E}_{J,k}[f(G_{-,i,J})-f(G_{-,k,J})|G_{-}] (1_{e_{i}\in G_m}-\pr{e_{i}\in G_m|G_{-}})\, .
\]
Since $|1_{e_{i}\in G_m}-\pr{e_{i}\in G_m|G_{-}}|\le 1$, we obtain
\[
|Z_{i}\, -\, Z_{i-1}|\, \le\, \big| \mathbb{E}_{J,k}[f(G_{-,i,J})-f(G_{-,k,J})|G_{-}] \big|\, \le\, \mathbb{E}_{J,k}\big[|f(G_{-,i,J})-f(G_{-,k,J})| \big|G_{-}\big]\, .
\]
and since $|f(G_{-,i,J}) - f(G_{-,k,J})| \le \phi(e_i) + \phi(e_k) \le 2 \phi(e_i)$ we obtain~\eqr{incbound}.
\end{proof}

We now state Freedman's inequality, and the related converse inequality. 

Probabilistic intuition would suggest that deviation probabilities ought to be more closely connected to the second moment of the increments $X_i$ than to $\|X_i\|_{\infty}$.  Freedman's inequality~\cite{F} essentially allows us to replace $\|X_i\|_{\infty}^2$ by $\Ex{X_i^2|\F_{i-1}}$, the conditional second moment.

\begin{lem}[Freedman's inequality]\label{lem:F} Let $(S_m)_{m=0}^{M}$ be a martingale with increments $(X_i)_{i=1}^{M}$ with respect to a filtration $(\F_m)_{m=0}^{M}$, let $R\in \mathbb{R}$ be such that $\max_i |X_i|\le R$ almost surely, and let 
\[
V(m):=\sum_{i=1}^{m}\,\, \Ex{\,  |X_i|^2\, \big|\, \F_{i-1}}\, .
\]  
Then, for every $\alpha,\beta >0$, we have
\[
\mathbb{P}\big(S_m-S_0\, \ge\,  \alpha\quad \text{and}\quad V(m)\le \beta \quad \text{for some } m\big)\, \le\, \exp\left(\frac{-\alpha^2}{2(\beta+R\alpha)}\right)\, .
\]
\end{lem}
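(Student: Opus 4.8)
The plan is to prove Lemma~\ref{lem:F} via the classical exponential supermartingale method. Introduce a parameter $\theta>0$, to be optimised only at the very end, and set $\psi(\theta):=(e^{\theta R}-1-\theta R)/R^2$. The first ingredient is the elementary pointwise bound
\[
e^{\theta x}\,\le\,1+\theta x+\psi(\theta)x^2\qquad\text{for every }x\le R,
\]
which is immediate from the fact that $y\mapsto(e^{y}-1-y)/y^2$ is nondecreasing on $\mathbb R$, applied at $y=\theta x\le\theta R$. Applying this with $x=X_m$, taking the conditional expectation given $\F_{m-1}$, and using $\mathbb E[X_m\mid\F_{m-1}]=0$ together with $1+u\le e^{u}$, we get
\[
\mathbb E\big[e^{\theta X_m}\mid\F_{m-1}\big]\,\le\,1+\psi(\theta)\,\mathbb E[X_m^2\mid\F_{m-1}]\,\le\,\exp\big(\psi(\theta)\,\mathbb E[X_m^2\mid\F_{m-1}]\big).
\]

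Next I would define $M_m:=\exp\big(\theta(S_m-S_0)-\psi(\theta)V(m)\big)$, so that $M_0=1$. Since the increment $V(m)-V(m-1)=\mathbb E[X_m^2\mid\F_{m-1}]$ is $\F_{m-1}$-measurable, the previous display shows that $\mathbb E[M_m\mid\F_{m-1}]\le M_{m-1}$; hence $(M_m)$ is a nonnegative supermartingale with $M_0=1$. Because $V$ is nondecreasing, the (random) set $\{m:V(m)\le\beta\}$ is an initial segment $\{0,1,\dots,\sigma\}$ for a stopping time $\sigma$, and stopping $M$ at $\sigma$ keeps it a nonnegative supermartingale. On the event that $S_m-S_0\ge\alpha$ and $V(m)\le\beta$ hold simultaneously for some $m$, such an $m$ satisfies $m\le\sigma$, so $M_{m\wedge\sigma}=M_m=\exp(\theta(S_m-S_0)-\psi(\theta)V(m))\ge\exp(\theta\alpha-\psi(\theta)\beta)$; thus $\sup_j M_{j\wedge\sigma}\ge e^{\theta\alpha-\psi(\theta)\beta}$ on that event. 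Ville's maximal inequality for nonnegative supermartingales then yields
\[
\mathbb P\big(S_m-S_0\ge\alpha\text{ and }V(m)\le\beta\text{ for some }m\big)\,\le\,\exp\big(\psi(\theta)\beta-\theta\alpha\big).
\]

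It remains to optimise the exponent over $\theta>0$. The function $\theta\mapsto\psi(\theta)\beta-\theta\alpha$ has derivative $\beta(e^{\theta R}-1)/R-\alpha$, so its minimiser is $\theta=R^{-1}\log(1+R\alpha/\beta)$. Substituting this value, the desired bound $\psi(\theta)\beta-\theta\alpha\le-\alpha^2/(2(\beta+R\alpha))$ reduces, after writing $x=R\alpha/\beta$, to the scalar inequality $\log(1+x)\ge\frac{x}{1+x}+\frac{x^2}{2(1+x)^2}$, which follows by checking that its left side minus its right side vanishes at $x=0$ and has derivative $x^2/(1+x)^3\ge0$. (If one prefers to avoid the exact minimiser, the cruder estimate $\psi(\theta)\le\tfrac12\theta^2/(1-\theta R/3)$ combined with the choice $\theta=\alpha/(\beta+R\alpha)$ gives the same conclusion.)

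The only step requiring genuine care, as opposed to routine computation, is the transition from the supermartingale $(M_m)$ to the probability bound: the constraint $V(m)\le\beta$ effectively truncates the time horizon at the random time $\sigma$, so one must apply the maximal inequality to the stopped process $M_{\cdot\wedge\sigma}$ (equivalently, run $M$ until the first time both $S_m-S_0\ge\alpha$ and $V(m)\le\beta$ are satisfied). All the remaining pieces --- the pointwise exponential estimate, the conditional expectation computation, and the one-variable optimisation --- are standard.
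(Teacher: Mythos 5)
Your proof is correct. The paper does not reprove this lemma --- it cites it directly from Freedman's 1975 paper~\cite{F} --- and the exponential-supermartingale argument you give (the pointwise bound $e^{\theta x}\le 1+\theta x+\psi(\theta)x^2$ for $x\le R$, the nonnegative supermartingale $M_m=\exp(\theta(S_m-S_0)-\psi(\theta)V(m))$, stopping at the last time $V\le\beta$, Ville's maximal inequality, and optimising $\theta$ via $\theta^*=R^{-1}\log(1+R\alpha/\beta)$ with the scalar inequality $\log(1+x)\ge\tfrac{x}{1+x}+\tfrac{x^2}{2(1+x)^2}$) is precisely the classical route Freedman himself follows. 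One small point worth making explicit for the reader: $\sigma$ is indeed a stopping time because $V(m+1)$ is $\F_m$-measurable (each summand $\Ex{X_i^2\mid\F_{i-1}}$ with $i\le m+1$ is $\F_m$-measurable), so $\{\sigma=m\}=\{V(m)\le\beta\}\cap\{V(m+1)>\beta\}\in\F_m$; you have implicitly used this but it is the one place a careless reader might stumble. Everything else, including the parenthetical alternative with $\psi(\theta)\le\tfrac12\theta^2/(1-\theta R/3)$ and $\theta=\alpha/(\beta+R\alpha)$, checks out.
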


In addition, Freedman~\cite{F} proved that this exponent is often close to best possible.  Before stating this converse, let us restate the above inequality.  Define the stopping time $m_\alpha$ to be the least $m$ such that $S_m> S_0+\alpha$, and define
\[
T_{\alpha}\, :=\, V(m_{\alpha})\, .
\]
The above inequality states that
\[
\pr{T_{\alpha}\le \beta}\, \le\, \exp\left(\frac{-\alpha^2}{2(\beta+R\alpha)}\right)\, .
\]
Freedman's converse inequality~\cite{F} is as follows.

\begin{lem}[Converse Freedman inequality] \label{lem:CF}
Let $(S_m)_{m=0}^{M}$ be a martingale with increments $(X_i)_{i=1}^{M}$ with respect to a filtration $(\F_m)_{m=1}^{M}$, let $R$ be such that $\max_i|X_i|\le R$ almost surely, and let $T_{\alpha}$ be as defined above.
Then, for every $\alpha,\beta >0$, we have
\[
\pr{T_{\alpha}\le \beta}\, \ge\, \frac{1}{2} \exp\left(\frac{-\alpha^2(1+4\delta)}{2\beta}\right),
\]
where $\delta>0$ is minimal such that $\beta/\alpha\ge 9R\delta^{-2}$ and $\alpha^2/\beta \ge 16\delta^{-2}\log(64\delta^{-2})$.
\end{lem}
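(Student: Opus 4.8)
The statement is Freedman's converse tail bound~\cite{F} with the constants made explicit, so the shortest honest route is to cite~\cite{F} and note that the constants there may be taken as above; for completeness I would sketch the argument, which runs by an exponential change of measure. Assume $S_0=0$, fix $\lambda>0$ to be chosen, and set $W_m:=\exp\!\big(\lambda S_m-\sum_{i=1}^m\log\Ex{e^{\lambda X_i}\mid\F_{i-1}}\big)$; since $\max_i|X_i|\le R$ and $\Ex{X_i\mid\F_{i-1}}=0$, this is a nonnegative martingale with $W_0=1$ and every factor $\Ex{e^{\lambda X_i}\mid\F_{i-1}}\ge1$. Let $\sigma$ be the first $m$ with $V(m+1)>\beta$, let $\tau:=m_\alpha\wedge\sigma$ (all stopping times capped at the horizon $M$), and note that on $A:=\{m_\alpha\le\sigma\}$ we have $\tau=m_\alpha$, $\alpha\le S_\tau\le\alpha+R$ and $V(m_\alpha)\le V(\sigma)\le\beta$, so $A\subseteq\{T_\alpha\le\beta\}$ and it suffices to bound $\pr{A}$ from below (using that $V(\cdot)$ is predictable, nondecreasing, with jumps at most $R^2$). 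Define $\widetilde{\mathbb P}$ on $\F_\tau$ by $d\widetilde{\mathbb P}=W_\tau\,d\mathbb P$, with expectation $\widetilde{\mathbb E}$.

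First I would record, valid once $\lambda R$ is below an absolute constant, the one-step Taylor estimates (using $\Ex{X_i^2\mid\F_{i-1}}\le R^2$): $\lambda(1-c\lambda R)\Ex{X_i^2\mid\F_{i-1}}\le\widetilde{\mathbb E}[X_i\mid\F_{i-1}]\le\lambda(1+c\lambda R)\Ex{X_i^2\mid\F_{i-1}}$, together with $\widetilde{\mathbb E}[X_i^2\mid\F_{i-1}]\le(1+c\lambda R)\Ex{X_i^2\mid\F_{i-1}}$ and $\log\Ex{e^{\lambda X_i}\mid\F_{i-1}}\ge\tfrac12\lambda^2(1-c\lambda R)\Ex{X_i^2\mid\F_{i-1}}$. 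Thus under $\widetilde{\mathbb P}$ the process $S$ acquires a predictable drift $A_m=\sum_{i\le m}\widetilde{\mathbb E}[X_i\mid\F_{i-1}]$ with $\lambda(1-c\lambda R)V(m)\le A_m\le\lambda(1+c\lambda R)V(m)$, while $S-A$ is a $\widetilde{\mathbb P}$-martingale with increments $\le2R$ and predictable variation $\le(1+c\lambda R)\beta$ at time $\tau$. Now take $\lambda=(1+2\delta)\alpha/\beta$, so that $\lambda R\lesssim\delta^2$ by the hypothesis $\beta/\alpha\ge9R\delta^{-2}$ and $A_\sigma\ge(1+\delta)\alpha$; applying Freedman's \emph{upper} bound (Lemma~\ref{lem:F}) to $\pm(S-A)$ under $\widetilde{\mathbb P}$, whose side condition $V'(m)\le(1+c\lambda R)\beta$ is automatic for $m\le\tau$, gives $\widetilde{\mathbb P}\big(\exists m\le\tau:|S_m-A_m|>\delta\alpha\big)\le2\exp\!\big(-\tfrac{\delta^2\alpha^2}{2(1+o(1))\beta}\big)\le\tfrac12$ by the hypothesis $\alpha^2/\beta\ge16\delta^{-2}\log(64\delta^{-2})$. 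Since $S_\sigma>\alpha$ implies $m_\alpha\le\sigma$, this yields $\widetilde{\mathbb P}(A)\ge\tfrac12$; and on the bulk of $A$ the bound $|S_{m_\alpha}-A_{m_\alpha}|\le\delta\alpha$ combined with $S_{m_\alpha}\ge\alpha$ and $A_{m_\alpha}\le\lambda(1+c\lambda R)V(m_\alpha)$ forces $V(m_\alpha)\ge(1-O(\delta))\beta$.

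Finally I would translate back: $\pr{T_\alpha\le\beta}\ge\widetilde{\mathbb E}[W_\tau^{-1}1_A]\ge\widetilde{\mathbb P}(A)\exp\!\big(\widetilde{\mathbb E}[\log W_\tau^{-1}\mid A]\big)$ by Jensen, and on $A$ one has $\log W_\tau^{-1}=-\lambda S_{m_\alpha}+\sum_{i\le m_\alpha}\log\Ex{e^{\lambda X_i}\mid\F_{i-1}}\ge-\lambda(\alpha+R)+\tfrac12\lambda^2(1-c\lambda R)V(m_\alpha)$; inserting $\widetilde{\mathbb E}[V(m_\alpha)\mid A]\ge(1-O(\delta))\beta$ and $\lambda=(1+2\delta)\alpha/\beta$ and using $R\le\delta^2\beta/(9\alpha)$ collapses the exponent to $-\tfrac{\alpha^2}{2\beta}(1+O(\delta))\ge-\tfrac{\alpha^2(1+4\delta)}{2\beta}$ for $\delta$ small. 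The step I expect to be the main obstacle is exactly this last one: crudely bounding $W_\tau^{-1}\ge e^{-\lambda(\alpha+R)}$ by discarding the nonnegative sum would cost a spurious factor of $2$ in the exponent (giving only $e^{-\alpha^2/\beta}$), so one genuinely needs the concentration $V(m_\alpha)\approx\beta$ under the tilted measure, and it is the simultaneous requirement that the maximal/Freedman estimate of the second paragraph hold \emph{and} that this concentration be tight which pins down the two explicit thresholds in the definition of $\delta$ (the first controlling $\lambda R$, the second the Gaussian-type tail $e^{-\delta^2\alpha^2/2\beta}$ of $S-A$).
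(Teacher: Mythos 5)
The paper supplies no proof of Lemma~\ref{lem:CF}: it is presented verbatim as ``Freedman's converse inequality~\cite{F},'' so the intended argument is exactly the citation you propose, and on that point your proposal matches the paper precisely. Your supplementary exponential-tilting sketch is not in the paper and is not required; it is broadly on the right lines, though (as you yourself flag) the step where $V(m_\alpha)$ must be pinned near $\beta$ on the bulk of $A$ under the tilted law, together with the unaddressed edge case in which $\sigma$ is never attained because the total predictable variation falls short of $\beta$, would both need genuine care to turn the sketch into a rigorous proof with the explicit thresholds in the definition of $\delta$.
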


From the point of view of our present applications, the essential content of these inequalities is that
\[
\pr{T_{\alpha}\le \beta}\, =\, \exp\left(\frac{-\alpha^2(1+o(1))}{2\beta}\right)\, 
\]
when $\alpha R \ll \beta \ll \alpha^2$.  See Section 3.4 of \cite{Colin} for other martingale inequalities in a similar spirit.

\subsection{A bound for the hypergeometric distribution}

The hypergeometric distribution represents the number of successes in a series of draws without replacement.  Given $N,K,m$, a random varianble $S_m$ has hypergeometric distribution with parameters $N,K,m$ if $\pr{S_m=k} = \binom{K}{k}\binom{N-K}{m-k}/\binom{N}{m}$.  If $\mu = \mathbb{E}[S_m]= Km/N$ we have the following bounds on the upper tail:
\eq{hyperupper}
\pr{S_m\, \ge \, \mu+a}\, \le\,\exp\left(\frac{-a^2}{2\mu+2a/3}\right)\, \le\,\exp\left(\frac{-a^2}{2\mu+a}\right)\,
\eqe
and the lower tail:
\eq{hyperlower}
\pr{S_{m}\, \le \, \mu -a}\, \le\,\exp\left(\frac{-a^2}{2\mu}\right) \, ,
\eqe
which were proved in~\cite{Hoeff}.  They also appear in Theorem 2.10 of~\cite{JLR}.

\section{Degrees and codegrees in $G(n,m)$}\label{sec:degrees}

There are many results on degree sequences of random graphs, for more information see the articles of Bollob\'as~\cite{Bdeg}, McKay and Wormald~\cite{MW} and Liebenau and Wormald~\cite{LW} and the refernces therein.  

We are not aware of a direct reference for the degree and codegree bounds that we require.  In this section we prove bounds on the probability of certain events related to degrees and codegrees in the model $G(n,m)$.  All the proofs are straightforward applications of Corollary~\ref{cor:HA}, a form of the Hoeffding-Azuma inequality.

The three degree deviation results we prove concern the largest degree deviation, the sum of fourth powers of degree deviations and the sum of squares of degree deviations.  After stating these results in Section~\ref{ssec:degrees}, we state the analogous codegree results in Section~\ref{ssec:codegrees}.  We make no effort to optimise the constants in any of these results.

Although it differs from the standard notation, we write $d_u(G)$ for the degree of a vertex $u$ in a graph $G$.  In the case of $G_m\sim G(n,m)$ the expected degree of $u$ is $2m/n$, and so
\eq{Dudef}
D_u(G_m)\, :=\, d_u(G_m)\, -\, \frac{2m}{n}
\eqe
is the deviation of the degree of $u$ from its mean.  We shall also consider codegrees, writing $d_{u,w}(G)$ for the number of common neighbours of vertices $u$ and $w$ in a graph $G$, and 
\eq{Duwdef}
D_{u,w}(G_m)\, :=\, d_{u,w}(G_m)\, -\, \frac{(n-2)(m)_2}{(N)_2}
\eqe
for the deviation of $d_{u,w}(G_m)$ from its mean.

\subsection{Degrees}\label{ssec:degrees}

We prove bounds related to the maximum degree deviation (Lemma~\ref{lem:maxdegdev}), the sum of fourth powers of degree deviations (Lemma~\ref{lem:deg4}), and the sum of squares of degree deviations (Lemma~\ref{lem:deg2}).

We first state the result about the maximum degree deviation.  Let 
\[
D_{\ourmax}(G_m)\, :=\, \max_{u}D_u(G_m)
\]
and
\[
D_{\ourmin}(G_m)\, :=\, \min_{u}D_{u}(G_m)\, .
\]

\begin{lem}\label{lem:maxdegdev} 
For all $b\ge \log{n}$, and all $m\le N$, we have
\[
\pr{D_{\ourmax}(G_m)\, >\, 4b^{1/2}t^{1/2}n^{1/2}\, +\,  4b}\, \le \, \exp(-b)\, .
\]
Furthermore, the same bound holds for the event $D_{\ourmin}(G_m)<-4b^{1/2}t^{1/2}n^{1/2}-4b$.
\end{lem}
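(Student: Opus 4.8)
The plan is to bound the deviation $D_u(G_m)$ for a single fixed vertex $u$ using Corollary~\ref{cor:HA}, and then take a union bound over the $n$ vertices. First I would fix $u$ and observe that $d_u(\cdot)$, viewed as a function on $\mathcal{G}_{n,m}$, is $\psi$-Lipschitz for the weight function $\psi(e) = 1_{u\in e}$: swapping one edge for another changes $d_u$ by at most the number of the two swapped edges incident to $u$, which is at most $\psi(e_i)+\psi(e_j)$. Since exactly $n-1$ edges of $K_n$ are incident to $u$, we have $\|\psi\|_2^2 = n-1$. Corollary~\ref{cor:HA} then gives
\[
\pr{|D_u(G_m)| \, >\, a}\, \le\, 2\exp\left(\frac{-a^2}{8(n-1)}\right)
\]
for all $a\ge 0$, using that $\Ex{d_u(G_m)} = 2m/n$ so that $D_u(G_m) = d_u(G_m) - \Ex{d_u(G_m)}$.

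Next I would choose $a = 4b^{1/2}t^{1/2}n^{1/2} + 4b$ and check that the resulting bound beats $\exp(-b)/n$, so that the union bound over $u$ closes. Since $m = \lfloor tN\rfloor \le tN = tn(n-1)/2$, we have $n-1 \le n$ and it suffices to show $a^2 \ge 8n(b + \log n)$ — then $\exp(-a^2/(8(n-1))) \le \exp(-a^2/(8n)) \le \exp(-b-\log n) = e^{-b}/n$, and summing over the $n$ vertices (and absorbing the factor $2$, which is harmless after adjusting constants, or noting $b\ge\log n \ge \log 2$ for $n$ large — one may simply track constants more carefully or assume $n$ large) gives the claim. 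To verify $a^2 \ge 8n(b+\log n)$: since $b\ge \log n$ we have $b+\log n \le 2b$, so it is enough that $a^2 \ge 16 nb$. Now $a \ge 4b^{1/2}t^{1/2}n^{1/2}$ contributes $16 b t n$ toward $a^2$, and this already handles the regime $t \ge 1$... but $t\le 1$, so instead I use $a \ge 4b$, giving $a^2 \ge 16 b^2 \ge 16 nb$ whenever $b\ge n$, while for $b < n$ I use... here one must be slightly careful. A cleaner route: $a^2 \ge (4b^{1/2}t^{1/2}n^{1/2})^2 + (4b)^2 = 16btn + 16b^2$. We want this $\ge 16 n b$, i.e.\ $tn + b \ge n$. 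If $b\ge n$ this is immediate; if $b < n$, then we need $tn \ge n - b$, which may fail for small $t$. So in fact the correct accounting must use the cross term or re-examine: $a^2 = 16btn + 32 b^{3/2}t^{1/2}n^{1/2} + 16b^2 \ge 16b^2 + 32b^{3/2}t^{1/2}n^{1/2}$, and actually the point is that when $tn$ is small the variance $n-1$ in Corollary~\ref{cor:HA} should really be replaced by a sharper bound of order $tn$ reflecting that $d_u(G_m)$ has variance $\Theta(tn(1-t))$; the Lipschitz argument above is wasteful.

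The main obstacle, therefore, is getting the right variance proxy in the sparse regime $t = o(1)$: a naive application of Corollary~\ref{cor:HA} with $\|\psi\|_2^2 = n-1$ only yields concentration on scale $\sqrt{n\log n}$, whereas the lemma claims scale $\sqrt{tn\log n}$ (the $b^{1/2}t^{1/2}n^{1/2}$ term). To fix this I would instead reveal edges one at a time in the Erd\H os--R\'enyi process and apply the Hoeffding--Azuma inequality (Lemma~\ref{lem:HA}) directly to the Doob martingale of $d_u(G_i)$, or equivalently note that $d_u(G_m)$ is hypergeometric with parameters $N, n-1, m$ and invoke the hypergeometric tail bounds~\eqr{hyperupper} and~\eqr{hyperlower}: with $\mu = (n-1)m/N \le 2t(n-1)/... $ wait, $\mu = (n-1)\lfloor tN\rfloor/N \le t(n-1) \le tn$, the upper tail~\eqr{hyperupper} gives $\pr{D_u(G_m) \ge a} \le \exp(-a^2/(2\mu+a)) \le \exp(-a^2/(2tn+a))$ and the lower tail~\eqr{hyperlower} gives $\pr{D_u(G_m)\le -a}\le \exp(-a^2/(2tn))$. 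Plugging $a = 4b^{1/2}t^{1/2}n^{1/2}+4b$: then $a^2 \ge 16 btn$ from the first term gives $a^2/(2tn) \ge 8b$, comfortably handling the lower tail, and for the upper tail $a^2/(2tn+a)$: the term $4b$ ensures $a^2 \ge 4b\cdot a$, so if $a \ge 2tn$ then $2tn + a \le 2a$ and $a^2/(2a) = a/2 \ge 2b$; if $a < 2tn$ then $2tn + a \le 4tn$ wait — better: $2tn + a \le 2tn + a$, and $a^2 = 16btn + (\text{nonneg}) + 16b^2 \ge 16btn \ge 8b\cdot 2tn \ge 8b(2tn+a)$ provided $a \le tn$; otherwise $a>tn$ so $2tn+a < 3a$ and $a^2/(3a) = a/3 \ge 4b/3 \ge b$. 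Either way $\pr{D_u(G_m)\ge a}\le e^{-b}$ after a harmless constant adjustment, and similarly for the lower tail; summing over $u$ using $b\ge\log n$ completes the proof. I would present the hypergeometric-tail version as it is cleanest, and only mention the martingale/Hoeffding--Azuma alternative in passing.
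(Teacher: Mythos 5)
Your final approach (hypergeometric tails~\eqr{hyperupper}--\eqr{hyperlower} plus a union bound over the $n\le \exp(b)$ vertices) is exactly the paper's proof, and your instinct to abandon the naive $\psi$-Lipschitz route in favour of a variance proxy of order $tn$ is the right one. One caution: the case analysis you wrote for $a^2/(2tn+a)\ge 2b$ contains a false intermediate inequality (the claimed chain ``$8b\cdot 2tn \ge 8b(2tn+a)$'' would require $a\le 0$); in fact no case split is needed, since with $a=4b^{1/2}t^{1/2}n^{1/2}+4b$ one has $a^2 = 16btn + 32b^{3/2}t^{1/2}n^{1/2}+16b^2$ and $2b(2tn+a)=4btn+8b^{3/2}t^{1/2}n^{1/2}+8b^2$, so $a^2\ge 2b(2tn+a)$ holds term by term and no constant adjustment is required.
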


\begin{proof} Fix a vertex $u\in V(G_m)$ and let $a=4b^{1/2}t^{1/2}n^{1/2}\, +\,  4b$.  The degree $d_u(G_{m})$ has hypergeometric distribution with parameters $N,n-1,m$.  By the bound~\eqr{hyperupper} on the tail of the hypergeometric distribution we have
\[
\pr{D_{u}(G_m)\, >\, a}\, \le\, \exp\left(\frac{-a^2}{2tn+a}\right) \, \le \, \exp(-2b)\, .
\]
A union bound over the $n\le \exp(b)$ vertices completes the proof of the main statement.  The lower tail bound follows by a similar argument.
\end{proof}

Our result on the sum of fourth powers of degree deviations is as follows.

\begin{lem}\label{lem:deg4} There is a constant $C>0$ such that, for all $b\ge n^{1/2}$, and all $m\le N$, we have
\[
\pr{\sum_{u\in V(G_m)}D_{u}(G_m)^4\, >\, Cb n^{2}\min\{b,n\} }\, \le \, \exp(-b)\, .
\]
\end{lem}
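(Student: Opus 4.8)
The plan is to bound $\sum_u D_u(G_m)^4$ via a union bound over vertices, using the fact that for a single vertex $u$ the degree $d_u(G_m)$ is hypergeometrically distributed with parameters $N, n-1, m$, so that $D_u(G_m)$ is tightly concentrated. First I would fix a vertex $u$ and apply the hypergeometric tail bounds~\eqr{hyperupper} and~\eqr{hyperlower}. These give, for $a\ge 0$,
\[
\pr{|D_u(G_m)|\, >\, a}\, \le\, 2\exp\left(\frac{-a^2}{2tn+a}\right)\, ,
\]
so that with $a = a_\lambda := \lambda^{1/2}t^{1/2}n^{1/2} + \lambda$ we obtain $\pr{|D_u(G_m)| > a_\lambda}\le 2\exp(-\lambda)$ for all $\lambda\ge 0$. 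In particular, for $\lambda \ge 4\log n$ (which is implied by $b\ge n^{1/2}$ for $n$ large, after adjusting constants) we may take a union bound over the $n$ vertices and a further union over dyadic scales $\lambda = 2^j \log n$ to conclude that, outside an event of probability at most $\exp(-b)$, \emph{every} vertex satisfies $|D_u(G_m)| \le a_\lambda$ with $\lambda$ comparable to $\max\{\log n, (\text{the scale at which }u\text{ sits})\}$.

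The key quantitative step is then to integrate these tail bounds to control the sum of fourth powers. Write $S := \sum_u D_u(G_m)^4$. For any threshold $M>0$,
\[
\pr{S > M}\, \le\, \pr{\max_u |D_u(G_m)|^4 > M/n}\, +\, \pr{S>M,\ \max_u|D_u(G_m)|^4\le M/n}\, ,
\]
but a cleaner route is a direct tail integration: for each $u$,
\[
\Ex{D_u(G_m)^4}\, =\, \int_0^\infty 4a^3\,\pr{|D_u(G_m)|>a}\,da\, \le\, \int_0^\infty 8a^3\exp\!\left(\frac{-a^2}{2tn+a}\right)da\, =\, O(t^2n^2)\, ,
\]
since the integrand is dominated by a Gaussian-type tail of variance $\Theta(tn)$ for $a\lesssim tn$ and by an exponential tail beyond. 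Hence $\Ex{S} = O(tn^2\cdot n) = O(n^3)$, and more importantly the truncated fourth moment on the event $\{\max_u|D_u|\le a_\lambda\}$ is $O(n^3 + n\,a_\lambda^4)$. Combining: on the good event of the previous paragraph, where simultaneously $\max_u|D_u(G_m)|\le a_b$ (with $\lambda = b$) and, say, $S$ is no larger than its typical value inflated by the same dyadic argument, we get $S = O(n\cdot a_b^4) = O(n(b^{1/2}t^{1/2}n^{1/2}+b)^4)$. Using $b\ge n^{1/2}$ and $t\le 1$ one checks $b^{1/2}t^{1/2}n^{1/2} \le b\cdot(tn/b)^{1/2}\cdot b^{-1/2}\cdot\ldots$; more simply, $(b^{1/2}t^{1/2}n^{1/2})^4 = b^2 t^2 n^2 \le b^2 n^2$ and $b^4 \le b^2 n^2$ is false in general — so here one must be careful, and the correct bound is $S = O(n\cdot(b^2t^2n^2 + b^4))$, and since $b\ge n^{1/2}$ forces $b^4 \le b^3 n$... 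I would instead phrase the target as $Cbn^2\min\{b,n\}$ and verify $n\,a_b^4 \le C b n^2\min\{b,n\}$ directly: when $b\le n$, $a_b \le 2b^{1/2}t^{1/2}n^{1/2}$ is false unless $b\le tn$, so split into $b\le tn$ (then $a_b \le 2b^{1/2}t^{1/2}n^{1/2}$, $n a_b^4 \le 16 b^2t^2n^3 \le 16bn^2\cdot bn$) and $b> tn$ (then $a_b\le 2b$, $na_b^4 \le 16nb^4$; and $b\le n$ gives $nb^4\le bn^2\cdot b^2 \le bn^2\cdot bn$). The case $b>n$ is handled by the $\min\{b,n\}=n$ branch similarly.

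The main obstacle I anticipate is \emph{not} the per-vertex tail estimate — that is immediate from the hypergeometric bounds already stated — but rather packaging the union bound over vertices \emph{and} over the continuum of relevant deviation scales so that one arrives at a clean single-exponential bound $\exp(-b)$ rather than $\exp(-b)\,\mathrm{poly}(n)$ or $\exp(-b/\mathrm{polylog})$. The hypothesis $b\ge n^{1/2}$ is exactly what makes this work: it dominates the $\log n$ from the vertex union bound and controls the arithmetic in the last step above. I would organize the scale union as a geometric sum $\sum_{j\ge 0}\exp(-2^j b')\le 2\exp(-b')$ with $b' = b - O(\log n) \ge b/2$, absorbing the polynomial factors into the exponent. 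A secondary technical point: one should confirm that the contribution to $S$ from vertices with $|D_u(G_m)|$ of moderate size (below the top dyadic scale) is also $O(bn^2\min\{b,n\})$ deterministically once the maximum is controlled — this follows since there are at most $n$ such vertices each contributing at most $a_b^4$.
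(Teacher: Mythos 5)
Your proposal has a genuine gap: it reduces the control of $\sum_u D_u(G_m)^4$ to a bound on $\max_u |D_u(G_m)|$, and this is too lossy by a polynomial factor. To get $\max_u |D_u(G_m)|\le a$ with failure probability $\exp(-b)$ via the per-vertex hypergeometric tail plus a union bound over $n$ vertices, one is forced to take $a\gtrsim \sqrt{btn}+b$, and then the deterministic bound $\sum_u D_u^4 \le n(\max_u|D_u|)^4$ only gives $\sum_u D_u^4 \lesssim n(btn)^2 = b^2t^2n^3$ in the regime $b\le tn$. For $t$ bounded away from $0$ and $b\le n$ this exceeds the claimed $Cb^2n^2$ by a factor of order $tn$. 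Indeed, in your final verification you wrote ``$n a_b^4\le 16b^2t^2n^3\le 16bn^2\cdot bn$'', but $16bn^2\cdot bn = 16b^2n^3$, while the stated target is $Cbn^2\min\{b,n\}=Cb^2n^2$ for $b\le n$; the substitution of $bn$ for $\min\{b,n\}$ hides the loss. (A concrete instance: $b=n^{1/2}$, $t=1/2$; then $a\approx n^{3/4}$, $na^4\approx n^4$, but the target is $\approx n^3$.)

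The missing idea is a dyadic level-set argument that bounds, at each deviation scale, the \emph{number} of vertices whose deviation exceeds that scale, rather than just the global maximum. This is what the paper's proof does: it chooses scales $a_j \approx 2^{-j/2}b^{1/2}n^{1/2}+2^{-j/8}n^{5/8}$ (decreasing in $j$) and shows that with probability $1-\exp(-b)$, at most $O(2^j)$ vertices have $|D_u(G_m)|>a_j$. The key step is that the event that all $2^j$ vertices in a set $U$ have $D_u>a_j$ implies $\sum_{u\in U}D_u(G_m)>2^ja_j$, and one applies the Hoeffding--Azuma-type inequality (Corollary~\ref{cor:HA}) to the function $\sum_{u\in U}D_u(G_m)$, whose squared Lipschitz norm is only $O(2^jn)$, not $O(2^{2j}n)$. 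This makes the deviation probability of order $\exp(-2^ja_j^2/n)$, small enough to absorb the $\binom{n}{2^j}$ union bound. Summing $\sum_j 2^{j+1}a_j^4$ over the resulting level sets gives the correct order $O(b^2n^2)$ for $b\le n$ (with a separate truncation argument for $b>n$). The joint control of sums of deviations over subsets, rather than per-vertex tail plus union bound, is precisely what your proposal lacks; the hypergeometric single-vertex bound cannot produce it, and your expectation estimate $\Ex{S}=O(t^2n^3)$, while correct in order, only yields a polynomial Markov bound rather than the required exponential tail.
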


\begin{proof}
Fix $b\ge n^{1/2}$.  We define a family of events related to degree deviations.  For each $j\ge 1$, let
\[
a_j\, :=\, 2^{3-j/2} b^{1/2}n^{1/2}\, +\, 2^{3-j/8}n^{5/8}\, .
\]
We may immediately note that\vspace{-0.1cm}
\eq{aj2}
a_j^2\, \ge \, 2^{6-j}bn\, +\, 2^{6-j/4} n^{5/4}
\eqe
and that
\eq{aj4}
a_j^4\, \le\, 2^{15-2j}b^2n^2\, +\, 2^{15-j/2}n^{5/2}\, .\vspace{0.1cm}
\eqe
For each $j\ge 1$ and for each set $U\subseteq V(G_m)$ of cardinality $2^j$ we define $E^{+}_{j,U}$ to be the event that
\[
D_u(G_m)\, >\, a_j \qquad \text{for all } u\in U\, ,
\]
and $E^{-}_{j,U}$ to be the event\vspace{-0.1cm}
\[
D_u(G_m)\, <\, -a_j \qquad \text{for all } u\in U\, .
\]

\noindent\textbf{Claim:} $\pr{E^{+}_{j,U}}\, \le\, \exp(-2b-2^{1+3j/4}n^{1/4})$.\vspace{0.1cm}

\noindent \textbf{Proof of Claim:} Define 
\[
f(G_m)\, =\, \sum_{u\in U} D_{u}(G_m)\, .
\]
We observe that $\Ex{f(G_m)}=0$ and that $f$ is $\psi$-Lipschitz for the function $\psi(e)=|e\cap U|$.  We have $\sum_{e \in E(K_n)} \psi(e)^2\, \le\, 4|U|n\, =\, 2^{j+2}n$.  We are now ready to apply Corollary~\ref{cor:HA}.  Noting that $f(G_m)>2^ja_j$ on the event $E^{+}_{j,U}$, we have\vspace{-0.1cm}
\[
\pr{E^{+}_{j,U}}\, \le\, \pr{f(G_m)\, >\, 2^ja_j}\, \le\, \exp\left(\frac{-2^{2j}a_j^2}{2^{j+5}n}\right)\, .
\]
The claim now follows immediately from~\eqr{aj2}.

Naturally, the same bound holds for $\pr{E^{-}_{j,U}}$.

Now, for each $j\ge 1$, a union bound allows us to bound the probability that any of the events $E^{+}_{j,U}$ or $E^{-}_{j,U}$ occurs for any set $U$ of $2^j$ vertices.   Indeed this probability is at most
\begin{align*}
2\binom{n}{2^j}\exp(-2b-2^{1+3j/4}n^{1/4})\, & \le\, \exp(-2b)\exp(1+2^j+2^j\log(n2^{-j})-2^{1+3j/4}n^{1/4})\\ 
& \le\, \exp(-2b)\, ,
\end{align*}
where we use the fact that $\binom{n}{2^j} \le \left(\exp(1) n 2^{-j}\right)^{2^j}$ and the final inequality is obtained using the bound $\log{x}\le x^{1/4}$ applied with $x=n2^{-j}$.
Taking a union bound over $1\le j\le \log_2{n}$ there is probability at most
\[
\log_2{n} \exp(-2b)\, \le \, \exp(-b)
\]
that any of the events $E^{+}_{j,U}$ or $E^{-}_{j,U}$ occurs.  On the complementary event, there is a partition of the vertices into groups $V_{1},V_2,\dots$ such that $|V_j|\le 2^{j+1}$ and $|D_{u}(G_m)|\le a_j$ for all $u\in V_j$.  Thus, with probability at least $1-\exp(-b)$, we have
\begin{align*}
\sum_{u\in V(G_m)}D_{u}(G_m)^4\, & \le\, \sum_{j= 1}^{\log_2{n}} 2^{j+1}a_j^4\\
 & \le\, \sum_{j= 1}^{\log_2{n}} \Big(2^{16-j}b^2n^2\, +\, 2^{16+j/2}n^{5/2}\Big)\vspace{0.1cm}\\ 
 & \le\, C b^2 n^2\, ,
 \end{align*}
where we have used~\eqr{aj4} to prove the second inequality, and for the third we have taken $C\ge 2^{19}$ and used that $b\ge n^{1/2}$.
 
This proves the lemma in the case that $b\le n$.  If $b>n$, then let $J$ be the smallest integer such that $2^{J}\ge b/n$.  It follows that $2^{-J}\le n/b$. We now argue as above, except using the trivial bound $|D_u(G_m)|\le n$ for the vertices $u\in V_j$ for $j<J$.  Now, with probability at least $1-\exp(-b)$, we have
\begin{align*}
\sum_{u\in V(G_m)}D_{u}(G_m)^4\, & \le\, 2^{J+1}n^4\, +\, \sum_{j\ge J} 2^{j+1}a_j^4\\
 & \le\, 4bn^3\, +\, \sum_{j\ge J} \Big(2^{16-j}b^2n^2\, +\, 2^{16+ j/2}n^{5/2}\Big)\\
 & \le\, 4b n^3\, +\, 2^{17-J}b^2n^2\, +\, 2^{16} n^{3}\phantom{\sum}\\
 &\le \, 4bn^{3} \, +\, 2^{17} b n^{3}\, +\, 2^{16} n^3\phantom{\sum}\\
 & \le\, Cbn^3\, ,\phantom{\sum}
 \end{align*}
where $C$ has been taken to be at least $2^{18}$.  This proves the inequality in the case $b\ge n$, completing the proof.
\end{proof}

\begin{lem}\label{lem:deg2}
There is a constant $C$ such that for all $b\ge 30$, and all $m\le N$, we have
\[
\pr{\sum_{u\in V(G_m)}D_u(G_m)^2\, >\, C b n^2}\, \le\, \exp(-bn) \, .
\]
\end{lem}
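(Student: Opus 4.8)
The plan is to reduce to controlling the deviation of the path count $N_\owedge(G_m)$, and then to apply Freedman's inequality (Lemma~\ref{lem:F}) to a suitably stopped martingale. Two elementary reductions come first. A direct count gives $\sum_{u}d_u(G)^2=N_\owedge(G)+2e(G)$, so for the random graph process
\[
\sum_{u}D_u(G_i)^2\;=\;N_\owedge(G_i)+2i-\tfrac{4i^2}{n}\;=\;D_\owedge(G_i)+\mu_i ,
\qquad \mu_i:=\Ex{\textstyle\sum_u D_u(G_i)^2}=\textstyle\sum_u \Var(d_u(G_i))\le n^2 .
\]
Also, since $d_u(G^c)=n-1-d_u(G)$ and $2N/n=n-1$, one has $D_u(G^c)=-D_u(G)$ for every graph $G$ on $n$ vertices; hence $\sum_u D_u(G_m)^2$ has the same law under $G(n,m)$ as under $G(n,N-m)$, and we may assume $m\le N/2$. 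It therefore suffices to prove $\pr{D_\owedge(G_m)>(C-1)bn^2}\le\exp(-bn)$ for $m\le N/2$, with $C$ an absolute constant to be fixed.

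Next I set up the martingale. Applying Theorem~\ref{thm:Mart} to $H=\owedge$, and noting that for the three subgraphs $F$ of $\owedge$ with fewer than two edges the count $N_F$ changes by a deterministic amount when an edge is added (so $X_F\equiv 0$), one gets $D_\owedge(G_{i'})=(N-i')_2\sum_{i=1}^{i'}X_\owedge(G_i)/(N-i)_2$; hence $\tilde S_{i'}:=D_\owedge(G_{i'})/(N-i')_2=\sum_{i=1}^{i'}X_\owedge(G_i)/(N-i)_2$ is a martingale for the natural filtration of $(G_{i'})$. Writing $e_i=xy$, every ordered $P_2$ created by adding $e_i$ uses $e_i$, so $A_\owedge(G_i)=2\big(d_x(G_{i-1})+d_y(G_{i-1})\big)$ and thus $|X_\owedge(G_i)|\le 4n$. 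For the conditional second moment, $\Ex{X_\owedge(G_i)^2\mid G_{i-1}}=\Var(A_\owedge(G_i)\mid G_{i-1})=4\Var(d_x+d_y)$ where $\{x,y\}$ is the uniform non-edge of $G_{i-1}$; bounding $\Var(d_x+d_y)\le 2\Ex{D_x(G_{i-1})^2+D_y(G_{i-1})^2}$ and evaluating the latter as $\le\frac{2(n-1)}{N-i+1}\sum_u D_u(G_{i-1})^2$ (each $u$ is an endpoint of $n-1-d_u$ non-edges) gives
\[
\Ex{X_\owedge(G_i)^2\mid G_{i-1}}\;\le\;\frac{8(n-1)}{N-i+1}\,Z_{i-1},\qquad Z_{i-1}:=\textstyle\sum_u D_u(G_{i-1})^2 .
\]

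Now for the main step. The quadratic variation of $\tilde S$ involves the $Z_{i-1}$'s — the very quantities we are trying to bound — so I introduce the stopping time $\tau:=\min\{i:Z_i>Cbn^2\}$ and work with the stopped martingale $\hat S_{i'}:=\tilde S_{i'\wedge\tau}$. For $i\le\tau$ we have $Z_{i-1}\le Cbn^2$, and for $m\le N/2$, $i\le m$, $n\ge 3$ we have $N-i+1\ge N/2$ and $(N-i)_2\ge N^2/8$; hence the quadratic variation of $\hat S$ is bounded, deterministically, by some $\beta=O(Cbn^3/N^4)$, and its increments by $R=O(n/N^2)$. By Lemma~\ref{lem:F}, $\pr{\hat S_{i'}\ge\alpha\text{ for some }i'\le m}\le\exp\!\big(-\alpha^2/(2(\beta+R\alpha))\big)$ for every $\alpha>0$. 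On $\{\tau\le m\}$ we have $\hat S_m=\tilde S_\tau=D_\owedge(G_\tau)/(N-\tau)_2$ (Theorem~\ref{thm:Mart} at $m=\tau$, valid since $N-\tau\ge N/2\ge 2$), while $D_\owedge(G_\tau)=Z_\tau-\mu_\tau>Cbn^2-n^2\ge(C-1)bn^2$ and $(N-\tau)_2<N^2$, so $\hat S_m>(C-1)bn^2/N^2=:\alpha$. Since $\{Z_m>Cbn^2\}\subseteq\{\tau\le m\}$, we conclude
\[
\pr{\textstyle\sum_u D_u(G_m)^2>Cbn^2}\;\le\;\pr{\tau\le m}\;\le\;\exp\!\left(\frac{-\alpha^2}{2(\beta+R\alpha)}\right),
\]
and substituting the values of $\alpha,\beta,R$ the exponent is at least $(C-1)^2bn/(O(C))$, which is $\ge bn$ once $C$ is a sufficiently large absolute constant. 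The finitely many small $n$ not covered by these estimates are handled by enlarging $C$, using $\sum_u D_u(G_m)^2\le n^3$.

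The step I expect to be the crux is precisely the circularity flagged above: a naive application of Freedman's inequality to a normalisation of $D_\owedge(G_m)$ fails because its quadratic variation $\sum_i\Ex{X_\owedge(G_i)^2\mid G_{i-1}}$ is of order $\sum_i Z_{i-1}/(N-i)$, so controlling it presupposes the conclusion; moreover cruder, union-bound-over-dyadic-bands arguments lose a factor of $\log n$ in the exponent. The stopping-time truncation is what breaks the loop — before time $\tau$ the quadratic variation is small by fiat, and the event $\{\tau\le m\}$, on which $Z$ exceeds $Cbn^2$, forces $\hat S$ into a large excursion that Freedman's inequality rules out with probability $\exp(-bn)$.
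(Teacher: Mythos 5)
Your proof is correct, and it takes a genuinely different route from the paper's. The paper's argument proceeds by a union bound over ``types'' of sign vectors $\sigma\in\{0,\pm1,\pm2,\ldots,\pm 2^\ell\}^{V}$: it bounds $\pr{\sum_u\sigma_u D_u(G_m)>\|\sigma\|^2 n^{1/2}}$ via Corollary~\ref{cor:HA} (the Azuma-type inequality), then carries out a delicate entropy count to show that summing $\exp(-\|\sigma\|^2/32)$ over all $\sigma$ with $\|\sigma\|^2>32bn$ costs only a factor $\exp(-n)$. You instead use the algebraic identity $\sum_u D_u(G_m)^2=D_{\owedge}(G_m)+\mu_m$ with $\mu_m$ deterministic and at most $n^2$, recognize $\tilde S_{i'}=D_{\owedge}(G_{i'})/(N-i')_2$ as a martingale via Theorem~\ref{thm:Mart}, and apply Freedman's inequality (Lemma~\ref{lem:F}). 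The apparent circularity --- the conditional variance of the increments is of order $Z_{i-1}/(N-i)$, i.e., it involves the very quantity being bounded --- is broken by stopping the martingale at $\tau=\min\{i:Z_i>Cbn^2\}$, which caps the quadratic variation by fiat while forcing, on $\{\tau\le m\}$, an excursion of $\tilde S$ above $\alpha=(C-1)bn^2/N^2$; Freedman then gives $\exp(-\Omega_C(bn))$ directly, with no entropy loss. Both proofs are valid; yours is more in the spirit of the paper's martingale framework and arguably cleaner, while the paper's argument uses only the more elementary Azuma-type bound and is more readily transplanted to the codegree analogue (Lemma~\ref{lem:codeg2}, where the paper restricts to matchings to keep the union bound under control). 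It is worth noting that your argument does not use $b\ge 30$ (any $b\ge 1$ would do, after adjusting $C$), that the reduction $D_u(G^c)=-D_u(G)$ correctly permits the restriction to $m\le N/2$, and that there is no circular dependence on results downstream of Lemma~\ref{lem:deg2}, since Theorem~\ref{thm:Mart} and Lemma~\ref{lem:F} are both established independently.
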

 
\begin{proof}
Fix $b\ge 30$.  Let $\ell=\lfloor \log_2{n}\rfloor$.  We shall consider a function $f_{\sigma}$ for each sequence $\sigma\in \{0,\pm1,\pm 2,\pm 4,\dots, \pm 2^{\ell}\}^{V(G_m)}$ defined as follows
\[
f_{\sigma}(G_m)\, =\, \sum_{u\in V(G_m)}\sigma_u D_u(G_m)\, .
\]
Let us set
\[
\|\sigma\|^2\, :=\, \sum_{u\in V(G_m)}\sigma_u^2\, .
\]
To see the connection between these functions and the result of the lemma, consider the choice of $\sigma^{*}$ defined by setting $\sigma^{*}_u=0$ if $|D_u(G_m)|\le n^{1/2}$, and otherwise defined so that $\sigma^{*}_u$ has the same sign as $D_{u}(G_m)$ and $|\sigma^{*}_u|$ is the largest power of two such that $|\sigma^{*}_u|n^{1/2}$ is at most $|D_{u}(G_m)|$.  With this choice of $\sigma^{*}$ we have
\eq{fisbig}
f_{\sigma^{*}}(G_m)\, \ge\, \|\sigma^{*}\|^2 n^{1/2}\, .
\eqe
Furthermore, if $\sum_{u}D_u(G_m)^2\, >\, C b n^2$ with constant $C\ge 129$ then
\eq{sigmaisbig}
\|\sigma^{*}\|^2\, =\, \sum_{u}(\sigma^{*}_u)^2\, \ge\, \sum_{u} \frac{D_u(G_m)^2\, -\, n}{4n} \, >\, 32bn\, .
\eqe
By~\eqr{fisbig},~\eqr{sigmaisbig}, and a union bound, proving the lemma reduces to the problem of proving the following inequality:
\eq{allthat}
\sum_{\sigma:\|\sigma\|^2>32bn}\, \pr{f_{\sigma}(G_m)>\|\sigma\|^2n^{1/2}}\, \le\, \exp(-bn)\, .
\eqe
We first bound this probability for each fixed $\sigma$.

\noindent\textbf{Claim:} $\pr{f_{\sigma}(G_m)>\|\sigma\|^2n^{1/2}}\, \le\, \exp(-\|\sigma\|^2/16)$.\vspace{0.1cm}

\noindent \textbf{Proof of Claim:} The function $f_\sigma$ is $\psi$-Lipschitz for the function $\psi(uw)=|\sigma_u|+|\sigma_w|$, for which $\sum_{e \in E(K_n)} \psi(e)^2\le 2n\sum_{u \in V(G_m)} \sigma_u^2\, =\, 2n\|\sigma\|^2$.  Since $\Ex{f_{\sigma}(G_m)}=0$, it follows from Corollary~\ref{cor:HA} that
\[
\pr{f_{\sigma}(G_m)\, >\, \|\sigma\|^2n^{1/2}}\, \le\, \exp\left(\frac{-\|\sigma\|^4n}{16n\|\sigma\|^2}\right)\, \le\, \exp(-\|\sigma\|^2/16)\, ,
\]
as required, completing the proof of the claim.
 
For $\sigma$ with $\|\sigma\|^2>32bn$ it follows that 
\[
\pr{f_{\sigma}(G_m)\, >\, \|\sigma\|^2 n^{1/2}}\, \le\, \exp\left(\frac{-\|\sigma\|^2}{16}\right)\, \le \, \exp(-bn)\,\exp\left(\frac{-\|\sigma\|^2}{32}\right)\, .
\]
Substituting this bound into~\eqr{allthat} we need only prove that
\eq{allthat2}
\sum_{\sigma:\|\sigma\|^2>32bn}\, \exp\left(\frac{-\|\sigma\|^2}{32}\right)\, \le\, 1\, .
\eqe
We prove this bound by splitting into ``types''.  Given a sequence $x=(x_{-\ell-1},\dots ,x_{\ell+1})$ we say $\sigma$ has \emph{type} $x$ if precisely $x_0$ vertices $u$ have $\sigma_u=0$, precisely $x_j$ have $\sigma_u=2^{j-1}$ and precisely $x_{-j}$ have $\sigma_u=-2^{j-1}$ for each $j\in\{1,\dots ,\ell+1\}$.  Setting
\[
S_{x}\, :=\, \left\{\sigma\, :\, \sigma \, \text{has type }x \,\text{and}\, \|\sigma\|^2>32bn\right\}\, 
\]
and observing that there are at most $n^{3\ell}\,\le\, \exp(n)$ choices of $\sigma$ which have type $x$, it suffices to prove that
\eq{allthat3}
\sum_{\sigma\in S_{x}}\, \exp\left(\frac{-\|\sigma\|^2}{32}\right)\, \le\, \exp(-n)\, 
\eqe
for each type $x$. Note that all $\sigma$ of type $x$ have the same $\|\sigma\|^2$, which is given by
\[
\varphi(x)\, :=\, \sum_{j\neq 0} x_j 4^{|j|+1}\, .
\]
It follows that $S_x$ is empty if $\varphi(x)\le 32bn$.  Fix a type $x$ with $\varphi(x)\ge 32bn$, we prove~\eqr{allthat3} for this type $x$.  We must prove that
\[
|S_x|\, \le\, \exp\left(\frac{\varphi(x)}{32}\, -\, n\right)\, .
\]
We have
\[
|S_x|\, =\, \binom{n}{x_{-\ell-1},\dots ,x_{\ell+1}}\, \le\, \prod_{j\neq 0}\binom{n}{x_j}
\]
and, by the well known inequality $\binom{n}{k}\le (en/k)^k$, we obtain
\[
|S_x|\, \le\, \exp\left( \sum_{j\neq 0}x_j\log(en/x_j)\right)\, \le\, \exp\left(e^{1/2}n^{1/2}\sum_{j\neq 0} x_{j}^{1/2}\right)\, ,
\]
where we have used the inequality $\log{y}\le y^{1/2}$ for $y>0$.

For each $j\neq 0$, we have
\[
|x_j|\, \le\, \min\{n, 4^{1-j}\varphi(x)\}\, .
\]
Using $|x_j|\le n$ for $|j|\le 4$ and $|x_j|\le 4^{1-j}\varphi(x)$ for $|j|\ge 5$, we have
\[
|S_x|\, \le\, \exp\left(16n\, +\, 2n^{1/2}\sum_{|j|\ge 5}2^{1-j}\varphi(x)^{1/2}\right) \, .
\]
Since $\varphi(x)\ge 32bn$, we obtain
\[
|S_x|\, \le\, \exp\left(\frac{\varphi(x)}{2b}\, +\, \frac{\varphi(x)}{16b^{1/2}}\right) \, \le \, \exp\left(\frac{\varphi(x)}{32}\, -\, n\right)\, ,
\]
as required, completing the proof.
\end{proof}

 \subsection{Codegrees}\label{ssec:codegrees}

We now state and prove the analogous results for codegrees.

Recall that $d_{u,w}(G_m)$ denotes the number of common neighbours of vertices $u$ and $w$ in $G_m$, and 
\[
D_{u,w}(G_m)\, :=\, d_{u,w}(G_m)\, -\, \frac{(n-2)(m)_2}{(N)_2}
\] 
is the deviation of $d_{u,w}(G_m)$ from its mean.  Let 
\[
D'_{\ourmax}(G_m)\, :=\, \max_{u,w}D_{u,w}(G_m)
\]
and
\[
D'_{\ourmin}(G_m)\, :=\, \min_{u,w}D_{u,w}(G_m)\, .
\]

\begin{lem}\label{lem:maxcodegdev} 
For all $b\ge 2\log{n}$, and all $m\le N$, we have
\[
\pr{D'_{\ourmax}(G_m)\, >\, 4b^{1/2}t^{1/2}n^{1/2}\, +\,  8b}\, \le \, \exp(-b)\, .
\]
Furthermore, the same bound holds for the event $D'_{\ourmin}(G_m)<-4b^{1/2}t^{1/2}n^{1/2} -  8b$.
\end{lem}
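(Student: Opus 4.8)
The plan is to follow the template of the proof of Lemma~\ref{lem:maxdegdev}: bound $D_{u,w}(G_m)$ for a single fixed pair $\{u,w\}$ and then take a union bound over all $\binom{n}{2}\le\exp(2\log n)\le\exp(b)$ pairs. The extra twist is that a single codegree $d_{u,w}(G_m)$ is not itself hypergeometric, so I would first condition to expose the relevant structure. Fix $\{u,w\}$ and condition on the neighbourhood $N_u$ of $u$ in $G_m$, writing $d:=d_u(G_m)=|N_u|$. Conditionally on $N_u$, the edges of $G_m$ not incident to $u$ form a uniformly random $(m-d)$-subset of the $\binom{n-1}{2}$ pairs avoiding $u$, and $d_{u,w}(G_m)$ is precisely the number of these chosen edges lying among the $k:=|N_u\setminus\{w\}|\in\{d-1,d\}$ pairs of the form $wx$ with $x\in N_u\setminus\{w\}$. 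Hence, conditionally on $N_u$, the codegree $d_{u,w}(G_m)$ has the hypergeometric distribution with parameters $\binom{n-1}{2}$, $k$ and $m-d$, and conditional mean $\mu':=k(m-d)/\binom{n-1}{2}$.

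Next I would control $d_u$: by Lemma~\ref{lem:maxdegdev} applied to $G_m$ (and, for the lower tail below, also to the complement $G_m^c\sim G(n,N-m)$, which gives a matching lower bound on $d_u$), with probability at least $1-\tfrac12\exp(-b)$ every vertex satisfies $|D_u(G_m)|\le Cb^{1/2}t^{1/2}n^{1/2}+Cb$ for a suitable absolute constant $C$; in particular $d_u\le 2m/n+Cb^{1/2}t^{1/2}n^{1/2}+Cb=:D^{+}$ and $\mu'\le d_u\,m/\binom{n-1}{2}\le 2D^{+}t$. Moreover, writing $\mu'$ as a function of $D_u=d_u-2m/n$ and comparing with the unconditional mean $\tfrac{(n-2)(m)_2}{(N)_2}$ of \eqref{eq:Duwdef}, one checks that $\big|\mu'-\tfrac{(n-2)(m)_2}{(N)_2}\big|=O\!\big(t\,|D_u|+1\big)$, which on this event is $O\!\big(b^{1/2}t^{1/2}n^{1/2}+b\big)$.

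I would then apply the hypergeometric tail bounds \eqref{eq:hyperupper} and \eqref{eq:hyperlower} conditionally on $N_u$, on the event $\{d_u\le D^{+}\}$: for the upper tail
\[
\pr{d_{u,w}(G_m)\ge \mu'+a \,\big|\, N_u}\;\le\;\exp\!\left(\frac{-a^{2}}{2\mu'+a}\right)\;\le\;\exp\!\left(\frac{-a^{2}}{4D^{+}t+a}\right),
\]
and the analogous bound with $2\mu'$ in the denominator for the lower tail via \eqref{eq:hyperlower}. Choosing $a$ of order $b^{1/2}t^{1/2}n^{1/2}+b$ makes each right-hand side at most $\tfrac12\exp(-2b)$; combining this conditional fluctuation bound with the displayed bound on $|\mu'-\text{mean}|$, and then summing over the $\le\exp(b)$ pairs together with the degree event, gives the claim. (The case $b\ge n$ is trivial, since then $|D_{u,w}(G_m)|\le n-2<8b$ deterministically.)

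The main obstacle I anticipate is quantitative rather than conceptual: tracking the gap between the conditional mean $\mu'$ and the unconditional mean as $D_u$ ranges over its typical interval, and choosing the various intermediate constants (those in the degree-deviation bound, and $a$) so that the sum of the conditional fluctuation and this mean shift genuinely fits inside $4b^{1/2}t^{1/2}n^{1/2}+8b$ — it is exactly the mean shift, of order $t|D_u|$, that forces the linear term to carry the constant $8$ rather than $4$. For the lower tail one has to bring in the complement bound on $d_u$ to keep the mean shift controlled when $t$ is close to $1$, and one should use the trivial bound $0\le d_{u,w}(G_m)$ to handle the cases in which the unconditional mean already exceeds $4b^{1/2}t^{1/2}n^{1/2}+8b$.
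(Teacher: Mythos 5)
The paper does not actually prove this lemma: it says only that the proof is ``essentially identical to the proof of Lemma~\ref{lem:maxdegdev}'', so there is no argument in the paper to check you against. Your proposal is a reasonable way to make that remark rigorous, and it correctly surfaces the one point that the phrase glosses over: a single codegree $d_{u,w}(G_m)$, unlike a single degree, is \emph{not} hypergeometrically distributed, so one cannot apply~\eqref{eq:hyperupper} directly. Conditioning on $N_u$, observing that the conditional law of $d_{u,w}(G_m)$ is hypergeometric with parameters $\binom{n-1}{2},\,|N_u\setminus\{w\}|,\,m-d_u$, and then splitting the deviation into a mean-shift term $\mu'-\bar\mu\approx tD_u$ (controlled via Lemma~\ref{lem:maxdegdev}) plus a conditional fluctuation (controlled via~\eqref{eq:hyperupper} and~\eqref{eq:hyperlower}), before a union bound over the $\binom{n}{2}\le\exp(b)$ pairs, is the natural adaptation of the Lemma~\ref{lem:maxdegdev} argument and is structurally sound.

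Two caveats. First, you say you would invoke the complement bound on $d_u$ ``for the lower tail'', but it is more than that: when $t$ is close to $1$, without replacing the Lemma~\ref{lem:maxdegdev} bound $|D_u|\le 4b^{1/2}t^{1/2}n^{1/2}+4b$ by the sharper $|D_u|\lesssim b^{1/2}\min\{t,1-t\}^{1/2}n^{1/2}+b$ coming from the complement, the mean-shift $tD^+$ alone can exceed $4b^{1/2}t^{1/2}n^{1/2}$, leaving no fluctuation budget on \emph{either} tail. Second---and you flag this yourself---verifying the advertised constants $4$ and $8$ by this route is genuinely delicate and I was not able to make them come out: the shift $tD^+$ is of the same order as the threshold, the degree event must be run at a secondary parameter $b'>b$, and the per-pair conditional bound must beat $\exp(-b)/\binom{n}{2}$ using~\eqref{eq:hyperupper} with $\mu'$ (not the smaller true variance), which is lossy for $t$ bounded away from $0$. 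Expect to land on somewhat larger absolute constants; that is harmless for all downstream uses of this lemma (in the proof of Lemma~\ref{lem:maxY} there is a large factor of slack), but as stated the constant-tracking is a genuine open step in your plan, not a routine one.
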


We omit the proof, which is essentially identical to the proof of Lemma~\ref{lem:maxdegdev}.

The codegree version of Lemma~\ref{lem:deg4}, on fourth powers of degree deviations, is as follows.

\begin{lem}\label{lem:codeg4} There is a constant $C>0$ such that, for all $b\ge n^{1/2}$, and all $m\le N$, we have
\[
\pr{\sum_{u,w\in V(G_m)}D_{u,w}(G_m)^4\, >\, Cb n^{3}\min\{b,n\}}\, \le \, \exp(-b)\, .
\]
\end{lem}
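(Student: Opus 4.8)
The plan is to follow the proof of Lemma~\ref{lem:deg4} as closely as possible, with vertices replaced by pairs of vertices, keeping track of the two extra powers of $n$ that enter because there are $N=\binom n2$ pairs rather than $n$ vertices. Fix $b\ge n^{1/2}$ and introduce a dyadic sequence of thresholds $a_1>a_2>\cdots$. For a set $U$ of $2^j$ pairs, let $E^{+}_{j,U}$ be the event that $D_{u,w}(G_m)>a_j$ for every $\{u,w\}\in U$ (and $E^{-}_{j,U}$ the analogous event with $<-a_j$), and apply Corollary~\ref{cor:HA} to the mean-zero function $f_U(G_m):=\sum_{\{u,w\}\in U}D_{u,w}(G_m)$, which is $\psi_U$-Lipschitz with $\psi_U(xy)=\deg_U(x)+\deg_U(y)$, where $\deg_U(x)$ counts the pairs of $U$ through $x$; since $f_U>2^ja_j$ on $E^{+}_{j,U}$, this bounds $\pr{E^{\pm}_{j,U}}$ in terms of $\|\psi_U\|_2^2$. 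One then chooses the $a_j$, with a ``$b$-part'' producing a factor $\exp(-2b)$ and a ``combinatorial part'' beating the number of relevant sets $U$, so that after a union bound over $U$ and over $1\le j\le\lfloor\log_2 N\rfloor$, with probability $\ge 1-\exp(-b)$ fewer than $2^j$ pairs satisfy $|D_{u,w}(G_m)|>a_j$ for every $j$. Partitioning the pairs into levels $V_j$ with $|V_j|\le 2^{j+1}$ and $|D_{u,w}(G_m)|\le a_j$ on $V_j$ (the top level, where $2^j$ is comparable to $N$, being handled by the trivial bound $|D_{u,w}|\le n$), one bounds $\sum_{u,w}D_{u,w}(G_m)^4\le\sum_j 2^{j+1}a_j^4$ and checks that this is at most $Cbn^3\min\{b,n\}$; as in Lemma~\ref{lem:deg4}, the range $b>n$ is treated separately by using $|D_{u,w}|\le n$ on the lowest levels.

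The crux is the behaviour of $\|\psi_U\|_2^2$. One always has $\|\psi_U\|_2^2 = O\big(n\,(\max_x\deg_U(x))\,|U|\big)$. For a \emph{spread-out} $U$, in which no vertex meets too many pairs of $U$, this is of order at most $n|U|^{3/2}$, mirroring the degree case exactly, and a naive adaptation of Lemma~\ref{lem:deg4} goes through. The difficulty is a \emph{clustered} $U$: if all $2^j$ pairs of $U$ pass through a single vertex, $\|\psi_U\|_2^2$ can be as large as $n^2|U|$, an extra factor $n$ that the final sum cannot absorb. To deal with this I would (i) stratify the union bound by $\max_x\deg_U(x)\in[2^k,2^{k+1})$, using $\|\psi_U\|_2^2 = O(2^k n|U|)$ together with the fact that such $U$ are far fewer than $\binom N{2^j}$ (the $\lesssim 2^{j-k}$ vertices of $U$ of largest $\deg_U$ can be chosen first, which is cheap when $k$ is large), and (ii) invoke Lemma~\ref{lem:deg4} for the genuinely extreme pairs: from $d_u+d_w-n\le d_{u,w}\le\min\{d_u,d_w\}$, a pair with very large positive codegree deviation has both endpoints of large degree deviation, and a pair with very large negative codegree deviation has an endpoint of very negative degree deviation, so the number of such pairs is controlled by a power of $\sum_v D_v(G_m)^4$ divided by a large power of the threshold.

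I expect the main difficulty to lie in the bookkeeping: choosing the combinatorial part of $a_j$ and the stratification parameter $k$ so that every level contributes at most a constant multiple of $bn^3\min\{b,n\}$, in particular in the intermediate regime where a pair's codegree deviation is well above the typical scale $n^{1/2}$ but below the scale at which the degree inequalities $d_u+d_w-n\le d_{u,w}\le\min\{d_u,d_w\}$ become useful, and where the Corollary~\ref{cor:HA} estimate for clustered sets $U$ is the only available tool.
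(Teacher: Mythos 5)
You correctly diagnose that the naive adaptation of Lemma~\ref{lem:deg4} breaks because of clustered sets $U$, and you flag the intermediate regime as the unresolved difficulty. That worry is well-founded, and as written the proposal has a genuine gap exactly there.

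To make the gap concrete: take $U$ a star of $2^j$ pairs through a common hub $h$. Then $\|\psi_U\|_2^2=\Theta(n\,2^{2j})$, so Corollary~\ref{cor:HA} only gives $\pr{E^{+}_{j,U}}\le \exp\left(-a_j^2/(8n)\right)$ — the $2^j$ in the deviation and in the Lipschitz norm cancel. Beating the roughly $n\binom{n}{2^j}$ stars by a union bound then forces $a_j^2\gtrsim 2^j n\log(n/2^j)$. Meanwhile the degree inequalities $d_u+d_w-n\le d_{u,w}\le\min\{d_u,d_w\}$ only translate codegree deviations into degree deviations after a shift of order $n$ (e.g.\ $D_{u,w}\le D_w+t(1-t)n$ and $D_{u,w}\ge D_u+D_w-(1-t)^2n$), so they are vacuous until $a_j=\Theta(n)$. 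Between $a_j\approx 2^{j/2}\sqrt{n\log n}$ (what the union bound over clustered sets demands) and $a_j=\Theta(n)$ (where the degree reduction starts to bite), neither tool controls the event, and the resulting level-sum $\sum_j 2^{j+1}a_j^4$ picks up spurious $\log n$ factors and worse that do not fit inside $Cbn^3\min\{b,n\}$, especially near $b=n^{1/2}$. The stratification by $\max_x\deg_U(x)$ also does not reduce the count as much as one would hope, since the vast majority of sets of $2^j$ pairs are already nearly spread out and contribute $\binom{N}{2^j}$ to the union bound regardless.

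The paper sidesteps all of this with a cleaner device: partition $E(K_n)$ into $n$ matchings $M_1,\dots,M_n$, run the Lemma~\ref{lem:deg4} argument separately inside each matching — where every $U\subseteq M_k$ of size $2^j$ is automatically pairwise vertex-disjoint, so $\|\psi_U\|_2^2=O(n\,2^j)$ always, and the relevant union bound is over the $\binom{n/2}{2^j}$ subsets of a matching rather than all $\binom{N}{2^j}$ sets of pairs — and then take a further union bound over the $n$ matchings. Clustering simply never occurs. The price is that the level sizes become $|E_j|\le 2^{j+1}n$ rather than $2^{j+1}$, which is precisely what produces the extra factor $n$ in $Cbn^3\min\{b,n\}$ relative to Lemma~\ref{lem:deg4}'s $Cbn^2\min\{b,n\}$.
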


The proof is very similar to that of Lemma~\ref{lem:deg4}.  One difference is that in place of the events $E_{j,U}^+$ and $E_{j,U}^-$, we consider events of this type inside matchings. This may appear ad hoc, but if we do not make such a restriction the argument runs into problems when we arrive at the union bound.

\begin{proof} 
Fix $b\ge n^{1/2}$.  Let $M_1,\dots ,M_{n}$ be a sequence of matchings which partition $E(K_n)$.  We define a family of events related to codegree deviations.  For each $j\ge 1$, let
\[
a'_j\, :=\, 2^{4-j/2} b^{1/2}n^{1/2}\, +\, 2^{4-j/8}n^{5/8}\, .
\]
We may immediately note that
\eq{apj2}
(a'_j)^2\, \ge \, 2^{8-j}bn\, +\, 2^{8-j/4} n^{5/4}
\eqe
and that
\eq{apj4}
(a'_j)^4\, \le\, 2^{19-2j}b^2n^2\, +\, 2^{19-j/2}n^{5/2}\, .
\eqe
For each $1 \le j \le \log_2 n$ and for each set $U\subseteq M_1$ of cardinality $2^j$ we define $F^{+}_{j,U}$ to be the event that
\[
D_{u,w}(G_m)\, >\, a_j \qquad \text{for all } uw\in U\, ,
\]
and $F^{-}_{j,U}$ to be the event
\[
D_{u,w}(G_m)\, <\, -a_j \qquad \text{for all } uw\in U\, .
\]

\noindent\textbf{Claim:} $\pr{F^{+}_{j,U}}\, \le\, \exp(-4b-2^{2+3j/4}n^{1/4})$.\vspace{0.1cm}

\noindent \textbf{Proof of Claim:} Define 
\[
f(G_m)\, =\, \sum_{uw\in U} D_{u,w}(G_m)\, .
\]
We observe that $\Ex{f(G_m)}=0$ and that $f$ is $\psi$-Lipschitz for the function $\psi(e)=|e\cap \bigcup U|$, where $\bigcup U$ denotes the set of vertices that occur in an edge of $U$.  We have $\sum \psi(e)^2\, \le\, 2^{j+3}n$.  We are now ready to apply Corollary~\ref{cor:HA}.  Noting that $f(G_m)>2^j a'_j$ on the event $F^{+}_{j,U}$ we have\vspace{-0.1cm}
\[
\pr{F^{+}_{j,U}}\, \le\, \pr{f(G_m)\, >\, 2^ja'_j}\, \le\, \exp\left(\frac{-2^{2j}(a'_j)^2}{2^{j+6}n}\right)\, .
\]
The claim now follows immediately from~\eqr{apj2}.

Naturally, the same bound holds for $\pr{F^{-}_{j,U}}$.

Now, for each $j\ge 1$, a union bound allows us to bound the probability that any of the events $F^{+}_{j,U}$ or $F^{-}_{j,U}$ occurs for any set $U$ of $2^j$ pairs of $M_1$.   Indeed this probability is at most
\begin{align*}
2\binom{n/2}{2^j}\exp(-4b-2^{2+3j/4}n^{1/4})\, & \le\, \exp(-4b) \exp(1+2^j+2^j\log(n2^{-j})-2^{2+3j/4}n^{1/4})\\ 
& \le\, \exp(-4b)\, ,
\end{align*}
where the final inequality is obtained using the bound $\log{x}\le x^{1/4}$ applied with $x=n2^{-j}$.

Taking a union bound over $1\le j\le \log_{2} n$ there is probability at most
\[
\log(n) \exp(-4b)\, \le \, \exp(-2b)
\]
that any of the events $F^{+}_{j,U}$ or $F^{-}_{j,U}$ occurs.  The above argument also holds inside the remaining matchings $M_2,\dots ,M_n$.  Since $n\exp(-2b)\le \exp(-b)$, we have with probability at least $1-\exp(-b)$ that in each matching and for each $j\ge 1$, at most $2^j$ edges $uw$ have $D_{u,w}(G_m)\, >\, a_j$ and at most $2^j$ have $D_{u,w}(G_m)\, <\, -a_j$.

In this case, there is a partition of the edges of $K_n$ into groups $E_{1},\dots$ such that $|E_j|\le 2^{j+1}n$ and $|D_{u,w}(G_m)|\le a_j$ for all $uw\in E_j$.  Thus, with probability at least $1-\exp(-b)$, we have
\begin{align*}
\sum_{uw}D_{u,w}^4(G_m)\, & \le\, \sum_{j =1}^{\log_2 n} 2^{j+1}n(a'_j)^4\\
 & \le\, \sum_{j =1}^{\log_2 n} (2^{20-j}b^2n^3\, +\, 2^{20+j/2}n^{7/2} )\\
 & \le\, C b^2 n^3\, , \phantom{\sum}
 \end{align*}
where we have used~\eqr{apj4} to prove the second inequality, and taken $C\ge 2^{22}$.
 
This proves the lemma in the case that $b\le n$.  If $b>n$, then let $J$ be the smallest integer such that $2^{J}\ge b/n$.  It follows that $2^{-J}\le n/b$. We now argue as above, except using the trivial bound $|D_{u,w}(G_m)|\le n$ for the pairs $uw\in V_j$ for $j<J$.  Now, with probability at least $1-\exp(-b)$, we have
\begin{align*}
\sum_{uw}D_{u,w}^4(G_m)\, & \le\, 2^{J+1}n^5\, +\, \sum_{j= J}^{\log_2 n} 2^{j+1}na_j^4\\
 & \le\, 4bn^4\, +\, \sum_{j= J}^{\log_2 n}(2^{20-j}b^2n^3\, +\, 2^{20 + j/2}n^{7/2} ) \phantom{\sum}\\
 & \le\, 4b n^4\, +\, 2^{21-J}b^2n^3\, +\, 2^{20} n^{4}\phantom{\sum}\\
 &\le \, 4bn^{4} \, +\, 2^{21} b n^{4}\, +\, 2^{20} n^4\phantom{\sum}\\
 & \le\, Cbn^4\, ,\phantom{\sum}
 \end{align*}
where $C$ has been taken to be at least $2^{22}$.  This proves the inequality in the case $b\ge n$, completing the proof.
\end{proof}

Finally, the generalisation of Lemma~\ref{lem:deg2} to codegrees is as follows.

\begin{lem}\label{lem:codeg2}
There is a constant $C$ such that for all $b\ge 30$, and all $m\le N$, we have
\[
\pr{\sum_{u,w}D_{u,w}(G_i)^2\, >\, C b n^3}\, \le\, \exp(-bn) \, .
\]
\end{lem}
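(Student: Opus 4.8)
The plan is to run the argument of Lemma~\ref{lem:deg2}, applied to the codegree deviations along one matching at a time; restricting to a matching is precisely the device already used in the proof of Lemma~\ref{lem:codeg4}, and for the same reason, namely that without it the Lipschitz weights below (and the ensuing union bound) blow up. First I would fix a proper edge-colouring of $K_n$, that is, a partition $E(K_n)=M_1\cup\dots\cup M_n$ into matchings, each of size at most $n/2$. Since
\[
\sum_{u,w}D_{u,w}(G_m)^2\, =\, \sum_{\ell}\, \sum_{uw\in M_\ell}D_{u,w}(G_m)^2\, ,
\]
it suffices to prove that, for a suitable absolute constant $C'$ and each fixed matching $M=M_\ell$,
\[
\pr{\sum_{uw\in M}D_{u,w}(G_m)^2\, >\, C'bn^2}\, \le\, \exp(-(b+1)n)\, ,
\]
and then to take a union bound over the at most $n$ matchings and set $C=C'$.

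Fixing $M=M_\ell$, I would introduce the test functions of Lemma~\ref{lem:deg2}: for $\sigma\in\{0,\pm1,\pm2,\pm4,\dots,\pm 2^{\lfloor\log_2 n\rfloor}\}^{M}$ set $f_\sigma(G_m):=\sum_{uw\in M}\sigma_{uw}D_{u,w}(G_m)$ and $\|\sigma\|^2:=\sum_{uw\in M}\sigma_{uw}^2$, so that $\Ex{f_\sigma(G_m)}=0$. Writing $p(x)$ for the pair of $M$ containing a vertex $x$ (and $\sigma_{p(x)}:=0$ when $x$ lies in no pair of $M$), one checks that $f_\sigma$ is $\psi$-Lipschitz for $\psi(xy):=|\sigma_{p(x)}|+|\sigma_{p(y)}|$: swapping one edge changes each $d_{u,w}$ by at most the number of swapped edges meeting $\{u,w\}$, and since $M$ is a matching a swapped edge $xy$ meets at most the two pairs $p(x),p(y)$. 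As each vertex of a pair carries that pair's weight, $\|\psi\|_2^2\le 2(n-1)\sum_x|\sigma_{p(x)}|^2=4(n-1)\|\sigma\|^2\le 4n\|\sigma\|^2$, and Corollary~\ref{cor:HA} gives
\[
\pr{f_\sigma(G_m)\, >\, \|\sigma\|^2 n^{1/2}}\, \le\, \exp\left(\frac{-\|\sigma\|^4 n}{32 n\|\sigma\|^2}\right)\, =\, \exp\left(-\frac{\|\sigma\|^2}{32}\right)\, .
\]
Exactly as in Lemma~\ref{lem:deg2}, rounding each $D_{u,w}(G_m)/n^{1/2}$ down in absolute value to the nearest signed power of two (and to $0$ when $|D_{u,w}(G_m)|\le n^{1/2}$) produces a sequence $\sigma^*$ with $f_{\sigma^*}(G_m)\ge\|\sigma^*\|^2 n^{1/2}$ and $\|\sigma^*\|^2\ge\tfrac14\big(n^{-1}\sum_{uw\in M}D_{u,w}(G_m)^2-|M|\big)$; using $|M|\le n/2$ and $b\ge 30$, choosing $C'$ large enough (e.g.\ $C'\ge 513$) forces $\|\sigma^*\|^2>128bn$ on the event $\{\sum_{uw\in M}D_{u,w}(G_m)^2>C'bn^2\}$.

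It then remains to bound $\sum_{\sigma:\|\sigma\|^2>128bn}\pr{f_\sigma(G_m)>\|\sigma\|^2 n^{1/2}}$. From the displayed estimate and $\|\sigma\|^2>128bn$ one gets $\pr{f_\sigma(G_m)>\|\sigma\|^2 n^{1/2}}\le\exp(-(b+1)n)\exp(-\|\sigma\|^2/64)$, so it is enough to show $\sum_{\sigma:\|\sigma\|^2>128bn}\exp(-\|\sigma\|^2/64)\le 1$. As in Lemma~\ref{lem:deg2} I would prove this by grouping the $\sigma$ into ``types'' recording how many coordinates take each signed value: there are at most $\exp(n)$ types, and for a fixed type $x$ with $\varphi(x):=\|\sigma\|^2>128bn$ the number $|S_x|$ of sequences of that type obeys, using $\binom{|M|}{k}\le(e|M|/k)^k$, $|M|\le n/2$, $\log y\le y^{1/2}$ and $|x_j|\le\min\{|M|,4^{1-|j|}\varphi(x)\}$, the bound $|S_x|\le\exp(\varphi(x)/64-n)$; hence $|S_x|\exp(-\varphi(x)/64)\le\exp(-n)$ and the total over all types is at most $\exp(n)\exp(-n)=1$. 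Assembling the pieces, each matching contributes at most $\exp(-(b+1)n)$, and a union bound over the at most $n$ matchings gives probability at most $n\exp(-(b+1)n)\le\exp(-bn)$, with $C=C'$.

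The only genuinely delicate point is the one already flagged in the proof of Lemma~\ref{lem:codeg4}: if one runs the test-function argument over all pairs $uw$ at once instead of matching by matching, the weight of an edge $xy$ becomes a sum over all pairs through $x$ or through $y$, so $\|\psi\|_2^2$ can be of order $n^2\|\sigma\|^2$ rather than $n\|\sigma\|^2$, and then both the martingale estimate and the type-counting step break down. Everything else is the bookkeeping of absolute constants, which goes through for $b\ge 30$ essentially as in Lemma~\ref{lem:deg2}.
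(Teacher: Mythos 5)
Your proposal is correct and follows exactly the route the paper sketches: partition $E(K_n)$ into matchings $M_1,\dots,M_n$, run the $\sigma$-test-function argument of Lemma~\ref{lem:deg2} within each matching (which is precisely where the restriction to a matching controls the Lipschitz weights, just as in Lemma~\ref{lem:codeg4}), and union-bound over the $n$ matchings. You have merely filled in the details behind the paper's ``may easily be adjusted,'' and those details (the $\psi$-Lipschitz bound $\|\psi\|_2^2\le 4n\|\sigma\|^2$, the rounding to $\sigma^*$, and the type-counting estimate) all check out.
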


\begin{proof} We describe how the proof may be obtained from ideas present in the above proofs.  As in the proof of Lemma~\ref{lem:codeg4}, let $M_1,\dots ,M_n$ be a family of matchings that partition $K_n$.

\noindent \textbf{Claim:} There is a constant $C$ such that 
\[
\pr{\sum_{uw\in M_1}D_{u,w}(G_m)^2 > C b n^2}\, \le\, \exp(-2bn) .
\]

\noindent \textbf{Proof of Claim:} In the same way that the proof of Lemma~\ref{lem:deg4} was adjusted to bound deviation probabilities for $\sum_{uw\in M_1}D_{uw}(G_m)^4$ in the proof of Lemma~\ref{lem:codeg4}, so Lemma~\ref{lem:deg2} may easily be adjusted to prove the claim.

Applying a union bound over the matchings $M_1,\dots ,M_n$ one obtains that with probability at least $1-\exp(-bn)$ we have
\[
\sum_{uw\in M_k}D_{u,w}(G_m)^2\, \le\, C b n^2
\]
for all $k=1,\dots n$.  In this case
\[
\sum_{u,w}D_{u,w}(G_m)^2\, =\, \sum_{k=1}^{n}\sum_{uw\in M_k}D_{u,w}(G_m)^2\, \le\, C b n^3\, ,
\]
as required.
\end{proof}

\section{Approximating the deviation $D_{H}(G_{n,t})$ in terms of \(D_{{\footnotesize \bigwedge}}(G_{n,t})\) and $D_{\triangle}(G_{n,t})$ -- Theorem~\ref{thm:relate}}\label{sec:relate} 

The main aim of this section is to prove Theorem~\ref{thm:relate}, which states that $D_{H}(G_{n,t})$ is well approximated by a certain linear combination $\Lambda_H(G_{n,t})$ of $D_{\owedge}(G_{n,t})$ and $D_{\triangle}(G_{n,t})$.   This result will be extremely useful since, for the range of deviations for which it applies, it essentially reduces the study of all subgraph count deviations $D_{H}(G_{n,t})$ to the cases of two specific graphs, the path of length two and the triangle.

%This proof will rely on two key intermediate results: Proposition~\ref{prop:Ysmall} (stated below) and Theorem~\ref{thm:approx}.  We begin with an overview which puts these results in context.  We then prove Proposition~\ref{prop:Ysmall} in Section~\ref{sec:Ysmall}, Theorem~\ref{thm:approx} in Section~\ref{sec:approx}, and deduce Theorem~\ref{thm:relate}, relatively easily, in Section~\ref{ssec:relate}.

%Let us now give an overview of the connection between Theorem~\ref{thm:relate}, Theorem~\ref{thm:approx} and Proposition~\ref{prop:Ysmall}.  Our eventual aim is to prove 

In order to prove Theorem~\ref{thm:relate}, we first prove Theorem~\ref{thm:approx}, which shows that $D_{H}(G_{n,t})$ is very well approximated by 
\begin{align*}
& \Lambda^{*}_H(G_{n,t})\,  \\
& = \, n^{v-3}\sum_{i=1}^{m}\left(t^{e-2}\obinom{H}{\owedge}\frac{(1-t)^2}{(1-s)^2}X_{\owedge}(G_i)\, +\, t^{e-3}\obinom{H}{\triangle} \frac{(1-t)^3}{(1-s)^{3}}\, \big(X_{\triangle}(G_i)-3sX_{\owedge}(G_i)\big) \right)\, ,
\end{align*}
a sum of terms each of which is a linear combination of $X_{\owedge}(G_i)$ and $X_{\triangle}(G_i)$, where $m=\lfloor tN\rfloor$ and $s:=i/N$.  We deduce Theorem~\ref{thm:relate} (for $t\in (0,1/2]$) from Theorem~\ref{thm:approx} by showing that $\Lambda_H(G_{n,t})$ is very close to $\Lambda^{*}_H(G_{n,t})$ deterministically.  It is then straightforward to deduce the remaining cases ($t\in (1/2,1)$) using Corollary~\ref{cor:comp}.

Let us now discuss the task of proving Theorem~\ref{thm:approx}.  Naturally, our proof that $D_H(G_{n,t})$ is well approximated by $\Lambda^{*}_H(G_{n,t})$ begins with the precise martingale expression for $D_H(G_m)$ (given by Theorem~\ref{thm:Mart})
\[
D_{H}(G_m) \, =\, \sum_{i=1}^{m}\, \sum_{F\ssq E(H)}\frac{(N-m)_{e(F)}(m-i)_{e-e(F)}}{(N-i)_e}\, X_F(G_i)\, .
\]
In order to show that the precise expression is well approximated by $\Lambda^{*}_H(G_{n,t})$, we show that each $X_F(G_i)$ can be well approximated by
\eq{Xsdef}
X^*_F(G_i)\, := \, n^{v-3}s^{e(F)-2}\left(\obinom{F}{\owedge}-3\obinom{F}{\triangle}\right) X_{\owedge}(G_i)\, +\, n^{v-3}s^{e(F)-3}\obinom{F}{\triangle}X_{\triangle}(G_i)\, .
\eqe
This statement is made rigorous in Proposition~\ref{prop:Ysmall}.

\begin{definition} For each graph $F$ we define
\eq{Ydef}
Y_{F}(G_i)\, :=\, X_{F}(G_i)\, -\, X^{*}_F(G_i)\, .
\eqe
\end{definition}

We prove that $Y_F(G_i)$ is small, in particular in the $L^2$ sense.  Since it is the graph $G_{i-1}$ that determines the distribution of $Y_F(G_i)|G_{i-1}$, the result will state that it is very unlikely that $G_{i-1}$ is such that $\Ex{Y_F(G_i)^2|G_{i-1}}$ is large.

\begin{prop}\label{prop:Ysmall}
Let $F$ be a graph with $v(F)$ vertices and $e(F)$ edges, and let $t\in (0,1/2]$.  There is a constant $C=C(F)$ such that for all $1\le i\le tN$ and $b\ge 3\log{n}$,  we have
\eq{ayevent}
\pr{\Ex{Y_F(G_i)^2\, \big| \,G_{i-1}}\, >\, Cb n^{2v(F)-6}}\, \le\, \exp(-b)\, .
\eqe
\end{prop}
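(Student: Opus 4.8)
The plan is to bound $\Ex{Y_F(G_i)^2 \mid G_{i-1}}$ by first obtaining a good pointwise (almost sure) estimate of $Y_F(G_i)$ in terms of the degree and codegree deviations of $G_{i-1}$, and then feeding in the tail bounds from Section~\ref{sec:degrees}. The starting point is an exact expansion of $A_F(G_i)$: writing $e_i = uw$ for the $i$-th edge, $A_F(G_i) = \sum_{f \in E(F)} A_{F,f}(G_i)$, and each $A_{F,f}(G_i)$ counts embeddings of $F \setminus f$ into $G_{i-1}$ whose image of $f$ is $uw$ (and which are not already embeddings of $F$). By Lemma~\ref{lem:expW}, $\Ex{A_F(G_i) \mid G_{i-1}}$ is the corresponding average over all non-edges, so $X_F(G_i) = A_F(G_i) - \Ex{A_F(G_i)\mid G_{i-1}}$ is a difference of a ``local'' count at the random pair $uw$ and its average. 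The key structural fact is that, up to lower-order terms, the number of copies of $F \setminus f$ through a fixed pair $uw$ in $G_{i-1}$ is governed by how many of the $v(F)-2$ remaining vertices attach correctly — and the leading-order dependence on $G_{i-1}$ (beyond the deterministic main term) comes precisely from the degrees $d_u, d_w$ and the codegree $d_{uw}$, since every other local statistic is concentrated much more tightly. Extracting this leading behaviour is exactly what $X^*_F(G_i)$, defined in \eqr{Xsdef} as a combination of $X_{\owedge}(G_i)$ and $X_{\triangle}(G_i)$, is built to capture: $X_{\owedge}(G_i)$ tracks the degree-deviation contribution and $X_{\triangle}(G_i)$ the codegree-deviation contribution.

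So the first and main step is to prove a deterministic bound of the shape
\[
|Y_F(G_i)| \;\le\; C\,n^{v(F)-3}\Big( \Delta(e_i)\,+\,\text{(terms controlled by } D_{\ourmax},\,D'_{\ourmax},\text{ global degree/codegree sums)}\Big),
\]
where $\Delta(e_i) = D_u(G_{i-1})^2 + D_w(G_{i-1})^2 + D_{uw}(G_{i-1})^2$ as in \eqr{Deldef}; more precisely I expect a bound in which the error from replacing $X_F$ by $X^*_F$ is quadratic in the degree/codegree deviations at $u,w$ plus a contribution that is linear in those deviations but averaged (hence small) over the other vertices. This is a Taylor-expansion-type computation: one writes $d_u(G_{i-1}) = 2(i-1)/n + D_u(G_{i-1})$ etc., substitutes into the exact combinatorial expression for $A_{F,f}(G_i)$, and collects terms by degree in the deviations. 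The zeroth- and first-order terms are designed to cancel against $X^*_F(G_i)$; what remains is genuinely second order in $D_u,D_w,D_{uw}$, or else involves sums over the other $v(F)-2$ vertices of single degree deviations, which are $O(n^{1/2+o(1)})$-type quantities rather than $O(n)$.

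For the second step, I condition on $G_{i-1}$ and take the conditional second moment: since $e_i = uw$ is uniform over the $N - i + 1$ non-edges of $G_{i-1}$, $\Ex{Y_F(G_i)^2 \mid G_{i-1}}$ is at most a constant times $n^{2v(F)-6}$ times the average over non-edges $uw$ of $\Delta(uw)^2$ plus the global-sum error terms squared. Now
\[
\frac{1}{N}\sum_{uw}\Delta(uw)^2 \;\lesssim\; \frac{1}{N}\sum_{uw}\big(D_u(G_{i-1})^4 + D_w(G_{i-1})^4 + D_{uw}(G_{i-1})^4\big) \;\lesssim\; \frac{1}{n^2}\sum_u D_u^4 + \frac{1}{n^2}\sum_{uw} D_{uw}^4,
\]
and Lemmas~\ref{lem:deg4} and~\ref{lem:codeg4} say that on an event of probability $\ge 1 - \exp(-b)$ (valid for $b \ge n^{1/2}$, and one handles $3\log n \le b \le n^{1/2}$ by monotonicity, enlarging the event) these are $O(b n^4)$ and $O(b n^5)$ respectively, giving $O(bn^2)$ and $O(bn^3)$ after dividing by $n^2$ — wait, one must be careful with the codegree normalisation, but the upshot is that the average of $\Delta^2$ over pairs is $O(bn^{?})$ of the right order so that, multiplied by $n^{2v(F)-6}$, one lands at $O(bn^{2v(F)-6})$ as claimed. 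The global linear-in-deviation error terms are handled similarly but more easily, using Lemmas~\ref{lem:deg2}, \ref{lem:codeg2}, \ref{lem:maxdegdev}, \ref{lem:maxcodegdev}, all of which hold with failure probability at most $\exp(-b)$ in the required range of $b$. A union bound over the (constantly many) events collects everything into a single failure probability $\le \exp(-b)$, perhaps after absorbing a factor into the constant $C = C(F)$.

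The main obstacle is unquestionably the first step: carrying out the combinatorial expansion of $A_F(G_i)$ cleanly enough to see that $X^*_F(G_i)$ really is the right first-order proxy, and that \emph{all} the leftover terms are either genuinely quadratic in the local deviations or are averages of single deviations over the ancillary vertices. One has to track embeddings of all subgraphs $F \setminus f$ of $F$ simultaneously, and the bookkeeping of which vertex-orbits contribute $\binom{F}{\owedge}$ versus $\binom{F}{\triangle}$ coefficients (and the $-3\binom{F}{\triangle}$ correction in \eqr{Xsdef}) is delicate. I would organise this by fixing the two endpoints $u,w$ of the new edge and summing over the ways the other $v(F)-2$ vertices of $F$ can be placed, classifying each remaining vertex of $F$ by whether it is adjacent (in $F$) to one or both of the endpoints of $f$; vertices adjacent to both are what produce triangle-count dependence, vertices adjacent to one produce path-count dependence, and vertices adjacent to neither contribute only the deterministic main term plus negligible fluctuations. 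Once that classification is in place the cancellation with $X^*_F$ should be visible, and the rest is the tail-bound machinery already assembled in Section~\ref{sec:degrees}.
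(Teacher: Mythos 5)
Your high-level plan matches the paper's: write $Y_F(G_i) = \big(A_F(G_i) - A^*_F(G_i)\big) - \Ex{A_F(G_i) - A^*_F(G_i)\mid G_{i-1}}$ with $A^*_F$ the affine combination of degree/codegree deviations at $e_i$ (this is exactly Lemma~\ref{lem:Yis}), prove a pointwise bound on $A_F - A^*_F$ in terms of $\Delta(e_i)$ (Lemma~\ref{lem:Clem}), square, average over the uniform choice of $e_i$ among non-edges, and finish with Lemmas~\ref{lem:deg4} and~\ref{lem:codeg4} to control $\Ex{\Delta(e_i)^2\mid G_{i-1}}$. That is the correct skeleton and it is the paper's skeleton.

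However, there is a genuine gap at the heart of the first step. You present it as a ``Taylor-expansion-type computation'': substitute $d_u = 2(i-1)/n + D_u$, $d_{u,w} = (n-2)(i-1)_2/(N)_2 + D_{u,w}$, etc., into a combinatorial formula for $A_{F,f}(G_i)$, cancel the low-order terms against $X^*_F$, and read off a remainder that is quadratic in $D_u,D_w,D_{u,w}$. But $A_F(G_i)$ is \emph{not} a deterministic function of $(d_u,d_w,d_{u,w})$: given those three numbers, the realised number of embeddings through $e_i=uw$ still fluctuates with the structure of $G_{i-1}$ away from $u,w$. What your Taylor expansion actually pins down is the conditional mean $A^{**}_F(G_i) := \Ex{A_F(G_i)\mid N_u,N_w,e_i=uw}$, not $A_F(G_i)$ itself. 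The paper therefore introduces $A^{**}_F$ explicitly, proves deterministically that $|A^*_F - A^{**}_F| \le C\big(n^{v-3} + n^{v-4}\Delta(e_i)\big)$ by precisely the expansion you describe, and then separately proves a concentration bound $\pr{|A_F - A^{**}_F| > Cb^{1/2}n^{v-3}} \le \exp(-b)$ by observing that $A_{F,\vec f}$ is an $n^{v-4}$-Lipschitz function of $G_{i-1}[V\setminus\{u,w\}]$ and invoking the Azuma corollary, Corollary~\ref{cor:HA}. Your proposal never invokes any concentration inequality here; ``concentrated much more tightly'' and ``negligible fluctuations'' gesture at it but do not supply it, and without it you cannot produce the $b^{1/2}n^{v-3}$ term, nor the $\exp(-b)$ failure probability, in Lemma~\ref{lem:Clem}.

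There is also a power-counting error that would become a real problem if pushed through. You write $|Y_F(G_i)|\lesssim n^{v(F)-3}\Delta(e_i)$, and later $\Ex{Y_F^2\mid G_{i-1}}\lesssim n^{2v(F)-6}\cdot\overline{\Delta^2}$; the correct prefactor in front of $\Delta(e_i)$ is $n^{v(F)-4}$, hence $n^{2v(F)-8}$ in front of $\Delta^2$. Since $\Delta(e_i)$ is typically of order $tn$ and $\Ex{\Delta(e_i)^2\mid G_{i-1}}$ of order $bn^2$, your exponents overshoot the target $Cbn^{2v(F)-6}$ by a factor $n^2$. The correct scaling falls out of the $n^{v-4}$ Lipschitz constant in the Azuma step and the falling-factorial expansion of $A^{**}_{F,\vec f}$, so filling in the missing concentration argument would also correct the exponent.
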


\begin{remark} The result may be proved for all $t\in (0,1)$, however for our purposes working for $t\in (0,1/2]$ is sufficient.
\end{remark}

In Section~\ref{sec:Ysmall} we prove Proposition~\ref{prop:Ysmall}.  We will then be ready to prove Theorem~\ref{thm:approx} in Section~\ref{sec:approx} and Theorem~\ref{thm:relate} in Section~\ref{ssec:relate}.

\subsection{Proof of Proposition~\ref{prop:Ysmall}}\label{sec:Ysmall}

Fix a graph $F$ with $v(F)$ vertices and $e(F)$ edges. In this subsection we write $v$ for $v(F)$ and $e$ for $e(F)$.  The proof of Proposition~\ref{prop:Ysmall} depends on Lemma~\ref{lem:Yis} and Lemma~\ref{lem:Clem}.  We shall now motivate and state these two lemmas.

Let $e_1,\dots ,e_N$ be the order in which edges are added in the realisation of the Erd\H os-R\'enyi random graph process, so that $G_m=\{e_1,\dots ,e_m\}$.  In particular, in this notation $e_i$ is the edge we add to go from $G_{i-1}$ to $G_i$. Define $A^{*}_F(G_i)$, a linear combination involving the degree and codegree deviation of $e_i$, by
\begin{align*}
A^{*}_F(G_i)\, & :=\, 2e s^{e-1}n^{v-2}\, +\, s^{e-2}n^{v-3}\left(2\obinom{F}{\owedge}-6\obinom{F}{\triangle}\right)(D_u(G_i)+D_w(G_i))\, \\
& \qquad  +\, 6s^{e-3}n^{v-3}\obinom{F}{\triangle}D_{u,w}(G_{i})\, .
\end{align*}
We will prove that $A_F(G_i)$ (which was introduced in~\eqr{Adef}) is usually well approximated by $A^*_F(G_i)$ (see Lemma~\ref{lem:Clem}).  On the other hand, we prove that $Y_F(G_i)$ may be expressed in terms of the difference $A_F(G_i)-A^{*}_F(G_i)$.  

\begin{lem}\label{lem:Yis} 
\[
Y_F(G_i)\, =\, \big( A_F(G_i)-A^{*}_F(G_i)\big)\, -\, \Ex{A_F(G_i)-A^{*}_F(G_i)\, \big|\, G_{i-1}}\, .
\]
\end{lem}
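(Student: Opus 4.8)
The plan is to reduce the identity to the purely algebraic statement
\[
X^*_F(G_i)\, =\, A^*_F(G_i)\, -\, \Ex{A^*_F(G_i)\,\big|\,G_{i-1}}
\]
and to verify the latter by direct computation. For the reduction, recall from~\eqr{Xdef} that $X_F(G_i)=A_F(G_i)-\Ex{A_F(G_i)\,|\,G_{i-1}}$; substituting this together with the definition~\eqr{Ydef} of $Y_F(G_i)$ into the claimed equality, the terms $A_F(G_i)-\Ex{A_F(G_i)\,|\,G_{i-1}}$ cancel on the two sides, and what remains is exactly the displayed identity. So it suffices to prove that identity.

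To do so, I would first write down exact formulas for the increments $A_\owedge(G_i)$ and $A_\triangle(G_i)$, with $e_i=uw$ the $i$-th edge added. A copy of $\owedge$ or $\triangle$ created by adding $e_i$ must use the edge $uw$; counting these by whether $u$ or $w$ plays the centre of the path (two orderings, and the third vertex is a $G_{i-1}$-neighbour of that centre), respectively by the number of common neighbours of $u$ and $w$ (each triangle $\{u,w,x\}$ giving six embeddings), one obtains $A_\owedge(G_i)=2d_u(G_{i-1})+2d_w(G_{i-1})$ and $A_\triangle(G_i)=6\,d_{u,w}(G_{i-1})$. Since $d_u(G_{i-1})=d_u(G_i)-1$, $d_w(G_{i-1})=d_w(G_i)-1$ and $d_{u,w}(G_{i-1})=d_{u,w}(G_i)$, the definitions~\eqr{Dudef} and~\eqr{Duwdef} give
\[
A_\owedge(G_i)\, =\, 2\big(D_u(G_i)+D_w(G_i)\big)+\Big(\frac{8i}{n}-4\Big),\qquad A_\triangle(G_i)\, =\, 6D_{u,w}(G_i)+\frac{6(n-2)(i)_2}{(N)_2},
\]
in which the additive terms depend only on $i$ and $n$.

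Now I would compare with $A^*_F(G_i)$, writing $s=i/N$ and letting $v,e$ denote $v(F),e(F)$ as in this subsection. Multiplying the two formulas above by $n^{v-3}s^{e-2}\big(\binom{F}{\owedge}-3\binom{F}{\triangle}\big)$ and by $n^{v-3}s^{e-3}\binom{F}{\triangle}$ respectively --- these being exactly the coefficients of $X_\owedge(G_i)$ and $X_\triangle(G_i)$ in~\eqr{Xsdef} --- and adding, the non-constant parts reproduce term for term the two non-constant terms in the displayed definition of $A^*_F(G_i)$ (using $2\binom{F}{\owedge}-6\binom{F}{\triangle}=2(\binom{F}{\owedge}-3\binom{F}{\triangle})$ and that the $D_{u,w}$-coefficients are both $6n^{v-3}s^{e-3}\binom{F}{\triangle}$). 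Hence
\[
A^*_F(G_i)\, =\, n^{v-3}s^{e-2}\Big(\binom{F}{\owedge}-3\binom{F}{\triangle}\Big)A_\owedge(G_i)+n^{v-3}s^{e-3}\binom{F}{\triangle}A_\triangle(G_i)+R,
\]
where $R=R(i,n,F)$ is deterministic, collecting the term $2e\,s^{e-1}n^{v-2}$ together with the negatives of the deterministic parts brought in from $A_\owedge(G_i)$ and $A_\triangle(G_i)$. Subtracting $\Ex{\,\cdot\,|\,G_{i-1}}$ from both sides annihilates $R$ and, by~\eqr{Xdef}, turns $A_\owedge(G_i),A_\triangle(G_i)$ into $X_\owedge(G_i),X_\triangle(G_i)$; the right-hand side then becomes exactly $X^*_F(G_i)$ by~\eqr{Xsdef}, which is the required identity, and the lemma follows.

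I expect the only point needing care to be this last bookkeeping: checking that the non-constant terms of $A^*_F(G_i)$ agree precisely with the stated multiples of $A_\owedge(G_i)$ and $A_\triangle(G_i)$, and that everything left over really is a single deterministic quantity $R$, independent of the randomness, so that it drops out upon subtracting conditional expectations. There is no probabilistic content: the statement is an identity between random variables, valid for every $1\le i\le N$.
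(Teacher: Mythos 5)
Your proof is correct and takes essentially the same route as the paper's: both reduce the claim to showing $X^*_F(G_i) = A^*_F(G_i) - \Ex{A^*_F(G_i)\,\big|\,G_{i-1}}$ and verify it by writing $A_{\owedge}(G_i)$ and $A_{\triangle}(G_i)$ as deterministic functions of $(i,n)$ plus linear combinations of degree and codegree deviations. The paper phrases the bookkeeping via $D_u(G_{i-1})$ while you use $D_u(G_i)$ to match the definition of $A^*_F$ directly, but this is only a deterministic shift that the centering absorbs in either presentation.
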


\begin{proof} This expression for $Y_F(G_i)$ follows almost directly from its definition as $Y_F(G_i):= X_{F}(G_i)\, -\, X^{*}_F(G_i)$.  Indeed, the definition~\eqr{Xdef} of $X_F(G_i)$ is
\[
X_F(G_i)\, =\, A_F(G_i)\, -\, \Ex{A_F(G_i)\, |\, G_{i-1}}
\]
and so we need only prove that
\eq{sstp}
X^{*}_F(G_i)\, =\, A^{*}_F(G_i)\, -\, \Ex{A^{*}_F(G_i)\,|\, G_{i-1}}\, .
\eqe
As $X^*_F(G_i)$ is defined~\eqr{Xsdef} as a linear combination of $X_{\owedge}(G_i)$ and $X_{\triangle}(G_i)$ it is useful to note that
\begin{align*}
X_{\owedge}(G_i)\, & =\, A_{\owedge}(G_i)\, -\,  \frac{8(i-1)}{n}\, + \, \Ex{ \frac{8(i-1)}{n}-A_{\owedge}(G_i)\, \big|\,G_{i-1}}\\
& =\, 2\big(D_u(G_{i-1})+D_w(G_{i-1}) \big) \, -\, \Ex{2\big(D_u(G_{i-1})+D_w(G_{i-1}) \big)\, \big|\, G_{i-1}}\, 
\end{align*}
and
\[
X_{\triangle}(G_i)\, =\, 6D_{uw}(G_{i-1})\, -\, \Ex{6D_{uw}(G_{i-1})\, \big|\, G_{i-1}}\, ,
\]
where we have used that
\begin{align}
A_{\owedge}(G_i)\, & =\, 2\big(d_u(G_{i-1})+d_{w}(G_{i-1})\big) \nonumber \\
& =\,  \frac{8(i-1)}{n}\, +\, 2\big(D_u(G_{i-1})+D_w(G_{i-1})\big)\, . \label{eq:Awedgeis}
\end{align}
and
\eq{Atriis}
A_{\triangle}(G_i)\, =\, \frac{6(n-2)(i-1)_2}{(N)_2}\, +\, 6D_{u,w}(G_{i-1})\, .
\eqe

The required equation~\eqr{sstp} now follow simply by substituting these values in the definition of $X^*_F(G_i)$.
\end{proof}

We now state Lemma~\ref{lem:Clem}.  We shall use the quantity $\Delta(e_i)$ defined to be the sum of squares of the degree and codegree deviations associated with edge $e_i$.  That is,
\eq{Deldef}
\Delta(e_i)\, :=\, D_{u}(G_{i-1})^2+D_{w}(G_{i-1})^2+D_{uw}(G_{i-1})^2\, ,
\eqe
where $e_i=\{u,w\}$.

\begin{lem}\label{lem:Clem} Let $F$ be a graph with $v$ vertices and $e$ edges.  There is a constant $C=C(F)$ such that, for all $1\le i\le N$ and $b\ge 1$, the event that
\eq{aevent}
\big|A_F(G_i)-A^{*}_F(G_i)\big|\, >\, Cb^{1/2}n^{v-3}\, +\, Cn^{v-4}\Delta(e_i)
\eqe
has probability at most $\exp(-b)$.
\end{lem}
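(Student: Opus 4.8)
The plan is to express $A_F(G_i)$ as a sum of rooted embedding counts, extract from each the leading term and the part linear in the degree and codegree deviations at $e_i$ (these will reassemble into $A^{*}_F(G_i)$), and bound everything else using the estimates of Section~\ref{sec:degrees}. Write $G_i=G_{i-1}\cup\{e_i\}$ with $e_i=\{u,w\}$, $s=i/N$, and $q_{xy}=\mathbbm{1}[xy\in E(G_i)]-s$. Since a copy of $F$ is new in $G_i$ exactly when its image uses $e_i$, and an injective $\phi$ maps at most one edge of $F$ onto $\{u,w\}$, we have $A_F(G_i)=\sum_{f=\{a,b\}\in E(F)}\bigl(A^{u,w}_{F,f}(G_i)+A^{w,u}_{F,f}(G_i)\bigr)$, where $A^{u,w}_{F,f}(G_i)$ counts embeddings $\phi$ of $F$ in $G_i$ with $\phi(a)=u$ and $\phi(b)=w$. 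Expanding each edge indicator of such a $\phi$ as $s+q$ gives $A^{u,w}_{F,f}(G_i)=\sum_{S\subseteq E(F)\setminus\{f\}} s^{e-1-|S|}(n-O_F(1))_{v-2-k_S}\,Q_S$, where $k_S$ is the number of non-root vertices of $F$ met by $S$ and $Q_S=\sum_{\star}\prod_{g\in S}q_{\psi(g)}$, the sum being over injective placements of those $k_S$ vertices in $V(G_i)\setminus\{u,w\}$ (with $a,b$ mapped to $u,w$). One splits $A^{*}_F(G_i)$ into $2e$ matching pieces.

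The terms contributing at order $n^{v-2}$, or at order $n^{v-3}$ with coefficient linear in $D_u(G_i),D_w(G_i),D_{u,w}(G_i)$, are those with $S=\emptyset$, those with $|S|=1$ whose edge meets exactly one of $a,b$, and those with $S=\{ax,bx\}$ for $x$ a common $F$-neighbour of $a,b$; the evaluations $\sum_y q_{uy}=D_u(G_i)+O(1)$, $\sum_y q_{wy}=D_w(G_i)+O(1)$, $\sum_y q_{uy}q_{wy}=D_{u,w}(G_i)-s(D_u(G_i)+D_w(G_i))+O(1)$, and $\sum_{y\ne y'}q_{yy'}=O(|D_u(G_i)|+|D_w(G_i)|+1)$ (the last because $|E(G_i)|=i$ is deterministic, so that internal-edge corrections are of lower order) identify these up to an additive $O_F(n^{v-3})$. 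Summing over $f$ and the two orientations and using $\sum_{\{a,b\}\in E(F)}(\deg_F(a)+\deg_F(b)-2-2t_f)=2\binom{F}{\owedge}-6\binom{F}{\triangle}$ and $\sum_f t_f=3\binom{F}{\triangle}$, where $t_f$ is the number of triangles of $F$ through $f$, one recovers $A^{*}_F(G_i)$ up to error $O_F(n^{v-3})+O_F(n^{v-4}\Delta(e_i))$, provided one checks that the spurious $sD_u,sD_w$ contributions of the $|S|=1$ terms at edges to common neighbours cancel the $-s(D_u+D_w)$ part of the $S=\{ax,bx\}$ terms; this cancellation is exactly what replaces $\deg_F(a)-1$ by $\deg_F(a)-1-t_f$ and produces the coefficient $6\binom{F}{\triangle}$ of $D_{u,w}$.

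It remains to bound each of the $O_F(1)$ remaining terms $\pm s^{e-1-|S|}(n-O_F(1))_{v-2-k_S}Q_S$ by $Cb^{1/2}n^{v-3}+Cn^{v-4}\Delta(e_i)$ off an event of probability $\le\exp(-b)$. Summing $Q_S$ off the leaves of $S$ one vertex at a time (using the evaluations above), and applying Cauchy--Schwarz wherever a vertex of $S$ has degree $\ge2$ in $S$, bounds $|Q_S|$ by a bounded product of factors among $|D_u(G_i)|,|D_w(G_i)|,|D_{u,w}(G_i)|$, $\max_y|D_y(G_i)|$, $\max_{y,y'}|D_{y,y'}(G_i)|$, $(\sum_y D_y(G_i)^2)^{1/2}$, $(\sum_{y,y'}D_{y,y'}(G_i)^2)^{1/2}$, $(\sum_y D_y(G_i)^4)^{1/4}$, $(\sum_{y,y'}D_{y,y'}(G_i)^4)^{1/4}$, together with, when a component of $S$ disjoint from $\{a,b\}$ contains a triangle or a denser configuration, a factor $|D_{\triangle}(G_i)|$ or $|D_{\owedge}(G_i)|$. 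The first three kinds of factor are absorbed into $n^{v-4}\Delta(e_i)$ via $|x|\le1+x^2$; the global degree and codegree statistics are controlled by the estimates of Section~\ref{sec:degrees}; and the $|D_{\triangle}(G_i)|,|D_{\owedge}(G_i)|$ factors, which appear only together with a prefactor small enough to leave ample room, are controlled at this point by the Hoeffding--Azuma inequality (Corollary~\ref{cor:HA}) applied to $N_{\triangle}$ and $N_{\owedge}$ made Lipschitz on the event that every codegree of $G_i$ is $O(s^2n+b)$ (which holds off probability $\le\exp(-b)$ by Lemma~\ref{lem:maxcodegdev}). A short case analysis on the size of $b$ relative to powers of $n$ --- for $b$ above a fixed power of $n$ the trivial bounds $|D_y(G_i)|,|D_{y,y'}(G_i)|\le n$ and $|D_{\triangle}(G_i)|,|D_{\owedge}(G_i)|\le O(n^3)$ suffice --- then yields the stated bound for each term off an event of probability $\le\exp(-b)$; a union bound over the finitely many terms and auxiliary events, absorbing all constants into $C=C(F)$ (and taking $C$ large enough to handle the finitely many small $n$), completes the proof.

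I expect the real difficulty to be in this last step, and in particular its uniformity over the whole range $b\ge1$ (and over $t$): a naive use of the concentration lemmas loses a factor $\sqrt b$, so one must interpolate carefully between the degree/codegree estimates, the maximum-deviation estimates and the trivial bounds according to the size of $b$, and the triangle- and $P_2$-count deviation factors coming from triangles inside $F$ must be handled without the sharp Theorem~\ref{thm:upto}, which is proved only later. The bookkeeping in the identification step --- keeping precisely the right $|S|\le2$ terms so that the triangle coefficient $6\binom{F}{\triangle}$ emerges after the $sD_u,sD_w$ cancellation --- is the secondary place where care is needed.
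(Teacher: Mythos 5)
Your proposal takes a genuinely different route from the paper's. The paper fixes $e_i=\{1,2\}$ by symmetry, conditions on $N_1=N_{G_{i-1}}(1)$ and $N_2=N_{G_{i-1}}(2)$, and introduces the intermediate quantity $A^{**}_F(G_i):=\mathbb{E}[A_F(G_i)\mid N_1,N_2,e_i=12]$. With $N_1,N_2$ fixed, each $A_{F,\vec f}(G_i)$ is an $n^{v-4}$-Lipschitz function of the residual random graph $G_{i-1}[V\setminus\{1,2\}]$, so a single application of Corollary~\ref{cor:HA} gives $|A_F-A^{**}_F|\le Cb^{1/2}n^{v-3}$ off probability $\exp(-b)$, while $|A^{**}_F-A^*_F|\le C(n^{v-3}+n^{v-4}\Delta(e_i))$ is obtained by a deterministic expansion of $A^{**}_F$ in $|N_1|,|N_2|,|N_1\cap N_2|$. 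Your plan drops the conditioning and expands $A_F$ in centred edge indicators, aiming to control each residual $Q_S$ directly.

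There is a concrete gap in the Cauchy--Schwarz step. Take $F$ the path on $\{1,2,3,4\}$ (so $v=4$, $e=3$), $f=12$, $a=1$, $b=2$; then $S=\{23,34\}$ is a path of length two hanging off the root $b$, its prefactor $s^{e-1-|S|}(n-O(1))_{v-2-k_S}$ is $\Theta(1)$, and
\[
Q_S\;=\;\sum_{\substack{x'\ne y'\\ x',y'\notin\{u,w\}}}q_{wx'}q_{x'y'}\;=\;\sum_{x'}q_{wx'}D_{x'}(G_i)\;+\;O\big(|D_u|+|D_w|+1\big)\, .
\]
Your Cauchy--Schwarz step bounds the main piece by $\big(\sum_{x'}q_{wx'}^2\big)^{1/2}\big(\sum_{x'}D_{x'}(G_i)^2\big)^{1/2}$, which is typically of order $n^{1/2}\cdot n=n^{3/2}$, since $\sum_{x'}q_{wx'}^2=s(1-s)n+O(|D_w|+1)$ and $\mathbb{E}\sum_{x'}D_{x'}^2$ is of order $s(1-s)n^2$. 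But for $b=O(1)$ and a typical edge $e_i$ one has $\Delta(e_i)=\Theta(n)$, so the target $Cb^{1/2}n^{v-3}+Cn^{v-4}\Delta(e_i)$ is only $O(n)$ here: Cauchy--Schwarz overshoots by a factor of $n^{1/2}$, and because this loss occurs already at $b=O(1)$, no case split on $b$ can repair it. In fact $\sum_{x'}q_{wx'}D_{x'}=\sum_{x'\in N(w)}D_{x'}-s\sum_{x'}D_{x'}+O(1)$ really does concentrate at scale $b^{1/2}n$, because $q_{wx'}$ and $D_{x'}$ are nearly uncorrelated, but Cauchy--Schwarz cannot see that; establishing it is exactly what the paper's conditioning on $N(w)$ buys, since with $N(w)$ fixed $\sum_{x'\in N(w)}D_{x'}$ is a $2$-Lipschitz function of the residual graph and Corollary~\ref{cor:HA} gives the right scale at once. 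The conditioning is therefore the load-bearing idea, not a shortcut, and your decomposition cannot close without it or some comparable decorrelation device. A secondary, fixable point: invoking Corollary~\ref{cor:HA} for $N_\triangle$ ``made Lipschitz on the event'' that codegrees are small is not legitimate as stated (one would need a typical-bounded-differences inequality, in the spirit of Warnke's), but as you observe the crude Lipschitz constant $n$ already gives $|D_\triangle(G_i)|\le Cb^{1/2}n^2$ off $\exp(-b)$, which suffices for those terms.
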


\begin{proof} The vertex set of $G_{m}$ is $[n]=\{1,\dots ,n\}$.  By symmetry we may assume that the pair $12$ is added as the $i$th edge, i.e., $e_i=12$.  Thus, the event~\eqr{aevent} may be viewed as an event concerning the first $i-1$ edges $e_1,\dots ,e_{i-1}$.  We may reveal this information as follows: we first reveal the neighbourhoods $N_1(G_{i-1})$ and $N_2(G_{i-1})$ of vertices $1$ and $2$ in $G_{i-1}$, and then we reveal the remaining edges.  We shall prove, for any choice on the first step, of $N_1(G_{i-1})$ and $N_2(G_{i-1})$, that the conditional probability that~\eqr{aevent} occurs
is at most $\exp(-b)$.  The result of the lemma then follows by taking expectations.

% Where is C in the above

Let us now fix $N_1:=N_1(G_{i-1})$ and $N_2:=N_2(G_{i-1})$.  We set $d_1=|N_1|$ and $d_2=|N_2|$.  Let us also abbreviate $D_1(G_{i-1}),D_{2}(G_{i-1})$ and $D_{1,2}(G_{i-1})$ to $D_1,D_2$ and $D_{1,2}$ respectively, for the duration of the proof.

Our aim is to show that in selecting the remaining $i-1-d_1-d_2$ edges, in $V(G_m)\setminus \{1,2\}$, there is probability at most $\exp(-b)$ that~\eqr{aevent} occurs.

The proof will use the triangle inequality, in the sense that we bound $|A_F(G_i)-A^{*}_F(G_i)|$ by introducing a third quantity $A^{**}_F(G_i)$ such that
\eq{todouble} 
\pr{\big|A_F(G_i)-A^{**}_F(G_i)\big|\, >\, Cb^{1/2}n^{v-3}\phantom{\Big|} \big|\, N_1,N_2,e_i=12}\,\le \, \exp(-b)\,
\eqe
and
\eq{doubling}
\big|A^{*}_F(G_i)-A^{**}_F(G_i)\big|\, \le\,  Cb^{1/2}n^{v-3}\, +\, Cn^{v-4}\Delta(e_i)
\eqe
deterministically.  We set
\[
A^{**}_F(G_i)\, :=\, \Ex{A_F(G_i)\, \big|\,N_1,N_2,e_i=12}\, .
\]
It is clear (by considering the triangle inequality) that proving the lemma reduces to verifying~\eqr{todouble} and~\eqr{doubling}.

Let us subdivide $A_F(G_i)$ depending on which edge $f$ of $F$ corresponds to the new edge $e_i$, and its orientation with respect to $e_i=12$, which one may think of as oriented $\vec{12}$.  That is, we write 
\[
A_{F,\vec{f}}(G_i)
\]
for the number of embeddings $\phi(F)$ of $F$ created with the addition of $e_i=12$ in which $\phi(\vec{f})=\vec{12}$.  Clearly
\eq{AFassum}
A_F(G_i)\, =\, \sum_{\vec{f}}A_{F,\vec{f}}(G_i)\, ,
\eqe
where the sum is over orientations $\vec{f}$ of edges $f\in E(F)$.  We shall also define $A^{*}_{F,\vec{f}}(G_i)$ and $A^{**}_{F,\vec{f}}(G_{i-1})$ for each $\vec{f}\in \vec{E}(F)$, as follows.
We shall write $\Gamma_1(\vec{f})$ and $\Gamma_2(\vec{f})$ for the neighbourhood in $F$ of the start and end vertex of $\vec{f}$ respectively, and we set
\[
\beta(\vec{f})\, :=\, \big|\Gamma_1\cap \Gamma_2\big|\, ,\quad \alpha_1(\vec{f})\, :=\, \big|\Gamma_1\setminus \Gamma_2\big|\quad \text{and}\quad \alpha_2(\vec{f})\, :=\, \big|\Gamma_2\setminus \Gamma_1\big|\, .
\]
We may now define
\eq{AsFf}
A^{*}_{F,\vec{f}}(G_i)\, =\, s^{e-1}n^{v-2}\, +\, s^{e-2}n^{v-3}\big(\alpha_1(\vec{f}) D_1+\alpha_2(\vec{f}) D_2\big)\, +\, s^{e-3}n^{v-3}\beta(\vec{f})D_{1,2}\, ,
\eqe
One may easily verify that $\sum_{\vec{f}}\alpha_1(\vec{f})=\sum_{\vec{f}}\alpha_2(\vec{f})=2\binom{F}{\owedge}-6\binom{F}{\triangle}$ and $\sum _{\vec{f}}\beta(\vec{f})=6\binom{F}{\triangle}$, from which it follows that
\eq{AFsassum}
A^{*}_F(G_i)\, =\, \sum_{\vec{f}}A^{*}_{F,\vec{f}}(G_i)\, .
\eqe 
We may also define
\[
A^{**}_{F,\vec{f}}(G_i)\, :=\, \Ex{A_{F,\vec{f}}(G_i)\, \big|\,N_1,N_2,e_i=12}\, .
\]
It follows directly from linearity of expectation that
\eq{AFssassum}
A^{**}_F(G_i)\, =\,  \sum_{\vec{f}}A^{**}_{F,\vec{f}}(G_i)\, .
\eqe
Taken together, equations~\eqr{AFassum},~\eqr{AFsassum} and~\eqr{AFssassum} reduce the problem of proving~\eqr{todouble} and~\eqr{doubling} to the problem of proving, for each $\vec{f}\in \vec{E}(F)$,
\eq{tofdouble}
\pr{\big|A_{F,\vec{f}}(G_i)-A^{**}_{F,\vec{f}}(G_i)\big|\, >\, Cb^{1/2}n^{v-3}\phantom{\Big|} \big|\, N_1,N_2,e_i=12}\,\le\, \exp(-b)\,
\eqe
and
\eq{fdoubling}
\big|A^{*}_{F,\vec{f}}(G_i)-A^{**}_{F,\vec{f}}(G_i)\big|\, \le\,  Cn^{v-3}\, +\, Cn^{v-4}\Delta(e_i)
\eqe
deterministically.

Fix $\vec{f}\in \vec{E}(F)$.  Let us write $\alpha_1,\alpha_2$ and $\beta$ for $\alpha_1(\vec{f}),\alpha_2(\vec{f})$ and $\beta(\vec{f})$ respectively, and let $\alpha=\alpha_1+\alpha_2$.  Let us first prove~\eqr{fdoubling} for this $\vec{f}$.  We shall use the notation $\pm E$ to denote an error of up $E$.  For example, we may express $|N_j|$ as $sn+D_j\pm 1$, for $j\in \{1,2\}$ and $|N_1\cap N_2|$ as $s^2 n+D_{1,2}\pm 2$.

We begin with a discussion of $A^{**}_{F,\vec{f}}(G_i)$, the expected number of embeddings $\phi(F)$ created with the addition of edge $e_i=12$ in which $\phi(\vec{f})=\vec{12}$, given $N_1$ and $N_2$.  
Let us observe that, writing $F'$ for the graph obtained by removing the vertices of $\vec{f}$, this is precisely the number of embeddings $\phi(F')$ of $F'$ in 
\[
G'_i\, :=\, G_i[V(G_i)\setminus \{1,2\}]
\]
in which
\[
\phi(\Gamma_1)\, \subseteq \, N_1\quad \text{and}\quad \phi(\Gamma_2)\, \subseteq \, N_2\, .
\]
We may thus calculate that
\begin{align*}
A^{**}_{F,\vec{f}}(G_i)\, =\, (|N_1\cap N_2|)_{\beta}& (|N_1| -\beta)_{\alpha_1}(|N_2|-\beta\pm \alpha_1)_{\alpha_2}(n-2-\alpha-\beta)_{v-2-\alpha-\beta} \phantom{\Bigg(}\\
& \cdot \frac{(i-1-|N_1|-|N_2|+|N_1\cap N_2|)_{e-1-\alpha-2\beta}}{(N')_{e-1-\alpha-2\beta}}\, ,
\end{align*}
where we have written $N'$ for $\binom{n-2}{2}$.
We may now expand each of these term to obtain a main contribution and error terms.  For example, we may express $(|N_1\cap N_2|)_{\beta}$ as
\[
(s^2 n+D_{1,2}\pm 2)_{\beta}\, =\, (s^2 n+D_{1,2}\pm 2)^\beta \, \pm \, \beta^2 n^{\beta-1}\, .
\]
Continuing, and using that $|D_{1,2}|\le n$, we may express $(|N_1\cap N_2|)_{\beta}$ as
\[
s^{2\beta}n^{\beta}\, +\, \beta s^{2\beta-2}n^{\beta-1} D_{1,2}\, \pm\, 2^\beta n^{\beta-2} D_{1,2}^2\, \pm \, 2(3^\beta+\beta^2) n^{\beta-1}\, .
\]
In particular, there is a constant $C_1=C_1(F)$, so that
\[
(|N_1\cap N_2|)_{\beta}\, =\, s^{2\beta}n^{\beta}\, +\, \beta s^{2\beta-2}n^{\beta-1} D_{1,2}\, \pm\, C_1\big( n^{\beta-1} + n^{\beta-2} \Delta(e_i)\big)\, .
\]
We may assume that $C_1=C_1(F)$ is also chosen so that the equivalent statements hold for the remaining terms.  In particular,
\[
(|N_j| -\beta\pm \alpha_1)_{\alpha_j}\, =\, s^{\alpha_j}n^{\alpha_j}\,+\, \alpha_j s^{\alpha_j-1}n^{\alpha_j-1}D_j\, \pm\, C_1 \big( n^{\alpha_j-1} + n^{\alpha_j-2} \Delta(e_i)\big)\, 
\]
for $j=1,2$,
\[
(n-2-\alpha-\beta)_{v-2-\alpha-\beta}\, =\, n^{v-2-\alpha-\beta}\, \pm \, C_1 n^{v-3-\alpha-\beta}
\]
and
\[
 \frac{(i-1-|N_1|-|N_2|+|N_1\cap N_2|)_{e-1-\alpha-2\beta}}{(N')_{e-1-\alpha-2\beta}}\, =\, s^{e-1-\alpha-2\beta}\, \pm\, C_1 n^{-1}\, .
\]
Replacing $C_1$ by a larger constant $C_2$ if necessary, it follows that
\[
A^{**}_{F,\vec{f}}(G_i)\, =\, s^{e-1}n^{v-2}\, +\, s^{e-2}n^{v-3}\big(\alpha_1 D_1+\alpha_2 D_2\big)\, +\, s^{e-3}n^{v-3}\beta D_{1,2}\, \pm \, C_2 \big( n^{v-3} + n^{v-4} \Delta(e_i)\big)\, ,
\]
completing the proof of~\eqr{fdoubling}.

All that remains is to prove~\eqr{tofdouble}.  With $e_i=12$ and the neighbourhoods $N_1$ and $N_2$ fixed we have that $A_{F,\vec{f}}(G_i)$ is a function $f(G)$ of the graph $G=G_{i-1}[V\setminus \{1,2\}]\sim G(n-2,i-1-d_1-d_2)$, and by definition, see~\eqr{AsFf}, we have $\Ex{f(G)}=A^{**}_{F,f}(G_i)$.  Furthermore $f(G)$ is $n^{v-4}$-Lipschitz, in the sense described in Section~\ref{sec:ineqs}.  By Corollary~\ref{cor:HA}, we have that
\[
\pr{|f(G)-\Ex{f(G)}|\, >\, Cb^{1/2}n^{v-3}}\, \le\, 2\exp\left(\frac{-C^2 b n^{2v-6}}{8n^{2v-6}}\right)\, \le\, \exp(-2b)
\]
provided we choose $C\ge 5$.  This precisely proves~\eqr{tofdouble}, completing the proof.
\end{proof}

We now prove Proposition~\ref{prop:Ysmall}.

\begin{proof}[Proof of Proposition~\ref{prop:Ysmall}]  Let $b\ge 3\log{n}$ be fixed.   By Lemma~\ref{lem:Clem} there is a constant $C_1$ such that the event
\eq{could}
\big|A_F(G_i)-A^{*}_F(G_i)\big|\, >\, C_1 b^{1/2}n^{v-3}\, +\, C_1 n^{v-4}\Delta(e_i)
\eqe
has probability at most $\exp(-3b)\le n^{-2}\exp(-2b)$.  We say that $G_{i-1}$ is \emph{$b$-good} if~\eqr{could} does not occur for any choice of $e_i$.  Since there are fewer than $n^2$ choices for $e_i$, it follows that
\[
\pr{G_{i-1}\, \text{is $b$-good}}\,\ge \, 1-\exp(-2b)\, .
\]
If $G_{i-1}$ is $b$-good then we have that
\[
\big| A_F(G_i)-A^{*}_F(G_i)\big|\, \le\, C_1 b^{1/2}n^{v-3}\, +\, C_1 n^{v-4}\Delta(e_i)
\]
for all possible choices of $e_i$, and
\[
\big|\Ex{A_F(G_i)-A^{*}_F(G_i)\, \big|\, G_{i-1}}\big|\, \le\, C_1 b^{1/2}n^{v-3}\, +\, C_1 n^{v-4}\Ex{\Delta(e_i)\,\big|\, G_{i-1}}\, .
\]
It follows immediately from Lemma~\ref{lem:Yis} that
\eq{Ymax}
|Y_F(G_i)|\, \le\, 2C_1 b^{1/2}n^{v-3}\, +\, C_1 n^{v-4}\Delta(e_i) +\, C_1 n^{v-4}\Ex{\Delta(e_i)\,\big|\, G_{i-1}}
\eqe
whenever $G_{i-1}$ is $b$-good.  And, since $(\alpha+\beta+\gamma)^2\le 3(\alpha^2+\beta^2+\gamma^2)$,
\eq{nowY}
Y_F(G_i)^2\, \le\, 12C_1^2 bn^{2v-6}\, +\, 3C_1^2 n^{2v-8}\Delta(e_i)^2\, +\, 3C_1^2 n^{2v-8}\Ex{\Delta(e_i)\, \big|\, G_{i-1}}^2\, 
\eqe
whenever $G_{i-1}$ is $b$-good.  

The first term is already in an appropriate form; we now consider the other two terms.  Recalling that $\Delta(e_i)=D_{u}^2(G_{i-1})+D_{w}^2(G_{i-1})+D_{uw}^2(G_{i-1})$, where $e_i=uw$, it follows that
\[
\Ex{\Delta(e_i)^2\, \big|\,G_{i-1}}\, \le\, 3\Ex{D_u^4(G_{i-1})+D_w^4(G_{i-1})+D_{uw}^4(G_{i-1})\, \big|\, G_{i-1}}\, .
\]

By the Lemmas~\ref{lem:deg4} and~\ref{lem:codeg4}, there is a constant $C_2$ such that each of the events
\eq{deg4}
\sum_{u\in V(G_{i-1})}D_{u}(G_{i-1})^4\, >\, C_2 n^{3}+C_2 b n^{2}\min\{b,n\} \, ,
\eqe
and
\eq{codeg4}
\sum_{u,w\in V(G_{i-1})}D_{u,w}(G_{i-1})^4\, >\, C_2 n^{4}+C_2 b n^{3}\min\{b,n\}\, ,
\eqe
has probability at most $\exp(-2b)$.  We say the $G_{i-1}$ is $b$-\emph{great}, if it is $b$-good, and neither of the events~\eqr{deg4},~\eqr{codeg4} occurs.  We have
\[
\pr{G_{i-1}\, \text{is $b$-great}}\, \ge \, 1-\exp(-2b)-2\exp(-2b)\, \ge \, 1-\exp(-b)\, .
\]
Finally, if $G_{i-1}$ is $b$-great, then since each vertex has probability at most
\[
\frac{n-1}{N-i+1}\, \le\, \frac{n-1}{(1-s)N}\, =\, \frac{2}{(1-s)n}
\]
of being included in $e_i$, and each remaining pair has probability at most $3/(1-s)n^2$ of being $e_i$, we have
\begin{align*}
\Ex{\Delta(e_i)^2\, \big|\,G_{i-1}}\, &\le\, \frac{6}{(1-s)n}\sum_{u\in V(G_{i-1})}D_{u}(G_{i-1})^4\, +\, \frac{9}{(1-s)n^2}\sum_{u,w\in V(G_{i-1})}D_{u,w}(G_{i-1})^4\\
& \le\, 20C_2 n^2 \, +\, 20 C_2 b n \min\{b,n\}\\
& \le \, C_3 bn^2\,  ,
\end{align*}
where $C_3=40C_2$.  Taking conditional expectations in~\eqr{nowY}, and using the bound on $\Ex{\Delta(e_i)^2|G_{i-1}}$, we have
\[
\Ex{Y_F(G_i)^2\, \big|\, G_{i-1}}\, \le\, Cb n^{2v-6}\, ,
\]
where $C=12C_1^2(C_3+1)$, whenever $G_{i-1}$ is $b$-great.  This completes the proof of the proposition.
\end{proof}

\subsection{Proof of Theorem~\ref{thm:approx}}\label{sec:approx}

In this section we show how we may deduce Theorem~\ref{thm:approx} from Proposition~\ref{prop:Ysmall}.  

Let $t\in (0,1/2]$ and let $H$ be a graph with $v$ vertices and $e$ edges.  The main statement of Theorem~\ref{thm:approx} is that there exists a constant $C=C(H)$ such that
\eq{approxagain}
\pr{\big| D_H(G_{n,t})-\Lambda^{*}_H(G_{n,t})\big |\, >\, Cbt^{1/2}n^{v-2}}\, \le\, \exp(-b)
\eqe
for all $3\log{n}\le b\le t^{1/2}n$, where $\Lambda^{*}_H(G_{n,t})=\sum_{i=1}^{m}\X_H(G_i;t)$ is the sum of the increments
\[
\X_H(G_i;t) \, = \, n^{v-3} \left(t^{e-2}\obinom{H}{\owedge}\frac{(1-t)^2}{(1-s)^2}X_{\owedge}(G_i)\, +\, t^{e-3}\obinom{H}{\triangle} \frac{(1-t)^3}{(1-s)^{3}}\, \big(X_{\triangle}(G_i)-3sX_{\owedge}(G_i)\big) \right)\, .
\]

The proof will use the triangle inequality, bounding the difference between $D_H(G_{n,t})$ and $\Lambda^{*}_H(G_{n,t})$ via 
\eq{Lassdef}
\Lambda^{**}_H(G_{n,t})\, :=\, \sum_{i=1}^{m}\, \sum_{F\ssq E(H)}\frac{(1-t)^{e(F)}(t-s)^{e-e(F)}}{(1-s)^e}\, X_F(G_i)\, ,
\eqe
where as usual $m$ denotes $\lfloor tN\rfloor$.

Notice that $\Lambda^{**}_H(G_{n,t})$ is close to the martingale expression for $D_H(G_m)$, given by Theorem~\ref{thm:Mart}, with $m=\lfloor tN\rfloor$, except with coefficients
\[
\frac{(1-t)^{e(F)}(t-s)^{e-e(F)}}{(1-s)^e} \qquad \text{in place of } \qquad \frac{(N-m)_{e(F)}(m-i)_{e-e(F)}}{(N-i)_e}\, .
\]
The following lemma bounds the difference between these coefficients.  For fixed constants $0\le c\le e$, and any $1\le i\le m\le N$, 
\[
\nu_{c,e}(i,m)\, :=\,\frac{(N-m)_{c}(m-i)_{e-c}}{(N-i)_e}\, -\, \frac{(1-t)^{c}(t-s)^{e-c}}{(1-s)^e}\, .
\]

\begin{lem}\label{lem:coeffs}
Let $t\in (0,1/2]$ and $c,e\in \mathbb{N}$.  There is a constant $C=C(c,e)$ such that for all $1\le i\le m\le tN$, we have
\[
|\nu_{c,e}(i,m)|\, \le\, \frac{C}{n^2}\, .
\]
\end{lem}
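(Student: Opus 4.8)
The plan is to compare, factor by factor, the exact coefficient $P := \frac{(N-m)_c(m-i)_{e-c}}{(N-i)_e}$ with its approximation $Q := \frac{(1-t)^c(t-s)^{e-c}}{(1-s)^e}$, so that $\nu_{c,e}(i,m)=P-Q$. The only feature of the relation between $m$ and $tN$ that I would use is that $m=\lfloor tN\rfloor$, hence $0\le tN-m<1$. Clearing $N^e$ from $Q$ and recalling $sN=i$ rewrites $Q=\left(\frac{N-tN}{N-i}\right)^c\left(\frac{tN-i}{N-i}\right)^{e-c}$; splitting $(N-i)_e=(N-i)_c(N-i-c)_{e-c}$ rewrites $P=A\cdot B$ with $A=\prod_{j=0}^{c-1}\frac{N-m-j}{N-i-j}$ and $B=\prod_{j=0}^{e-c-1}\frac{m-i-j}{N-i-c-j}$, so that $Q=a\cdot b$ with $a=\left(\frac{N-tN}{N-i}\right)^c$ and $b=\left(\frac{tN-i}{N-i}\right)^{e-c}$. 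It then suffices to bound $|A-a|$ and $|B-b|$ and combine.

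First I would dispose of the finitely many small $n$ by enlarging $C$, so that we may assume $N=\binom{n}{2}\ge 8e$; together with $i\le m\le tN\le N/2$ this puts every denominator above in $[N/4,N]$ and ensures that all four ratios $\frac{N-m-j}{N-i-j}$, $\frac{m-i-j}{N-i-c-j}$, $\frac{N-tN}{N-i}$, $\frac{tN-i}{N-i}$ have absolute value at most $1$ (for the second ratio one uses $m-i-j\le N-i-c-j$, equivalent to $m+c\le N$, for the positive part, and $i+j-m\le j\le e$ for the negative part). The key estimate, obtained by clearing denominators and using $|tN-m|<1$ together with $N-tN\le N-i$, is that each factor of $A$ differs from the base of $a$ by at most $\frac{4(1+2j)}{N}$; an identical computation, using $0\le tN-i\le N-i$, shows each factor of $B$ differs from the base of $b$ by at most $\frac{4(c+2j+1)}{N}$. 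The point I want to stress is that this second bound needs no lower bound on $m-i$: near $i=m$ the factors $m-i-j$ and $tN-i$ can be tiny or even vanish, so one cannot argue that each ratio equals $1+O(1/n)$, and one must instead estimate the differences of factors directly. This is the one place that genuinely needs care.

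To finish, I would invoke the elementary inequality that $|x_1\cdots x_r-y^r|\le\sum_{k=1}^r|x_k-y|$ whenever $|x_1|,\dots,|x_r|,|y|\le 1$ (expand $x_1\cdots x_r-y^r=\sum_k x_1\cdots x_{k-1}(x_k-y)y^{r-k}$ and note that products of numbers bounded by $1$ in absolute value are bounded by $1$). This gives $|A-a|\le\sum_{j=0}^{c-1}\frac{4(1+2j)}{N}=O(c^2/N)$ and $|B-b|=O(e^2/N)$, whence, since $|A|,|a|,|B|,|b|\le 1$,
\[
|\nu_{c,e}(i,m)|=|AB-ab|\le |A|\,|B-b|+|b|\,|A-a|\le |B-b|+|A-a|=O(e^2/N)=O(1/n^2),
\]
with the implied constant depending only on $c$ and $e$, as required.
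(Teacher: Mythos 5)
Your proof is correct, but it takes a genuinely different route from the paper's. You factor $P=\frac{(N-m)_c(m-i)_{e-c}}{(N-i)_e}$ and $Q=\frac{(1-t)^c(t-s)^{e-c}}{(1-s)^e}$ as $P=AB$, $Q=ab$ with matching blocks of $c$ and $e-c$ factors, bound each factor-difference by $O(1/N)$ using $|tN-m|<1$ and the crude denominator bound $N-i-j\ge N/4$, and then assemble with the elementary identity $|x_1\cdots x_r-y^r|\le\sum_k|x_k-y|$ for quantities in $[-1,1]$ plus $|AB-ab|\le|A||B-b|+|b||A-a|$. The paper instead puts $\nu_{c,e}$ over the single common denominator $(k-1)_{e-1}k^e$ (with $k=N-i$, $\ell=N-m$) and splits the resulting numerator into three telescoping brackets, in each of which the top-degree term cancels, giving a numerator of order $k^{2e-2}$ against a denominator of order $k^{2e-1}$; this yields $|\nu_{c,e}|\le 6e^2/(N-m)=O(1/n^2)$ in one sweep. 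The paper's route is shorter and sidesteps the delicate point you correctly flag --- that near $i=m$ the factors $m-i-j$ and $tN-i$ can be tiny or even negative, so one cannot pretend each ratio is $1+O(1/n)$ --- because after clearing denominators that issue never arises. Your route is more mechanical and makes the source of each error visible, at the cost of being longer and requiring the observation about small numerators to be handled by bounding \emph{differences} of factors rather than \emph{ratios}. One small caveat: the lemma's hypothesis reads ``$1\le i\le m\le tN$'', while the paper's proof treats $t$ as exactly $m/N$ and you instead assume $m=\lfloor tN\rfloor$; both readings are consistent with how the lemma is used, and either makes your bound go through, but it is worth being aware that the statement as literally written (arbitrary $m\le tN$, $t$ independent) is not what is proved in either argument.
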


\begin{proof}  We will show that the constant $C=24e^2$ works for all sufficiently large $n$.  One may then adjust $C$ so that the result holds trivially for all smaller values of $n$.

Set $k=N-i$ and $\ell=N-m$.  We have
\[
\nu_{c,e}(i,m)\, =\, \frac{(\ell)_{c}(k-\ell)_{e-c}}{(k)_e}\, -\, \frac{\ell^{c}(k-\ell)^{e-c}}{k^e}\, =\, \frac{(\ell)_{c}(k-\ell)_{e-c}k^{e-1}-\ell^c(k-\ell)^{e-c}(k-1)_{c-1}}{(k-1)_{e-1}k^e}\, .
\]
The numerator of this expression may be written as
\[
[(\ell)_c-\ell^c]\, (k-\ell)_{e-c}k^{e-1}\, +\, [(k-\ell)_{e-c}-(k-\ell)^{e-c}]\ell^c k^{e-1} \, +\, [k^{e-1}-(k-1)_{e-1}]\ell^c(k-\ell)^{e-c}\, .
\]
Since $\ell\le k$, and the highest order term cancels in each of the square brackets, the numerator has absolute value at most
\[
3e^2 k^{2e-2}\, .
\]
On the other hand the denominator is at least
\[
k^e (k-1)_{e-1}\, \ge\, \frac{1}{2} k^{2e-1}
\]
for all sufficiently large $n$.  And so
\[
|\nu_{c,e}(i,m)|\, \le\, 6e^2 k^{-1}\, =\, \frac{6e^2}{N-m}\, \le\, \frac{12e^2}{(1-t)n^2}\, .
\]
Since $t\in (0,1/2]$, this complete the proof.
\end{proof}

We are now nearly ready to prove Theorem~\ref{thm:approx}.  Before doing so we require one more lemma.  We may view $Y_{F}(G_i)$ as a function of $G_{i-1}$ and $e_i$.  Let us write $\|Y_{F}|G_{i-1}\|_{\infty}$ for the maximum possible value of $|Y_F(G_i)|$ over the possible choices $e\in E(K_n)\setminus E(G_{i-1})$ of the $i$th edge.

\begin{lem}\label{lem:maxY}  Let $F$ be a graph with $v(F)$ vertices and $e(F)$ edges.  There is a constant $C=C(F)$ such that, for all $1\le i\le N$ and all $b\ge 3\log{n}$, the event that 
\[
\|Y_{F}|G_{i-1}\|_{\infty}\, >\, Cb^{1/2}n^{v(F)-3}\, +\, Cbsn^{v(F)-3}\,+\, Cb^{3/2}s^{1/2}n^{v(F)-7/2}\, +\, Cb^2n^{v-4}\, 
\]
has probability at most $\exp(-b)$.
\end{lem}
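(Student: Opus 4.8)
The plan is to reduce the claim to the pointwise estimate on $Y_F(G_i)$ obtained inside the proof of Proposition~\ref{prop:Ysmall}, and then to bound the degree and codegree contribution \emph{uniformly} over the choice of the $i$th edge. Write $v=v(F)$. As established there, for $b\ge 3\log n$ the event that $G_{i-1}$ is \emph{$b$-good} --- meaning the bad event~\eqref{eq:could} of Lemma~\ref{lem:Clem} fails for \emph{every} choice of $e_i\in E(K_n)\setminus E(G_{i-1})$ --- has probability at least $1-\exp(-2b)$, and on this event the bound~\eqref{eq:Ymax}, namely $|Y_F(G_i)|\le 2C_1 b^{1/2}n^{v-3}+C_1 n^{v-4}\Delta(e_i)+C_1 n^{v-4}\Ex{\Delta(e_i)\,\big|\, G_{i-1}}$, holds simultaneously for every choice of $e_i$. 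Since $\Ex{\Delta(e_i)\,\big|\, G_{i-1}}\le \max_e \Delta(e)$, where the maximum runs over pairs $e=\{u,w\}$ and $\Delta(\{u,w\})=D_u(G_{i-1})^2+D_w(G_{i-1})^2+D_{u,w}(G_{i-1})^2$, it suffices to control $\max_e\Delta(e)$.

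For this I would apply Lemma~\ref{lem:maxdegdev} and Lemma~\ref{lem:maxcodegdev} to $G_{i-1}$, whose edge density is at most $s:=i/N$, each with parameter $2b$, to get with probability at least $1-4\exp(-2b)$ that $|D_u(G_{i-1})|\le 8b^{1/2}s^{1/2}n^{1/2}+16b$ for all $u$ and $|D_{u,w}(G_{i-1})|\le 8b^{1/2}s^{1/2}n^{1/2}+16b$ for all $u,w$. Consequently $\max_e\Delta(e)\le 3\big(8b^{1/2}s^{1/2}n^{1/2}+16b\big)^2\le C_2\big(bsn+b^{3/2}s^{1/2}n^{1/2}+b^2\big)$ for an absolute constant $C_2$; it is precisely the cross term of this square that produces the term $Cb^{3/2}s^{1/2}n^{v-7/2}$ in the statement. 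Intersecting this event with the $b$-good event (total failure probability at most $5\exp(-2b)\le\exp(-b)$ since $b\ge 3\log n$) and substituting into~\eqref{eq:Ymax} gives, for every choice of $e_i$,
\[
|Y_F(G_i)|\, \le\, 2C_1 b^{1/2}n^{v-3}\, +\, 2C_1 C_2\big(bsn^{v-3}\, +\, b^{3/2}s^{1/2}n^{v-7/2}\, +\, b^2 n^{v-4}\big)\, ,
\]
so $\|Y_{F}|G_{i-1}\|_{\infty}$ is bounded by the right-hand side; taking $C=2C_1(C_2+1)$ (enlarged if necessary so the statement holds trivially for small $n$) completes the proof.

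The only points requiring care are the routine bookkeeping of the parameters $b$ and the multiplicative constants, and the observation that~\eqref{eq:Ymax} must be applied uniformly over $e_i$ --- which is exactly what $b$-goodness delivers --- rather than only for the realised $i$th edge. All the genuinely probabilistic content is already contained in Lemmas~\ref{lem:Clem},~\ref{lem:maxdegdev} and~\ref{lem:maxcodegdev} and in the proof of Proposition~\ref{prop:Ysmall}, so I do not expect any substantial obstacle.
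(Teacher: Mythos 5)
Your proof is correct and follows essentially the same route as the paper: on the $b$-good event you invoke the pointwise bound~\eqref{eq:Ymax}, and then control $\Delta(e)$ uniformly over $e$ via Lemma~\ref{lem:maxdegdev} and Lemma~\ref{lem:maxcodegdev} (the paper packages this as the event $F_{\Delta}$, but the content is identical). The only superficial difference is bookkeeping: you track a failure probability of $5\exp(-2b)$ where the paper allows $7\exp(-2b)$ (being slightly more generous in counting tail events), and either is absorbed into $\exp(-b)$ using $b\ge 3\log n$.
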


\begin{proof} With the usage of $b$-good introduced in the proof of Proposition~\ref{prop:Ysmall}, we have that $G_{i-1}$ is $b$-good with probability at least $1-\exp(-2b)$ and we recall~\eqr{Ymax}, which states that for some constant $C_1$ we have
\[
|Y_F(G_i)|\, \le\, 2C_1 b^{1/2}n^{v(F)-3}\, +\, C_1 n^{v(F)-4}\Delta(e_i) +\, C_1 n^{v(F)-4}\Ex{\Delta(e_i)\,\big|\, G_{i-1}}
\]
whenever $G_{i-1}$ is $b$-good.  Let $F_{\Delta}$ be the event that some $e\in E(K_n)\setminus E(G_{i-1})$ has
\[
\Delta(e)\, >\, 1000\Big(bsn\, +\, b^{3/2}s^{1/2}n^{1/2}\, +\, b^2\Big)\, .
\]
It follows easily from Lemma~\ref{lem:maxdegdev} and Lemma~\ref{lem:maxcodegdev} that $\pr{F_{\Delta}}\, \le\, 6\exp(-2b)$.  

We may now observe that there exists a constant $C$ such that the event that
\[
\|Y_{F}|G_{i-1}\|_{\infty}\, >\, Cb^{1/2}n^{v(F)-3}\, +\, Cbsn^{v(F)-3}\,+\, Cb^{3/2}s^{1/2}n^{v(F)-7/2}\, +\, Cb^2n^{v(F)-4}
\]
is contained in $F_{\Delta} \cup \{\text{$G_{i-1}$ is not $b$-good}\}$.  This probability is at most
\[
6\exp(-2b)\, +\, \exp(-2b)\, \le\, \exp(-b)\, ,
\]
as required.
\end{proof}

\begin{proof}[Proof of Theorem~\ref{thm:approx}]
Let $t\in (0,1/2]$.  We shall focus on the proof of the first statement.  To deduce the ``Furthermore'' statement, simply follow the same proof, with the variable $t$ removed, and use $R=2^{e+1}n^{v-2}$.

Fix $3\log{n}\le b\le t^{1/2}n$.  By the triangle inequality it clearly suffices to prove
\eq{approx1}
\big| D_H(G_{n,t})-\Lambda^{**}_H(G_{n,t})\big|\,\le \, C_1 t n^{v-2}\, 
\eqe
deterministically, and
\eq{approx2}
\pr{\big| \Lambda^{**}_H(G_{n,t})-\Lambda^{*}_H(G_{n,t})\big |\, >\, C_2 bt^{1/2} n^{v-2}}\, \le\, \exp(-b)
\eqe
for all $3\log{n}\le b\le t^{1/2}n$, for some constants $C_1$ and $C_2$.

We begin with~\eqr{approx1}.  By Theorem~\ref{thm:Mart}, we have the precise martingale expression for $D_H(G_{n,t})$ given by
\[
D_{H}(G_{n,t}) \, =\, \sum_{i=1}^{m}\, \sum_{F\ssq E(H)}\frac{(N-m)_{e(F)}(m-i)_{e-e(F)}}{(N-i)_e}\, X_F(G_i)\, ,
\]
where $m=\lfloor tN\rfloor$.  It follows that $D_H(G_{n,t})-\Lambda^{**}(G_{n,t})$ is
\[
\sum_{i=1}^{m}\, \sum_{F\ssq E(H)} \nu_{e(F),e}(i,m)\, X_{F}(G_i)\, .
\]
Since each $X_F(G_i)$ is at most $n^{v-2}$ deterministically and $\nu_{e(F),e}(i,m)$ is at most $C_3/n^2$, where $C_3$ is the constant given by Lemma~\ref{lem:coeffs}, it follows that this difference is at most 
\[
m  n^{v-2} \frac{C_1}{n^2}\, \le\, C_1 t n^{v-2}
\] 
deterministically, where $C_1=2^{e} C_3$. 

We now prove~\eqr{approx2}.   Let $3\log{n}\le b\le t^{1/2}n$ be fixed.  The proof proceeds by replacing the $X_{F}(G_i)$ in $\Lambda^{**}_H(G_{n,t})$ by
\[
X^*_F(G_i)\, +\, Y_F(G_i)
\]
where
\[
X^*_F(G_i)\, := \, n^{v-3}s^{e(F)-2}\left(\obinom{F}{\owedge}-3\obinom{F}{\triangle}\right) X_{\owedge}(G_i)\, +\, n^{v-3}s^{e(F)-3}\obinom{F}{\triangle}X_{\triangle}(G_i)\, .
\]
We claim that the $X^*_F(G_i)$ contribute exactly $\Lambda^{*}_{H}(G_{n,t})$, so that:

\noindent\textbf{Claim:} 
\eq{Ldiffis}
\Lambda^{**}_H(G_{n,t})\, -\,\Lambda^{*}_H(G_{n,t})\, =\, \sum_{i=1}^{m}\, \sum_{F\ssq E(H)}\, \frac{(1-t)^{e(F)}(t-s)^{e-e(F)}}{(1-s)^e}Y_F(G_i)\, .
\eqe

\noindent\textbf{Proof of Claim:} We must prove that 
\[
\sum_{i=1}^{m}\, \sum_{F\ssq E(H)}\, \frac{(1-t)^{e(F)}(t-s)^{e-e(F)}}{(1-s)^e}X^*_F(G_i)\, =\, \Lambda^{*}_H(G_{n,t}) \, .
\]
That is, we must prove that 
\[
\sum_{F\ssq E(H)}\, \frac{(1-t)^{e(F)}(t-s)^{e-e(F)}}{(1-s)^e}X^*_F(G_i)\, =\, \X_H(G_i;t)\, ,
\]
for all $i=1,\dots ,m$.  It is clear that both sides are linear combinations of the increments $X_{\owedge}(G_i)$ and $X_{\triangle}(G_i)$, so it suffices to prove they receive the same coefficients on each side.  We begin with $X_{\owedge}(G_i)$, which receives coefficient 
\eq{cris}
n^{v-3}t^{e-2}\obinom{H}{\owedge}\frac{(1-t)^2}{(1-s)^2}\, -\, 3n^{v-3} s t^{e-3}\obinom{H}{\triangle} \frac{(1-t)^3}{(1-s)^{3}}
\eqe
on the right hand side, and coefficient ~\eqr{clwedge}$-3\times$\eqr{cltriangle} on the left, where~\eqr{clwedge} and~\eqr{cltriangle} are given by
\eq{clwedge}
n^{v-3}\sum_{F\subseteq E(H)}\frac{(1-t)^{e(F)}(t-s)^{e-e(F)}}{(1-s)^e} s^{e(F)-2}\obinom{F}{\owedge}
\eqe
and
\eq{cltriangle}
n^{v-3}\sum_{F\subseteq E(H)}\frac{(1-t)^{e(F)}(t-s)^{e-e(F)}}{(1-s)^e} s^{e(F)-2}\obinom{F}{\triangle}\, .
\eqe
Since there is a contribution to~\eqr{clwedge} for each copy of $P_2$ contained in the subgraph $F\subseteq E(H)$, one may sum first over copies of $P_2$ contained in $H$, with each having a contribution equal to the total contribution of subgraphs $F\subseteq E(H)$ which contain it.  Thus~\eqr{clwedge} is $n^{v-3}\binom{H}{\owedge}$ times
\begin{align*}
& \sum_{P\subseteq F\subseteq E(H)}\frac{(1-t)^{e(F)}(t-s)^{e-e(F)}}{(1-s)^e} s^{e(F)-2}\\
&  \qquad = \frac{(1-t)^2(t-s)^{e-2}}{(1-s)^e}\sum_{F' \subseteq E(H) \setminus P}\left(\frac{s(1-t)}{t-s}\right)^{e(F')}\, ,
\end{align*}
where $P$ is some copy of $P_2$ of $H$.  Summing, using the binomial identity, reveals that~\eqr{clwedge} is precisely
\[
n^{v-3}\obinom{H}{\owedge} \frac{t^{e-2}(1-t)^2}{(1-s)^2}\,.
\]
Similarly,~\eqr{cltriangle} is $n^{v-3}\binom{H}{\triangle}$ times
\begin{align*}
& \sum_{T\subseteq F\subseteq E(H)}\frac{(1-t)^{e(F)}(t-s)^{e-e(F)}}{(1-s)^e} s^{e(F)-2} \\
& \qquad = \frac{s(1-t)^3(t-s)^{e-3}}{(1-s)^e}\sum_{F' \subseteq E(H) \setminus T} \left( \frac{s(1-t)}{t-s}\right)^{e(F')}\, ,
\end{align*}
where $T$ is some triangle of $H$.  By the binomial identity, we find that~\eqr{cltriangle} is precisely
\[
n^{v-3}\obinom{H}{\triangle} \frac{s t^{e-3}(1-t)^3}{(1-s)^3}\, .
\]
The coefficient on the left, ~\eqr{clwedge}$-3\times$\eqr{cltriangle}, is equal to that on the right,~\eqr{cris}.

Similar calculations confirm that $X_{\triangle}(G_i)$ receives coefficient
\[
n^{v-3} t^{e-3}\obinom{H}{\triangle} \frac{(1-t)^3}{(1-s)^{3}}
\]
on both sides, completing the proof of the Claim.

Now to complete the proof of~\eqr{approx2}, it suffices to prove, for some constant $C_2$, that
\eq{sumY}
\sum_{i=1}^{m}\, \sum_{F\ssq E(H)}\, \frac{(1-t)^{e(F)}(t-s)^{e-e(F)}}{(1-s)^e}Y_F(G_i)
\eqe
is at most $C_2 bt^{1/2} n^{v-2}$, in absolute value, with probability at least $1-\exp(-b)$.

Let $C_4$ be a constant $3e(H)$ times larger than the largest constant required by Proposition~\ref{prop:Ysmall} for a subgraph $F\subseteq E(H)$ and $C_5$ the equivalent for Lemma~\ref{lem:maxY}.  With these choices we have
\eq{2bound}
\pr{\Ex{Y_F(G_i)^2\, \big|\,G_{i-1}}\, >\, C_4 b n^{2v-6}}\, \le\, \exp(-3e(H)b)\, \le \, n^{-2}2^{-e(H)}\exp(-2b)\, 
\eqe
for all $F\subseteq E(H)$.  
Let $E_H(m)$ be the event that for some $F\subseteq E(H)$ and $1\le i\le m$ either
\[
\Ex{Y_F(G_i)^2\, \big|\,G_{i-1}}\, >\, C_4 b n^{2v-6}
\]
or
\[
\|Y_{F}|G_{i-1}\|_{\infty}\, >\, C_5 b^{1/2}n^{v-3}\, +\, C_5 btn^{v-3}\,+\, C_5 b^{3/2}t^{1/2}n^{v-7/2}\, +\, C_5 b^2n^{v-4}\, 
\]
occurs.  By~\eqr{2bound}, Lemma~\ref{lem:maxY} and a union bound, we have
\[
\pr{E_H(m)}\, \le\, \exp(-2b)\, .
\]
Let us define
\[
Y^*_F(G_i)\, =\, Y_F(G_i)\, 1_{E_H(i)^c}\, .
\]
We observe that the $Y^*_F(i)$ are also martingale increments, in the sense that
\[
\Ex{Y^*_F(G_i)\, \big|\,G_{i-1}}\, =\, 0\, .
\]
We observe further that they satisfy
\[
\Ex{Y^*_F(G_i)^2\, \big|\,G_{i-1}}\, \le\, C_4 b n^{2v-6}
\]
and
\[
|Y^*_F(G_i)|\, \le\, C_5 b^{1/2}n^{v-3}\, +\, C_5 btn^{v-3}\,+\, C_5 b^{3/2}t^{1/2}n^{v-7/2}\, +\, C_5 b^2n^{v-4}\, ,
\]
almost surely, and 
\eq{sumY*}
\sum_{i=1}^{m}\, \sum_{F\ssq E(H)}\, \frac{(1-t)^{e(F)}(t-s)^{e-e(F)}}{(1-s)^e}Y^*_F(G_i)
\eqe
is equal to~\eqr{sumY} on $\Omega\setminus E^Y_H(m)$.

We bound the probability that~\eqr{sumY*} is large using Freedman's inequality, applied to the martingale~\eqr{sumY*}, with increments
\[
\sum_{F\ssq E(H)}\, \frac{(1-t)^{e(F)}(t-s)^{e-e(F)}}{(1-s)^e}Y^*_F(G_i)\, .
\]
Furthermore, since the coefficients are all at most $1$, we have
\[
\Ex{\left(\sum_{F\ssq E(H)}\, \frac{(1-t)^{e(F)}(t-s)^{e-e(F)}}{(1-s)^e}Y^*_F(G_i)\right)^2\, \Bigg|\, G_{i-1}}\, \le\, 4^{e(H)}C_4 b n^{2v-6}
\]
almost surely.  We now apply Freedman's inequality, Lemma~\ref{lem:F}, to~\eqr{sumY*}, with $\alpha=C_2 bt^{1/2} n^{v-2}$, 
\[
\beta\, =\, 4^{e(H)}C_4 b m n^{2v-6}\, \le \, 4^{e(H)}C_4 bt n^{2v-4}
\]
and 
\begin{align*}
R\, & =\, C_6 b^{1/2}n^{v-3}\, +\, C_6 btn^{v-3}\,+\, C_6 b^{3/2}t^{1/2}n^{v-7/2}\, +\, C_6 b^2n^{v-4}  \\
 & \le \, 4C_6 t^{1/2} n^{v-2}\, ,
\end{align*}
where the inequality relies on the condition $b\le t^{1/2} n$, and where we have chosen $C_6=2^{e(H)}C_5$.
We obtain that the probability that~\eqr{sumY*} exceeds $C_2 bt^{1/2} n^{v-2}$ in absolute value is at most
\[
\exp\left(\frac{-C_2^2 b^2 t n^{2v-4}}{4^{e(H)}C_4 bt n^{2v-4}+8 C_2 C_6  btn^{2v-4}}\right) \, \le\, \exp(-b)\, ,
\]
provided $C_2\ge 2^{e(H)+1}C_4 C_6$, completing the proof of the theorem.
\end{proof}

\subsection{Proof of Theorem~\ref{thm:relate}}\label{ssec:relate}

We now show how we may deduce Theorem~\ref{thm:relate} from Theorem~\ref{thm:approx}.  The main statement of Theorem~\ref{thm:relate} is that
\[
\pr{\big|D_{H}(G_{n,t})-\Lambda_{H}(G_{n,t})\big| >Cbt^{1/2}n^{v-2}}\, \le\, \exp(-b)\, 
\]
for some constant $C=C(H)$, and for all $3\log{n}\le b\le t^{1/2}n$, where
\eq{Lambagain}
\Lambda_{H}(G_{n,t})\, :=\, n^{v-3}t^{e-2}\left(\obinom{H}{\owedge}-3\obinom{H}{\triangle}\right)  D_{\owedge}(G_{n,t})\, +\, n^{v-3}t^{e-3}\obinom{H}{\triangle} D_{\triangle}(G_{n,t})\, .
\eqe

We shall use the triangle inequality to control the difference between $D_{H}(G_{n,t})$ and $\Lambda_{H}(G_{n,t})$ via $\Lambda^*_H(G_{n,t})$.

We prove Theorem~\ref{thm:relate} first for $t\in (0,1/2]$, and then show how we may deduce the result for $t\in (1/2,1)$.

\begin{proof}[Proof of Theorem~\ref{thm:relate} for $t\in (0,1/2\rbrack$] 
Let $t\in (0,1/2]$.  We shall focus on proof of the main statement.  To deduce the ``Furthermore'' statement simply follow the same proof, with the variable $t$ removed, and use the ``Furthermore'' part of Theorem~\ref{thm:approx}.

Let $3\log{n}\le b\le t^{1/2}n$ be fixed.
By Theorem~\ref{thm:approx}, there is a constant $C=C(H)$, such that
\[
\pr{\big| D_H(G_{n,t})-\Lambda^{*}_H(G_{n,t})\big |\, >\, Cbt^{1/2} n^{v-2}}\, \le\, \exp(-b)
\]
for all $3\log{n}\le b\le t^{1/2}n$.  So, by the triangle inequality, it suffices to prove the bound
\eq{lambneed}
\big|\Lambda^{*}_H(G_{n,t})-\Lambda_H(G_{n,t})\big|\, \le \, C'tn^{v-2}
\eqe
deterministically, for some constant $C'=C'(H)$.  Using Theorem~\ref{thm:Mart}, the precise martingale expression for $D_H(G_{n,t})$, to expand $D_{\owedge}(G_{n,t})$ and $D_{\triangle}(G_{n,t})$ in terms of $X_{\owedge}(G_i)$ and $X_{\triangle}(G_i)$ we may express $\Lambda_H(G_{n,t})$ as a sum of the form\footnote{One does not need to include terms $X_{F}(G_i)$ for graphs $F$ with $e(F)\le 1$, as $X_F(G_i)=0$ in all such cases.}
\[
\sum_{i=1}^{m}\, \big( \alpha(i,m)X_{\owedge}(G_i)\,+\, \beta(i,m)X_{\triangle}(G_i) \big) \, .
\]
On the other hand $\Lambda^{*}_H(G_{n,t})$ is already of the form
\[
\sum_{i=1}^{m}\, \big( \alpha'(i,m)X_{\owedge}(G_i)\,+\, \beta'(i,m)X_{\triangle}(G_i) \big)\, .
\]
We shall prove that
\eq{alphadif}
\alpha(i,m)-\alpha'(i,m)\, =\, n^{v-3}t^{e-2} \left(\obinom{H}{\owedge}-3\obinom{H}{\triangle}\right)\nu_{2,2}(i,m)+3n^{v-3}t^{e-3}\obinom{H}{\triangle}\nu_{2,3}(i,m)
\eqe
and
\eq{betadif}
\beta(i,m)-\beta'(i,m)\, =\, n^{v-3}t^{e-3}\nu_{3,3}(i,m)\, .
\eqe
By Lemma~\ref{lem:coeffs}, the $\nu$ values are $O(n^{-2})$, and so, based on~\eqr{alphadif} and~\eqr{betadif} these differences are $O(n^{v-5})$.  Since each of $X_{\owedge}(G_i)$ and $X_{\triangle}(G_i)$ has absolute value at most $n$ deterministically, and the sums each have $m\le tN$ terms, this completes the proof of~\eqr{lambneed}, and therefore the whole proof.

All that remains is to verify~\eqr{alphadif} and~\eqr{betadif}.  We observe that~\eqr{betadif} follows immediately from the definitions.  In order to prove~\eqr{alphadif}, let us calculate
\eq{alphaminus}
\alpha(i,m)\, -\, n^{v-3}t^{e-2} \left(\obinom{H}{\owedge}-3\obinom{H}{\triangle}\right)\nu_{2,2}(i,m)\, -\, 3n^{v-3}t^{e-3}\obinom{H}{\triangle}\nu_{2,3}(i,m)\, .
\eqe
We have that
\[
\alpha(i,m)\, =\, n^{v-3}t^{e-2} \left(\obinom{H}{\owedge}-3\obinom{H}{\triangle}\right) \frac{(N-m)_2}{(N-i)_2}\, +\, 3n^{v-3} t^{e-3}\obinom{H}{\triangle}\frac{(N-m)_2(m-i)}{(N-i)_3}\,.
\]
And so, using the definition of $\nu_{c,e}(i,m)$, we have that~\eqr{alphaminus} equals
\[
n^{v-3}t^{e-2} \left(\obinom{H}{\owedge}-3\obinom{H}{\triangle}\right) \frac{(1-t)^2}{(1-s)^2}\, +\, 3n^{v-3} t^{e-3}\obinom{H}{\triangle}\frac{(1-t)^2(t-s)}{(1-s)^3}\, .
\]
Cancelling, we obtain
\[
n^{v-3}t^{e-2}\obinom{H}{\owedge}\frac{(1-t)^2}{(1-s)^2}\, -\, 3n^{v-3}st^{e-3}\obinom{H}{\triangle}\frac{(1-t)^3}{(1-s)^3}
\]
which is precisely $\alpha'(i,m)$, completing the verification of~\eqr{alphadif} and therefore the proof.
\end{proof}

We have now proved Theorem~\ref{thm:relate} for $t\in (0,1/2]$.  We deduce the cases $t\in (1/2,1)$ by considering the complementary graph and using Corollary~\ref{cor:comp}.

\begin{proof}[Proof of Theorem~\ref{thm:relate} for $t\in (1/2,1)$] For this range of $t$ it suffices to prove the ``Furthermore'' statement.  Indeed, up to a change of the constant this implies the main statement.  Fix $t\in (1/2,1]$ and $b\ge 3\log{n}$.  Let $t'=1-t\in (0,1/2]$.  Let $C''$ be the maximum over subgraphs $F\subseteq E(H)$ of the constant obtained by the proof of Theorem~\ref{thm:relate} in the case $t\in (0,1/2]$, let $C'=eC''$ and $C=2^e C'$.  By Theorem~\ref{thm:relate} for $t\in (0,1/2]$ we have
\[
\pr{\big|D_F(G_{n,t'})\, -\, \Lambda_F(G_{n,t'})\big|\, >\, C'bn^{v-2}}\, \le\, \exp(-eb)\, \le \, 2^{-e}\exp(-b)\, .
\]
Thus, by a union bound there is probability at least $1-\exp(-b)$ that
\eq{forallF}
\big|D_F(G_{n,t'})\, -\, \Lambda_F(G_{n,t'})\big|\, \le \, C'bn^{v-2}
\eqe
for all $F\subseteq E(H)$.  We complete the proof by showing that if~\eqr{forallF} holds for $G_{n,t'}=G_{n,t}^c$, then
\eq{then}
\big|D_H(G_{n,t})-\Lambda_H(G_{n,t})\big|\, \le\, Cb n^{v-2}\, .
\eqe
(It is elementary that the complement of $G_{n,t}$ is distributed as $G_{n,t'}$.)

We now prove~\eqr{then} which will complete the proof of the theorem.   We shall use Corollary~\ref{cor:comp} which allows us to relate subgraph count deviations to those in the complement.  By Corollary~\ref{cor:comp} we have that
\[
D_{H}(G_{n,t})\, =\, \sum_{F\subseteq E(H)}(-1)^{e(F)}D_{F}(G_{n,t}^c)\, .
\]
If~\eqr{forallF} holds in $G^c_{n,t}$ (which has the same distribution as $G_{n,t'}$) then
\[
D_H(G_{n,t})\, =\, \sum_{F\subseteq E(H)}(-1)^{e(F)}\Lambda_F(G_{n,t'})\, \pm\, Cbn^{v-2} \, .
\]
We claim that the main sum
\eq{mainsum}
\sum_{F\subseteq E(H)}(-1)^{e(F)}\Lambda_F(G_{n,t'})
\eqe
is equal to $\Lambda_H(G_{n,t})$.  Clearly proving this fact will complete the proof.

%\[
%N_{H}(G)\, =\, \sum_{F\subseteq E(H)}(-1)^{e(F)}N_{F}(G^c)\, ,
%\]
%where the sum is over all $2^{e(H)}$ subgraphs of $H$.  Taking expected values, the same expression holds for $L_H(G)$ in terms of $L_{F}(G^c)$.  Subtracting the latter from the former we obtain
%\eq{compforD}
%D_{H}(G)\, =\, \sum_{F\subseteq E(H)}(-1)^{e(F)}D_{F}(G^c)\, .
%\eqe

By the definition of $\Lambda_F(G_{n,t'})$, see~\eqr{Lambagain}, we can rewrite~\eqr{mainsum} as~\eqr{mainwedge} $+$ \eqr{maintri}, defined by: 
\eq{mainwedge}
n^{v-3}\sum_{F\subseteq E(H)}(-1)^{e(F)}(t')^{e(F)-2}\left(\obinom{F}{\owedge}-3\obinom{F}{\triangle}\right) D_{\owedge}(G_{n,t'})
\eqe
and
\eq{maintri}
n^{v-3}\sum_{F\subseteq E(H)}(-1)^{e(F)}(t')^{e(F)-3}\obinom{F}{\triangle} D_{\triangle}(G_{n,t'})\, .
\eqe
Summing over $P_2$s and triangles of $H$ and using the binomial identity, as in the proof of Theorem~\ref{thm:approx}, we obtain that~\eqr{mainwedge} is equal to
\eq{nowwedge}
n^{v-3}t^{e-2}\obinom{H}{\owedge}D_{\owedge}(G_{n,t'})\, +\, 3n^{v-3}t' t^{e-3}\obinom{H}{\triangle} D_{\owedge}(G_{n,t'})\, ,
\eqe
while~\eqr{maintri} is equal to
\eq{nowtri}
-n^{v-3}t^{e-3}\obinom{H}{\triangle} D_{\triangle}(G_{n,t'})\, .
\eqe
Again using Corollary~\ref{cor:comp}, and using the fact that we are taking $G_{n,t'}$ to be the complement of $G_{n,t}$, we have
\[
D_{\owedge}(G_{n,t'})\, =\, D_{\owedge}(G_{n,t})\qquad \text{and}\qquad D_{\triangle}(G_{n,t'})\, =\, -D_{\triangle}(G_{n,t})+3D_{\owedge}(G_{n,t})\, .
\]
Substituting these values in~\eqr{nowwedge} and~\eqr{nowtri}, we obtain that~\eqr{mainsum} is
\[
n^{v-3}t^{e-2}\left(\obinom{H}{\owedge}-3\obinom{H}{\triangle}\right) D_{\owedge}(G_{n,t})\, +\, n^{v-3}t^{e-3}\obinom{H}{\triangle} D_{\triangle}(G_{n,t})\, .
\]
This proves that~\eqr{mainsum} is equal to $\Lambda_{H}(G_{n,t})$, and therefore completes the proof.
\end{proof}

\subsection{Deducing Theorem~\ref{thm:approxbetter}}

We recall that Theorem~\ref{thm:approx} is stated for $t=t(n)\in (0,1/2)$.  Having proved Theorem~\ref{thm:relate} we may now deduce that Theorem~\ref{thm:approx} applies for $t\in (0,1)$.  This was stated as Theorem~\ref{thm:approxbetter}.

\begin{proof}[Proof of Theorem~\ref{thm:approxbetter}]
In proving~\eqr{lambneed} above, we established that
\[
\big|\Lambda^{*}_H(G_{n,t})-\Lambda_H(G_{n,t})\big|\, \le \, C'tn^{v-2}
\]
deterministically.  It is now immediate by observation that Theorem~\ref{thm:approxbetter} follows from Theorem~\ref{thm:relate} and the triangle inequality.
\end{proof}

\section{A general bound on deviation probabilities -- Theorem~\ref{thm:upto}}\label{sec:upto}

In this section we prove Theorem~\ref{thm:upto}.  We recall that this theorem gives a weaker bound on subgraph count deviations than Theorem~\ref{thm:main}.  However, it applies across the whole range of possible deviations and gives an exponent which is best possible up to multiplication by constant.

Our proof will rely on using Theorem~\ref{thm:approxbetter}, which states that $D_{H}(G_{n,t})$ is well approximated by $\Lambda^{*}_H(G_{n,t})$, and the following proposition.

\begin{prop}\label{prop:forH}
Let $H$ be a graph with $v$ vertices and $e$ edges.  There is a constant $c=c(H)>0$ such that for all $t\in (0,1/2]$, and all $\eta,n \ge c^{-1}$, we have
\[
\pr{\big|\Lambda^{*}_H(G_{n,t})\big|\, >\, \eta n^{v-3/2}}\, \le\, \exp\big(-c\eta \min\{\eta,n^{1/2}\}\big)\,  .
\]
\end{prop}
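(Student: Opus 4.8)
The plan is to apply Freedman's inequality (Lemma~\ref{lem:F}) directly to the martingale $\Lambda^{*}_H(G_{n,t})=\sum_{i=1}^{m}\X_H(G_i;t)$ with $m=\lfloor tN\rfloor$, after establishing a deterministic bound on its increments and a bound on its conditional quadratic variation that holds with overwhelming probability. First I would dispose of trivial cases: if $\binom{H}{\owedge}=0$ then $H$ has maximum degree at most $1$, so $\binom{H}{\triangle}=0$ as well and $\X_H(G_i;t)\equiv 0$; similarly there is nothing to prove unless $v\ge 3$. So assume $\binom{H}{\owedge}\ge 1$ and $v\ge 3$. For $t\in(0,1/2]$ and $1\le i\le m$ one has $s:=i/N\le 1/2$, hence $1-s\ge 1/2$ and $(1-t)/(1-s)\le 1$; moreover $\binom{H}{\owedge}\ge 1$ forces $e\ge 2$ and $\binom{H}{\triangle}\ge 1$ forces $e\ge 3$, so in~\eqref{eq:XXdef} every power of $t$ multiplying a nonzero binomial coefficient is at most $1$. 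Writing $\X_H(G_i;t)=\kappa_i X_{\owedge}(G_i)+\rho_i X_{\triangle}(G_i)$, it follows that $|\kappa_i|,|\rho_i|\le C_H\,n^{v-3}$. Since $A_{\owedge}(G_i)=2(d_u(G_{i-1})+d_w(G_{i-1}))$ and $A_{\triangle}(G_i)=6d_{u,w}(G_{i-1})$ are bounded by $4n$ and $6n$, their centred versions obey $|X_{\owedge}(G_i)|\le 8n$ and $|X_{\triangle}(G_i)|\le 12n$ deterministically, giving $|\X_H(G_i;t)|\le R:=C'_H\,n^{v-2}$ almost surely.

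Next I would control the conditional quadratic variation $V(m):=\sum_{i=1}^{m}\Ex{\X_H(G_i;t)^2\,\big|\,G_{i-1}}$. The quantity $\Ex{X_{\owedge}(G_i)^2\mid G_{i-1}}$ is the conditional variance, over the uniformly random new edge $e_i=uw$, of $2(D_u(G_{i-1})+D_w(G_{i-1}))$, and so is at most $\tfrac{C}{(1-s)n}\sum_{u}D_u(G_{i-1})^2$; likewise $\Ex{X_{\triangle}(G_i)^2\mid G_{i-1}}\le \tfrac{C}{(1-s)n^{2}}\sum_{u,w}D_{u,w}(G_{i-1})^2$. For any $b\ge 30$, Lemma~\ref{lem:deg2}, Lemma~\ref{lem:codeg2} and a union bound over $i\le m\le N$ show that the event
\[
\mathcal B_b:=\Big\{\exists\,i\le m:\ \sum_u D_u(G_{i-1})^2>Cbn^2\ \text{ or }\ \sum_{u,w}D_{u,w}(G_{i-1})^2>Cbn^3\Big\}
\]
has probability at most $2Ne^{-bn}\le e^{-(b-1)n}$ (for $n$ larger than an absolute constant, which is all we need since $c$ may be taken small). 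On $\mathcal B_b^{c}$, using $1-s\ge 1/2$, each term satisfies $\Ex{\X_H(G_i;t)^2\mid G_{i-1}}\le C''_H\,b\,n^{2v-5}$, whence $V(m)\le \beta_b:=C'''_H\,b\,n^{2v-3}$.

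I would then take $\alpha=\eta n^{v-3/2}$ and, crucially, choose the threshold $b=30+\eta\min\{\eta,n^{1/2}\}/n\ (\ge 30)$. Applying Freedman's inequality to $\pm\Lambda^{*}_H(G_{n,t})$ with parameters $\alpha$, $\beta_b$, $R$ yields
\[
\pr{|\Lambda^{*}_H(G_{n,t})|\ge \alpha,\ V(m)\le\beta_b}\ \le\ 2\exp\!\left(\frac{-\eta^{2}}{2\,(C'''_H b+C'_H\,\eta n^{-1/2})}\right).
\]
A two-case check finishes the estimate of this term: if $\eta\le n^{1/2}$ then $b\le 31$ and $\eta n^{-1/2}\le 1$, so the denominator is a constant depending only on $H$ and the exponent is of order $\eta^{2}=\eta\min\{\eta,n^{1/2}\}$; if $\eta>n^{1/2}$ then $b\le 31\,\eta n^{-1/2}$, so the denominator is of order $\eta n^{-1/2}$ and the exponent is of order $\eta n^{1/2}=\eta\min\{\eta,n^{1/2}\}$. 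Hence this term is at most $\exp(-c_1\eta\min\{\eta,n^{1/2}\})$ for some $c_1=c_1(H)>0$. Meanwhile $\pr{\mathcal B_b}\le e^{-(b-1)n}=e^{-29n}e^{-\eta\min\{\eta,n^{1/2}\}}\le e^{-\eta\min\{\eta,n^{1/2}\}}$. Summing the two contributions and absorbing the constant factor into a smaller $c=c(H)$ — which is legitimate because $\eta,n\ge c^{-1}$ forces $\eta\min\{\eta,n^{1/2}\}\ge c^{-3/2}$ to be large — gives $\pr{|\Lambda^{*}_H(G_{n,t})|>\eta n^{v-3/2}}\le\exp\!\big(-c\,\eta\min\{\eta,n^{1/2}\}\big)$, as required.

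The main obstacle is exactly this coupling between the size of $\eta$ and the control of $V(m)$: because $\eta$ ranges up to $\Theta(n^{3/2})$, the target probability can be as small as $e^{-\Theta(n^{2})}$, far below what Lemma~\ref{lem:deg2} and Lemma~\ref{lem:codeg2} deliver at a fixed threshold (only $e^{-\Theta(n)}$). The remedy is to let the threshold $b$ in those lemmas grow proportionally to $\eta\min\{\eta,n^{1/2}\}/n$, and the delicate bookkeeping is to verify that this simultaneously makes $\pr{\mathcal B_b}$ negligible against the target \emph{and} does not inflate the Freedman exponent above a constant multiple of $\eta\min\{\eta,n^{1/2}\}$ — equivalently, that $b$ stays $O(1)$ when $\eta\le n^{1/2}$ and $O(\eta n^{-1/2})$ when $\eta>n^{1/2}$. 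Everything else — the increment bound, the variance bounds via the degree and codegree sum-of-squares estimates, and the elementary case analysis — is routine.
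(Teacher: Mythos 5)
Your proof is correct and follows essentially the same route as the paper: apply Freedman's inequality to the martingale $\Lambda^*_H(G_{n,t})=\sum_i\X_H(G_i;t)$ with the deterministic increment bound $O(n^{v-2})$, and control the conditional quadratic variation via the degree and codegree sum-of-squares estimates (Lemmas~\ref{lem:deg2} and~\ref{lem:codeg2}), with the threshold parameter chosen to scale with $\eta$ so that the failure probability of the variance event is negligible against the target $\exp(-c\eta\min\{\eta,n^{1/2}\})$. The paper packages the variance control into a standalone statement, Lemma~\ref{lem:basicvar}, parametrized so that $\Ex{X_{\owedge}(G_i)^2\mid G_{i-1}},\,\Ex{X_{\triangle}(G_i)^2\mid G_{i-1}}\le Cn^{1/2}\max\{\eta,n^{1/2}\}$ fails with probability $\exp(-\eta n^{1/2})$, whereas you carry out the same estimate inline with your threshold $b=30+\eta\min\{\eta,n^{1/2}\}/n$; these are equivalent up to bookkeeping.
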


Let us first show that Theorem~\ref{thm:upto} follows easily from these results.

\begin{proof}[Proof of Theorem~\ref{thm:upto}]
We prove the result for $t\in (0,1/2]$; up to changing the constant, the result then follows for $t\in (1/2,1)$ by Corollary~\ref{cor:comp}.  Now suppose that $t\in (0,1/2]$, has been fixed.  Let us also fix the graph $H$ with $v$ vertices and $e$ edges.

For all $\alpha,n$, we have, by the triangle inequality,
\eq{triineq}
\pr{\big|D_H(G_{n,t})\big|\, >\, \alpha n^{v-3/2}}\, \le\, \pr{\big|\Lambda^{*}_H(G_{n,t})\big|\,  >\, \frac{\alpha}{2}n^{v-3/2}}\, +\, \pr{F(\alpha/2)}\, ,
\eqe
where $F(b)$ is the event that
\[
\big|D_H(G_{n,t})-\Lambda^*_H(G_{m,t})\big|\, >\, bn^{v-3/2}\, .
\]

We bound the first probability by applying Proposition~\ref{prop:forH}. We obtain
\[
\pr{\big|\Lambda^{*}_H(G_{n,t})\big|\,  >\, \frac{\alpha}{2}n^{v-3/2}}\, \le\, \exp\big(-c'\alpha \min\{\alpha, n^{1/2}\}\big)\, ,
\]
where $c'$ is a quarter of the constant of that proposition.

We bound the second probability using Theorem~\ref{thm:approxbetter}.   Let $C=C(H)$, be the constant given by Theorem~\ref{thm:approxbetter}.  By Theorem~\ref{thm:approxbetter}, we have that
\[
\pr{F(\alpha/2)}\, \le\, \exp\left(\frac{-\alpha n^{1/2}}{2C}\right)\, .
\]

Substituting these bounds into~\eqr{triineq}, we obtain
\begin{align*}
\pr{\big|D_H(G_{n,t})\big|\, >\, \alpha n^{v-3/2}}\, & \le\, \exp\big(-c'\alpha \min\{\alpha, n^{1/2}\}\big)\, +\, \exp\left(\frac{-\alpha n^{1/2}}{2C}\right) \\
& \le \, \exp\big(-c\alpha \min\{\alpha, n^{1/2}\}\big)\, ,
\end{align*}
where $c$ is taken to be at most $\min\{c'/2,1/4C\}$.
\end{proof}

All that remains to complete the section is to prove Proposition~\ref{prop:forH}.  We require the following lemma.

\begin{lem}\label{lem:basicvar} There is a constant $C$ such that for all $1\le i\le N/2$, and all $\eta\ge 1$ there is probability at least $1-\exp(-\eta n^{1/2})$ that
\eq{Du2}
\Ex{X_{\owedge}(G_i)^2\, \big|\,G_{i-1}}\, \le\, Cn^{1/2}\max\{\eta,n^{1/2}\}
\eqe
and
\eq{Duw2}
\Ex{X_{\triangle}(G_i)^2\,\big|\, G_{i-1}}\, \le \, Cn^{1/2}\max\{\eta,n^{1/2}\} \, .
\eqe
\end{lem}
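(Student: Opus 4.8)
The plan is to rewrite $X_{\owedge}(G_i)$ and $X_{\triangle}(G_i)$ in terms of the degree and codegree deviations at the newly added edge, bound the conditional second moments by averages of $D_v(G_{i-1})^2$ and $D_{u,w}(G_{i-1})^2$ over the (nearly uniform) choice of that edge, and then control these averages using Lemma~\ref{lem:deg2} and Lemma~\ref{lem:codeg2}.

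First I would recall, writing $e_i=\{u,w\}$ for the edge added at step $i$, the identities \eqr{Awedgeis} and \eqr{Atriis}: $A_{\owedge}(G_i)=\tfrac{8(i-1)}{n}+2\big(D_u(G_{i-1})+D_w(G_{i-1})\big)$ and $A_{\triangle}(G_i)=\tfrac{6(n-2)(i-1)_2}{(N)_2}+6D_{u,w}(G_{i-1})$. Since their leading terms are $G_{i-1}$-measurable, they cancel upon centering, so
\[
X_{\owedge}(G_i)=2Z_{\owedge}-\Ex{2Z_{\owedge}\, \big|\,G_{i-1}},\qquad X_{\triangle}(G_i)=6Z_{\triangle}-\Ex{6Z_{\triangle}\, \big|\,G_{i-1}},
\]
where $Z_{\owedge}:=D_u(G_{i-1})+D_w(G_{i-1})$ and $Z_{\triangle}:=D_{u,w}(G_{i-1})$ depend on $G_{i-1}$ and on the random edge $e_i$. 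Then, using $\Ex{(Z-\Ex Z)^2}\le\Ex{Z^2}$ and $(a+b)^2\le2(a^2+b^2)$, together with the facts that for $i\le N/2$ a fixed vertex lies in $e_i$ with conditional probability at most $\tfrac{n-1}{N-i+1}\le\tfrac4n$ and a fixed pair equals $e_i$ with conditional probability at most $\tfrac1{N-i+1}\le\tfrac8{n^2}$, I get
\[
\Ex{X_{\owedge}(G_i)^2\, \big|\,G_{i-1}}\le\frac{32}{n}\sum_{v}D_v(G_{i-1})^2,\qquad \Ex{X_{\triangle}(G_i)^2\, \big|\,G_{i-1}}\le\frac{288}{n^2}\sum_{u,w}D_{u,w}(G_{i-1})^2.
\]

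Next I would apply Lemma~\ref{lem:deg2} and Lemma~\ref{lem:codeg2} to $G_{i-1}\sim G(n,i-1)$ with the common parameter $b:=\max\{2\eta n^{-1/2},30\}$; writing $C_0,C_0'$ for the constants of those lemmas, each of the events $\sum_v D_v(G_{i-1})^2>C_0 b n^2$ and $\sum_{u,w}D_{u,w}(G_{i-1})^2>C_0' b n^3$ then has probability at most $\exp(-bn)$. Since $bn\ge2\eta n^{1/2}$ and $\eta n^{1/2}\ge1>\log2$, a union bound gives probability at most $2\exp(-bn)\le2\exp(-2\eta n^{1/2})\le\exp(-\eta n^{1/2})$ that either event occurs. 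On the complementary event, using $bn^2\le60\,n^{3/2}\max\{\eta,n^{1/2}\}$ and $bn^3\le60\,n^{5/2}\max\{\eta,n^{1/2}\}$, the two inequalities above become $\Ex{X_{\owedge}(G_i)^2\, \big|\,G_{i-1}}\le1920C_0\,n^{1/2}\max\{\eta,n^{1/2}\}$ and $\Ex{X_{\triangle}(G_i)^2\, \big|\,G_{i-1}}\le17280C_0'\,n^{1/2}\max\{\eta,n^{1/2}\}$, which is \eqr{Du2} and \eqr{Duw2} with $C:=\max\{1920C_0,17280C_0'\}$.

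The computation is routine; the only point needing a little care is matching the target form $n^{1/2}\max\{\eta,n^{1/2}\}$ to the bounds of Lemmas~\ref{lem:deg2} and~\ref{lem:codeg2}, which are linear in a parameter that must be at least $30$ — this is why one takes $b=\max\{2\eta n^{-1/2},30\}$, and the extra factor $2$ there, combined with $\eta n^{1/2}\ge\log2$, is exactly what absorbs the union-bound factor $2$ into the exponent. For the finitely many small values of $n$ for which these manipulations are not valid, the bounds hold after enlarging $C$, since $|X_{\owedge}(G_i)|,|X_{\triangle}(G_i)|=O(n)$ while $n^{1/2}\max\{\eta,n^{1/2}\}\ge n$.
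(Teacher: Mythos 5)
Your proof is correct and follows essentially the same route as the paper: expand $X_{\owedge}(G_i)$ and $X_{\triangle}(G_i)$ as centered versions of $2(D_u+D_w)$ and $6D_{u,w}$, bound the conditional second moment by $O(n^{-1})\sum_v D_v^2$ and $O(n^{-2})\sum_{u,w}D_{u,w}^2$ respectively, and then invoke Lemma~\ref{lem:deg2} and Lemma~\ref{lem:codeg2}. The only (cosmetic) difference is bookkeeping: you feed the parameter $b=\max\{2\eta n^{-1/2},30\}$ into those lemmas and then translate, whereas the paper defines the bad events $E_1,E_2$ directly in terms of $n^{3/2}\max\{\eta,n^{1/2}\}$ and $n^{5/2}\max\{\eta,n^{1/2}\}$ and case-splits on $\eta\lessgtr n^{1/2}$; the constants and the union bound land in the same place.
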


\begin{proof}
Let $C'$ be twice the larger of the constants given by Lemma~\ref{lem:deg2} and Lemma~\ref{lem:codeg2}.  Let $E_1$ be the event that 
\[
\sum_{u}D_u(G_{i-1})^2\, >\, C' n^{3/2}\max\{\eta,n^{1/2}\}
\]
and $E_2$ the event that
\[
\sum_{u,w}D_{u,w}(G_{i-1})^2\, >\, C' n^{5/2}\max\{\eta,n^{1/2}\}\, .
\]
By considering the cases $\eta\le n^{1/2}$ and $\eta> n^{1/2}$, it follows from Lemma~\ref{lem:deg2} that
\[
\pr{E_1}\, \le\, \exp(-2n)1_{\eta\le n^{1/2}}\, +\, \exp(-2\eta n^{1/2})1_{\eta> n^{1/2}}\, \le\, \exp(-2\eta n^{1/2})\, .
\]
Using Lemma~\ref{lem:codeg2} one may obtain the same bound on $\pr{E_2}$, so that
\[
\pr{E_1 \cup E_2}\, \le\, 2\exp(-2\eta n^{1/2})\, \le\,  \exp(-\eta n^{1/2})\, .
\]
It therefore suffices to prove that the event that~\eqr{Du2} fails is contained in $E_1$, and the event that~\eqr{Duw2} fails is contained in $E_2$.

Let us now find a bound on $\Ex{X_{\owedge}(G_i)^2|G_{i-1}}$ which will show that~\eqr{Du2} holds in $E_1^c$.  We recall that $X_{\owedge}(G_i)$ is defined by $X_{\owedge}(G_i)=A_{\owedge}(G_i)-\Ex{A_{\owedge}(G_i)|G_{i-1}}$, and so
\begin{align*}
\Ex{X_{\owedge}(G_i)^2\, \big|\,G_{i-1}}\,& = \, \Var(A_{\owedge}(G_i)\, \big|\,G_{i-1}) \phantom{\bigg)}\\
& \le \, \Ex{\left(A_{\owedge}(G_i)\, -\, \frac{8(i-1)}{n}\right)^2\, \Bigg|\,G_{i-1}} \phantom{\bigg)}\, .
\end{align*}
Now, we recall from~\eqr{Awedgeis} that $A_{\owedge}(G_i)= 8(i-1)/n\, +\, 2\big(D_u(G_{i-1})+D_w(G_{i-1})\big)$ where $uw$ is the $i$th edge.  It follows that, on the event $E_1^c$,
\begin{align*}
\Ex{X_{\owedge}(G_i)^2\, \big|\, G_{i-1}}\, & \le \, \frac{1}{N-i+1}\sum_{uw\not\in E(G_{i-1})} 4\big(D_u(G_{i-1})+D_w(G_{i-1})\big)^2 \phantom{\bigg)}\\
& \le \, \frac{8}{N-i+1}\sum_{uw\not\in E(G_{i-1})} \big(D_u(G_{i-1})^2+D_w(G_{i-1})^2\big) \phantom{\bigg)}\\
& \le \, \frac{16(n-1)}{N}\sum_{u}D_u(G_{i-1})^2 \phantom{\bigg)}\\
& \le\, 32C'\, n^{1/2}\max\{\eta,n^{1/2}\}\, , \phantom{\bigg)}
\end{align*} 
For $C\ge 32C'$ it follows that the event
\[
\Ex{X_{\owedge}(G_i)^2\, \big|\,G_{i-1}}\, >\, Cn^{1/2}\max\{\eta,n^{1/2}\}
\] 
is contained in $E_1$, as required.

We recall from~\eqr{Atriis} that $A_{\triangle}(G_i)=6(n-2)(i-1)_2/(N)_2+ 6D_{u,w}(G_{i-1})$.  A calculation as above, using that
\begin{align*}
\Ex{X_{\triangle}(G_i)^2\,\big|\,G_{i-1}}\,& = \, \Var(A_{\triangle}(G_i)\, \big|\,G_{i-1}) \phantom{\bigg)}\\
& \le \, \Ex{\left(A_{\triangle}(G_i)\, -\, \frac{6(n-2)(i-1)_2}{(N)_2}\right)^2\, \Bigg|\,G_{i-1}} \phantom{\bigg)}\\
& =\, \frac{1}{N-i+1}\sum_{uw\not\in E(G_{i-1})} \big(6D_{u,w}(G_{i-1})\big)^2\, ,
\end{align*}
shows that the event~\eqr{Duw2} fails is contained in $E_2$, provided $C\ge 160C'$, completing the proof.
\end{proof}

We now present a proof of Proposition~\ref{prop:forH}

\begin{proof}[Proof of Proposition~\ref{prop:forH}]
The proof is obtained by an application of Freedman's inequality, Lemma~\ref{lem:F}, to
\[
\Lambda^*_H(G_{n,t})\, =\, \sum_{i=1}^{m}\X_H(G_i;t)
\]
where $m=\lfloor tN\rfloor$ and
\[
\X_{H}(G_i;t)\, =\, n^{v-3}t^{e-3}\left(t\obinom{H}{\owedge}\frac{(1-t)^2}{(1-s)^2}X_{\owedge}(G_i)\, +\, \obinom{H}{\triangle} \frac{(1-t)^3}{(1-s)^{3}}\, \big(X_{\triangle}(G_i)-3sX_{\owedge}(G_i)\big) \right)\, .
\]
Since the co-efficients of $X_{\owedge}(G_i)$ and $X_{\triangle}(G_i)$ are at most $Cn^{v-3}$ in absolute value, for some constant $C=C(H)$, we have
\[
\Ex{\X_H(G_i;t)^2\, \big|\,G_{i-1}}\, \le \, 2C^2 n^{2v-6}\, \Ex{X_{\owedge}(G_i)^2\, \big|\, G_{i-1}}\, +\, 2C^2n^{2v-6}\, \Ex{X_{\triangle}(G_i)^2\, \big|\, G_{i-1}}\, .
\]
Writing $E_{\mathrm{var}}(i-1)$ for the event that 
\eq{varbigger}
\Ex{\X_H(G_i;t)^2\, \big|\,G_{i-1}}\, >\, 4C_1 C^2 n^{2v-11/2}\max\{\eta,n^{1/2}\}\, ,
\eqe
where $C_1$ is taken to be the constant of Lemma~\ref{lem:basicvar}, it follows, from Lemma~\ref{lem:basicvar}, that $\pr{E_{\mathrm{var}}(i-1)}\le \exp(-\eta n^{1/2})$.
Now let $E_{\mathrm{var}}$ be the event
\[
\sum_{i=1}^{m}\Ex{\X_H(G_i;t)^2\, \big|\,G_{i-1}}\, > \, 4C_2^3m n^{2v-11/2}\max\{\eta,n^{1/2}\}\, ,
\]
where $C_2=\max\{C,C_1\}$.
By a union bound, we have
\[
\pr{E_{\mathrm{var}}} \, \le\, n^2\exp(-\eta n^{1/2})\,\le \,\exp(-\eta n^{1/2}/2)\, .
\]
One may also note that, since $|X_{\owedge}(G_i)|,|X_{\triangle}(G_i)|\le n$, we have
\[
\big|\X_H(G_i;t)\big|\, \le\, 2Cn^{v-2}\qquad \text{almost surely.}
\]
We now apply Freedman's inequality, Lemma~\ref{lem:F}, with $\alpha=\eta t^{1/2} n^{v-3/2}$, with $\beta=4C_2^3 t n^{2v-7/2}\max\{\eta,n^{1/2}\}$ and with $R=2Cn^{v-2}$.  We obtain
\begin{align*}
& \pr{\Lambda^{*}_H(G_{n,t})\, >\, \eta n^{v-3/2}}\, \\
& \qquad \le\, \exp\left(\frac{-\eta^2 t n^{2v-3}}{8C_2^3 t n^{2v-7/2}\max\{\eta,n^{1/2}\}\, +\, 4C\eta n^{2v-7/2}}\right)\, +\, \pr{E_{\mathrm{var}}}\\ 
& \qquad \le \, \exp\left(\frac{-\eta\min\{\eta,n^{1/2}\}}{12C_2^3}\right)\, +\, \exp(-\eta n^{1/2}/2)\\
& \qquad \le\, 2\exp\big(-4c\eta\min\{\eta,n^{1/2}\}\big)\, ,
\end{align*}
where $c$ was chosen to be at most $1/24C_2^3$.  Since an identical argument applies to bound the probability that $\Lambda^{*}_H(G_{n,t})\, <\, -\eta n^{v-3/2}$, we have
\[
\pr{\big|\Lambda^{*}_H(G_{n,t})\big|\, >\, \eta n^{v-3/2}}\, \le\, 4\exp\big(-4c\eta\min\{\eta,n^{1/2}\}\big)\, \le\, \exp\big(-c\eta\min\{\eta,n^{1/2}\}\big)\, .
\]
This completes the proof.
\end{proof}

\section{Variance and covariance of the increments $X_F(G_i)$}\label{sec:var}

The aim of this section is to prove that the conditional variance $\Var(X_F(G_i)|G_{i-1})$ of $X_F(i)$ and the conditional covariance
\[
\Ex{X_F(G_i)X_{F'}(G_i)\, \big|\,G_{i-1}}\, ,
\] 
of $X_F(G_i)$ and $X_{F'}(G_i)$, are very predictable, in the sense that they are generally close to certain deterministic functions.  Given two graphs, $F$ with $v$ vertices and $e$ edges, and $F'$ with $v'$ vertices and $e'$ edges, let us define 
\eq{Vdef}
V_{F,F'}(i,n)\, :=\, n^{v+v'-5}s^{e+e'-4}(1-s)\big(s\theta_1(F,F')+(1-s)\theta_2(F,F')\big)\, ,
\eqe
where
\eq{Tdef}
\theta_1(F,F')\, :=\, 8\obinom{F}{\owedge}\obinom{F'}{\owedge}\qquad \text{and}\qquad \theta_2(F,F')\, :=\, 36\obinom{F}{\triangle}\obinom{F'}{\triangle}\, .
\eqe

\begin{prop}\label{prop:var} Let $F,F'$ be graphs with $v,v'$ vertices (respectively) and $e,e'$ edges (respectively) and let $t\in (0,1)$.  There is a constant $C=C(F,F',t)$ such that, for all $1\le i\le tN$ and all $3\log{n} \le b\le n/2C$, we have
\[
\pr{\big|\Ex{X_F(G_i)X_{F'}(G_i)\, \big|\, G_{i-1}}\, -\, V_{F,F'}(i,n)\big|\, >\, Cb^{1/2}n^{v+v'-11/2}}\, \le\, \exp(-b)\, .
\]
\end{prop}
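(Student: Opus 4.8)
The plan is to establish the estimate in three steps. First, I pass from $X_F(G_i),X_{F'}(G_i)$ to the linear approximations $X^*_F(G_i),X^*_{F'}(G_i)$ of~\eqr{Xsdef}, bounding the discrepancy by means of Proposition~\ref{prop:Ysmall}. Second, I reorganise $X^*_F$ around $X_{\owedge}(G_i)$ and $Z(G_i):=X_{\triangle}(G_i)-3sX_{\owedge}(G_i)$, which reduces matters to the three conditional covariances of $X_{\owedge}(G_i)$ and $Z(G_i)$. Third, I evaluate those covariances by expressing them through the degree and codegree deviations of $G_{i-1}$ and then controlling the resulting degree/codegree sums via Theorem~\ref{thm:upto}. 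Throughout, the dependence of the implied constants on $t$ enters only through a factor $(1-t)^{-1}$ arising from the denominator $N-i+1\ge(1-t)N$.

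\emph{Step 1.} Writing $X_F=X^*_F+Y_F$ (see~\eqr{Ydef}) and expanding the product gives
\[
\Ex{X_F(G_i)X_{F'}(G_i)\mid G_{i-1}}-\Ex{X^*_F(G_i)X^*_{F'}(G_i)\mid G_{i-1}}=\Ex{X^*_F Y_{F'}\mid G_{i-1}}+\Ex{Y_F X^*_{F'}\mid G_{i-1}}+\Ex{Y_F Y_{F'}\mid G_{i-1}}.
\]
Proposition~\ref{prop:Ysmall} (valid for all $t\in(0,1)$, as remarked there) gives $\Ex{Y_F(G_i)^2\mid G_{i-1}}\le Cbn^{2v-6}$ and $\Ex{Y_{F'}(G_i)^2\mid G_{i-1}}\le Cbn^{2v'-6}$ off an event of probability at most $\exp(-b)$; and since, given $i$, $X^*_F$ is a linear combination of $X_{\owedge}(G_i)$ and $X_{\triangle}(G_i)$ with coefficients of size $O(n^{v-3})$ (a term carrying $\binom{F}{\triangle}$ occurring only when $e(F)\ge3$, so that no harmful negative power of $s$ appears), Lemma~\ref{lem:basicvar}, applied with $\eta=bn^{-1/2}\in(0,n^{1/2}]$ (legitimate as $b\le n$, whence $\max\{\eta,n^{1/2}\}=n^{1/2}$), gives $\Ex{X_{\owedge}(G_i)^2\mid G_{i-1}},\Ex{X_{\triangle}(G_i)^2\mid G_{i-1}}\le Cn$ and hence $\Ex{X^*_F(G_i)^2\mid G_{i-1}}\le Cn^{2v-5}$, and likewise for $F'$, off an event of probability at most $\exp(-b)$. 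Cauchy--Schwarz now bounds the three displayed terms by $O(b^{1/2}n^{v+v'-11/2})$, $O(b^{1/2}n^{v+v'-11/2})$ and $O(bn^{v+v'-6})$ off an event of probability at most $\exp(-b)$; the last is absorbed into the first since $b\le n$, so it suffices to estimate $\Ex{X^*_F(G_i)X^*_{F'}(G_i)\mid G_{i-1}}$.

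\emph{Step 2.} Because $\big(\binom{F}{\owedge}-3\binom{F}{\triangle}\big)+3\binom{F}{\triangle}=\binom{F}{\owedge}$, the definition~\eqr{Xsdef} can be rewritten as $X^*_F(G_i)=n^{v-3}s^{e(F)-2}\binom{F}{\owedge}X_{\owedge}(G_i)+n^{v-3}s^{e(F)-3}\binom{F}{\triangle}Z(G_i)$. Multiplying two such expressions and comparing with the expansion $V_{F,F'}(i,n)=8n^{v+v'-5}s^{e+e'-3}(1-s)\binom{F}{\owedge}\binom{F'}{\owedge}+36n^{v+v'-5}s^{e+e'-4}(1-s)^2\binom{F}{\triangle}\binom{F'}{\triangle}$ of~\eqr{Vdef}--\eqr{Tdef}, we see that Proposition~\ref{prop:var} follows once we prove that, off an event of probability at most $\exp(-b)$,
\begin{align*}
\Ex{X_{\owedge}(G_i)^2\mid G_{i-1}}&=8ns(1-s)+O(b^{1/2}n^{1/2}),\\
\Ex{X_{\owedge}(G_i)Z(G_i)\mid G_{i-1}}&=O(b^{1/2}n^{1/2}),\\
\Ex{Z(G_i)^2\mid G_{i-1}}&=36ns^2(1-s)^2+O(b^{1/2}n^{1/2})
\end{align*}
(terms with a vanishing combinatorial coefficient being absent). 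The middle line is precisely the asymptotic orthogonality of $X_{\owedge}$ and $X_{\triangle}-3sX_{\owedge}$ mentioned after Theorem~\ref{thm:approx}.

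\emph{Step 3.} By the computations in the proof of Lemma~\ref{lem:Yis}, conditionally on $G_{i-1}$ the only randomness in $X_{\owedge}(G_i)$ and $X_{\triangle}(G_i)$ is the uniform choice of the new edge $e_i=uw$ among the $N-i+1$ non-edges of $G_{i-1}$, with $X_{\owedge}(G_i)=2(D_u+D_w)-\Ex{2(D_u+D_w)\mid G_{i-1}}$ and $X_{\triangle}(G_i)=6D_{uw}-\Ex{6D_{uw}\mid G_{i-1}}$, all deviations being those of $G_{i-1}$. Each of the three quantities above is therefore an explicit variance or covariance of these linear statistics under the uniform non-edge, which I evaluate using the exact identities $\sum_u D_u(G_{i-1})=0$, $\sum_u d_u(G_{i-1})D_u(G_{i-1})=\sum_u D_u(G_{i-1})^2$ and their codegree analogues --- for instance $\Ex{(D_u+D_w)^2\mid G_{i-1}}=\big((n-2)\sum_u D_u(G_{i-1})^2-\sum_{xy\in E(G_{i-1})}(D_x+D_y)^2\big)/(N-i+1)$. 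Every sum that arises --- $\sum_u D_u^2$, $\sum_u D_u^3$, $\sum_u d_uD_u^2$, $\sum_{xy\in E}D_xD_y$, $\sum_{\{u,w\}}D_{uw}^2$, $\sum_{uw\in E}D_{uw}^2$ and the analogous mixed degree--codegree sums --- can be written, using the relation $d_v=D_v+2(i-1)/n$ between degrees and degree deviations in $G_{i-1}$ together with the elementary identities expressing $\sum_v(d_v)_k$, $\sum_{\{u,w\}}(d_{uw})_k$ and similar mixed sums as linear combinations of embedding counts $N_J(G_{i-1})$ of graphs $J$ on at most four vertices (such as $K_{1,3}$, $P_4$, $C_4$ and $K_4-e$), as a deterministic function of $i,n$ plus a linear combination (with coefficients of size $O(n)$) of subgraph count deviations $D_J(G_{i-1})$ with $v(J)\le4$, plus negligible lower-order terms. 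Theorem~\ref{thm:upto}, applied with deviation parameter $\alpha=C'b^{1/2}$ --- for which $\min\{\alpha,n^{1/2}\}=\alpha$, since we may choose the constant $C$ in the hypothesis $b\le n/2C$ large relative to $C'$ --- then gives $|D_J(G_{i-1})|\le C'b^{1/2}n^{v(J)-3/2}\le C'b^{1/2}n^{5/2}$ off an event of probability at most $\exp(-b)$, so that, after the division by $N-i+1=\Theta(n^2)$ present in each covariance, all stochastic contributions are $O(b^{1/2}n^{1/2})$. It remains to verify, by a direct computation with exact hypergeometric moments --- equivalently, by evaluating the unconditional expectations $\Ex{X_{\owedge}(G_i)^2}$, $\Ex{X_{\owedge}(G_i)Z(G_i)}$, $\Ex{Z(G_i)^2}$ with the help of Lemma~\ref{lem:expW} and standard subgraph-count variance formulas --- that the deterministic parts are $8ns(1-s)$, $0$ and $36ns^2(1-s)^2$ up to an additive $O(1)$, which is absorbed into the error since $b\ge3\log n$. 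This last verification is the delicate point: the deterministic computations must be carried to sufficient precision to display the large cancellations --- above all the cancellation of the two $\Theta(n)$ leading terms in $\Ex{X_{\owedge}(G_i)Z(G_i)\mid G_{i-1}}=\Ex{X_{\owedge}(G_i)X_{\triangle}(G_i)\mid G_{i-1}}-3s\,\Ex{X_{\owedge}(G_i)^2\mid G_{i-1}}$, which is what renders the cross term lower order and is the reason $X_{\triangle}-3sX_{\owedge}$, not $X_{\triangle}$, is the natural second coordinate --- and one must check that every degree/codegree sum genuinely resolves into subgraph counts of graphs on at most four vertices, so that Theorem~\ref{thm:upto} supplies fluctuations of the right order.
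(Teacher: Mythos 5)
Your proposal takes a genuinely different route from the paper. The paper first proves the stronger Proposition~\ref{prop:varbetter} (with the explicit second-order correction $W_{F,F'}(G_{i-1})$) for general $F,F'$, via a careful expansion of $\Ex{A_F(G_i)A_{F'}(G_i)\mid G_{i-1}}$ through the identification graphs $\Gamma(F,F':f,f',\vect)$ and an appeal to Theorem~\ref{thm:relate}; Proposition~\ref{prop:var} is then a two-line corollary using Theorem~\ref{thm:upto} to bound $|W_{F,F'}|$. You instead reduce, via Cauchy--Schwarz and Proposition~\ref{prop:Ysmall}, to the three special conditional covariances of $X_{\owedge}$ and $Z=X_{\triangle}-3sX_{\owedge}$, then propose to compute those directly. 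Your Steps~1 and~2 are sound (the constants $8ns(1-s)$, $0$, $36ns^2(1-s)^2$ you derive by rewriting $V_{F,F'}(i,n)$ are correct, as is the observation that $V_{\owedge,\triangle}-3sV_{\owedge,\owedge}=0$), modulo two points you do flag in passing but should make explicit: Proposition~\ref{prop:Ysmall} and Lemma~\ref{lem:basicvar} are stated only for $i\le N/2$, so for $t>1/2$ you must rerun their proofs with the extra $(1-t)^{-1}$ loss; and the composite good event from three separate applications needs $b$ replaced by, say, $2b$ to keep the failure probability below $\exp(-b)$.

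The genuine gap is Step~3, which is the whole content of the result and which you yourself identify as ``the delicate point'' without carrying it out. You assert that each of the degree/codegree sums $\sum_u D_u^2$, $\sum_u d_uD_u^2$, $\sum_{uw\in E}D_uD_w$, $\sum_{u,w}D_{uw}^2$, and their mixed analogues decomposes as (explicit deterministic function of $i,n$) plus (linear combination of $D_J(G_{i-1})$ with $v(J)\le 4$ and $O(n)$ coefficients) plus (lower order), and you assert that after dividing by $N-i+1$ and applying Theorem~\ref{thm:upto} the stochastic part is $O(b^{1/2}n^{1/2})$ while the deterministic part matches $V_{\owedge,\owedge}(i,n)$, $0$, $V_{\triangle,\triangle}(i,n)-6sV_{\owedge,\triangle}(i,n)+9s^2V_{\owedge,\owedge}(i,n)$ to $O(1)$. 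Neither assertion is verified, and both hide substantial cancellation: the $\Theta(n)$ leading terms in the cross-covariance must cancel exactly, the embedding-count identities $\sum_v(d_v)_k=N_{K_{1,k}}$ (and the $C_4$/$K_4{-}e$ bookkeeping for codegree squares) must be tracked with error $O(n^{v(J)-1})$ rather than $O(n^{v(J)-1/2})$ to keep the stochastic part at order $b^{1/2}n^{1/2}$, and the evaluation of the deterministic residue to additive $O(1)$ is precisely the hypergeometric computation that occupies most of the paper's proof of Proposition~\ref{prop:varbetter}. Without these computations, the proposal is a plausible plan rather than a proof; to complete it, you would essentially have to redo, for the special case $F,F'\in\{\owedge,\triangle\}$, the accounting the paper carries out once for general $F,F'$.
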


Since the path of length two and the triangle play a particularly important role (see Theorem~\ref{thm:approx}, for example), it is perhaps of interest to note that in these cases we have
\begin{align*}
V_{\owedge,\owedge}(i,n)\, & =\, 8ns(1-s)\, ,\phantom{\Big)} \\
V_{\owedge,\triangle}(i,n)\, & =\, 24ns^2(1-s) \qquad \text{and}\phantom{\Big)}\\
V_{\triangle,\triangle}(i,n)\, & =\, 36ns^2 (1-s^2)\, .\phantom{\Big)}
\end{align*}
We may also now explain why we chose to express the terms of $\Lambda^{*}_H(G_{n,t})$ as multiples of $X_{\owedge}(G_i)$ and $X_{\triangle}(G_i)-3sX_{\owedge}(G_i)$.  This representation was chosen because these increments are asymptotically orthogonal in the sense that 
\[
\Ex{X_{\owedge}(G_i)\big(X_{\triangle}(G_i)-3sX_{\owedge}(G_i)\big)\, \big|\, G_{i-1}}=o(n)
\]
with high probability.  This follows from Proposition~\ref{prop:var} and the fact that
\[
V_{\owedge,\triangle}(i,n)\, -\, 3sV_{\owedge,\owedge}(i,n)\, =\, 0\, .
\]

We in fact prove an even more precise result, Proposition~\ref{prop:varbetter}, which includes a second order term related to the current deviation $D_{\owedge}(G_{i-1})$.  We define
\eq{Wdef}
W_{F,F'}(G_{i-1})\, :=\, 8n^{v+v'-7} s^{e+e'-4}\obinom{F}{\owedge}\obinom{F'}{\owedge} D_{\owedge}(G_{i-1})\, .
\eqe

\begin{prop}\label{prop:varbetter} Let $F,F'$ be graphs with $v,v'$ vertices (respectively) and $e,e'$ edges (respectively) and let $t\in (0,1)$.  There is a constant $C=C(F,F',t)$ such that, for all $1\le i\le tN$ and all $3\log{n} \le b\le n/2C$, we have
\[
\pr{\big|\Ex{X_F(G_i)X_{F'}(G_i)\, \big|\,G_{i-1}}\, -\, \big(V_{F,F'}(i,n)+W_{F,F'}(G_{i-1})\big)\big|\, >\, Cbn^{v+v'-6}}\, \le\, \exp(-b)\, .
\]
\end{prop}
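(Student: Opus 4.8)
The plan is to compute $\Ex{X_F(G_i)X_{F'}(G_i)\mid G_{i-1}}$ as the conditional covariance $\mathrm{Cov}\big(A_F(G_i),A_{F'}(G_i)\mid G_{i-1}\big)$ of the two creation counts over the uniform choice of the $i$th edge $e_i=uw$ among the non-edges of $G_{i-1}$, and to reduce to the cases $F,F'\in\{\owedge,\triangle\}$. Writing $A_F=A^*_F+R_F$ (notation of Section~\ref{sec:Ysmall}, with $R_F:=A_F-A^*_F$ so that $Y_F=R_F-\Ex{R_F\mid G_{i-1}}$), bilinearity splits the covariance into four pieces. Since $A^*_F$ is, by its definition, a fixed linear combination --- with coefficients depending only on $F$, $s$ and $n$ --- of $1$, $D_u(G_{i-1})$, $D_w(G_{i-1})$ and $D_{u,w}(G_{i-1})$ (recall that $A_{\owedge}(G_i)$ and $A_{\triangle}(G_i)$ are exactly affine in the local deviations, by~\eqr{Awedgeis} and~\eqr{Atriis}), the piece $\mathrm{Cov}(A^*_F,A^*_{F'}\mid G_{i-1})$ is the corresponding linear combination of $\Ex{X_{\owedge}^2\mid G_{i-1}}$, $\Ex{X_{\owedge}X_{\triangle}\mid G_{i-1}}$ and $\Ex{X_{\triangle}^2\mid G_{i-1}}$; this piece carries the main term, and the three pieces involving $R_F$ will be error.

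For the main term, each of the three conditional covariances of $X_{\owedge},X_{\triangle}$ becomes a normalised sum $\frac{1}{N-i+1}\sum_{uw\notin E(G_{i-1})}D_u^aD_w^bD_{u,w}^c$ with $a+b+c\le 2$. I would evaluate these using $d_u=D_u(G_{i-1})+2(i-1)/n$, the identity $\sum_uD_u(G_{i-1})=0$, and the combinatorial identities $N_{\owedge}(G)=\sum_u d_u(d_u-1)$ (which gives $\sum_uD_u(G_{i-1})^2=D_{\owedge}(G_{i-1})+\Ex{\sum_uD_u(G_{i-1})^2}$ exactly), its codegree analogue, and identities expressing $\sum_uD_u^3$, $\sum_{uw\in E}D_uD_w$ and the relevant codegree sums as affine combinations of small subgraph counts $N_{\owedge},N_{\triangle},N_{K_{1,3}},N_{P_3},\dots$. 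This produces a deterministic leading term (which assembles into $V_{F,F'}(i,n)$), a term linear in the auxiliary deviations $D_{\owedge}(G_{i-1}),D_{K_{1,3}}(G_{i-1}),D_{P_3}(G_{i-1}),\dots$, and a fluctuating term built from sums of products of at least three deviations. Here I would invoke Theorem~\ref{thm:relate} to replace each auxiliary $D_{H'}(G_{i-1})$ by $\Lambda_{H'}(G_{i-1})$ --- essentially a known multiple of $D_{\owedge}(G_{i-1})$, plus a $D_{\triangle}$-term that turns out to be lower order, consistently with the absence of $\binom{\cdot}{\triangle}$-cross-terms in $V_{F,F'}$ --- at a cost of errors of size $O(bn^{v(H')-2})$; after this substitution the linear part collapses to exactly $W_{F,F'}(G_{i-1})$, with the substitution errors absorbed into the $O(bn^{v+v'-6})$ budget.

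The fluctuating remainders and the $R_F$-covariances are bounded using the estimates of Section~\ref{sec:degrees}: Lemmas~\ref{lem:deg2} and~\ref{lem:codeg2} control $\sum_uD_u^2$ and $\sum_{u,w}D_{u,w}^2$ up to $O(n^2)$ and $O(n^3)$ on an event of probability $1-\exp(-b)$; Lemmas~\ref{lem:deg4} and~\ref{lem:codeg4} together with Cauchy--Schwarz control third moments such as $\sum_uD_u^3$ by $O(bn^2)$; Lemmas~\ref{lem:maxdegdev} and~\ref{lem:maxcodegdev} control the extremes; and Theorem~\ref{thm:upto} is used for any auxiliary deviation that enters only through a crude bound. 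For $\mathrm{Cov}(R_F,R_{F'}\mid G_{i-1})$, Cauchy--Schwarz together with $\Ex{R_F^2\mid G_{i-1}}=\Ex{Y_F^2\mid G_{i-1}}+\Ex{R_F\mid G_{i-1}}^2\le Cbn^{2v-6}$ (Proposition~\ref{prop:Ysmall} plus a crude bound on the conditional mean) already gives $O(bn^{v+v'-6})$.

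The main obstacle is the mixed terms $\mathrm{Cov}(A^*_F,R_{F'}\mid G_{i-1})$: a direct Cauchy--Schwarz bound here is only $O(b^{1/2}n^{v+v'-11/2})$, which exceeds the target whenever $b\ll n$. The resolution is to use the finer description of $R_{F'}$ from the proof of Lemma~\ref{lem:Clem}: after removing its $e_i$-independent part (of size $O(n^{v'-3})$, which contributes nothing to a covariance), $R_{F'}$ is a polynomial of degree at most $2$ in the local deviations $D_u,D_w,D_{u,w}$ with coefficients $O(n^{v'-4})$, plus a genuinely small fluctuating piece whose correlation with the affine statistics $X_{\owedge},X_{\triangle}$ can again be controlled by the same moment estimates; pairing $X^*_F$ (whose own linear coefficients are $O(n^{v-3})$) against each piece yields $O(bn^{v+v'-6})$ rather than the lossy $n^{v+v'-11/2}$. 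A secondary difficulty is the bookkeeping needed to verify that, after the Theorem~\ref{thm:relate} substitutions, all the terms that are typically of order $n^{1/2}$ recombine into exactly $W_{F,F'}(G_{i-1})=8n^{v+v'-7}s^{e+e'-4}\binom{F}{\owedge}\binom{F'}{\owedge}D_{\owedge}(G_{i-1})$ and nothing more. A union bound over the finitely many events used then finishes the proof of Proposition~\ref{prop:varbetter}; Proposition~\ref{prop:var} follows, since on the good event $W_{F,F'}(G_{i-1})=O(b^{1/2}n^{v+v'-11/2})$ may itself be absorbed into the error.
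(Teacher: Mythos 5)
Your strategy -- decomposing $A_F=A^*_F+R_F$ and computing the covariance via bilinearity, with $\mathrm{Cov}(A^*_F,A^*_{F'}\mid G_{i-1})$ carrying the main term -- is genuinely different from the paper's. The paper never decomposes $A_F$ at all; instead it writes $\Ex{A_F A_{F'}\mid G_{i-1}}$ \emph{exactly} (up to an $O(n^{v+v'-6})$ tail for overlaps of four or more vertices) as a linear combination of $N_\Gamma(G_{i-1})$ over the gluings $\Gamma(F,F':f,f',\vect)$, $\Gamma(F,F':f,f',u,u',\vect)$ and their $\Gamma^o$-variants, expands each $N_\Gamma=L_\Gamma+D_\Gamma$, computes the deterministic part via the combinatorial identity $2\lambda_1=\theta_1-2\theta_2$, $2\lambda_2=\theta_2$ to get $V_{F,F'}(i,n)$, and controls the $D_\Gamma$-part by substituting $D_\Gamma\approx\Lambda_\Gamma$ from Theorem~\ref{thm:relate} and collecting the $D_\owedge$-coefficients into $W_{F,F'}(G_{i-1})$ (the $D_\triangle-3sD_\owedge$-coefficients cancel since triangles cannot cross between $F$ and $F'$). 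This route never produces a ``fluctuating piece'' whose covariance must be estimated; the only probabilistic inputs are the $\Lambda_\Gamma$-approximations, each of which is applied to a single deviation and gives an error of the right order $O(bn^{v(\Gamma)-2})/(N-i+1)=O(bn^{v+v'-6})$ directly.

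The gap in your proposal is in the treatment of $\mathrm{Cov}(X^*_F,\,A_{F'}-A^{**}_{F'}\mid G_{i-1})$. You correctly note that plain Cauchy--Schwarz against $\Ex{Y_{F'}^2\mid G_{i-1}}$ gives only $O(b^{1/2}n^{v+v'-11/2})$, which exceeds the target $O(bn^{v+v'-6})$ by a factor of order $(n/b)^{1/2}$ throughout the allowed range $3\log n\le b\le n/2C$. Your resolution splits $R_{F'}$ into an $e_i$-independent part, a polynomial of degree $\le2$ in $D_u,D_w,D_{u,w}$, and a ``genuinely small fluctuating piece'' $A_{F'}-A^{**}_{F'}$ to be controlled ``by the same moment estimates''. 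But Lemma~\ref{lem:Clem} supplies only a pointwise \emph{bound} $|A_{F'}-A^{**}_{F'}|\le Cb^{1/2}n^{v'-3}+Cn^{v'-4}\Delta(e_i)$, not a representation; pairing this bound against $X^*_F$ (whose centred nature you cannot exploit through an absolute-value estimate) still yields only $\Ex{|X^*_F|}\cdot b^{1/2}n^{v'-3}=O(b^{1/2}n^{v+v'-11/2})$ from the first term, and $n^{v-3}n^{v'-4}\Ex{|D_u|^3+\cdots}=O(n^{v+v'-11/2})$ from the $\Delta$-term, so the lossy exponent persists. To beat this you would need a genuine orthogonality statement for $A_{F'}-A^{**}_{F'}$ against $X^*_F$ over the random edge, or an explicit expansion of $A_{F'}-A^{**}_{F'}$ in a controlled basis; neither is provided, and the conditional expectation defining $A^{**}_{F'}$ integrates out a different randomness (the edges of $G_{i-1}$ beyond $N_u,N_w$) than the edge-choice randomness relevant to the covariance. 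Separately, even the degree-$2$ polynomial piece requires the bookkeeping you flag, since its contribution involves third moments such as $\Ex{D_u^3}$ which, after Theorem~\ref{thm:relate}, contain a $D_\owedge(G_{i-1})$-term of typical size $n^{1/2}$ that must be shown to recombine into $W_{F,F'}$ and nothing more; but it is the uncontrolled fluctuating piece, not this bookkeeping, that is the load-bearing gap.
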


The term $W_{F,F'}(G_{i-1})$ is generally much smaller than the main term $V_{F,F'}(i,n)$.  This follows from the fact that $D_{\owedge}(G_{i-1})$ is generally much smaller than $n^2$, which follows from Theorem~\ref{thm:upto}.  

%The following proposition proves this by showing that $D_{\owedge}(G_{i-1})$ is generally much smaller than $n^2$.
%
%\begin{prop}\label{prop:basic} Let $H$ be a graph with $v$ vertices and $e$ edges and with $\binom{H}{\owedge}\ge 1$, and let $t\in (0,1)$.  There exists a constant $C=C(H,t)$ such that, for all $1\le i\le tN$, we have
%\[
%\pr{|D_H(G_{i})|\, >\, Cb^{1/2}n^{v-3/2}}\, \le\, \exp(-b)\,
%\]
%for all $1\le b\le n$.
%\end{prop}

Let us observe that Proposition~\ref{prop:var} follows from Proposition~\ref{prop:varbetter} and Theorem~\ref{thm:upto}.  

\begin{proof}[Proof of Proposition~\ref{prop:var}] Let $F,F'$ and $t$ be fixed.  Writing $C_1$ for the constant of Proposition~\ref{prop:varbetter}, and $c$ for the constant associated with $H=\owedge$ in Theorem~\ref{thm:upto}, we define 
\[
C\, =\, 2\left(C_1\,+\,8c^{-1}\obinom{F}{\owedge}\obinom{F'}{\owedge}\right)\, .
\]
By the triangle inequality,
\begin{align*}
&\Big|\Ex{X_F(G_i)X_{F'}(G_i)\, \big|\, G_{i-1}}\, -\, V_{F,F'}(i,n)\Big|\,  \\
& \qquad \le\,  \Big|\Ex{X_F(G_i)X_{F'}(G_i)\, \big|\,G_{i-1}}\, -\, \big(V_{F,F'}(i,n)+W_{F,F'}(G_{i-1})\big)\Big|\, +\, |W_{F,F'}(i,n)|\, ,
\end{align*}
and so the event
\eq{varstp}
\Big|\Ex{X_F(G_i)X_{F'}(G_i)\, \big|\, G_{i-1}}\, -\, V_{F,F'}(i,n)\Big|\, >\, Cb^{1/2}n^{v+v'-11/2}
\eqe
may only occur if
\begin{align*}
\Big|\Ex{X_F(G_i)X_{F'}(G_i)\, \big|\,G_{i-1}}\, -\, \big(V_{F,F'}(i,n)+W_{F,F'}(G_{i-1})\big)\Big|\, & >\, 2C_1 b^{1/2}n^{v+v'-11/2}\\ & \ge\, 2C_1 bn^{v+v'-6}
\end{align*}
or 
\[
\big|D_{\owedge}(G_{i-1})\big|\, >\, 2c^{-1} b^{1/2}n^{3/2}\, .
\]
By Proposition~\ref{prop:varbetter} and Theorem~\ref{thm:upto} respectively these events each have probability at most $\exp(-2b)$.  By a union bound, the event~\eqr{varstp} has probability at most $2\exp(-2b)\le \exp(-b)$, as required.
\end{proof}

We now prove Proposition~\ref{prop:varbetter}.

\begin{proof}[Proof of Proposition~\ref{prop:varbetter}]  Let $F,F'$ and $t$ be fixed.  Let $i\le tN$.
Since
\[
X_F(G_i)\, :=\, A_F(G_i)\, -\, \Ex{A_F(G_i)\, \big|\,G_{i-1}}\, ,
\]
the conditional covariance may be expressed as
\begin{align}\label{eq:covaris}
\Ex{X_F(G_i)X_{F'}(G_i)\, \big|\,G_{i-1}}\,  \quad & \nonumber \\ \phantom{\Bigg{(}}
= \, \Ex{A_F(G_i)A_{F'}(G_i)\, \big|\,G_{i-1}}\, & -\, \Ex{A_F(G_i)\, \big|\,G_{i-1}}\Ex{A_{F'}(G_i)\, \big|\,G_{i-1}}\, .
\end{align}

The proof consists of two main stages.  In the first we express each of the terms of~\eqr{covaris} as a linear combination of terms of the form $L_H(i)$ and $D_{H}(G_i)$.  This first stage results in the expressions~\eqr{expA} and~\eqr{expAA}, which we combine to obtain~\eqr{expX}, an expression for $\Ex{X_F(G_i)X_{F'}(G_i)|G_{i-1}}$ in terms of $L_H(i)$ and $D_{H}(G_i)$.  In the second stage we expand these terms and, after some calculation, arrive at the conclusion.

We begin the first stage by calculating an expression\footnote{This expression is in fact already given by Lemma~\ref{lem:expW}, however reproving it is a useful step towards the more difficult challenge of expressing $\Ex{A_F(G_i)A_{F'}(G_i)|G_{i-1}}$ in the desired form.} for $\Ex{A_{F}(G_i)|G_{i-1}}$.  Let $\Phi_F$ be the set of injective functions $\phi:V(F)\to V(G_{i-1})$.  We write $\phi(e)$ for the image of an edge, i.e., $\phi(uw)=\{\phi(u),\phi(w)\}$.  For $f\in E(F)$, we say that $\phi$ is \emph{$f$-ready} if $\phi(e)$ is an edge of $G_{i-1}$ for every edge of $F\setminus f$ and $\phi(f)$ is a non-edge of $G_{i-1}$.  For $f\in E(F)$, we define
\[
\Phi_F^{f}\, :=\, \{\phi\in \Phi_F\, :\, \text{$\phi$ is $f$-ready}\}\, .
\]
Since a copy of $F$ may only be created at the moment we add its final edge, and each of the $N-i+1$ remaining pairs is added as the $i$th edge with probability $1/(N-i+1)$, we have
\eq{Expassum}
\Ex{A_{F}(G_i)\, \big|\,G_{i-1}}\, =\,\sum_{f\in E(F)}\frac{|\Phi_F^{f}|}{N-i+1}\, .
\eqe
Observe that $|\Phi_F^{f}|$, the number of $f$-ready injective functions $\phi$, is given by
\[
|\Phi_F^{f}|\, =\, N_{F\setminus f}(G_{i-1})\, -\, N_{F}(G_{i-1})\, .
\]
Substituting this into~\eqr{Expassum}, and expanding each $N_H(G_{i-1})$ as $L_H(i-1)+D_{H}(G_{i-1})$, we obtain
\begin{align*}
& \Ex{A_{F}(G_i)\, \big|\,G_{i-1}}\, \\
& =\, \sum_{f\in E(F)}\frac{L_{F\setminus f}(i-1)-L_{F}(i-1)}{N-i+1}\, +\, \frac{1}{N-i+1}\sum_{f\in E(F)}\big( D_{F\setminus f}(G_{i-1})-D_{F}(G_{i-1})\big)\, .
\end{align*}
Since it is easily checked that
\[
\sum_{f\in E(F)}\frac{L_{F\setminus f}(i-1)-L_{F}(i-1)}{N-i+1}\, =\, L_{F}(i)\, -\, L_{F}(i-1)\, ,
\]
we obtain
\eq{expA}
\Ex{A_{F}(G_i)\, \big|\,G_{i-1}}\, =\, \big(L_{F}(i)-L_F(i-1)\big)\, +\, \frac{1}{N-i+1}\sum_{f\in E(F)} \big(D_{F\setminus f}(G_{i-1})-D_{F}(G_{i-1})\big)\, .
\eqe

We continue the first stage by calculating the expression~\eqr{expAA} for $\Ex{A_{F}(G_i)A_{F'}(G_i) |G_{i-1}}$.  To abbreviate the notation we set
\[
\exff\, :=\, \Ex{A_{F}(G_i)A_{F'}(G_i)\, \big|\,G_{i-1}}\, .
\]
Let $\Phi_{F,F'}=\Phi_F\times \Phi_{F'}$, be the set of pairs $(\phi,\phi')$ of injective functions $\phi:V(F)\to V(G_{i-1})$ and $\phi':V(F')\to V(G_{i-1})$.  We say that such a pair $(\phi,\phi')$ is \emph{$(f,f',\vect)$-ready}, for edges $f\in E(F)$, $f'\in E(F')$ and a relative orientation $\vect$ of the edges $f$ and $f'$, if 
\begin{enumerate}
\item[(i)] $\phi(e)\in E(G_{i-1})$ for all $e\in F\setminus f$,
\item[(ii)] $\phi'(e)\in E(G_{i-1})$ for all $e\in F'\setminus f'$ in $G_{i-1}$, and 
\item[(iii)] $\phi(f)$ and $\phi'(f')$ map to the same non-edge of $G_{i-1}$, and have relative orientation $\vect$.  
\end{enumerate}
For $f\in E(F),f'\in E(F')$ and a relative orientation $\vect$, we define
\[
\Phi_{F,F'}^{f,f',\vect}\, :=\, \{(\phi,\phi')\in \Phi_{F,F'}\, :\, (\phi,\phi')\, \text{is $(f,f',\vect)$-ready}\}\, .
\]
Since embeddings of $F$ and $F'$ may only be simultaneously created at the moment we add their final edge, and each of the $N-i+1$ remaining pairs is added as the $i$th edge with probability $1/(N-i+1)$, we have
\[
\exff\, =\,\sum_{f,f',\vect}\frac{|\Phi_{F,F'}^{f,f',\vect}|}{N-i+1}\, .
\]
Observe that 
\[
|\Phi_{F,F'}^{f,f',\vect}|\, ,
\] 
the number of $(f,f',\vect)$-ready pairs $(\phi,\phi')$ includes a count over those pairs $(\phi,\phi')$ whose images overlap in exactly two vertices, those whose images overlap in three vertices, and those pairs that overlap in four or more vertices.  The count of pairs with overlap exactly two vertices is 
\[
N_{\Gamma^{o}(F,F':f,f',\vect)}(G_{i-1})\, -\, N_{\Gamma(F,F':f,f',\vect)}(G_{i-1})\, .
\]
where $\Gamma(F,F':f,f',\vect)$ is the graph obtained by joining $F$ and $F'$ by identifying $f$ and $f'$ using the relative orientation $\vect$, and $\Gamma^{o}(F,F':f,f',\vect)$ is obtained by then removing the identified edge.  The count of pairs with overlap exactly three vertices is
\[
\sum_{u\in V(F)\setminus f, u'\in V(F')\setminus f'} \big( N_{\Gamma^{o}(F,F':f,f',u,u',\vect)}(G_{i-1})\, -\, N_{\Gamma(F,F':f,f',u,u',\vect)}(G_{i-1}) \big)\, 
\]
where $\Gamma(F,F':f,f',u,u',\vect)$ is the graph obtained by joining $F$ and $F'$ by identifying $f$ and $f'$ using the relative orientation $\vect$ and also identifying $u$ and $u'$, and $\Gamma^{o}(F,F':f,f',u,u',\vect)$ is obtained by then removing the identified edge.
It follows that
\begin{align*}
\exff \, & = \frac{1}{N-i+1} \sum_{f,f',\vect}\big(N_{\Gamma^{o}(F,F':f,f',\vect)}(G_{i-1})\, -\, N_{\Gamma(F,F':f,f',\vect)}(G_{i-1})\big)\\ 
& +\, \frac{1}{N-i+1}\sum_{f,f',u,u',\vect}\big(N_{\Gamma^{o}(F,F':f,f',u,u',\vect)}(G_{i-1})\, -\, N_{\Gamma(F,F':f,f',u,u',\vect)}(G_{i-1})\big)\\ 
& +\, O(n^{v+v'-6})\, ,
\end{align*}
where the error term $O(n^{v+v'-6})$ comes from the fact that there are $O(n^{v+v'-4})$ pairs that overlap in four or more vertices, and $N-i+1\ge N-tN+1 =\Theta(n^2)$, as $t\in (0,1)$ is fixed.
Expanding the terms $N_H(G_{i-1})$ we obtain the desired expression for $\Ex{A_{F}(G_i)A_{F'}(G_i)|G_{i-1}}$:

%\begin{align}\label{eq:expAA}
%\exff \nonumber \\
%& \, =\, \frac{1}{N-i+1} \sum_{f,f',\vect}\big(L_{\Gamma^{o}(F,F':f,f',\vect)}(i-1)\, -\, L_{\Gamma(F,F':f,f',\vect)}(i-1)\big)\\ & \,+\, \frac{1}{N-i+1}\sum_{f,f',u,u',\vect}\big(L_{\Gamma^{o}(F,F':f,f',u,u',\vect)}(i-1)\, -\, L_{\Gamma(F,F':f,f',u,u',\vect)}(i-1)\big)\nonumber \\
%&\qquad \, + \,\frac{1}{N-i+1} \sum_{f,f',\vect}\big(D_{\Gamma^{o}(F,F':f,f',\vect)}(G_{i-1})\, -\, D_{\Gamma(F,F':f,f',\vect)}(G_{i-1})\big)\nonumber \\ 
%& \qquad \,+\, \frac{1}{N-i+1}\sum_{f,f',u,u',\vect}\big(D_{\Gamma^{o}(F,F':f,f',u,u',\vect)}(G_{i-1})\, -\, D_{\Gamma(F,F':f,f',u,u',\vect)}(G_{i-1})\big)\nonumber \\ 
%& \qquad \, +\, O(n^{v+v'-6})\,  .\nonumber 
%\end{align}

\begin{align}\label{eq:expAA}
& \exff \,  =\, \frac{1}{N-i+1} \sum_{f,f',\vect}\big(L_{\Gamma^{o}(F,F':f,f',\vect)}(i-1)\,- \, L_{\Gamma(F,F':f,f',\vect)}(i-1)\big)\nonumber  \\ &\quad \,+\, \frac{1}{N-i+1}\sum_{f,f',u,u',\vect}\big(L_{\Gamma^{o}(F,F':f,f',u,u',\vect)}(i-1)\, -\, L_{\Gamma(F,F':f,f',u,u',\vect)}(i-1)\big)\nonumber \\
&\quad + \,\frac{1}{N-i+1} \sum_{f,f',\vect}\big(D_{\Gamma^{o}(F,F':f,f',\vect)}(G_{i-1})\, -\, D_{\Gamma(F,F':f,f',\vect)}(G_{i-1})\big)\nonumber \\ 
&\quad +\, \frac{1}{N-i+1}\sum_{f,f',u,u',\vect}\big(D_{\Gamma^{o}(F,F':f,f',u,u',\vect)}(G_{i-1})\, -\, D_{\Gamma(F,F':f,f',u,u',\vect)}(G_{i-1})\big)\nonumber \\ 
&\quad  +\, O(n^{v+v'-6})\,  .
\end{align}

Combining~\eqr{expA} and~\eqr{expAA} and substituting into~\eqr{covaris}, we obtain the following expression for $\Ex{X_F(G_i)X_{F'}(G_i)|G_{i-1}}$:
\begin{align}\label{eq:expX}
& \Ex{X_F(G_i)X_{F'}(G_i)\, \big|\,G_{i-1}} \nonumber \\
& \, =\, \frac{1}{N-i+1} \sum_{f,f',\vect}\big(L_{\Gamma^{o}(F,F':f,f',\vect)}(i-1)\, -\, L_{\Gamma(F,F':f,f',\vect)}(i-1)\big) \nonumber \\ &\quad +\, \frac{1}{N-i+1}\sum_{f,f',u,u',\vect}\big(L_{\Gamma^{o}(F,F':f,f',u,u',\vect)}(i-1)\, -\, L_{\Gamma(F,F':f,f',u,u',\vect)}(i-1)\big)\nonumber \\
&\quad + \,\frac{1}{N-i+1} \sum_{f,f',\vect}\big(D_{\Gamma^{o}(F,F':f,f',\vect)}(G_{i-1})\, -\, D_{\Gamma(F,F':f,f',\vect)}(G_{i-1})\big)\nonumber \\ &\quad +\, \frac{1}{N-i+1}\sum_{f,f',u,u',\vect}\big(D_{\Gamma^{o}(F,F':f,f',u,u',\vect)}(G_{i-1})\, -\, D_{\Gamma(F,F':f,f',u,u',\vect)}(G_{i-1})\big)\nonumber \\
& \quad - \left(\big(L_{F}(i)-L_F(i-1)\big)\, +\, \frac{1}{N-i+1}\sum_{f\in E(F)} \big(D_{F\setminus f}(G_{i-1})-D_{F}(G_{i-1})\big)\right)\nonumber \\
& \quad \times \left(\big(L_{F'}(i)-L_{F'}(i-1)\big)\, +\, \frac{1}{N-i+1}\sum_{f'\in E(F')} \big(D_{F'\setminus f'}(G_{i-1})-D_{F'}(G_{i-1})\big)\right)\nonumber \\
 & \quad +\, O(n^{v+v'-6})\, .
\end{align}

We now begin the second stage of the proof.  Essentially we must understand the terms in~\eqr{expX}, and calculate what remains after cancellations.  Our hope is that all the terms involving deviations reduce to $W_{F,F'}(G_{i-1})$, up to a small error term.  To prove this we use Theorem~\ref{thm:relate} and Theorem~\ref{thm:upto}.  

By Theorem~\ref{thm:relate} and Theorem~\ref{thm:upto} there exists, for each $v^{*}$, a constant $C_1=C_1(v^{*})$ such that for all $1\le b\le n/2C_1$ and all graphs $H$ on at most $v^*$ vertices, each of the events
\eq{devml}
\big|D_{H}(G_{n,s})-\Lambda_{H}(G_{n,s})\big| \, \le \, C_1 bn^{v(H)-2}
\eqe
and
\eq{justdev}
\big|D_H(G_{n,s})\big|\, \le \, C_1 b^{1/2}n^{v(H)-3/2}
\eqe
fail with probability at most $\exp(-((v^{*})^2+2)b)$.  Let $E_{v^{*}}(b)$ be the event that both of~\eqr{devml},~\eqr{justdev} hold for any graph $H$ on at most $v^*$ vertices.  By a straightforward union bound over the (at most $2^{(v^*)^2}$) graphs $H$ on at most $v^{*}$ vertices.  We have that
\[
\pr{E_{v^*}(b)}\, \ge \, 1\, -\, \exp(-b)\, .
\]

% this does not occur
% Let $C_1$ be $(v+v'+2)$ times the largest constant $C(H)$ given by Theorem~\ref{thm:relate} for graphs $H$ on at most $v+v'$ vertices.  For each fixed graph $H$ on at most $v+v'$ vertices the event
%\[
%|D_{H}(G_{n,t})-\Lambda_{H}(G_{n,t})| \, >\, C_1 bn^{v-2}
%\]
%has probability at most $\exp(-(v+v'+2)b)$ by Theorem~\ref{thm:relate}.  Let $E'_{v+v'}(b)$ be the event that this does not occur for any graph $H$ on at most $v+v'$ vertices.  We have
%\[
%\pr{E'_{v+v'}(b)}\, \ge \, 1\, - \, 2^{v+v'}\exp(-(v+v'+2)b)\, \ge \, 1\, -\, \exp(-2b)\, .
%\]
%Let $C_2\ge C_1$ be large enough that, by Proposition~\ref{prop:basic},
%\[
%\pr{|D_H(G_{n,t})|\, >\, C_2 b^{1/2}n^{v-3/2}}\, \le\, \exp(-(v+v'+2)b)
%\]
%for each graph on at most $v+v'$ vertices.  Let $E_{v+v'}(b)$ be the intersection of $E'_{v+v'}(b)$ and the event that the above does not occur for any $H$ on at most $v+v'$ vertices.  We have $\pr{E_{v+v'}(b)}\ge 1-\exp(-b)$.

To complete the proof it suffices to prove that there is a constant $C$ such that, on the event $E_{v+v'}(b)$, we have
\[
\big|\Ex{X_F(G_i)X_{F'}(G_i)\,\big|\,G_{i-1}}\, -\, \big(V_{F,F'}(i,n)+W_{F,F'}(G_{i-1})\big)\big|\, \le \, Cbn^{v+v'-6}\, .
\]

Let us continue our calculation of $\Ex{X_F(G_i)X_{F'}(G_i)|G_{i-1}}$ by expanding and cancelling the terms of~\eqr{expX}.  We begin by calculating the total contribution~\eqr{Lcontrib} of the terms involving only the $L_H(i-1)$.  
Using that both $\Gamma^{o}(F,F':f,f',\vect)$ and $\Gamma(F,F':f,f',\vect)$ have $v+v'-2$ vertices, and that $e(\Gamma^{o}(F,F':f,f',\vect))=e+e'-2$ and $e(\Gamma(F,F':f,f',\vect))=e+e'-1$, it is easily verified that
\[
L_{\Gamma^{o}(F,F':f,f',\vect)}(i-1)\, -\, L_{\Gamma(F,F':f,f',\vect)}(i-1)\, =\, \frac{(n)_{v+v'-2}(i-1)_{e+e'-2}(N-i+1)}{(N)_{e+e'-1}}\, ,
\]
for every choice of $f,f',\vect$.  It follows that
\begin{align}\label{eq:contL1}
& \frac{1}{N-i+1}  \sum_{f,f',\vect}\big(L_{\Gamma^{o}(F,F':f,f',\vect)}(i-1)\, -\, L_{\Gamma(F,F':f,f',\vect)}(i-1)\big)\phantom{\Bigg(} \\ & 
=\,  \frac{2e e' (n)_{v+v'-2}(i-1)_{e+e'-2}}{(N)_{e+e'-1}}\phantom{\Bigg(} \nonumber \\
& =\, \frac{2 e e' (n)_{v+v'-2}s^{e+e'-2}}{N}\, +\, O(n^{v+v'-6}) \phantom{\Bigg(} \nonumber \\
& =\, 4e e' n^{-2} \left(1+\frac{1}{n}\right) \left(n^{v+v'-2} -\obinom{v+v'-2}{2} n^{v+v'-3}\right)s^{e+e'-2} \, +\, O(n^{v+v'-6})\phantom{\Bigg(} \nonumber \\
& =\, 4e e'n^{v+v'-4}s^{e+e'-2} + 2e e'\big(2-(v+v'-2)(v+v'-3)\big) n^{v+v'-5}s^{e+e'-2} + O(n^{v+v'-6})\, .\phantom{\Bigg(}\nonumber
\end{align}
The main negative term comes from the product
\[
-\big(L_{F}(i)-L_F(i-1)\big)\big(L_{F'}(i)-L_{F'}(i-1)\big)\, .
\]
Since
\[
L_{F}(i)-L_F(i-1)\, =\, \frac{e(n)_v(i-1)_{e-1}}{(N)_{e}}\, =\, 2en^{-2} \left(1+\frac{1}{n}\right) (n)_v s^{e-1}\, +\, O(n^{v-4})\, ,
\]
this main negative term is
\[
-4ee' n^{-4}\left(1+\frac{2}{n}\right)(n)_v(n)_{v'}s^{e+e'-2}\, +\, O(n^{v+v'-6})
\]
which may be expressed as
\eq{contL2}
-4e e'n^{v+v'-4}s^{e+e'-2}\, +\, 2 e e'\big(v(v-1)+v'(v'-1)-4\big) n^{v+v'-5}s^{e+e'-2}\, +\, O(n^{v+v'-6}).
\eqe
The final contribution from terms purely involving the terms $L_H(i-1)$ is
\[
\frac{1}{N-i+1}\sum_{f,f',u,u',\vect}\big(L_{\Gamma^{o}(F,F':f,f',u,u',\vect)}(i-1)\, -\, L_{\Gamma(F,F':f,f',u,u',\vect)}(i-1)\big)\, .
\]
The value of the summand depends on the number of extra overlaps of edges that occur in the identification.  For $j=0,1,2$, let $\lambda_j$ be the number of sequences $f,f',u,u',\vect$ in which $\Gamma^{o}(F,F':f,f',u,u',\vect)$ has $e+e'-2-j$ edges, meaning that $j$ edges other than $f$ and $f'$ are lost in the identification.  The contribution of 
\[
\frac{1}{N-i+1}\big(L_{\Gamma^{o}(F,F':f,f',u,u',\vect)}(i-1)\, -\, L_{\Gamma(F,F':f,f',u,u',\vect)}(i-1)\big)
\]
is 
\[
2n^{v+v'-5} s^{e+e'-2-j} \, +\, O(n^{v+v'-6})\, 
\]
in the case of $j$ extra edges being lost in the identification.  It follows that the total contribution of these terms is
\eq{contL3}
2n^{v+v'-5}\big(\lambda_0 s^{e+e'-2}+\lambda_1 s^{e+e'-3}+\lambda_2 s^{e+e'-4}\big)\, +\, O(n^{v+v'-6})\, .
\eqe
Summing all contributions to~\eqr{expX} from terms involving only the $L_H(i-1)$, i.e., summing~\eqr{contL1},~\eqr{contL2} and~\eqr{contL3}, we obtain
\[
2n^{v+v'-5}\left(\big(\lambda_0-2e e'(v-2)(v'-2)\big) s^{e+e'-2}+\lambda_1 s^{e+e'-3}+\lambda_2 s^{e+e'-4}\right)\, +\, O(n^{v+v'-6})\, .
\]
Using that $\lambda_0+\lambda_1+\lambda_2 = 2e e'(v-2)(v'-2)$, this total contribution is
\begin{align*}
&2n^{v+v'-5}\left(\big(-\lambda_1-\lambda_2\big) s^{e+e'-2}+\lambda_1 s^{e+e'-3}+\lambda_2 s^{e+e'-4}\right)\, +\, O(n^{v+v'-6}) \phantom{\bigg)}\\
&\, =\, 2n^{v+v'-5}\big(\lambda_1 s^{e+e'-3}(1-s)\, +\,\lambda_2 s^{e+e'-4}(1-s^2)\big)\, +\, O(n^{v+v'-6})\,. \phantom{\bigg)}
\end{align*}
We may now relate $\lambda_1$ and $\lambda_2$ to the parameters $\theta_1(F,F')$ and $\theta_2(F,F')$ that occur in the definition of $V_{F,F'}(i,n)$.

\noindent\textbf{Claim:} We have $2\lambda_1=\theta_1(F,F')-2\theta_2(F,F')$ and $2\lambda_2=\theta_2(F,F')$. 

\noindent\textbf{Proof of Claim:} Let $\rho_1$ be the number of pairs of an edge $f$ of $F$ and a disjoint vertex $u$ such there is precisely one edge between the endpoints of $f$ and $u$.  Let $\rho_2$ be the number of such pairs in which both possible edges are present, i.e., $f\cup\{u\}$ is a triangle in $F$, and let $\rho'_1$ and $\rho'_2$ be the equivalent quantities in $F'$.  It is easily verified that
\[
\rho_1\, =\, 2\obinom{F}{\owedge}-6\obinom{F}{\triangle}\qquad \text{and}\qquad \rho_2\, =\, 3\obinom{F}{\triangle}\, .
\]

Let us now prove that $2\lambda_2=\theta_2(F,F')$.  We recall that $\lambda_2$ counts the number of choices $f,f',u,u',\vect$ such that the overlap contains two extra edges.  This occurs if and only if $f\cup\{u\}$ and $f'\cup\{u'\}$ are triangles in their respective graphs, and so $\lambda_2=2\rho_2\rho'_2$, where the factor of $2$ has come from counting the two possible orientations.  It follows that
\[
2\lambda_2\, =\, 4\rho_2\rho'_2\, =\, 36\obinom{F}{\triangle}\obinom{F'}{\triangle}\, =\, \theta_2(F,F')\, .
\]

We now turn to $\lambda_1$, which counts the number of choices $f,f',u,u',\vect$ such that the overlap contains exactly one extra edge.  This occurs for one of the two orientations if there is one edge between $f$ and $u$ and likewise between $f'$ and $u'$, and with both orientations if one of the two is a triangle.  Thus
\[
\lambda_1\, =\, \rho_1\rho'_1\, +\, 2\rho_1 \rho'_2\, +\, 2\rho_2\rho'_1\, .
\]
Substituting in the values of $\rho_1,\rho_2,\rho'_1,\rho'_2$ we obtain
\[
2\lambda_1\, =\, 2\big(2\obinom{F}{\owedge}-6\obinom{F}{\triangle}\big)\big(2\obinom{F'}{\owedge}-6\obinom{F'}{\triangle}\big)\, +\, 12 \big(2\obinom{F}{\owedge}-6\obinom{F}{\triangle}\big)\obinom{F'}{\triangle}\,  +\, 12 \big(2\obinom{F'}{\owedge}-6\obinom{F'}{\triangle}\big)\obinom{F}{\triangle}\, ,
\]
which is $\theta_1(F,F')- 2\theta_2(F,F')$, completing the proof of the claim. \vspace{0.3cm}

Using the claim, the total contribution of the terms involving only the $L_H(i-1)$ is
\begin{align}\label{eq:Lcontrib}
&n^{v+v'-5}s^{e+e'-4}\Big((s-s^2)\big(\theta_1(F,F')- 2\theta_2(F,F')\big)\, +\, (1-s^2)\theta_2(F,F')\Big)\, = \phantom{\bigg)} \nonumber \\
& \phantom{\bigg)}  n^{v+v'-5}s^{e+e'-4}(1-s)\Big(s\theta_1(F,F')\, +\, (1-s)\theta_2(F,F')\Big)\, =\, V_{F,F'}(i,n)\, . 
\end{align}

We now turn to terms involving deviations $D_{H}(G_{i-1})$.  On the event $E_{v+v'}(b)$, that both of~\eqr{devml} and~\eqr{justdev} hold for all graphs on at most $v+v'$ vertices, we have that the deviation $D_H(G_{i-1})$ is given by
\eq{Lambdais}
n^{v(H)-3}s^{e(H)-2}\obinom{H}{\owedge}  D_{\owedge}(G_{i-1})\, +\, n^{v(H)-3}s^{e(H)-3}\obinom{H}{\triangle} \big(D_{\triangle}(G_{i-1})-3s D_{\owedge}(G_{i-1})\big)\, \pm\, C_1b n^{v(H)-2} 
\eqe
for all graphs $H$ on at most $v+v'$ vertices, and so, in particular for any graph included in~\eqr{expX}.  Thus, we need only to determine the coefficients of $D_{\owedge}(G_{i-1})$ and $D_{\triangle}(G_{i-1})$ obtained after summing the terms of~\eqr{expX} that involve deviations.  We note that the terms with overlap at least $3$, in the sum over $f,f',u,u',\vect$ for example, are at most 
\[
C_1 b^{1/2}n^{v+v'-13/2}\, =\, O(n^{v+v'-6})
\] 
on $E_{v+v'}(b)$.  We may also safely ignore the term
\[
-\frac{1}{(N-i+1)^2}\sum_{f\in E(F)} \big(D_{F\setminus f}(G_{i-1})-D_{F}(G_{i-1})\big)\, \sum_{f'\in E(F')} \big(D_{F'\setminus f'}(G_{i-1})-D_{F'}(G_{i-1})\big)
\]
which has absolute value at most
\[
C_1^2 b n^{v+v'-7}\, =\, O(n^{v+v'-6})
\]
on $E_{v+v'}(b)$.  The remaining terms are
\begin{align*}
&\frac{1}{N-i+1} \sum_{f,f',\vect}\big(D_{\Gamma^{o}(F,F':f,f',\vect)}(G_{i-1})\, -\, D_{\Gamma(F,F':f,f',\vect)}(G_{i-1})\big)\\ 
& - \big(L_{F}(i)-L_F(i-1)\big)\,\frac{1}{N-i+1}\sum_{f'\in E(F')} \big(D_{F'\setminus f'}(G_{i-1})-D_{F'}(G_{i-1})\big)\\
& -\, \big(L_{F'}(i)-L_{F'}(i-1)\big) \frac{1}{N-i+1}\sum_{f\in E(F)} \big(D_{F\setminus f}(G_{i-1})-D_{F}(G_{i-1})\big)
\end{align*}
By expanding each $D_H(G_{i-1})$ using~\eqr{Lambdais} we will find an expression for the remaining terms of~\eqr{expX} as a combination
\[
\beta_1 D_{\owedge}(G_{i-1})\, +\, \beta_2 \big(D_{\triangle}(G_{i-1})-3s D_{\owedge}(G_{i-1})\big)\, +\, O(bn^{v+v'-6})
\]
on $E_{v+v'}(b)$.

Let us first calculate $\beta_1$.  Using that 
\[
L_{F}(i)-L_{F}(i-1)\, =\, \frac{e(n)_{v}(i-1)_{e-1}}{(N)_{e}}\, =\, 2en^{v-2}s^{e-1}\, +\, O(n^{v-3})
\]
and expanding each $D_H(G_{i-1})$ using~\eqr{Lambdais}, we find that 
\begin{align}\label{eq:betais}
(N-i+1)\beta_1\, &=\, n^{v+v'-5}s^{e+e'-4}\sum_{f,f',\vect} \left(\obinom{\Gamma^{o}(F,F':f,f',\vect)}{\owedge}\, -\, s\obinom{\Gamma(F,F':f,f',\vect)}{\owedge}\right)\nonumber \\
& \, -\, 2en^{v+v'-5}s^{e+e'-4}\sum_{f'\in E(F')} \left(\obinom{F'\setminus f'}{\owedge}\, -\, s\obinom{F'}{\owedge}\right)\nonumber \\
& \, -\, 2e'n^{v+v'-5}s^{e+e'-4}\sum_{f\in E(F)} \left(\obinom{F\setminus f}{\owedge}\, -\, s\obinom{F}{\owedge}\right)\, .
\end{align}
We may count the contribution of the first sum as follows, each $P_2$ in $F$ is counted $2(e-2)e'$ times by the first term and $2ee'$ times by the second, while $P_2$s in $F'$ are counted $2e(e'-2)$ and $2ee'$ times respectively.  The other way to find a $P_2$ in these graphs is crossing between $F$ and $F'$; a little thought shows that there are
\[
4\obinom{F}{\owedge}\obinom{F'}{\owedge}
\]
such contributions to each of the two terms.  Thus the result of the first sum is
\[
2ee'\left(\obinom{F}{\owedge}+\obinom{F'}{\owedge}\right)(1-s)\, +\, 4\obinom{F}{\owedge}\obinom{F'}{\owedge}(1-s)\, -\,2e'\obinom{F}{\owedge}\, -\,2e\obinom{F'}{\owedge}\, .
\]
The equivalent results for the second and third terms are
\[
e'\obinom{F'}{\owedge}(1-s)\, -\, 2\obinom{F'}{\owedge}
\]
and
\[
e\obinom{F}{\owedge}(1-s)\, -\, 2\obinom{F}{\owedge}
\]
respectively.  Substituting these values in~\eqr{betais} we obtain
\[
(N-i+1)\beta_1\, =\, 4n^{v+v'-5}\obinom{F}{\owedge}\obinom{F'}{\owedge} s^{e+e'-4} (1-s)\, ,
\]
and so
\[
\beta_1\, =\, 8n^{v+v'-7} s^{e+e'-4}  \obinom{F}{\owedge}\obinom{F'}{\owedge}\, +\, O(n^{v+v'-8})\, .
\]
This is consistent with our aim to prove that the contribution of terms involving deviations is given by $W_{F,F'}(G_{i-1})$ up to $O(bn^{v+v'-6})$.  All that remains is to prove that $\beta_2=0$.  That is, the terms which contribute a multiple of $D_{\triangle}(G_{i-1})-3s D_{\owedge}(G_{i-1})$ in the expansion cancel.  We have
\begin{align*}
\beta_2(N-i+1)\, &=\, n^{v+v'-5}s^{e+e'-5}\sum_{f,f',\vect} \left(\obinom{\Gamma^{o}(F,F':f,f',\vect)}{\triangle}\, -\, s\obinom{\Gamma(F,F':f,f',\vect)}{\triangle}\right)\nonumber \\
& \, -\, 2en^{v+v'-5}s^{e+e'-5}\sum_{f'\in E(F')} \left(\obinom{F'\setminus f'}{\triangle}\, -\, s\obinom{F'}{\triangle}\right)\nonumber \\
& \, -\, 2e'n^{v+v'-5}s^{e+e'-5}\sum_{f\in E(F)} \left(\obinom{F\setminus f}{\triangle}\, -\, s\obinom{F}{\triangle}\right)\, .
\end{align*}
The calculation is as above; however, since no triangles can cross between $F$ and $F'$, we obtain only terms that cancel.  The result is that $\beta_2=0$.  This confirms that the contribution of terms in~\eqr{expX} that involve deviations contribute
\[
W_{F,F'}(G_{i-1})\, +\, O(bn^{v+v'-6})
\]
on $E_{v+v'}(b)$.  Combining this with~\eqr{Lcontrib} we obtain 
\[
\big|\Ex{X_F(G_i)X_{F'}(G_i)\,\big|\,G_{i-1}}\, -\, \big(V_{F,F'}(i,n)+W_{F,F'}(G_{i-1})\big)\big|\, \le \, Cbn^{v+v'-6}
\]
for an appropriately chosen constant $C$ on $E_{v+v'}(b)$, an event with probability at least $1-\exp(-b)$, as required.
\end{proof}

\section{Probability of subgraph count deviations -- Theorem~\ref{thm:main}}\label{sec:main}

In this section we bring together the various threads and complete our proof of Theorem~\ref{thm:main}.  Armed with Theorem~\ref{thm:approxbetter} it will suffice to prove the analogous statement with $\Lambda^{*}_H(G_{n,t})$ in place of $D_H(G_{n,t})$.

\begin{prop}\label{prop:mainLambda} Let $t=t(n)\in (0,1)$ be a sequence bounded away from $1$, let $H$ be graph with $v$ vertices, $e$ edges, and $\binom{H}{\owedge}\ge 1$.    Then
\[
\pr{\Lambda^{*}_H(G_{n,t})\, >\, \alpha_n n^{v-3/2}}\, =\, \exp\big(-\gamma_H(t) \alpha_n^2 (1+o(1))\big)\, ,
\]
for every sequence $(\alpha_n:n\ge 1)$ with $t^{e-3/2}\ll \alpha_n\ll t^{e+2}n^{1/2}$.  Furthermore, the same holds for $\pr{\Lambda^{*}_H(G_{n,t})\, <\, -\alpha_n n^{v-3/2}}$.
\end{prop}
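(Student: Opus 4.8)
The plan is to apply Freedman's inequality, Lemma~\ref{lem:F}, and its converse, Lemma~\ref{lem:CF}, to the martingale $\Lambda^{*}_H(G_{n,t}) = \sum_{i=1}^{m}\X_H(G_i;t)$, where $m = \lfloor tN\rfloor$ and $s = i/N$. Both inequalities require control of the predictable bracket $V(m) = \sum_{i=1}^{m}\Ex{\X_H(G_i;t)^2\mid G_{i-1}}$ and of an increment bound $R$ with $\max_i|\X_H(G_i;t)|\le R$. For $R$ the crude estimate $|\X_H(G_i;t)|\le Cn^{v-2}$ (respectively $Cn^{v-2}t^{e-3}$ when $\binom{H}{\triangle}\ge 1$), which follows from $|X_{\owedge}|,|X_{\triangle}|\le n$, will suffice: writing $\alpha := \alpha_n n^{v-3/2}$, the hypotheses on $\alpha_n$ are exactly calibrated so that $R\alpha = o(\beta)$, where $\beta$ is the deterministic constant identified below.

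The quantitative core is to show that $V(m)$ is asymptotic to $\beta := \tfrac12 n^{2v-3}\gamma_H(t)^{-1}$, whence $\alpha^2/(2\beta) = \gamma_H(t)\alpha_n^2$. Writing $\X_H(G_i;t) = A_i X_{\owedge}(G_i) + B_i\big(X_{\triangle}(G_i)-3sX_{\owedge}(G_i)\big)$ with $A_i = n^{v-3}t^{e-2}\binom{H}{\owedge}(1-t)^2(1-s)^{-2}$ and $B_i = n^{v-3}t^{e-3}\binom{H}{\triangle}(1-t)^3(1-s)^{-3}$, Proposition~\ref{prop:var} together with the orthogonality $V_{\owedge,\triangle}(i,n)-3sV_{\owedge,\owedge}(i,n) = 0$ gives, on a likely event, $\Ex{\X_H(G_i;t)^2\mid G_{i-1}} = A_i^2\cdot 8ns(1-s) + B_i^2\cdot 36ns^2(1-s)^2 + (\text{small error})$, since the cross term vanishes and $\Ex{(X_{\triangle}-3sX_{\owedge})^2\mid G_{i-1}}\approx V_{\triangle,\triangle}-6sV_{\owedge,\triangle}+9s^2V_{\owedge,\owedge} = 36ns^2(1-s)^2$. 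Viewing the sum over $i$ as a Riemann sum, $\sum_i f(i/N)\approx N\int_0^t f(s)\,ds$, and using $\int_0^t\frac{s}{(1-s)^3}\,ds = \frac{t^2}{2(1-t)^2}$ and $\int_0^t\frac{s^2}{(1-s)^4}\,ds = \frac{t^3}{3(1-t)^3}$, the two contributions assemble into $n^{2v-5}N\big(4\binom{H}{\owedge}^2 t^{2e-2}(1-t)^2 + 12\binom{H}{\triangle}^2 t^{2e-3}(1-t)^3\big) = n^{2v-5}N\,\gamma_H(t)^{-1}\sim\beta$; the hypothesis $\binom{H}{\owedge}\ge 1$ guarantees $\gamma_H(t)\in(0,\infty)$ and that $\beta$ has the claimed order $n^{2v-3}$.

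I would then prove concentration of $V(m)$: for each fixed $\epsilon>0$, $\pr{|V(m)-\beta|>\epsilon\beta}$ should be negligible next to $\exp(-\gamma_H(t)\alpha_n^2)$. Proposition~\ref{prop:varbetter} separates the per-term error into the contribution $W_{F,F'}(G_{i-1})$, proportional to $D_{\owedge}(G_{i-1})$, and a genuinely smaller remainder of size $O(bn^{2v-6})$ with failure probability $\exp(-b)$; taking $b = \epsilon' n$ and a union bound over the $O(n^2)$ indices bounds the accumulated remainder by $O(\epsilon' n^{2v-3})$ outside probability $\exp(-\Omega(n))$. The $W$-contribution is handled with Theorem~\ref{thm:upto}: bounding $|D_{\owedge}(G_{i-1})|\le g(n)^{-1}n^{2}$ for a slowly growing $g(n)\to\infty$ forces $\sum_i W_{F,F'}(G_{i-1}) = o(\beta)$ outside probability $n^2\exp(-\Omega(n/g(n)^2))$. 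Both error probabilities are $\exp(-\omega(\alpha_n^2))$ because the hypotheses force $\alpha_n^2 = o(n)$; exploiting this gap between $\alpha_n^2$ and $n$, and matching up the various $t$-powers in the sparse regime of case (ii), is the step I expect to be most delicate. Given the concentration, Lemma~\ref{lem:F} yields $\pr{\Lambda^{*}_H(G_{n,t})>\alpha}\le \pr{\Lambda^{*}_H(G_m)>\alpha,\,V(m)\le\beta(1+\epsilon)} + \pr{V(m)>\beta(1+\epsilon)}\le\exp(-\gamma_H(t)\alpha_n^2(1+o(1)))$, letting $\epsilon\to 0$.

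For the matching lower bound I would invoke Lemma~\ref{lem:CF} with threshold $\alpha(1+\epsilon)$ and $\beta_- := \beta(1-\epsilon)$, choosing $\delta = \delta(n)\to 0$ slowly enough that the hypotheses $\beta_-/\alpha\ge 9R\delta^{-2}$ and $\alpha^2/\beta_-\ge 16\delta^{-2}\log(64\delta^{-2})$ hold (again using $\alpha_n\ll n^{1/2}$ and its sparse analogue). On $\{V(m)\ge\beta_-\}$, since $V$ is increasing this gives $\pr{\max_{m'\le m}\Lambda^{*}_H(G_{m'})>\alpha(1+\epsilon)}\ge\tfrac12\exp\!\big(-\gamma_H(t)\alpha_n^2(1+\epsilon)^2(1+o(1))\big) - \exp(-\Omega(n))$. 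To pass from the running maximum to the value at time $m$, condition on the first time $\tau$ at which $\Lambda^{*}_H$ exceeds $\alpha(1+\epsilon)$ and apply Freedman's inequality to the martingale $\big(\Lambda^{*}_H(G_{m'\vee\tau})-\Lambda^{*}_H(G_\tau)\big)_{m'}$: a subsequent drop exceeding $\epsilon\alpha$ has conditional probability at most $\exp(-\epsilon^2\gamma_H(t)\alpha_n^2) = o(1)$ since $\alpha_n\to\infty$, so $\pr{\Lambda^{*}_H(G_m)>\alpha}\ge(1-o(1))\pr{\max_{m'\le m}\Lambda^{*}_H(G_{m'})>\alpha(1+\epsilon)}$. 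Sending $\epsilon,\delta\to 0$ gives $\pr{\Lambda^{*}_H(G_{n,t})>\alpha_n n^{v-3/2}} = \exp(-\gamma_H(t)\alpha_n^2(1+o(1)))$; the lower-tail statement follows identically, replacing $\Lambda^{*}_H$ by $-\Lambda^{*}_H$.
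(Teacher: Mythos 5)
Your plan matches the paper's proof of this proposition in all essentials: both use Freedman's inequality and its converse on the martingale $\Lambda^*_H(G_{n,t})=\sum_i \X_H(G_i;t)$, both identify $V(m)\sim \tfrac12 n^{2v-3}\gamma_H(t)^{-1}$ via the orthogonality $V_{\owedge,\triangle}-3sV_{\owedge,\owedge}=0$, the covariance formulas from Proposition~\ref{prop:var}/\ref{prop:varbetter}, and a Riemann-sum evaluation of $\int_0^t s(1-s)^{-3}\,ds$ and $\int_0^t s^2(1-s)^{-4}\,ds$, and both control the predictable bracket by concentrating each conditional covariance and then union-bounding. The only (cosmetic) difference is that the paper first abstracts the argument into Proposition~\ref{prop:maingen} for general coefficient sequences $\kappa,\rho$ before specialising, and it invokes Proposition~\ref{prop:var} rather than peeling apart $W_{F,F'}$ by hand via Proposition~\ref{prop:varbetter} plus Theorem~\ref{thm:upto}, which is internally equivalent.
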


Here the expression $\gamma_H(t)$ is as defined in the introduction, namely:
\[
\gamma_H(t)\, :=\, \left(4\obinom{H}{\owedge}^2 t^{2e-2}(1-t)^2\, +\, 12\obinom{H}{\triangle}^2 t^{2e-3}(1-t)^3\right)^{-1}\, .
\]
Let us observe that indeed Theorem~\ref{thm:main} follows from Proposition~\ref{prop:mainLambda} and Theorem~\ref{thm:approxbetter}.

\begin{proof}[Proof of Theorem~\ref{thm:main}]
Let us fix $t=t(n)\in (0,1)$ and a graph $H$ with $v$ vertices, $e$ edges, and $\binom{H}{\owedge}\ge 1$.  Let the sequence $(\alpha_n:n\ge 1)$ with 
\[
\max\{t^{1/2}n^{-1/2}\log{n}, t^{e-3/2}\}\, \ll\, \alpha_n\, \ll\, \min\{t^{2e-5/2}n^{1/2},t^{e+2}n^{1/2}\}
\] 
be given.  Finally, let us also fix $\eps>0$.  We may suppose $\eps<1/10$.

We first show the upper bound on $\pr{D_H(G_{n,t})\, >\, \alpha_n n^{v-3/2}}$.
We begin by observing that
\begin{align*}
\pr{D_H(G_{n,t})\, >\, \alpha_n n^{v-3/2}}\, \le &\, \,  \pr{\Lambda^{*}_H(G_{n,t})\, >\, (1-\eps)\alpha_n n^{v-3/2}}\\ & \quad  +\, \pr{\big|D_H(G_{n,t})-\Lambda^{*}_H(G_{n,t})\big|>\eps \alpha_n n^{v-3/2}}\, .\phantom{\bigg)}
\end{align*}
Now, by Proposition~\ref{prop:mainLambda}, we have
\begin{align*}
\pr{\Lambda^{*}_H(G_{n,t})\, >\, (1-\eps)\alpha_n n^{v-3/2}}\, & \le\, \exp\big(-\gamma_H(t) \alpha_n^2 (1-\eps+o(1))^2\big)\\
& \le\, \exp\big(-\gamma_H(t) \alpha_n^2 (1-3\eps)\big)
\end{align*}
for all sufficiently large $n$.  On the other hand, we shall apply Theorem~\ref{thm:approxbetter} (as stated in Theorem~\ref{thm:approx}) with $b=\eps \alpha_n C^{-1}t^{-1/2}n^{1/2}$ to bound
\[
\pr{\big|D_H(G_{n,t})-\Lambda^{*}_H(G_{n,t})\big|>\eps \alpha_n n^{v-3/2}}\, .
\]
It is easily checked that the conditions on $\alpha_n$ ensure that $3\log{n}\le b\le t^{1/2} n$, for all $n$ sufficiently large.  By Theorem~\ref{thm:approxbetter} we have
\[
\pr{\big|D_H(G_{n,t})-\Lambda^{*}_H(G_{n,t})\big|>\eps \alpha_n n^{v-3/2}}\, \le\, \exp(-c\eps t^{-1/2} \alpha_n n^{1/2})
\]
for some constant $c>0$.  Since $\alpha_n\ll t^{2e-5/2}n^{1/2}$, we have that
\[
c\eps t^{-1/2} \alpha_n n^{1/2}\gg \gamma_H(t) \alpha_n^2 (1-3\eps)
\]
and so, for all sufficiently large $n$,
\[
\pr{D_H(G_{n,t})\, >\, \alpha_n n^{v-3/2}}\, \le\, (1+\eps)\exp\big(-\gamma_H(t) \alpha_n^2 (1-3\eps)\big)\, .
\]
Since $\eps$ is arbitrary, and $\gamma_H(t)\alpha_n^2\gg 1$, we have
\[
\pr{D_H(G_{n,t})\, >\, \alpha_n n^{v-3/2}}\, \le\, \exp\big(-\gamma_H(t) \alpha_n^2 (1-o(1))\big)\, .
\]
The proof of the lower bound follows immediately by the same argument, and the fact that
\begin{align*}
\pr{D_H(G_{n,t})\, >\, \alpha_n n^{v-3/2}}\, & \ge\, \pr{\Lambda^{*}_H(G_{n,t})\, >\, (1+\eps)\alpha_n n^{v-3/2}}\\ &\qquad -\, \pr{\big|D_H(G_{n,t})-\Lambda^{*}_H(G_{n,t})\big|>\eps \alpha_n n^{v-3/2}}\, .\phantom{\bigg)}
\end{align*}
This completes the proof.
\end{proof}

Our remaining task is to prove Proposition~\ref{prop:mainLambda}.  Let us recall that $\Lambda^{*}_H(G_{n,t})$ is the martingale expression
\[
\Lambda^{*}_H(G_{n,t})\, = \, \sum_{i=1}^{\lfloor tN\rfloor }\left(\kappa_{H,n}^{t}(i) X_{\owedge}(G_i)\, +\, \rho^{t}_{H,n}(i)\, \big(X_{\triangle}(G_i)-3sX_{\owedge}(G_i)\big) \right)\, ,
\]
where
\[
\kappa^{t}_{H,n}(i)\, :=\, n^{v-3}t^{e-2}\obinom{H}{\owedge}\frac{(1-t)^2}{(1-s)^2}
\]
and
\[
\rho^{t}_{H,n}(i)\, :=\, n^{v-3}t^{e-3}\obinom{H}{\triangle} \frac{(1-t)^3}{(1-s)^{3}}\, .
\]

We prove a general statement on the probability of deviations of martingales of this general form.  

\begin{prop}\label{prop:maingen} Let $t=t(n)\in (0,1)$ be a sequence bounded away from $1$, let ${\bf \kappa}=(\kappa^{t}_{n})_{n\ge 1}$ and ${\bf \rho}=(\rho^{t}_{n})_{n\ge 1}$ be two sequences of functions such that $\kappa^{t}_{n}:\{1,\dots ,\lfloor tN\rfloor \}\to [-C,C]$ for some constant $C\in \mathbb{R}$ and $\rho^{t}_{n}:\{1,\dots ,\lfloor tN\rfloor \}\to [-Ct^{-1},Ct^{-1}]$, and suppose there exists $\eta>0$ such that
\eq{average}
\sum_{i=1}^{\lfloor tN\rfloor}|\kappa^{t}_{n}(i)|+t|\rho^{t}_{n}(i)|\, \ge \, \eta t N
\eqe
for all sufficiently large $n$.
Then 
\[
S_n^t\,:=\, \sum_{i=1}^{\lfloor tN\rfloor }\left(\kappa_{n}^{t}(i) X_{\owedge}(G_i)\, +\, \rho^{t}_{n}(i)\, \big(X_{\triangle}(G_i)-3sX_{\owedge}(G_i)\big) \right)
\]
satisfies
\[
\pr{S_n^t\, >\, \alpha_n n^{3/2}}\, =\, \exp\left(\frac{-\alpha_n^2 (1+o(1))}{2\tau_{{\bf \kappa},{\bf \rho}}}\right)\, ,
\]
for every sequence $(\alpha_n:n\ge 1)$ with $t^{1/2} \ll \alpha_n\ll t^4 n^{1/2}$, where
\[
\tau_{{\bf \kappa},{\bf \rho}}\, :=\, n^{-2} \sum_{i=1}^{\lfloor tN\rfloor}\Big(8s(1-s)\kappa^t_n(i)^2\, +\, 36s^2(1-s)^2\rho_n^t(i)^2\Big) \, .
\]
Furthermore the same holds for $\pr{S_n^t\, <\, -\alpha_n n^{v-3/2}}$.
\end{prop}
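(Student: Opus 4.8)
The plan is to apply Freedman's inequality (Lemma~\ref{lem:F}) for the upper bound and its converse (Lemma~\ref{lem:CF}) for the lower bound to the martingale $S^t_n=\sum_{i=1}^{\lfloor tN\rfloor}Z_i$ with increments
\[
Z_i\, :=\, \kappa^t_n(i)\,X_{\owedge}(G_i)\, +\, \rho^t_n(i)\,\big(X_{\triangle}(G_i)-3sX_{\owedge}(G_i)\big)\, ,
\]
writing $\tau:=\tau_{{\bf \kappa},{\bf \rho}}$ for the quantity in the statement. The first and most delicate task is to show that the discrete quadratic variation $V(\lfloor tN\rfloor)=\sum_i\Ex{Z_i^2\mid G_{i-1}}$ is, with very high probability, within $o(n^3\tau)$ of $n^3\tau$. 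Expanding $Z_i^2$ and invoking Proposition~\ref{prop:var} for the pairs $(\owedge,\owedge)$, $(\owedge,\triangle)$ and $(\triangle,\triangle)$, the key structural fact is that the identity $V_{\owedge,\triangle}(i,n)=3s\,V_{\owedge,\owedge}(i,n)$ kills the cross term (this is the reason the increments were written using $X_{\triangle}-3sX_{\owedge}$ rather than $X_{\triangle}$), while $V_{\triangle,\triangle}(i,n)-6sV_{\owedge,\triangle}(i,n)+9s^2V_{\owedge,\owedge}(i,n)=36ns^2(1-s)^2$; hence on the event of Proposition~\ref{prop:var} (applied with a parameter $b$ and union-bounded over $i$, at a cost of $3\lfloor tN\rfloor e^{-b}$) one has, term by term,
\[
\Ex{Z_i^2\mid G_{i-1}}\, =\, 8ns(1-s)\,\kappa^t_n(i)^2\, +\, 36ns^2(1-s)^2\,\rho^t_n(i)^2\, +\, O\!\big(t^{-2}b^{1/2}n^{1/2}\big)\, ,
\]
so that $V(\lfloor tN\rfloor)=n^3\tau+O(t^{-1}b^{1/2}n^{5/2})$. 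I would also record a lower bound $\tau\ge c_1t^2$: since $\kappa^t_n$ and $t\rho^t_n$ are bounded by a constant, hypothesis~\eqr{average} forces a positive fraction of the $\ell^1$-mass to lie at indices where $s$ exceeds a fixed multiple of $t$, where $s(1-s)\gtrsim t$; Cauchy--Schwarz then converts $\sum_i|\kappa^t_n(i)|$ (respectively $\sum_i|\rho^t_n(i)|$) into the required lower bound for $\sum_i\kappa^t_n(i)^2$ (respectively $\sum_i\rho^t_n(i)^2$).

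For the upper bound, take $\alpha=\alpha_nn^{3/2}$, $\beta=(1+\varepsilon)n^3\tau$, and the crude almost-sure bound $\max_i|Z_i|\le R:=O(n/t)$, which follows from $|X_{\owedge}|,|X_{\triangle}|=O(n)$ and $|\rho^t_n|\le Ct^{-1}$. The conditions $\alpha_n\gg t^{1/2}$ and $\alpha_n\ll t^4n^{1/2}$, together with $\tau\ge c_1t^2$, give $\beta\ll\alpha^2$ and $R\alpha\ll\beta$, so Freedman's inequality yields
\[
\pr{S_m\ge\alpha\text{ and }V(m)\le\beta\text{ for some }m}\, \le\, \exp\!\left(\frac{-\alpha^2}{2(\beta+R\alpha)}\right)\, =\, \exp\!\left(\frac{-\alpha_n^2(1-o(1))}{2\tau}\right)\, .
\]
It remains to choose $b$ so that $V(\lfloor tN\rfloor)\le\beta$ off an event of probability negligible next to the main term; one can take any $b$ in the window from $3\log n+2\alpha_n^2/\tau$ to $\varepsilon^2 t^2n\tau^2$, which is nonempty precisely because $\alpha_n\ll t^4n^{1/2}$ and because nonemptiness of the range $(t^{1/2},t^4n^{1/2})$ already forces $t\gg n^{-1/7}$, hence $t^6n\gg\log n$. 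Letting $\varepsilon\to0$ gives $\pr{S^t_n>\alpha_nn^{3/2}}\le\exp(-\alpha_n^2(1-o(1))/(2\tau))$.

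For the lower bound I would apply Lemma~\ref{lem:CF} to $S^t_n$ with $\alpha'=(1+\varepsilon)\alpha$, $\beta'=(1+\varepsilon)n^3\tau$ and a sequence $\delta=\delta_n\to0$ satisfying $\beta'/\alpha'\ge 9R\delta^{-2}$ and $(\alpha')^2/\beta'\ge 16\delta^{-2}\log(64\delta^{-2})$; both inequalities are solvable with $\delta_n\to0$ thanks to the same range restrictions on $\alpha_n$. This gives $\pr{\max_{m\le\lfloor tN\rfloor}S_m>\alpha'}\ge\tfrac12\exp\big(-(1+o(1))\alpha_n^2/(2\tau)\big)$, and it remains to pass from the running maximum to the endpoint: conditioning on $\F_{m^{*}}$, where $m^{*}$ is the first time $S_m>\alpha'$, and applying Freedman's inequality to the reversed-sign martingale $\big(S_{m^{*}}-S_{m^{*}+k}\big)_{k\ge0}$, whose quadratic variation is at most $V(\lfloor tN\rfloor)\le\beta'$ on the good event, shows that $\pr{S_{m^{*}}-S_{\lfloor tN\rfloor}\ge\varepsilon\alpha\mid\F_{m^{*}}}\to0$ (here one uses $\varepsilon^2\alpha_n^2/\tau\to\infty$). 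Hence conditionally on reaching $\alpha'$ one still has $S_{\lfloor tN\rfloor}>\alpha$ with probability at least $\tfrac12$, so $\pr{S^t_n>\alpha_nn^{3/2}}\ge\tfrac14\exp\big(-(1+o(1))\alpha_n^2/(2\tau)\big)$, the factor $\tfrac14$ being absorbed since $\alpha_n^2/\tau\to\infty$; letting $\varepsilon\to0$ completes the lower bound. The statement for $\pr{S^t_n<-\alpha_nn^{3/2}}$ follows by replacing $S^t_n$ with $-S^t_n$.

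The hard part is the control of the quadratic variation in the first step: one must at once exploit the orthogonality $V_{\owedge,\triangle}=3sV_{\owedge,\owedge}$ so that the off-diagonal term contributes only to the error, extract the lower bound $\tau\gtrsim t^2$ from~\eqr{average}, and choose the Proposition~\ref{prop:var} parameter $b$ large enough that the $e^{-b}$ failure probability is negligible yet small enough that the accumulated variance error stays below $\varepsilon n^3\tau$ --- it is exactly this last balancing that pins down the upper endpoint $\alpha_n\ll t^4n^{1/2}$.
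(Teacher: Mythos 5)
Your proposal follows the paper's proof very closely: same martingale, same use of Proposition~\ref{prop:var} with the orthogonality $V_{\owedge,\triangle}=3s\,V_{\owedge,\owedge}$ to get $\Ex{Z_i^2\mid G_{i-1}}=8ns(1-s)\kappa^t_n(i)^2+36ns^2(1-s)^2\rho^t_n(i)^2+\text{error}$, same lower bound $\tau\gtrsim t^2$ from~\eqr{average}, the same Freedman upper bound with $\beta$ slightly above $n^3\tau$ and $R=O(n/t)$, and the same converse-Freedman plus ``stay above'' argument for the lower bound. The bookkeeping (the window for the Proposition~\ref{prop:var} parameter $b$, and the reason it is nonempty) is also essentially the same.

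There is, however, a genuine gap in the lower bound. Lemma~\ref{lem:CF} bounds $\pr{T_{\alpha'}\le\beta'}$ from below, where $T_{\alpha'}$ is the quadratic variation at the \emph{first} time the martingale exceeds $\alpha'$, and defaults to the terminal variation $V(\lfloor tN\rfloor)$ if no such time exists. With your choice $\beta'=(1+\varepsilon)n^3\tau$, which lies \emph{above} the typical value of $V(\lfloor tN\rfloor)$, the event $\{T_{\alpha'}\le\beta'\}$ holds with probability close to one whether or not the martingale ever crosses $\alpha'$, so the implication ``$T_{\alpha'}\le\beta'\Rightarrow\max_m S_m>\alpha'$'' fails, and the asserted lower bound on $\pr{\max_{m\le\lfloor tN\rfloor}S_m>\alpha'}$ does not follow. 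The fix is to choose $\beta'$ strictly \emph{below} the typical quadratic variation, e.g.\ $\beta'=n^3(\tau-\varepsilon t^2)$ as the paper does: then, on the complement of the crossing event, $T_{\alpha'}\le\beta'$ forces $V(\lfloor tN\rfloor)\le n^3\tau-\varepsilon t^2n^3$, a rare downward deviation of the variance, and subtracting the (negligible) probability of that deviation from the converse-Freedman lower bound yields the desired lower bound on the crossing probability. The rest of your lower-bound reasoning --- conditioning on the first passage time and applying Freedman to the post-crossing increments to keep the endpoint above $\alpha$ --- is fine once this is repaired.
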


Let us observe that indeed Proposition~\ref{prop:mainLambda} follows from Proposition~\ref{prop:maingen}.

\begin{proof}[Proof of Proposition~\ref{prop:mainLambda}]
Let ${\bf \kappa}$ be the sequence of functions
\[
\kappa^t_n(i)\, =\, n^{3-v}t^{2-e}\kappa^{t}_{H,n}(i)\, =\, \obinom{H}{\owedge}\frac{(1-t)^2}{(1-s)^2}
\]
and let ${\bf \rho}$ be
\[
\rho^t_n(i)\, =\, n^{3-v}t^{2-e} \rho^{t}_{H,n}(i)\, =\, t^{-1}\obinom{H}{\triangle} \frac{(1-t)^3}{(1-s)^{3}}\, .
\]
It is easily verified that the average $\frac{1}{\lfloor tN \rfloor} \sum_{i=1}^{\lfloor tN \rfloor} \kappa^{t}_{n}(i)$ is bounded away from $0$.  

Since $n^{v-3}t^{e-2} S^t_n=\Lambda^*_H(G_{n,t})$, we have
\[
\pr{\Lambda^{*}_H(G_{n,t})\, >\, \alpha_n n^{v-3/2}}\, =\, \pr{S_n^t\, >\, \alpha_n t^{2-e}n^{3/2}}\, .
\]
In order to apply Proposition~\ref{prop:maingen} we must verify that $t^{1/2} \ll \alpha_n t^{2-e} \ll t^4 n^{1/2}$.  This follows immediately from the condition that $t^{e-3/2}\ll \alpha_n\ll t^{e+2} n^{1/2}$.  And so, by an application of Proposition~\ref{prop:maingen}, we have
\[
\pr{\Lambda^{*}_H(G_{n,t})\, >\, \alpha_n n^{v-3/2}} \, =\, \exp\left(\frac{-\alpha_n^2 t^{4-2e} (1+o(1))}{2\tau_{{\bf \kappa},{\bf \rho}}}\right)\, ,
\]
where
\[
\tau_{{\bf \kappa},{\bf \rho}}\, :=\, n^{-2} \sum_{i=1}^{\lfloor tN\rfloor}\Big(8s(1-s)\kappa^t_n(i)^2\, +\, 36s^2(1-s)^2\rho_n^t(i)^2\Big) \, .
\]
All that remains is to prove that
\eq{rtbp}
\gamma_H(t)\, =\, \frac{1+o(1)}{2t^{2e-4}\tau_{{\bf \kappa},{\bf \rho}}}\, .
\eqe
Substituting the values of $\kappa^t_n(i)$ and $\rho^t_n(i)$ into the definition of $\tau_{{\bf \kappa},{\bf \rho}}$ we obtain
\begin{align*}
t^{2e-4}\tau_{{\bf \kappa},{\bf \rho}}\,  =\, & t^{2e-4} n^{-2} \sum_{i=1}^{\lfloor tN\rfloor}8s(1-s)\left(\obinom{H}{\owedge}\frac{(1-t)^2}{(1-s)^2}\right)^2\\ & \, +\, t^{2e-4} n^{-2}\sum_{i=1}^{\lfloor tN\rfloor} 36s^2(1-s)^2\left(t^{-1}\obinom{H}{\triangle} \frac{(1-t)^3}{(1-s)^{3}}\right)^2\, .
\end{align*}
The contribution of the first term is
\begin{align*}
& \frac{(4+o(1))}{N}t^{2e-4}(1-t)^4\obinom{H}{\owedge}^2\sum_{i=1}^{\lfloor tN\rfloor}\frac{s}{(1-s)^3} \phantom{\Bigg)}\\
 =\, & (4+o(1))t^{2e-4}(1-t)^4\obinom{H}{\owedge}^2 \int_{0}^{t}\frac{s}{(1-s)^3}\, ds \phantom{\Bigg)}\\
 =\, & (2+o(1))t^{2e-2}(1-t)^2\obinom{H}{\owedge}^2\, , \phantom{\Bigg)}
 \end{align*}
where we have used that the integral has value $t^2/2(1-t)^2$ (as may be seen using the substitution $u=1-s$, for example).
The contribution of the second term is
\begin{align*}
& \frac{(18+o(1))}{N}t^{2e-6}(1-t)^6 \obinom{H}{\triangle}^2 \sum_{i=1}^{\lfloor tN\rfloor} \frac{s^2}{(1-s)^4} \phantom{\Bigg)} \\
=\, & (18+o(1))t^{2e-6}(1-t)^6 \obinom{H}{\triangle}^2 \int_{0}^{t}\frac{s^2}{(1-s)^4} \, ds \phantom{\Bigg)}\\
=\, & (6+o(1))t^{2e-3}(1-t)^3\obinom{H}{\triangle}^2 \, , \phantom{\Bigg)}
\end{align*}
where we have used that the integral has value $t^3/3(1-t)^3$.
Summing these two contributions, we have
\[
t^{2e-4}\tau_{{\bf \kappa},{\bf \rho}}\, =\, (2+o(1))t^{2e-2}(1-t)^2\obinom{H}{\owedge}^2\, +\, (6+o(1))t^{2e-3}(1-t)^3\obinom{H}{\triangle}^2\, .
\]
By observation,~\eqr{rtbp} holds, and so the proof is complete.
\end{proof}

Our final task is to prove Proposition~\ref{prop:maingen}.  This proof will use the inequalities of Freedman stated in Section~\ref{sec:ineqs}.

\begin{proof}[Proof of Proposition~\ref{prop:maingen}]
Let $t\in (0,1)$ and the sequences ${\bf \kappa}=(\kappa^{t}_{n})_{n\ge 1}$ and ${\bf \rho}=(\rho^{t}_{n})_{n\ge 1}$ be fixed.  We may assume that $\kappa^{t}_{n}:\{1,\dots ,\lfloor tN\rfloor \}\to [-C,C]$ and $\rho^{t}_{n}:\{1,\dots ,\lfloor tN\rfloor \}\to [-Ct^{-1},Ct^{-1}]$ are such that~\eqr{average} holds.  Let us also fix $\eps>0$.  It will be useful at times to note that
\eq{range}
\Omega(t^2)\, \le\, \tau_{{\bf \kappa},{\bf \rho}}\, \le\, O(t)
\eqe
which follows easily from the definition of $\tau_{{\bf \kappa},{\bf \rho}}$ and the conditions on $\kappa^{t}_{n}$ and $\rho^{t}_{n}$. 

We must prove an upper bound and a lower bound on the probability of a deviation of the final value $S_n^t$ of the martingale
\[
S_n^t\,:=\, \sum_{i=1}^{\lfloor tN\rfloor }\left(\kappa_{n}^{t}(i) X_{\owedge}(G_i)\, +\, \rho^{t}_{n}(i)\, \big(X_{\triangle}(G_i)-3sX_{\owedge}(G_i)\big) \right)\, .
\]
Fix the sequence $(\alpha_n:n\ge 1)$ with $t^{1/2} \ll \alpha_n\ll t^4 n^{1/2}$.

We first prove the upper bound on the probability
\[
\pr{S_n^t\, >\, \alpha_n n^{3/2}}\, ,
\]
by an application of Freedman's inequality, Lemma~\ref{lem:F}.  We have that $S_n^t$ is the final value of a martingale
\[
S_n^t\,:=\, \sum_{i=1}^{\lfloor tN\rfloor }X(i)\, ,
\]
with increments
\[
X(i)\,:=\, \kappa_{n}^{t}(i) X_{\owedge}(G_i)\, +\, \rho^{t}_{n}(i)\, \big(X_{\triangle}(G_i)-3sX_{\owedge}(G_i)\big)\, .
\]
In order to apply Freedman's inequality we need to assess the quantity
\[
V(\lfloor tN\rfloor):=\sum_{i=1}^{\lfloor tN\rfloor}\,\, \Ex{\,  X(i)^2\, \big|\, G_{i-1}}\, .
\]
Let $E_n^t(\eps)$ be the event that
\[
\big|V(\lfloor tN\rfloor)\, -\, n^3\tau_{{\bf \kappa},{\bf \rho}}\big|\, \le\, \eps t^2n^3\, .
\]

We bound the probability of $E_n^t(\eps)$ using the following claim.  The quantities $V_{F,F'}(i,n)$ are as defined by~\eqr{Vdef}.

\noindent\textbf{Claim:} Let $\delta\, =\, \eps/15C^2$.  If
\begin{align*}
 \big|\Ex{X_{\owedge}(G_i)^2\,\big|\,G_{i-1}}\, -\, V_{\owedge,\owedge}(i,n)\big|\,  &\le\, \delta tn \phantom{\Big|} \\
 \big|\Ex{X_{\owedge}(G_i)X_{\triangle}(G_i)\,\big|\,G_{i-1}}\, -\, V_{\owedge,\triangle}(i,n)\big|\,  &\le\, \delta t^{2}n \qquad \text{and} \phantom{\Big|}\\
  \big|\Ex{X_{\triangle}(G_i)X_{\triangle}(G_i)\,\big|\,G_{i-1}}\, -\, V_{\triangle,\triangle}(i,n)\big|\,  &\le\, \delta t^{3}n \phantom{\Big|} ,
\end{align*}
for all $1\le i\le \lfloor tN\rfloor$, then $E_n^t(\eps)$ occurs.

\noindent\textbf{Proof of Claim:} We may express $\Ex{\,  X(i)^2\, |\, G_{i-1}}$ as
\begin{align*}
\kappa_{n}^{t}(i)^2\,\, \Ex{X_{\owedge}(G_i)^2\,\big|\,G_{i-1}}\, & +\, 2\kappa_{n}^{t}(i)\rho^{t}_{n}(i)\, \Ex{X_{\owedge}(G_i)\big(X_{\triangle}(G_i)-3sX_{\owedge}(G_i)\big)\,\big|\,G_{i-1}}\\ 
& +\, \rho^{t}_{n}(i)^2\,\, \Ex{\big(X_{\triangle}(G_i)-3sX_{\owedge}(G_i)\big)^2\,\big|\,G_{i-1}}\, .
\end{align*}
By the assumption of the claim, it follows that 
\begin{align*}
\Ex{\,  X(i)^2\, \big|\, G_{i-1}}\, =\, & \kappa_{n}^{t}(i)^2\, V_{\owedge,\owedge}(i,n)\, \\
& +\, 2\kappa_{n}^{t}(i)\rho^{t}_{n}(i)\, \big(V_{\owedge,\triangle}(i,n)\, -\, 3s V_{\owedge,\owedge}(i,n)\big)\\
& +\, \rho^{t}_{n}(i)^2\, \big(V_{\triangle,\triangle}(i,n)\, -\, 6sV_{\owedge,\triangle}(i,n)\, +\, 9s^2 V_{\owedge,\owedge}(i,n)\big)\\
& \pm \, 30C^2 \delta tn\, .
\end{align*}
Substituting in the values
\begin{align*}
V_{\owedge,\owedge}(i,n)\, & =\, 8ns(1-s)\, ,\phantom{\Big)} \\
V_{\owedge,\triangle}(i,n)\, & =\, 24ns^2(1-s) \qquad \text{and}\phantom{\Big)}\\
V_{\triangle,\triangle}(i,n)\, & =\, 36ns^2 (1-s^2)\, ,\phantom{\Big)}
\end{align*}
we obtain that
\[
\Ex{\,  X(i)^2\, \big|\, G_{i-1}}\, = \, 8ns(1-s)\kappa^t_n(i)^2\, +\, 36ns^2(1-s)^2\rho_n^t(i)^2\, \pm\, 30C^2 \delta tn\, .
\]
Now summing over $i=1,\dots,\lfloor tN\rfloor$ we obtain
\[
\big|V(\lfloor tN\rfloor)\, -\, n^3\tau_{{\bf \kappa},{\bf \rho}}\big|\, \le\, 30C^2 \delta tn \lfloor tN\rfloor\, \le \, \eps t^2 n^3\,  ,
\]
as required.  This completes the proof of the claim.

Let $C_1\ge \max\{15,C\}$ be at least the maximum constant of Proposition~\ref{prop:var} for cases with $F,F'\in \{\owedge,\triangle\}$ and with $t=\sup_{n} t(n)$.  By the claim, the event $E_n^t(\eps)^c$ may only occur if one of the events in the condition of the claim fails to occur.  Using Proposition~\ref{prop:var} to bound the probability of such events, we obtain that, for all sufficiently large $n$,
\begin{align}\label{eq:Entec}
\pr{E_n^t(\eps)^c}\, & \le \, 3tN\exp\left(\frac{-\eps^2 t^6 n}{15^2 C^4 C_1^2}\right) \nonumber \\
& \le\, \exp\left(\frac{-\eps^2 t^6 n}{C_1^8}\right)\, .
\end{align}
We are now ready to apply Lemma~\ref{lem:F} and obtain our upper bound.  For an upper bound on $\|X(i)\|_{\infty}$ we simply use that 
\[
|X(i)|\,\le\, 4C|X_{\owedge}(i)|\, +\, C t^{-1} |X_{\triangle}(i)|\, \le\, 5C t^{-1} n\, .
\]
We observe that
\[
\pr{S_n^t\, >\, \alpha_n n^{3/2}}\, \le \, \pr{ \{S_n^t\, >\, \alpha_n n^{3/2} \} \cap E_n^t(\eps)}\, +\, \pr{E_n^t(\eps)^c}\, .
\]
We bound the first probability by applying Freedman's inequality with $\alpha=\alpha_n n^{3/2}$, $\beta=n^3(\tau_{\kappa, \rho}+\eps t^2)$ and $R=5C t^{-1} n$, and the second by~\eqref{eq:Entec}.  We obtain
\[
\pr{S_n^t\, >\, \alpha_n n^{3/2}}\, \le\, \exp\left(\frac{-\alpha_n^2 n^3}{2n^3(\tau_{{\bf \kappa},{\bf \rho}}+\eps t^2)\, +\, 10Ct^{-1}\alpha_n n^{5/2}}\right)\, +\, \exp\left(\frac{-\eps^2 t^6 n}{C_1^8}\right)\, .
\]
for all sufficiently large $n$.

By the upper bound condition of $\alpha_n$ and~\eqr{range} we have that $\alpha_n\ll t^4 n^{1/2}=O(t^3\tau_{{\bf \kappa},{\bf \rho}}^{1/2} n^{1/2})$.  It follows that the second exponential above is $o(1)$ times the first exponential, and the second term of the denominator in the first exponential is $o(1)$ times the first term.
And so
\[
\pr{S_n^t\, >\, \alpha_n n^{3/2}}\, \le\, \exp\left(\frac{-\alpha_n^2 (1-O(\eps))}{2\tau_{{\bf \kappa},{\bf \rho}}}\right)
\]
for all sufficiently large $n$.  Since $\eps>0$ is arbitrary this completes the proof of the upper bound
\[
\pr{S_n^t\, >\, \alpha_n n^{3/2}}\, \le\, \exp\left(\frac{-\alpha_n^2 (1+o(1))}{2\tau_{{\bf \kappa},{\bf \rho}}}\right)\, .
\]

We now prove the lower bound.  In principle the proof of the lower bound should be straightforward, essentially equivalent to the proof of the upper bound, except with Lemma~\ref{lem:CF} being used instead of Lemma~\ref{lem:F}.  One subtlety is that such a direct application of Lemma~\ref{lem:CF} would give a lower bound on the probability of a deviation occurring \emph{before} a certain time, rather than \emph{at} time $\lfloor tN\rfloor$.  In particular, it will allow us to obtain a lower bound on the probability of the event $F_n^t(\eps)$ that
\[
\exists \ell \le \lfloor tN\rfloor \quad \text{such that}\quad \sum_{i=1}^{\ell}X(i)\, >\, (1+\eps)\alpha_n n^{3/2}\, .
\]
By an application of Freedman's inequality to the part of the martingale that occurs after first crossing $(1+\eps)\alpha_n n^{3/2}$, one easily verifies that there is at least probability $1/2$ that the martingale remains above $\alpha_n n^{3/2}$, for all sufficiently large $n$.  And so,
\[
\pr{S_n^t\, >\, \alpha_n n^{3/2}}\, \ge \, \frac{1}{2}\, \pr{F_n^t(\eps)}
\]
for all sufficiently large $n$.  Thus, to complete the proof we need only prove that
\[
\pr{F_n^t(\eps)}\,  \ge\, \exp\left(\frac{-\alpha_n^2 (1+O(\eps))}{2\tau_{{\bf \kappa},{\bf \rho}}}\right)\, 
\]
for all sufficiently large $n$.

We recall that the statement of Lemma~\ref{lem:CF} provides a lower bound on the probability
\[
\pr{T_{\alpha}\le \beta}
\]
where $T_{\alpha}$ is defined by
\[
T_{\alpha}\, =\, \sum_{i=1}^{m_{\alpha}}\,\, \Ex{\,  |X(i)|^2\, \big|\, \F_{i-1}}\, .
\]
where $m_{\alpha}$ is the least $m$ such that the martingale exceeds $\alpha$.  If we take 
\[
\alpha\, =\, (1+\eps)\alpha_n n^{3/2}
\]
and $\beta=n^3(\tau_{{\bf \kappa},{\bf \rho}}-\eps t^2)$, then it is easily observed that event $T_{\alpha}\le \beta$ is contained in $E_n^t(\eps)\cup F_n^t(\eps)$.  So we have that
\[
\pr{F_n^t(\eps)}\,  \ge\, \pr{T_{\alpha}\le \beta}\, -\, \pr{E_n^t(\eps)^c}\, .
\]
Applying Lemma~\ref{lem:CF}, we obtain
\[
\pr{F_n^t(\eps)}\,  \ge\,  \frac{1}{2} \exp\left(\frac{-\alpha^2(1+4\delta)}{2\beta}\right)\, -\, \exp\left(\frac{-\alpha_n^2}{\eps\tau_{{\bf \kappa},{\bf \rho}}}\right)
\]
where $\delta>0$ is minimal such that $\beta/\alpha \ge 9R\delta^{-2}$ and $\alpha^2/\beta \ge 16\delta^{-2}\log(64\delta^{-2})$.  Substituting the values of $\alpha$ and $\beta$ we obtain
\[
\pr{F_n^t(\eps)}\,  \ge\, \frac{1}{3}\exp\left(\frac{-\alpha_n^2(1+O(\eps)+O(\delta))}{2\tau_{{\bf \kappa},{\bf \rho}}}\right)\, .
\]
From the definition of $\alpha_n$ it is easily verified that $\delta=o(1)$, and so
\[
\pr{F_n^t(\eps)}\,  \ge\, \exp\left(\frac{-\alpha_n^2(1+O(\eps))}{2\tau_{{\bf \kappa},{\bf \rho}}}\right)\, 
\]
for all sufficiently large $n$, as required.  This completes the proof of the main statement of the proposition.

The furthermore part of the statement follows immediately by multiplying the functions by $-1$ and applying the main part.
\end{proof}

\section{Moderate deviations of subgraph counts in $G(n,p)$}\label{sec:pproofs}

%The proofs will be based around the idea that the easiest way to achieve the specified deviations is to have the appropriate deviation in the number of edges in $G_p$.  By conditioning on the number of edges of $G_p$ we obtain the expression 
%\eq{identity}
%\pr{D_H(G_p)\, >\, \delta_n p^e(n)_v}\, =\, \sum_{m=0}^{N}b_N(m)\, \pr{N_H(G_m)\, >\, (1+\delta_n)p^e (n)_v}\, .
%\eqe
%For $m>pN$ the first term, $b_N(m)$, is increasing and the second is decreasing.  The proofs are concerned with identifying which terms make a significant contribution.

As discussed in the sketch proof in the introduction, the proofs of both Theorem~\ref{thm:smalldelta} and Theorem~\ref{thm:largerdelta} are based around the identity
\eq{identity}
\pr{D_H(G_p)\, >\, \delta_n p^e(n)_v}\, =\, \sum_{m=0}^{N}b_N(m)\, \pr{N_H(G_m)\, >\, (1+\delta_n)p^e (n)_v}\, ,
\eqe
and in particular in finding which terms make the largest contribution to the sum.  With this in mind we define
\[
m_*\, :=\, pN(1+\delta_n)^{1/e}\, 
\]
and note that this is approximately (up to a small additive constant) the value of $m$ at which no deviation is required in $G_m$ in order that $N_H(G_m)>(1+\delta_n) p^e(n)_v$.  Indeed, since 
\[
\frac{(m-e)^e}{N^e}\, <\, \frac{(m)_e}{(N)_e}\, \le\, \frac{m^e}{N^e}
\]
we have $L_{H}(m_*)\le (1+\delta_n) p^e(n)_v$ and $L_{H}(m_*+e)> (1+\delta_n) p^e(n)_v$.

It will be useful in the proof of Theorem~\ref{thm:smalldelta} to also consider a value $m_-$ slightly less than $m_*$ and a value $m_+$ slightly larger.  We define
\[
m_-\, :=\, \fl{m_*\, -\, \delta_n^{-1/2}n^{1/4}}
\]
and
\[
m_+\, :=\, \fl{m_*\, +\, \delta_n^{-1/2}n^{1/4}}\, .
\]
The intuition behind the definitions of $m_-$ and $m_+$ is simply that their difference from $m_*$ is between order $n^{1/2}$ (the amount one must change $m$ to have a significant effect on $L_H(m)$) and order $\delta_n^{-1}$ (the amount one can change $m$ before it has a significant effect on the tail bound for binomial deviations).  In other words, the probability that $G_p$ has at least $m_+$ edges is asymptotically equivalent to the probability it has at least $m_-$ edges, and yet the event $N_H(G_m)>(1+\delta_n) p^e(n)_v$ changes from being very unlikely to very likely as $m$ grows from $m_-$ to $m_+$.

In addition we define
\[
x(m)\, :=\, \frac{m\, -\, pN}{\sqrt{Npq}}\, ,
\]
and we set $x_*:=x(m_*)$, $x_-:=x(m_-)$ and $x_+:=x(m_+)$.

We split the proof of Theorem~\ref{thm:smalldelta} into two parts (Section~\ref{sec:LBS} and Section~\ref{sec:UBS}), corresponding to the lower bound and upper bound.  Theorem~\ref{thm:largerdelta} is proved in Section~\ref{sec:L}.

\subsection{Lower bound of Theorem~\ref{thm:smalldelta}}\label{sec:LBS}

Let the sequence $n^{-1} \ll \delta_n \ll n^{-1/2}$ be given.

Since the second term, $\pr{N_H(G_m)\, >\, (1+\delta_n)p^e (n)_v}$, in the expression~\eqr{identity} is increasing in $m$ it follows that
\begin{align*}
\pr{D_H(G_p)\, >\, \delta_n p^e(n)_v}\, &\ge\, \sum_{m=m_+}^{N}b_N(m)\, \pr{N_H(G_{m_+})\, >\, (1+\delta_n)p^e (n)_v}\\
& =\, B_N(m_+)\, \pr{N_H(G_{m_+})\, >\, (1+\delta_n)p^e (n)_v} \phantom{\Big|}
\, .
\end{align*}

The proof of the lower bound therefore reduces to proving the following two lemmas.

\begin{lem} \label{lem:Bnm+}
\[
B_N(m_+)\, =\, \sqrt{\frac{e^2 q}{\pi p}} \exp\left(- \frac{\delta_n^2 pn^2}{4 e^2q}\, +\, \frac{\big((3e-2)-(3e-1)p\big) \delta_n^3 pn^2}{12e^3q^2}\, -\, \log (n\delta_n)\, +\, o(1)  \right)\, .
\]
\end{lem}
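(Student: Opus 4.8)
The plan is to apply the binomial tail expansion of Theorem~\ref{thm:bah} at the point $m_+$ and then expand the resulting exponent as a power series in $\delta_n$, discarding every contribution that is $o(1)$ given the hypothesis $n^{-1}\ll\delta_n\ll n^{-1/2}$.

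First I would set $x_+ := x(m_+) = (m_+-pN)/\sqrt{Npq}$. Since $m_+ = m_* + \delta_n^{-1/2}n^{1/4} + O(1)$ and $m_* = pN(1+\delta_n)^{1/e}$, a direct computation gives $x_+ = (1+o(1))\tfrac{\sqrt{p}\,\delta_n n}{e\sqrt{2q}}$, so that $1 \ll x_+ \ll n^{1/2}$, the latter being (up to a constant) $(pqN)^{1/4}$, because $n^{-1} \ll \delta_n \ll n^{-1/2}$. Hence Theorem~\ref{thm:bah} applies, and moreover its ``furthermore'' clause permits truncating $E$ at $J=1$; since $E(x,N,1) = \tfrac{(p^2-q^2)x^3}{6\sqrt{pqN}} = \tfrac{(p-q)x^3}{6\sqrt{pqN}}$ (using $p+q=1$), this yields
\[
B_N(m_+) = (1+o(1))\,\frac{1}{x_+\sqrt{2\pi}}\,\exp\!\left(-\frac{x_+^2}{2} - \frac{(p-q)x_+^3}{6\sqrt{pqN}}\right).
\]

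Next I would replace $x_+$ by $x_* = x(m_*) = \big[(1+\delta_n)^{1/e}-1\big]\sqrt{pN/q}$ throughout: the discrepancy $x_+-x_* = \Theta(\delta_n^{-1/2}n^{-3/4})$, multiplied by $x_* = \Theta(n\delta_n)$ for the quadratic term, by $x_*^2 = \Theta(n^2\delta_n^2)$ for the cubic term, and divided by $\sqrt{Npq}=\Theta(n)$ where appropriate, produces errors which the two-sided bound on $\delta_n$ forces to be $o(1)$; likewise $\log x_+ = \log x_* + o(1)$. I would then use the binomial series $(1+\delta_n)^{1/e}-1 = \tfrac{\delta_n}{e} - \tfrac{(e-1)\delta_n^2}{2e^2} + O(\delta_n^3)$, substitute $N = \tfrac{n^2}{2}(1-1/n)$, and keep only terms of order up to $\delta_n^3 n^2$ (noting $\delta_n^4 n^2 = o(1)$ and $\delta_n^3 n = o(1)$), obtaining
\[
\frac{x_*^2}{2} = \frac{p\delta_n^2 n^2}{4qe^2} - \frac{p(e-1)\delta_n^3 n^2}{4qe^3} + o(1), \qquad \frac{(p-q)x_*^3}{6\sqrt{pqN}} = \frac{(2p-1)p\delta_n^3 n^2}{12q^2 e^3} + o(1).
\]
Adding the contributions $-x_*^2/2$ and $-E$, placing the cubic term over the common denominator $12q^2e^3$, and using $p=1-q$ to simplify $3q(e-1)-(2p-1) = (3e-1)q-1 = (3e-2)-(3e-1)p$ produces exactly the $\delta_n^2$ and $\delta_n^3$ coefficients in the statement. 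Finally the prefactor satisfies $\tfrac{1}{x_+\sqrt{2\pi}} = (1+o(1))\tfrac{1}{n\delta_n}\sqrt{\tfrac{e^2q}{\pi p}}$, contributing the constant $\sqrt{e^2q/(\pi p)}$ together with the summand $-\log(n\delta_n)$ in the exponent; absorbing the overall $(1+o(1))$ into $\exp(o(1))$ completes the proof.

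The main obstacle is entirely the bookkeeping of error terms: there is no conceptual difficulty, but one must carefully check against $n^{-1}\ll\delta_n\ll n^{-1/2}$ that all of the following are $o(1)$ — the $x_+$-versus-$x_*$ discrepancy in the quadratic term, in the cubic term, and in the logarithm; the linear-in-$n$ corrections coming from $N\neq n^2/2$; the quartic-and-higher terms of the binomial series; and the $i\ge 2$ terms of $E$ (equivalently, that the $J=1$ truncation in Theorem~\ref{thm:bah} is legitimate here) — and one must correctly combine the cubic contribution coming from $x_*^2/2$ with the one coming from the $i=1$ term of $E$, which together produce the slightly unexpected coefficient $(3e-2)-(3e-1)p$.
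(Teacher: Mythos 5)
Your proposal is correct and follows essentially the same route as the paper's own proof: apply Theorem~\ref{thm:bah} at $m_+$, truncate $E$ at $J=1$ using $x_+\ll N^{1/4}$, expand the exponent in powers of $\delta_n$, and combine the cubic contributions from $x^2/2$ and from $E(x,N,1)$ to recover the coefficient $(3e-2)-(3e-1)p$. The only cosmetic difference is that you pass explicitly through $x_*$ before expanding, whereas the paper's displayed formula~\eqref{xplusas} for $x_+$ already incorporates that substitution; the underlying error-term checks are identical.
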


%\left(-\frac{pn^2\delta_n^2}{4e^2(1-p)} - \frac{p[1 -3(1-p)(e-1)] n^2 \delta_n^3}{12 e^3 (1-p)^2 } - \log (n\delta_n)  \right)\, .

\begin{lem}\label{lem:NHm+}
\[
\pr{N_H(G_{m_+})\, >\, (1+\delta_n)p^e (n)_v}\, =\, 1+o(1)\, .
\]
\end{lem}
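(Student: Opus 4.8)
The plan is to write $N_H(G_{m_+}) = L_H(m_+) + D_H(G_{m_+})$ and show two separate facts: that the deterministic quantity $L_H(m_+)$ already exceeds the target $(1+\delta_n)p^e(n)_v$ by a margin which is $\gg n^{v-3/2}$, and that the random deviation $D_H(G_{m_+})$ is, with probability $1-o(1)$, smaller in absolute value than that margin. Combining the two gives $N_H(G_{m_+}) > (1+\delta_n)p^e(n)_v$ with probability $1-o(1)$, and the trivial upper bound of $1$ then yields the claimed asymptotic equality.

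For the margin, I would start from the observation made just before the statement, namely $L_H(m_*) \le (1+\delta_n)p^e(n)_v < L_H(m_*+e)$ with $m_* = pN(1+\delta_n)^{1/e}$. Viewing $L_H(m) = (n)_v(m)_e/(N)_e$ as a polynomial in the real variable $m$, this gives
\[
L_H(m_+) - (1+\delta_n)p^e(n)_v \; > \; L_H(m_+) - L_H(m_*+e) \; = \; \int_{m_*+e}^{m_+} L_H'(u)\, du .
\]
Since $\delta_n \to 0$ and (using $\delta_n \gg n^{-1}$) $\delta_n^{-1/2}n^{1/4} = o(N)$, the interval $[m_*+e,\, m_+]$ lies in a neighbourhood of $pN$, on which the elementary identity $L_H'(u) = L_H(u)\sum_{j=0}^{e-1}(u-j)^{-1}$ gives $L_H'(u) \ge c_1 n^{v-2}$ for a constant $c_1 = c_1(H,p) > 0$ and all large $n$; also $m_+ - m_* - e \ge \delta_n^{-1/2}n^{1/4} - 1 - e \ge \tfrac12\delta_n^{-1/2}n^{1/4}$ for large $n$. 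Hence the margin is at least $\tfrac12 c_1\,\delta_n^{-1/2} n^{v-7/4}$. The essential point is that this margin divided by $n^{v-3/2}$ equals $\tfrac12 c_1\,\delta_n^{-1/2} n^{-1/4}$, which tends to infinity precisely because $\delta_n \ll n^{-1/2}$ (while staying $o(n^{1/4})$ because $\delta_n \gg n^{-1}$).

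It then remains to control $D_H(G_{m_+})$. Writing $t = m_+/N$, we have $G_{m_+} = G_{n,t}$ with $t \to p \in (0,1)$, so $t$ is bounded away from $0$ and $1$ and Theorem~\ref{thm:upto} applies. Choose a sequence $\alpha_n \to \infty$ with $\alpha_n n^{v-3/2}$ smaller than the margin above and $\alpha_n = o(n^{1/2})$ — possible by the orders just recorded, e.g.\ $\alpha_n = \lfloor(\tfrac12 c_1 \delta_n^{-1/2} n^{-1/4})^{1/2}\rfloor$. Theorem~\ref{thm:upto} then gives
\[
\pr{D_H(G_{m_+}) \le -\alpha_n n^{v-3/2}} \; \le \; \exp\!\big(-c\,\alpha_n\min\{\alpha_n, n^{1/2}\}\big) \; = \; \exp(-c\alpha_n^2) \; \longrightarrow \; 0,
\]
for large $n$, and on the complementary event $N_H(G_{m_+}) = L_H(m_+) + D_H(G_{m_+}) > L_H(m_+) - \alpha_n n^{v-3/2} > (1+\delta_n)p^e(n)_v$. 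Thus $\pr{N_H(G_{m_+}) > (1+\delta_n)p^e(n)_v} \ge 1-o(1)$, completing the proof. There is no genuine obstacle here; the only delicate point is the bookkeeping that identifies this margin-versus-fluctuation comparison as working exactly on the range $n^{-1} \ll \delta_n \ll n^{-1/2}$, and verifying that the (crude) concentration bound of Theorem~\ref{thm:upto} is still strong enough on this scale.
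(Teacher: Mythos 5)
Your proposal is correct and follows essentially the same route as the paper: decompose $N_H(G_{m_+}) = L_H(m_+) + D_H(G_{m_+})$, show the deterministic margin $L_H(m_+) - (1+\delta_n)p^e(n)_v$ is $\Omega(\delta_n^{-1/2}n^{v-7/4}) = \omega(n^{v-3/2})$, and conclude via Theorem~\ref{thm:upto} (the paper estimates the margin by directly expanding the falling factorial $(\fl{m_* + \delta_n^{-1/2}n^{1/4}})_e$ rather than by integrating $L_H'$, but that is merely bookkeeping). The paper additionally remarks that Chebyshev's inequality with the bound $\Var(D_H(G_m)) = O(n^{2v-3})$ would suffice in place of Theorem~\ref{thm:upto}.
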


Let us first see that Lemma~\ref{lem:NHm+} follows easily from Theorem~\ref{thm:upto}.

\begin{proof} The event that
\eq{nodevisdev}
N_H(G_{m_+})\, \le\, (1+\delta_n)p^e (n)_v
\eqe
will correspond to a large negative deviation $D_H(G_{m_+})$.  Indeed, we observe that
\begin{align*}
L_H(m_+)\, &=\, \frac{(n)_v(m_+)_e}{(N)_e}\\
& =\, \frac{(n)_v}{(N)_e}\, (\fl{m_*\, +\, \delta_n^{-1/2}n^{1/4})})_e\\
& \ge\, \frac{(n)_v}{(N)_e}\, (m_*\, +\, \delta_n^{-1/2}n^{1/4})-e)^e\\
& \ge\, \frac{(n)_v}{(N)_e}\, (m_*^e\, +\, \delta_n^{-1/2}n^{1/4}m_*^{e-1})\\
& \ge \, (n)_v p^e(1+\delta_n)\, +\, \Omega(\delta_n^{-1/2} n^{v-7/4})\, .
\end{align*}
And so~\eqr{nodevisdev} is contained in the event
\[
D_H(G_{m_+})\, \le\, -\Omega(\delta_n^{-1/2} n^{v-7/4})\, .
\]
Since $\delta_n^{-1/2} n^{v-7/4}\, =\, \omega(n^{v-3/2})$, this event has probability $o(1)$ by Theorem~\ref{thm:upto}, as required.
\end{proof}

\begin{remark} In fact one does not need Theorem~\ref{thm:upto} to obtain Lemma~\ref{lem:NHm+}.  An alternative would be to use Chebyshev's inequality and the fact that the variance of $D_H(G_m)$ is $O(n^{2v-3})$.
\end{remark}

Before proving Lemma~\ref{lem:Bnm+} let us examine more closely the values of $x_*$ and $x_+$.  We recall that $m_*\, := \, pN(1+\delta_n)^{1/e}$, and so has expansion
\[
m_*\, =\, pN\left(1\, +\, \frac{\delta_n}{e}\, -\, \frac{\delta_n^2(e-1)}{2e^2}\, +\, O(\delta_n^3)\right)\, .
\]
It follows that $x_*$ may be expressed as
\begin{align}
x_*\, & =\,  \frac{\delta_np^{1/2}N^{1/2}}{eq^{1/2}}\, -\, \frac{\delta_n^2(e-1)p^{1/2}N^{1/2}}{2e^2q^{1/2}}\, +\, O(\delta_n^3 n) \nonumber \\
& =\,   \frac{\delta_np^{1/2}n}{e\sqrt{2}q^{1/2}}\, -\, \frac{\delta_n^2(e-1)p^{1/2}n}{2\sqrt{2}e^2q^{1/2}}\, +\, O(\delta_n^3 n)\, +\, O(n^{-1})\, .     \label{xstaras}
\end{align}
As $m_+-m_*$ is $\delta_n^{-1/2}n^{1/4} \pm 1$ it is clear that $x_+-x_*$ is $\delta_n^{-1/2}n^{1/4}/\sqrt{Npq} \, \pm n^{-1}$.  In fact we will only need that $x_+-x_*=O(\delta_n^{-1/2}n^{-3/4})$, so that
\eq{xplusas}
x_+\, =\,  \frac{\delta_np^{1/2}n}{e\sqrt{2}q^{1/2}}\, -\, \frac{\delta_n^2(e-1)p^{1/2}n}{2\sqrt{2}e^2q^{1/2}}\, +\, O(\delta_n^{-1/2}n^{-3/4})\, .
\eqe

\begin{proof}[Proof of Lemma~\ref{lem:Bnm+}]
By Theorem~\ref{thm:bah} we have
\eq{bgives}
B_N(m_+)\, =\, (1 + o(1)) \frac{1}{x_+ \sqrt{2\pi}} \exp \left(- \frac{x_+^2}{2}\,  -\, E(x_+,N,1) \right)\, ,
\eqe
where we have used that $x_+\, =\, \Theta(\delta_n n) \ll N^{1/4}$ to truncate the infinite sum $E(x_+,N)$ to $E(x_+,N,1)$.

From our expression~\eqr{xplusas} for $x_+$ we have that
\[
\frac{x_+^2}{2}\, =\, \frac{\delta_n^2 pn^2}{4 e^2q}\, -\, \frac{\delta_n^3(e-1)pn^2}{4 e^3q}\, +\, o(1)
\]
and
\[
x_+^3\, =\, \frac{\delta_n^3 p^{3/2} n^3}{2^{3/2}e^{3}q^{3/2}}\, +\, o(n)\,  .
\]

It is straightforward to calculate that
\[
\frac{x_+^2}{2}\,  +\, E(x_+,N,1)\, =\,  \frac{\delta_n^2 pn^2}{4 e^2q}\, +\, \frac{\big((3e-1)p-(3e-2)\big) \delta_n^3 pn^2}{12e^3q^2}\, .
\]
Substituting this into~\eqr{bgives} and using that $x_+\, =\, \delta_np^{1/2}n/e\sqrt{2}q^{1/2}\, +\, o(1)$, we obtain the desired result.
\end{proof}

\subsection{Upper bound of Theorem~\ref{thm:smalldelta}}\label{sec:UBS}

A key observation is that the expression for $B_N(m_+)$ given in Lemma~\ref{lem:Bnm+} is also an expression for $B_N(m_-)$ (as the difference between the two is contained in the $o(1)$ term).  This follows easily from the proof of Lemma~\ref{lem:Bnm+} and the observation that $x_-$ may also be expressed as
\[
x_-\, =\,  \frac{\delta_np^{1/2}n}{e\sqrt{2}q^{1/2}}\, -\, \frac{\delta_n^2(e-1)p^{1/2}n}{2\sqrt{2}e^2q^{1/2}}\, -\, O(\delta_n^{-1/2}n^{-3/4})\, .
\]

Applying the trivial upper bound $\pr{N_H(G_m)\, >\, (1+\delta_n)p^e (n)_v}\le 1$ for $m\ge m_-$ we obtain from the identity~\eqr{identity} that  
\begin{align*}
&\pr{D_H(G_p)\, >\, \delta_n p^e(n)_v } \\
& \qquad \le\, B_N(m_-) \, +\, \sum_{m=0}^{m_- -1 }b_N(m)\, \pr{N_H(G_{m})\, >\, (1+\delta_n)p^e (n)_v}\, \\
& \qquad=\, \sqrt{\frac{e^2q}{\pi p}} \exp\left(- \frac{\delta_n^2 pn^2}{4 e^2q}\, +\, \frac{\big((3e-2)-(3e-1)p\big) \delta_n^3 pn^2}{12e^3q^2}\, -\, \log (n\delta_n)\, +\, o(1)  \right)\\ 
& \qquad \qquad +\, \sum_{m=0}^{m_- -1 }b_N(m)\, \pr{N_H(G_{m})\, >\, (1+\delta_n)p^e (n)_v}\, .
\end{align*}
It therefore suffices to prove that
\eq{madeof}
\sum_{m=0}^{m_- -1 }b_N(m)\, \pr{N_H(G_{m})\, >\, (1+\delta_n)p^e (n)_v}\, =\,  o(1) \exp\left(- r_0(p,n,\delta_n) \right)\, 
\eqe
where we have set
\[
r_0(p,n,\delta_n)\, :=\, \frac{\delta_n^2 pn^2}{4 e^2q}\, +\, \frac{\big((3e-2)-(3e-1)p\big) \delta_n^3 pn^2}{12e^3q^2}\, -\, \log (n\delta_n)\, .
\]
We bound the sum by showing that the contribution of terms with $m\le m_- -n$ is small and by dividing the terms $m_- -n \le m< m_-$ into intervals.  If we write $m=m_* - f$, we may calculate that
\begin{align*}
L_H(m)\, &=\, \frac{(n)_v(m)_e}{(N)_e}\\
& \le \, \frac{(n)_v (m_* -f)^e}{N^e}\\
& \le \, \frac{(n)_v p^e(m_*^e\, -\, fm_*^{e-1})}{N^e}\\
& =\, (1+\delta_n)p^e (n)_v\, -\, \Omega(fn^{v-2})\, .
\end{align*}
It follows that the event $N_H(G_{m})\, >\, (1+\delta_n)p^e (n)_v$ corresponds to a deviation $D_H(G_m)\, >\, \Omega(fn^{v-2})$ which has probability at most
\eq{uptogives}
\exp(-\Omega(f^2/n))
\eqe
by Theorem~\ref{thm:upto}.

This gives us immediately that the contribution to~\eqr{madeof} of terms with $m\le m_- -n$ is at most $e^{-\Omega(n)}$ which is certainly $o(1)\exp(-r(p,n,\delta_n))$, as required.

We split the remaining values of $m$ into intervals $I_k:= \{m_{k+1},\dots ,m_k -1\}$ where $m_k:= \fl{m_*-k \delta_n^{-1/2}n^{1/4}}$. To complete the proof it clearly suffices to show that
\[
\sum_{k=1}^{\lceil \delta_n^{1/2}n^{3/4}\rceil} B_N(m_{k+1})\pr{N_H(G_{m_k})\, >\, (1+\delta_n)p^e (n)_v}\, =\,  o(1) \exp(- r(p,n,\delta_n))\, .
\]

To bound $B_N(m_{k+1})$ we calculate that $x_{k}:=x(m_k)$ satisfies
\[
x_{k}\, =\,  \frac{\delta_np^{1/2}n}{e\sqrt{2}q^{1/2}}\, -\, \frac{\delta_n^2(e-1)p^{1/2}n}{2\sqrt{2}e^2q^{1/2}}\, -\, O(k\delta_n^{-1/2}n^{-3/4})\, ,
\]
and so working as in the proof of Lemma~\ref{lem:Bnm+} we obtain
\[
B_N(m_k)\, \le \, O(1) \exp\left(- r_0(p,n,\delta_n)\, +\, O(k\delta_n^{1/2}n^{1/4})\right)\, .
\]
On the other hand, we have from~\eqr{uptogives} that
\[
\pr{N_H(G_{m_k})\, >\, (1+\delta_n)p^e (n)_v}\, =\, \exp\big(-\Omega(k^2\delta_n^{-1}n^{-1/2})\big)\, .
\]
It follows that
\begin{align*}
\sum_{k=1}^{\lceil \delta_n^{1/2}n^{3/4}\rceil} & B_N(m_{k+1})\, \pr{N_H(G_{m_k}) > (1+\delta_n)p^e (n)_v}\, \\
& =\,  O(1) \sum_{k=1}^{\lceil \delta_n^{1/2}n^{3/4}\rceil} \exp\left(- r_0(p,n,\delta_n)\, +\, O((k+1)\delta_n^{1/2}n^{1/4})\, -\, \Omega(k^2\delta_n^{-1}n^{-1/2})\right)  \\
& \le \, O(1) \sum_{k=1}^{\infty} \exp\left(- r_0(p,n,\delta_n)\, +\, k\delta_n^{-1}n^{-1/2}\big(O(\delta_n^{3/2}n^{3/4})\, -\, \Omega(k)\big)\right)\\ 
& \le \, O(1)\sum_{k=1}^{\infty} \exp\left(- r_0(p,n,\delta_n)\, -\, \Omega\big(k^2 \delta_n^{-1}n^{-1/2}\big)\right)\\ 
& =\, o(1)\exp(- r_0(p,n,\delta_n))\, .
\end{align*}

%
%and Fa\`a di Bruno's formula, we get
%\begin{align*}
%& \frac{x_{N}^2}{2} +  E(x_{N}, N, 1) \\
%& = \sum_{j=0}^{1} \frac{(p^{j+1}+(-1)^j(1-p)^{j+1})}{(j+1)(j+2)\sqrt{p^j(1-p)^j}}\frac{x_N^{j+2}}{\sqrt{N^j}} + o(1)\\
%& = \sum_{j=0}^{1} \frac{p(p^{j+1}+(-1)^j(1-p)^{j+1}) n^2}{2(j+1)(j+2)(1-p)^{j+1}}\left[\left(1 + \delta_n\right)^{1/e} - 1\right]^{j+2}  + o(1)\\
%& = \sum_{k=2}^{3} p n^2 \delta_n^k \sum_{j=2}^k \frac{p^{j-1} + (-1)^j (1-p)^{j-1}}{2j(j-1)(1-p)^{j-1}} \\
%& \hspace{3cm} \times \sum_{\pi \in \mathcal{P}_{k,j}} \Ch{j}{\pi_1 \ \pi_2 \ \cdots \ \pi_k} (e^{-1})^{\pi_1} \left( \frac{(e^{-1})_2}{2!} \right)^{\pi_2} \cdots  \left( \frac{(e^{-1})_k}{k!} \right)^{\pi_k} + o(1),
%\end{align*}
%where $\mathcal{P}_{k,j} = \left\{(\pi_1, \pi_2, \ldots, \pi_k): \pi_1 + \pi_2 + \cdots + \pi_k = j, \sum_{i=1}^k i \pi_i = k \right\}$.

\subsection{Proof of Theorem~\ref{thm:largerdelta}}\label{sec:L}

Let the sequence $n^{-1} \ll \delta_n \ll 1$ be given.

We set
\[
r(p,n,\delta_n)\, := \, \frac{x_*^2}{2}\, +\, E(x_*,N)\, -\, \frac{\delta_n^2 n}{16\gamma_H(p) e^4p^{2e-2}q^2}\, .
\]
Our aim is to prove that
\[
\pr{D_H(G_{n,p}) > \delta_n p^e(n)_v}\, =\, \exp\big(-r(p,n,\delta_n)\, +\, o(\delta_n^2 n)\, +\, O(\log{n})\big)\, .
\]
Since we have included a $O(\log{n})$ error term in the exponent, which is equivalent to a multiplicative factor of $n^{O(1)}$, the sum given in~\eqr{identity} is equivalent to its largest term, and so it suffices to prove that
\[
\max_m b_N(m)\, \pr{N_H(G_{m})> (1+\delta_n)p^e (n)_v} \, =\, \exp\big(-r(p,n,\delta_n)\, +\, o(\delta_n^2 n)\, +\, O(\log{n})\big)\, .
\]

The maximum is achieved with $m$ slightly smaller than $m_*$.  We explore values of $m$ of the form $m_* -f$.

It will be useful to isolate a subset of the terms of $r(p,n,\delta_n)$, we set
\[
s(p,n,\delta_n)\, =\, \frac{x_*^2}{2}\, +\, E(x_*,N)\, ,
\]
and note that
\[
b_N(m_*)\, =\, \exp\big(-s(p,n,\delta_n)\, +\, O(\log{n})\big)\, .
\]

Let us calculate expressions for $b_N(m)$ and $\pr{N_H(G_{m})\, >\, (1+\delta_n)p^e (n)_v}$ for $m=m_*-f$.
We note that
\[
x(m)\, =\, x_*\, -\, f\sigma^{-1}
\]
where $\sigma=\sqrt{Npq}$, and so, by Theorem~\ref{thm:bah}, we have
\begin{align}
b_N(m)\, &=\, \exp\left(-\frac{(x_*-f\sigma^{-1})^2}{2}\, -\, E(x_*-f\sigma^{-1},N)\, +\, O(\log{n})\right)\nonumber \\
& =\, \exp\left( -s(p,n,\delta_n)\, +\, (1+o(1))f x_*\sigma^{-1}\, +\, O(\log{n})\right)\, .\label{eq:bNmis}
\end{align}

In order to get an expression for $\pr{N_H(G_{m})\, >\, (1+\delta_n)p^e (n)_v}$ we first calculate
\begin{align*}
L_H(m) \, &=\, \frac{(n)_v(m)_e}{(N)_e} \\
& =\, \frac{(n)_v (m_*-f)^e}{N^e} \, +\, O(n^{v-2})\\
& =\, (1+\delta_n)p^e (n)_v\, -\, ef\, (n)_v \frac{m_*^{e-1}}{N^e}\, +\, O(n^{v-2})\\
& =\, (1+\delta_n)p^e (n)_v\, -\, (1+o(1))\frac{ef\, (n)_v p^{e-1}}{N}\, +\, O(n^{v-2})\\
& =\, (1+\delta_n)p^e (n)_v\, -\, (1+o(1))\, 2efp^{e-1}n^{v-2} \, +\, O(n^{v-2})\, .
\end{align*}
Therefore the event $N_H(G_{m})\, >\, (1+\delta_n)p^e (n)_v$ corresponds to a deviation 
\[
D_H(G_m)\, >\, (1+o(1))\, 2efp^{e-1}n^{v-2}\, .
\]
If $f=\Omega(n)$ then 
\[
\pr{N_H(G_{m})\, >\, (1+\delta_n)p^e (n)_v}\, =\, \exp(-\Omega(f))\, 
\]
by Theorem~\ref{thm:upto}.  For $f\ll n$ we may apply Theorem~\ref{thm:main} to obtain
\eq{NHmis}
\pr{N_H(G_{m})\, >\, (1+\delta_n)p^e (n)_v}\, =\, \exp\left(\frac{-(4+o(1))\gamma_H(p)e^2f^2p^{2e-2}}{n}\right)\, .
\eqe

We may immediately observe that the maximum will not occur with $f=\Omega(n)$.  Indeed, for such $f$, we get
\[
\exp \left(-s(p,n,\delta_n) + (1+o(1))fx_*\sigma^{-1} - \Omega(f)\right)
\]
and $fx_*/\sigma-\Omega(f)=o(f)-\Omega(f)<0$ for all sufficiently large $n$.  We may therefore assume $f\ll n$.

In this range ($f\ll n$), we may combine~\eqr{bNmis} and~\eqr{NHmis} to obtain
\begin{align*}
& b_N(m)\, \pr{N_H(G_{m})\,  >\, (1+\delta_n)p^e (n)_v}\phantom{\Big|} \\
& = \, \exp\left(-s(p,n,\delta_n)\, +\, (1+o(1))f x_*\sigma^{-1}\, - \frac{(4+o(1))\gamma_H(p)e^2f^2p^{2e-2}}{n} +\, O(\log{n})\right)\, .
\end{align*}
If $\delta_n\ll n^{-1/2}\sqrt{\log{n}}$ then $x_*\ll \sqrt{n\log{n}}$ and the maximum of the terms involving $f$ is $o(\log{n})$, and can be absorbed into the error term.  If $\delta_n=\Omega(n^{-1/2}\sqrt{\log{n}})$ then the maximum is obtained with
\[
f_*\, =\, (1+o(1)) \frac{x_* n}{8\gamma_H(p) \sigma e^2 p^{2e-2}}\, .
\]
Setting $m=m_*-f_*$, this maximum is given by
\begin{align*}
b_N(m)\, &\pr{N_H(G_{m})> (1+\delta_n)p^e (n)_v}\phantom{\Big|}\\ 
& =\, \exp\left(-s(p,n,\delta_n)\, +\, (1+o(1))\frac{x_*^2 n}{16\gamma_H(p)\sigma^2e^2 p^{2e-2}}\,  +\, O(\log{n})\right) \, .
\end{align*}
Since $x_*\sim \delta_n p^{1/2}n/\sqrt{2q}e$ and $\sigma^2\sim pqn^2/2$ we have shown that at the maximum we have
\begin{align*}
b_N(m)\, &\pr{N_H(G_{m}) > (1+\delta_n)p^e (n)_v}\phantom{\Big|}\\
& =\, \exp\left(-s(p,n,\delta_n)\, +\, (1+o(1))\frac{\delta_n^2 n}{16\gamma_H(p)e^4 p^{2e-2}q^2}\,  +\, O(\log{n})\right) \, ,
\end{align*}
as required.  This completes the proof.

\subsection*{Acknowledgements}  C.G.\ and S.G.\ were supported by EPSRC grant EP/J019496/1.  C.G.\ was also supported by EPSRC Fellowship EP/N004833/1.  S.G. was also supported by research support from PUC-Rio, CNPq bolsa de produtividade em pesquisa (Proc. 310656/2016-8) and FAPERJ Jovem cientista do nosso estado (Proc. 202.713/2018).  A.S.\ was supported by a Leverhulme Trust Research Fellowship.  The authors would like to thank Jos\'e D.\ Alvarado for giving some helpful comments on a draft of the paper.

\section{Appendix}

We prove Theorem~\ref{thm:bah}.

In the context in which it was presented and applied it was more natural to state the theorem for $\mathrm{Bin}(N,p)$ where $N$ denotes $\binom{n}{2}$.  Let us revert to lower case $n$ for the proof, so that
\[
b_n(k)\, :=\, \pr{\mathrm{Bin}(n,p) = k}
\]
and
\[
B_n(k)\,  :=\, \pr{\mathrm{Bin}(n,p) \ge k}\, .
\]
As we stated before the statement of Theorem~\ref{thm:bah}, $p$ may may either be a constant $p\in (0,1)$ or a function $p=p_n$.

Set $\sigma_n=\sqrt{npq}$.  We must prove that
\eq{bnis}
b_n(\fl{pn+x_n\sigma_n})\,  =\, (1 + o(1))\frac{1}{\sqrt{2 \pi \sigma_n^2}} \exp \left(-\frac{x_n^2}{2} - E(x_n,n) \right)
\eqe
and
\eq{BNis}
B_n(pn + x_n \sigma_n)\, =\, (1 + o(1)) \frac{1}{x_n \sqrt{2\pi}} \exp \left(- \frac{x_n^2}{2} - E(x_n,n) \right)\, ,
\eqe
for all $1\ll x_n\ll \sigma_n$, where
\[
E(x,n)\, =\, \sum_{i=1}^{\infty} \frac{(p^{i+1} + (-1)^i q^{i+1})x^{i+2}}{(i+1)(i+2) p^{i/2} q^{i/2} n^{i/2}}\, .
\]

Let us also remark that if we keep track of the error terms in the proof then we obtain
\[
B_n(pn + x_n \sigma_n) = \left(1 + O\left(\frac{x_n}{\sqrt{np}} + \frac{1}{x_n^2}\right) \right)  \frac{1}{x_n \sqrt{2\pi}} \exp \left(- \frac{x_n^2}{2} - E(x_n,n) \right).
\]

\begin{proof}[Proof of Theorem~\ref{thm:bah}]  Let us note immediately that 
\[
E(x_n,n,J)\, =\, E(x_n,n)\, +\, o(1)
\]
in the case that $x_n\ll (pqn)^{1/2} (pqn)^{-1/(J+3)}$, and so the ``Furthermore'' statement follows immediately from the main statements.

Both of the main asymptotic identities will follow from the fact that
\[
A_n(np + x_n\sigma_n) C_n(x_n)\,  \to\, 1
\]
where
\begin{align*}
A_n(k) & :=  \frac{(k+1)q}{k+1 - (n+1)p}\binom{n}{k} p^k q^{n-k}, \\
\intertext{and}
C_n(x) & := x \sqrt{2\pi} \exp \left(\frac{x^2}{2} + E(x,n) \right)
\end{align*}
In fact we prove that 
\[
A_n(np + x_n\sigma_n) C^{+}_n(x_n)\,  \to\, 1
\]
where
\[
C^{+}_n(x) := \left(x + \sqrt{\frac{q}{np}} \right) \sqrt{2\pi} \exp \left(\frac{x^2}{2} + E(x,n) \right)
\]
which is clearly equivalent as $1\le C^{+}_n(x)/C_n(x) \le 1\, +\, O(\sqrt{q/pnx})\, =\, 1+o(1)$. 

Setting $k_n\, =\, np\, +\, x_n\sigma_n$ we observe that
\[
\frac{k_n +1}{k_n+1 - (n+1)p} \, =\, \frac{np + x_n \sigma_n + 1}{x_n\sigma_n + q} \, =\, \frac{\sqrt{\frac{np}{q}} + x_n + \frac{1}{\sigma_n}}{x_n + \sqrt{\frac{q}{np}}}\, , 
\]
and so
\[
\left(x_n + \sqrt{\frac{q}{np}} \right) \frac{k_n +1}{k_n+1 - (n+1)p} = \sqrt{\frac{np}{q}}\left(1 + x_n\sqrt{\frac{q}{np}} + \frac{1}{np} \right)\, .
\]
It follows that
\begin{align*}
& A_n(k_n)\, C^{+}_n(x_n) \\
& \qquad =\, \binom{n}{k_n} p^{k_n} q^{n-k_n} \left(1 + x_n \sqrt{\frac{q}{np}} + \frac{1}{np}\right) \sqrt{2\pi \sigma_n^2} \exp\left(x^2_n/2 + E(x_n,n)\right)\, .
\end{align*}
We have $n, k_n, n-k_n \to \infty$ and so we may apply Stirling's approximation to the three factorials involved in the binomial coefficient to obtain that the right-hand side is asymptotically equivalent to
\[
\frac{ \left(1 + x_n \sqrt{\frac{q}{np}} + \frac{1}{np} \right) } { \left( 1 + \frac{(q-p)}{\sqrt{pq}} \frac{x_n}{\sqrt{n}} - \frac{x_n^2}{n} \right)^{1/2} } \cdot \frac{ \exp \left(x_n^2/2 + E(x_n,n) \right) }
{\left( 1 + x_n \sqrt{\frac{q}{np}} \right)^{np + x_n \sigma_n} \left( 1 - x_n \sqrt{\frac{p}{nq}} \right)^{nq -x_n\sigma_n} }.
\]
The first term in this product is equal to $1 + O(x_n/\sqrt{np})$.  On the other hand the logarithm of the denominator of the second term in the product is
\begin{align*}
& (np + x_n \sigma_n ) \log \left(1 + x_n \sqrt{\frac{q}{np}} \right) + (nq - x_n\sigma_n ) \log \left(1 - x_n \sqrt{\frac{p}{nq}} \right) \\
& = (np + x_n \sigma_n) \sum_{i=1}^{\infty} (-1)^{i+1} \frac{1}{i} \left(\frac{q}{p}\right)^{i/2} \left(\frac{x_n}{\sqrt{n}}\right)^i  - (nq - x_n \sigma_n) \sum_{i=1}^{\infty} \frac{1}{i} \left(\frac{p}{q}\right)^{i/2} \left(\frac{x_n}{\sqrt{n}} \right)^i  \\
& = \sum_{j=0}^{\infty} \frac{(p^{j+1} + (-1)^j q^{j+1})x_n^{j+2}}{(j+1)(j+2) p^{j/2} q^{j/2} n^{j/2}}\\
& = \frac{x_n^2}{2}\, +\, E(x_n,n)\,
\end{align*}
provided $n$ is such that $x_n/\sqrt{n} < \min\{\sqrt{q/p},\sqrt{p/q}\}$ (which is certainly the case for all $n$ sufficiently large). Hence,
\[
A_n(k_n)\, C^{+}_n(x_n)  \,   = \, (1+ O(x_n/\sqrt{np})) \,  \to \, 1\, .
\]
Now observe that 
\[
b_n(np + x_n\sigma_n) \, =\, \frac{k_n+1 - (n+1)p}{(k_n+1)q} A_n(k_n)
\]
and that
\[
\frac{k_n+1 - (n+1)p}{(k_n+1)q} \frac{\sigma_n}{x_n} \to 1
\]
as $n \to \infty$, from which it follows that
\[
b_n(np + x_n \sigma_n) = (1+ o(1)) \frac{1}{\sqrt{2 \pi \sigma_n^2}} \exp \left(-\frac{x_n^2}{2} - E(x_n,n) \right).
\]
Finally, by Theorem 1 of Bahadur~\cite{Bah}, we have
\[
1 \,\le\, \frac{A_n(k_n)}{B_n(k_n)}\, < \, 1 + x_n^{-2}\, .
\]
It follows that
\[
B_n(np + x_n \sigma_n)\, = \, (1+o(1))  \frac{1}{x_n \sqrt{2\pi}} \exp \left(- \frac{x_n^2}{2} - E(x_n,n) \right). \qedhere
\]
\end{proof}

\end{document}